\newtheorem{theo}{Theorem}[section]
\newtheorem{prop}[theo]{Proposition}
\newtheorem{lemm}[theo]{Lemma}
\newtheorem{cor}[theo]{Corollary}
\newtheorem{claim}[theo]{Claim}
\newtheorem{prob}[theo]{Problem}
\theoremstyle{definition}
\newtheorem{defi}[theo]{Definition}
\newtheorem{step}{Step}
\theoremstyle{remark}
\newtheorem{rem}[theo]{Remark}
\newcommand{\Image}[0]{\operatorname{Im}}
\newcommand{\deldel}{\sqrt{-1}\partial \overline{\partial}}
\newcommand{\dbar}{\overline{\partial}}
\newcommand{\e}{\varepsilon}
\newcommand{\ome}{\widetilde{\omega}}
\newcommand{\I}[1]{\mathcal{I}(#1)}
\newcommand{\nd}[1]{{\rm{nd}}(#1)}
\newcommand{\lla}[0]{{\langle\!\hspace{0.02cm} \!\langle}}
\newcommand{\rra}[0]{{\rangle\!\hspace{0.02cm}\!\rangle}}
\newcommand{\la}[0]{{(\!\hspace{0.02cm} \!(}}
\newcommand{\ra}[0]{{)\!\hspace{0.02cm}\!)}}
\begin{document}

\title[Injectivity theorems for higher direct images]
{Injectivity theorems with multiplier ideal sheaves \\
for higher direct images under K\"ahler morphisms}

\author{Shin-ichi MATSUMURA}
\address{Mathematical Institute, Tohoku University, 
6-3, Aramaki Aza-Aoba, Aoba-ku, Sendai 980-8578, Japan.}
\email{{\tt mshinichi-math@tohoku.ac.jp, mshinichi0@gmail.com}}
\date{\today, version 0.01}
\renewcommand{\subjclassname}{%
\textup{2010} Mathematics Subject Classification}
\subjclass[2010]{Primary 32L20, Secondary 32L10, 14F18.}
\keywords
{Injectivity theorems, 
Vanishing theorems, 
Extension theorems, 
Higher direct images,  
K\"ahler deformations, 
Singular hermitian metrics, 
Multiplier ideal sheaves, 
Hodge theory, 
The theory of harmonic integrals, 
$L^{2}$-methods, 
$\dbar$-equations.}

\maketitle

\begin{abstract}
The purpose of this paper is to establish injectivity theorems 
for higher direct image sheaves 
of canonical bundles twisted by pseudo-effective line bundles 
and multiplier ideal sheaves. 
As applications, 
we generalize Koll\'ar's torsion freeness and  
Grauert-Riemenschneider's vanishing theorem. 
Moreover, we obtain a relative vanishing theorem 
of Kawamata-Viehweg-Nadel type 
and an extension theorem for holomorphic sections 
from fibers of morphisms to the ambient space. 
Our approach is based on transcendental methods    
and works for K\"ahler morphisms and 
singular hermitian metrics with non-algebraic singularities. 
\end{abstract}

\tableofcontents

\section{Introduction}\label{Sec1}
The injectivity theorem, 
which has been studied in the last decades,  
is a very powerful tool 
to study higher dimensional algebraic geometry 
(in particular birational geometry) and complex geometry. 
After the pioneering work by Tankeev in \cite{Tan71}, 
Koll\'ar established the celebrated injectivity theorem in \cite{Kol86a} 
by using the Hodge theory. 
From the viewpoint of Hodge theory, 
we have already obtained many useful generalizations   
(for example, see \cite{Amb03}, \cite{Amb14}, \cite{EV92}, \cite{Fuj09}, \cite{Fuj11b}, 
\cite{Fuj13b}, \cite{Fuj14a}, and \cite{Kol86b}). 
Particularly, 
the following theorem is one of the most useful generalizations of 
Koll\'ar's injectivity theorem for deformations of projective varieties. 
On the other hand, also from the analytic viewpoint, 
we can approach to Koll\'ar's result  
(for example, see \cite{Eno90}, \cite{Fuj12}, \cite{Fuj13},  \cite{FM16},  
\cite{Mat13}, \cite{Mat14}, \cite{Ohs04}, \cite{Tak95}, and \cite{Tak97}). 
This paper contributes to the study of 
the injectivity theorem and its applications 
from the analytic viewpoint. 

\begin{theo}\label{kol}
Let $\pi \colon X \to \Delta$ be a surjective projective morphism 
from a smooth variety $X$ 
to a quasi-projective variety  $\Delta$, 
and $F$ be a $\pi$-semi-ample line bundle on $X$.

Then, for a non-zero $($holomorphic$)$ section $s$ of $F^m$ $(m \geq 0)$,  
the multiplication map induced by the tensor product with $s$ 
$$
R^q \pi_{*}(K_{X}\otimes F) 
\xrightarrow{\otimes s} 
R^q \pi_{*}(K_{X}\otimes F^{m+1}) 
$$
is injective for every $q$. 
Here $K_{X}$ denotes the canonical bundle of $X$ 
and $R^q \pi_{*}(\bullet)$ denotes 
the $q$-th higher direct image sheaf. 
\end{theo}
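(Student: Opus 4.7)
The plan is to carry out the analytic/$L^2$-theoretic argument of Enoki--Takegoshi--Fujino in the relative setting. Since the injectivity of a morphism of coherent sheaves on $\Delta$ can be checked locally, I would fix $t_0 \in \Delta$ and replace $\Delta$ by a Stein, relatively compact neighborhood $U \subset \Delta$. The classical base-change/Leray theorem for projective morphisms then identifies $H^0(U, R^q \pi_{*}(K_X \otimes F))$ with the Dolbeault cohomology $H^q(Y, K_X \otimes F)$, where $Y := \pi^{-1}(U)$, reducing the statement to the injectivity of the multiplication map on the (possibly non-compact, but weakly pseudoconvex) manifold $Y$.

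Because $F$ is $\pi$-semi-ample, after shrinking $U$ I obtain, for some $k \geq 1$, a morphism $\Phi : Y \to \mathbb{P}^N \times U$ over $U$ with $F^k|_Y \cong \Phi^* \mathcal{O}_{\mathbb{P}^N}(1)$. Pulling back the Fubini--Study metric yields a smooth Hermitian metric $h$ on $F|_Y$ with semi-positive curvature. Fixing a K\"ahler form $\omega$ on $Y$ (which exists since $\pi$ is projective and $X$ is smooth), I would represent a class $\{u\} \in H^q(Y, K_X \otimes F)$ by the $L^2$-harmonic $K_X \otimes F$-valued $(0,q)$-form with respect to $(\omega, h)$. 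On the non-compact manifold $Y$ this harmonic representation is provided by Takegoshi's extension of Hodge theory to projective morphisms over Stein bases.

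Now suppose $\{u\}$ lies in the kernel of the multiplication map, so that $s \cdot u$ is $\dbar$-exact as a $K_X \otimes F^{m+1}$-valued form. The pivotal step is to show that $s \cdot u$ is itself harmonic with respect to $h^{m+1}$: since $u$ is harmonic and saturates the Bochner--Kodaira inequality for $(h, \omega)$ (whose curvature is semi-positive), a direct computation shows that $s \cdot u$ saturates the corresponding inequality for $h^{m+1}$, hence it is harmonic. But a $\dbar$-exact harmonic form is zero by Hodge orthogonality, so $s \cdot u \equiv 0$; since $s$ is a nonzero holomorphic section, $u$ vanishes on a dense open set, and the real-analyticity of harmonic forms forces $u \equiv 0$.

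The main obstacle I anticipate is justifying the harmonic representation and closing up the Bochner--Kodaira argument on the non-compact total space $Y$, where standard Hodge theory for compact manifolds does not apply. This is precisely the regime that Takegoshi's relative $L^2$-theory for projective (or K\"ahler) morphisms over Stein bases is designed to handle, and adapting it carefully---especially keeping control of curvature signs after multiplication by $s$ and ensuring the exact form $s\cdot u - \dbar(\cdot)$ remains in the appropriate $L^2$ class---is the technically delicate core of the proof.
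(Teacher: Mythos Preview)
Your approach is correct and is essentially the Enoki--Takegoshi argument that the paper explicitly cites (\cite{Tak95}) as already establishing Theorem~\ref{kol} in the semi-positive case. Note that the paper does not give its own direct proof of Theorem~\ref{kol}: it is stated there as a known result, and the paper's contribution is the more general Theorem~\ref{main}, which allows \emph{singular} metrics and multiplier ideal sheaves.

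If one specializes the paper's proof of Theorem~\ref{main} to the smooth-metric situation of Theorem~\ref{kol}, it collapses to your outline: the equisingular approximation $h_\varepsilon$, the family $\omega_{\varepsilon,\delta}$ of complete K\"ahler forms on Zariski open sets, and the double limit $\varepsilon,\delta\to 0$ all become unnecessary when $h$ is already smooth and $Z_\varepsilon=\emptyset$. What remains is (i) a weight $e^{-\chi(\Phi)}$ forcing the representative $u$ into a global $L^2$ space on the non-compact $Y$, (ii) the Bochner--Kodaira identity giving $D'^*u=0$ and hence $\dbar^*(su)=0$, and (iii) a boundary-term computation (the paper's Propositions~\ref{key} and~\ref{2nd}) replacing the naive orthogonality argument, since the solution $v$ of $\dbar v=su$ is a priori only locally $L^2$. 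Your sketch handles (ii) correctly and accurately flags (i) and (iii) as the points to be supplied by Takegoshi's machinery.

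One minor correction: the appeal to ``real-analyticity of harmonic forms'' is both unnecessary and not guaranteed for a general smooth K\"ahler metric. Continuity of $u$ already yields $u\equiv 0$ once $su\equiv 0$ forces $u=0$ on the dense open set $\{s\neq 0\}$.
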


In this paper, we consider a proper K\"ahler morphism $\pi \colon X \to \Delta$ 
from a complex manifold $X$ to an arbitrary analytic space $\Delta$ 
and a (holomorphic) line bundle $F$ on $X$ equipped with a singular (hermitian) metric $h$, and we study the direct image sheaves 
$R^q \pi_{*}(K_{X}\otimes F \otimes \I{h})$ of the canonical bundle $K_{X}$ on $X$ 
twisted by $F$ and the multiplier ideal sheaf $\I{h}$ of $h$. 
As results, we establish two injectivity theorems  
formulated for singular metrics with arbitrary singularities, 
which can be seen as a generalization of Theorem \ref{kol} 
to pseudo-effective line bundles 
(see Theorem \ref{main} and Theorem \ref{main2}).
As applications, we give a generalization of 
Koll\'ar's torsion freeness and 
Grauert-Riemenschneider's vanishing theorem 
(see Corollary \ref{cor-1}). 
Moreover, we obtain a relative vanishing theorem 
of Kawamata-Viehweg-Nadel type (see Theorem \ref{KVN})
and an extension theorem for (holomorphic) sections (see Corollary \ref{cor-2}).

In \cite{Eno90}, Enoki obtained the special case (the absolute case) of Theorem \ref{kol} 
under the weaker assumption that $F$ is semi-positive  
(namely, it admits a smooth (hermitian) metric with semi-positive curvature), 
as an application of the theory of harmonic integrals. 
Takegoshi proved the relative case of Enoki's result in \cite{Tak95}, 
which is a complete generalization of Theorem \ref{kol} 
from semi-ample line bundles to semi-positive line bundles. 
In this paper, 
we handle line bundles admitting a (possibly) singular metric 
with semi-positive curvature (that is, pseudo-effective line bundles). 
The study of pseudo-effective line bundles is one of 
the important subjects, and thus it is natural and of interest 
to study further generalizations of Theorem \ref{kol} and Takegoshi's result 
from semi-positive line bundles to pseudo-effective line bundles.

The following theorems, which are the main results of this paper, 
can be seen as a generalization of Theorem \ref{kol} and Takegoshi's result 
to pseudo-effective line bundles. 
Moreover Theorem \ref{main} and Theorem \ref{main2} 
include various injectivity theorems, 
for example, \cite{Eno90}, \cite{FM16}, \cite{Fuj12}, 
\cite{Fuj13}, \cite{GM13}, \cite{Kol86a}, \cite{Mat13}, \cite{Mat14}, 
\cite{Tak95}, \cite{Tak97}, and so on.

\begin{theo}[Main Result A] \label{main} 
Let $\pi \colon X \to \Delta$ be a surjective 
proper K\"ahler morphism 
from a complex manifold $X$ to an analytic space $\Delta$, 
and $(F,h)$ be a $($possibly$)$ singular hermitian line bundle on $X$ 
with semi-positive curvature.

Then, for a non-zero $($holomorphic$)$ section $s$ of $F^m$ $(m \geq 0)$ 
satisfying $\sup_{K} |s|_{h^m} < \infty$
for every relatively compact set $K \Subset X$, 
the multiplication map 
induced by the tensor product with $s$ 
$$
R^q \pi_{*}(K_{X}\otimes F \otimes \I{h}) 
\xrightarrow{\otimes s} 
R^q \pi_{*}(K_{X}\otimes F^{m+1} \otimes \I{h^{m+1}}) 
$$
is injective for every $q$. 
Here $\I{\bullet}$ denotes the multiplier ideal sheaf of $\bullet$. 
\end{theo}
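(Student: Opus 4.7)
The plan is to extend the Enoki--Takegoshi $L^{2}$-harmonic approach (as in \cite{Eno90}, \cite{Tak95}, \cite{Mat14}, \cite{FM16}) to the relative K\"ahler setting. Because injectivity of a sheaf morphism on $\Delta$ is a local property, I first replace $\Delta$ by an arbitrarily small Stein open neighborhood $B$ of a given point. The K\"ahler hypothesis on $\pi$ supplies a K\"ahler form on $Y:=\pi^{-1}(B)$ (after shrinking), and Cartan's Theorem~B together with the Leray spectral sequence identifies the stalk of $R^q\pi_{*}(K_X\otimes F\otimes\I{h})$ with $H^q(Y,K_X\otimes F\otimes\I{h})$. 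Via Demailly's Dolbeault resolution with multiplier ideals, a given class is represented by a $\dbar$-closed $F$-valued $(n,q)$-form $u$ on $Y$ whose pointwise norm $|u|_h$ is locally square-integrable.

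Next, apply Demailly's regularization to obtain a family $\{h_\varepsilon\}_{\varepsilon>0}$ of singular metrics on $F$ with analytic singularities such that $\sqrt{-1}\Theta_{h_\varepsilon}(F)\ge -\varepsilon\,\omega$ for a fixed K\"ahler form $\omega$ on $Y$. On the complement $Y\setminus Z_\varepsilon$ of the singular locus I equip the manifold with a complete K\"ahler metric $\widetilde{\omega}_\varepsilon$. Takegoshi-type harmonic theory then yields a unique $(n,q)$-harmonic representative $u_\varepsilon$ of $[u]$ in the weighted Hilbert space $L^{2}_{(n,q)}(Y\setminus Z_\varepsilon,F,h_\varepsilon,\widetilde{\omega}_\varepsilon)$. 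Feeding the curvature bound into the Bochner--Kodaira--Nakano identity on the harmonic form controls its $(1,0)$-derivative:
\[
\|D'u_\varepsilon\|_{h_\varepsilon}^{2}\ \le\ C\,\varepsilon\,\|u_\varepsilon\|_{h_\varepsilon}^{2},
\]
so $u_\varepsilon$ becomes ``approximately holomorphic'' as $\varepsilon\to 0$.

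Assume now $[s\,u]=0$ in $H^{q}(Y,K_X\otimes F^{m+1}\otimes\I{h^{m+1}})$. The local bound $\sup_{K}|s|_{h^{m}}<\infty$ is exactly what is needed to keep $s\,u_\varepsilon$ in the Hilbert space weighted by $h_\varepsilon^{m+1}$. Solve $\dbar v_\varepsilon=s\,u_\varepsilon$ with the \emph{minimal} $L^{2}$-norm and exploit the orthogonality
\[
\|s\,u_\varepsilon\|_{h_\varepsilon^{m+1}}^{2}\ =\ \langle s\,u_\varepsilon,\dbar v_\varepsilon\rangle\ =\ \langle \dbar^{\ast}(s\,u_\varepsilon),\,v_\varepsilon\rangle.
\]
Because $s$ is holomorphic and $u_\varepsilon$ harmonic, a standard commutator computation rewrites $\dbar^{\ast}(s\,u_\varepsilon)$ pointwise in terms of $D'u_\varepsilon$ (no bare derivative of $s$ appears), so the previous step bounds it by $O(\sqrt{\varepsilon})$. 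Letting $\varepsilon\to 0$ forces $s\,u=0$ in $L^{2}$, and since $s\not\equiv 0$ is holomorphic and $u$ is locally $L^{2}$, this in turn forces $u=0$; the original class therefore vanishes.

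The main obstacle lies in the orthogonality step: one needs a \emph{uniform} $L^{2}$-bound on the minimal solution $v_\varepsilon$ as $\varepsilon\to 0$. Producing such a bound requires (i) comparing the cohomology groups $H^{q}(Y,K_X\otimes F^{m+1}\otimes\I{h_\varepsilon^{m+1}})$ along the regularization in a quantitative manner, and (ii) a closed-range theorem of Takegoshi type that is valid in the K\"ahler (not merely projective) setting and that allows solutions obtained on the incomplete manifold $Y\setminus Z_\varepsilon$ with respect to $\widetilde{\omega}_\varepsilon$ to descend to $Y$ with estimates. Carrying out this analysis without any algebraicity assumption on $h$ or projectivity assumption on $\pi$ is the technical core of the theorem.
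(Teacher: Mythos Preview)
Your outline follows the same broad strategy as the paper---regularize $h$, take harmonic representatives, use Bochner--Kodaira--Nakano to control $D'^*u_\varepsilon$, and exploit $[su]=0$---but two genuine gaps remain beyond the uniform-$v_\varepsilon$ obstacle you already flag.

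\textbf{The final deduction is invalid.} You claim that $\|su_\varepsilon\|_{h_\varepsilon^{m+1}}\to 0$ forces $su=0$ in $L^2$, and that this in turn forces $u=0$. Neither step holds: the harmonic representative $u_\varepsilon$ varies with $\varepsilon$ (and with the varying metric $\widetilde\omega_\varepsilon$ and singular set $Z_\varepsilon$), so its asymptotic vanishing says nothing directly about the fixed form $u$; and the goal is $[u]=0$ in cohomology, not $u=0$ as a form. The paper closes this gap by extracting weak limits $u_{\varepsilon,\delta}\rightharpoonup\alpha_\varepsilon\rightharpoonup\alpha$ in carefully nested $L^2$-spaces and proving that the composite map $\Ker\dbar\cap L^2_{(n,q)}\to \Ker\dbar/\Image\dbar$ (the latter a Fr\'echet space since $X$ is holomorphically convex) is a \emph{compact operator}; this compactness, established via Montel's theorem on the \v{C}ech side, is what converts $\alpha=0$ into $[u]=0$.

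\textbf{Non-compactness of $Y$ is not handled.} After localizing, $Y=\pi^{-1}(B)$ is non-compact and $u$ is only \emph{locally} $L^2$, so you cannot run Hilbert-space harmonic theory on $Y\setminus Z_\varepsilon$ as written. The paper introduces an exhaustion $\Phi=\pi^*|z|^2$ and a convex increasing $\chi$, replacing $h_\varepsilon$ by $H_\varepsilon:=h_\varepsilon e^{-\chi(\Phi)}$ so that $\|u\|_{H,\omega}<\infty$. Your orthogonality identity $\|su_\varepsilon\|^2=\langle\dbar^*(su_\varepsilon),v_\varepsilon\rangle$ would require $v_\varepsilon$ to have finite \emph{global} $L^2$-norm, which you have no mechanism to arrange; the paper instead works on sublevel sets $X_c=\{\Phi<c\}$, where integration by parts produces a boundary term $\la(\dbar\Phi)^*su_{\varepsilon,\delta},v_{\varepsilon,\delta}\ra_{\partial X_c}$ that must be separately killed using the \emph{twisted} Bochner--Kodaira--Nakano identity. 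The uniform bound on $\|v_{\varepsilon,\delta}\|_{X_c}$ that you identify as the core difficulty is obtained not by a closed-range argument but by translating the $\dbar$-equation to a coboundary equation on the \v{C}ech complex (where the spaces are independent of $\varepsilon$ thanks to $\I{h_\varepsilon}=\I{h}$), applying the open mapping theorem there, and translating back with a partition of unity.
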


\begin{theo}[Main Result B] \label{main2} 
Let $\pi \colon X \to \Delta$ be a surjective 
proper K\"ahler morphism 
from a complex manifold $X$ to an analytic space $\Delta$. 
Let $(F,h)$ be a 
$($possibly$)$ singular hermitian line bundle on $X$ 
and $(M,h_{M})$ be a smooth hermitian line bundle on $X$. 
Assume that 
\begin{equation*}
\sqrt{-1}\Theta_{h_M}(M)\geq 0 \quad \text{and}\quad 
\sqrt{-1}(\Theta_h(F)-b \Theta 
_{h_M}(M))\geq 0
\end{equation*}
for some $b>0$.

Then, for a non-zero $($holomorphic$)$ section $s$ of $M$, 
the multiplication map induced by the tensor product with $s$ 
$$
R^q \pi_{*}(K_{X}\otimes F \otimes \I{h}) 
\xrightarrow{\otimes s} 
R^q \pi_{*}(K_{X}\otimes F \otimes \I{h} \otimes M) 
$$
is injective for every $q$. 
\end{theo}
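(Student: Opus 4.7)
The plan is to follow the transcendental $L^{2}$/harmonic-integrals strategy of Enoki and Takegoshi, adapted to singular hermitian metrics and relative K\"ahler morphisms along the lines of the works cited in the introduction. First I would localise on $\Delta$: because injectivity of a morphism of coherent sheaves can be checked on stalks, I may replace $\Delta$ by a relatively compact Stein open set $\Delta'$. Then by Cartan's Theorem B together with the Leray spectral sequence,
\[
H^{0}(\Delta', R^{q}\pi_{*}(K_{X}\otimes F \otimes \I{h})) \cong H^{q}(\pi^{-1}(\Delta'), K_{X}\otimes F \otimes \I{h}),
\]
and similarly after tensoring with $M$, so the problem reduces to injectivity of the multiplication $\otimes s$ on these cohomology groups. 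I would realise the $\I{h}$-twisted cohomology as $L^{2}$-cohomology of $F$-valued $(n,q)$-forms on a dense open subset of $\pi^{-1}(\Delta')$ with respect to a complete K\"ahler metric $\ome$, built from the relative K\"ahler form of $\pi$ together with a strictly plurisubharmonic exhaustion pulled back from $\Delta'$, and with singular weight $h$. Note that $\sqrt{-1}\Theta_{h}(F)\geq 0$ as a current, since it is the sum of two semi-positive currents.

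With this $L^{2}$-framework, I would represent a cohomology class by a harmonic $F$-valued $(n,q)$-form $u$. Because $h$ is singular, this has to proceed through a Demailly-type equisingular approximation $\{h_{\e}\}_{\e>0}$ (smooth, $h_{\e}\searrow h$, $\sqrt{-1}\Theta_{h_{\e}}(F)\geq -\e\,\ome$, and $\I{h_{\e}}=\I{h}$ for small $\e$): take harmonic representatives $u_{\e}$ on exhausting subdomains and pass to a weak $L^{2}$-limit $u$. The technical heart of the proof is then an Enoki--Takegoshi--Fujino type lemma: \emph{if $u$ is harmonic with respect to $h$, then $s\cdot u$, viewed as an $F\otimes M$-valued $(n,q)$-form, is again harmonic with respect to $h\otimes h_{M}$}. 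Closedness $\dbar(s\cdot u)=0$ is immediate from $\dbar s=0$ and $\dbar u=0$. For $\dbar^{*}(s\cdot u)=0$, the Bochner--Kodaira--Nakano identity applied to $u$ with the Nakano semipositivity $\sqrt{-1}\Theta_{h}(F)\geq 0$ on $(n,q)$-forms forces $D'u=0$ and $\langle [\sqrt{-1}\Theta_{h}(F),\Lambda]u,u\rangle=0$; combining the decomposition $\sqrt{-1}\Theta_{h}(F)=b\sqrt{-1}\Theta_{h_{M}}(M)+\sqrt{-1}(\Theta_{h}(F)-b\Theta_{h_{M}}(M))$ with $\sqrt{-1}\Theta_{h_{M}}(M)\geq 0$ and $\sqrt{-1}(\Theta_{h}(F)-b\Theta_{h_{M}}(M))\geq 0$ splits this vanishing into the Enoki cancellation $\langle [\sqrt{-1}\Theta_{h_{M}}(M),\Lambda]u,u\rangle=0$. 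Using $D'u=0$ together with the fact that $D'_{M}s\wedge u$ is an $(n+1,q)$-form on an $n$-dimensional manifold, one gets $D'(s\cdot u)=0$; a computation via the Hodge star formula $\dbar^{*}=-\ast D'\ast$ combined with the cancellation then yields $\dbar^{*}(s\cdot u)=0$.

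Once $s\cdot u$ is a harmonic representative of $[s\cdot u]\in H^{q}(\pi^{-1}(\Delta'), K_{X}\otimes F \otimes \I{h}\otimes M)$, the vanishing of this class forces $s\cdot u\equiv 0$ pointwise by uniqueness of harmonic representatives in the $L^{2}$ Hodge decomposition. Since $s$ is a nonzero holomorphic section of $M$, its zero locus is a proper analytic subset of $X$, and therefore $u\equiv 0$ on a dense open set, whence $u\equiv 0$ globally and $[u]=0$, proving injectivity. The principal obstacle is the regularization/limit step: one needs uniform $L^{2}$ control of the approximate harmonic representatives $u_{\e}$ in the $h_{\e}$-weighted norms, must verify that the limit $u$ is genuinely harmonic for the singular metric $h$ and represents the given class within the correct $\I{h}$-twisted complex, and must preserve the Enoki cancellation across the $\e\to 0$ passage despite the $-\e\,\ome$ loss of positivity. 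Constructing the complete K\"ahler metric $\ome$ on $\pi^{-1}(\Delta')$ so that the resulting $L^{2}$-cohomology matches the sheaf cohomology of $K_{X}\otimes F \otimes \I{h}$, and handling the fact that $\Delta$ is an arbitrary analytic space rather than a manifold, are the remaining delicate technical points.
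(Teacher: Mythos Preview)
Your overall strategy---localise to a Stein base, represent cohomology by $L^2$ forms, regularise $h$ by equisingular metrics $h_\e$, and invoke an Enoki--Takegoshi cancellation to control $su$---is the right one and matches the paper in spirit. However, the execution you propose has a genuine gap at its core: you plan to pass to a weak $L^2$-limit $u$ of the approximate harmonic forms $u_\e$ and then argue that $u$ is ``genuinely harmonic for the singular metric $h$'' so that Enoki's lemma makes $su$ harmonic and hence $su=0$ by uniqueness of harmonic representatives. This does not work as stated. There is no useful notion of ``harmonic with respect to $h$'' when $h$ is singular (the formal adjoint $\dbar^*_h$ is not defined on the polar set), and even after regularising, the loss $\sqrt{-1}\Theta_{h_\e}(F)\geq -\e\,\omega$ means the Bochner--Kodaira--Nakano identity applied to the harmonic $u_{\e,\delta}$ no longer forces the pointwise vanishing of $\langle\sqrt{-1}\Theta_{h_\e}(F)\Lambda u_{\e,\delta},u_{\e,\delta}\rangle$; one only gets that its integral is $O(\e)$. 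Consequently $su_{\e,\delta}$ is \emph{not} harmonic, and no weak-limit argument recovers the exact cancellation you need for your Enoki-type lemma.

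The paper circumvents this by never claiming that $su$ is harmonic. Instead it works throughout with a two-parameter family $u_{\e,\delta}$ of harmonic forms on the Zariski open sets $Y_\e=X\setminus Z_\e$ (with respect to complete K\"ahler forms $\omega_{\e,\delta}\to\omega$ and weighted metrics $H_\e=h_\e e^{-\chi(\Phi)}$, the weight $\chi(\Phi)$ being essential because the original representative $u$ is only \emph{locally} $L^2$ on the noncompact $X$), and reduces the problem to showing that $\|su_{\e,\delta}\|_{X_c}\to 0$ on every sublevel set $X_c\Subset X$. This requires three ingredients absent from your sketch: (i) solutions $v_{\e,\delta}$ of $\dbar v_{\e,\delta}=su_{\e,\delta}$ with \emph{uniform} local $L^2$-bounds, obtained by transferring to the \v{C}ech side via the De Rham--Weil isomorphism and using that ${\rm Im}\,\mu$ is closed when $X$ is holomorphically convex; (ii) an integration-by-parts formula on $X_c$ with an explicit boundary term on $\partial X_c$, controlled via Ohsawa--Takegoshi's twisted Bochner--Kodaira--Nakano identity; and (iii) the fact that the quotient map from the global $L^2$-space to the $\dbar$-cohomology is a \emph{compact} operator, which is what converts the asymptotic norm vanishing into vanishing of the cohomology class. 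The ``uniqueness of harmonic representatives in the Hodge decomposition'' that you invoke is not directly available on the noncompact $X$ without first establishing that ${\rm Im}\,\dbar$ is closed, and in the paper this closedness (which comes from holomorphic convexity) is used in a different way, via the \v{C}ech complex, rather than through a global Hodge decomposition for the singular metric.
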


\begin{rem}\label{rem-main2}
(1) For every point $t \in \Delta$ 
we can take an open neighborhood $\Delta'$ of $t$ 
such that $\pi^{-1}(\Delta')$ is a K\"ahler manifold, 
when $\pi \colon X \to \Delta$ is a K\"ahler morphism 
(for example see \cite[Definition 6.1]{Tak95} for the definition of K\"ahler morphisms). 
\vspace{0.1cm}
\\
(2) The case $m=0$ in Theorem \ref{main} agrees with 
the case where $(M,h_{M})$ is trivial in Theorem \ref{main2}. 
This case is important for applications. 
\vspace{0.1cm}
\\
(3) The assumption in Theorem \ref{main} on the local sup-norm  
is a reasonable condition to make the multiplication map well-defined 
and it is always satisfied in the case $m=0$.
\end{rem}

In \cite{Mat13} and \cite{FM16}, by combining the theory of harmonic integrals 
with the $L^2$-method for the $\dbar$-equation, 
we succeeded to obtain the above results in the absolute case 
(see also \cite{GM13} and \cite{FM16} for applications).
The proof of the main results is based on transcendental methods 
developed in \cite{Mat13}, \cite{FM16}, and \cite{Tak95}. 
One of the advantages of our method is that 
we can prove the main results for K\"ahler morphisms 
(not only projective morphisms) and 
singular metrics with non-algebraic singularities.  
We must sometimes handle the singular metric $h$ obtained from  
taking the limit of suitable metrics $\{h_{m}\}_{m=1}^{\infty}$. 
Under the regularity (smoothness) for singular metrics, 
a theorem similar to Theorem \ref{main2} was given in \cite{Fuj13}. 
However, it is quite hard 
to investigate the regularity of the limit $h$, 
even if $h_{m}$ has algebraic singularities. 
Therefore it is worth formulating Theorem \ref{main} and Theorem \ref{main2} for 
singular metrics with arbitrary singularities.

As a direct corollary, 
we generalize Koll\'ar's torsion freeness (\cite{Kol86a}) 
and Grauert-Riemenschneider's vanishing theorem (\cite{GR70})
for the higher direct images $R^q \pi_{*}(K_{X}\otimes F \otimes \I{h})$.

\begin{cor}[{\rm{Koll\'ar's torsion freeness, 
Grauert-Riemenschneider's vanishing theorem}}]\label{cor-1} 
\  
Let $\pi : X \to \Delta$ be a surjective 
proper  K\"ahler morphism 
from a complex manifold $X$ to an analytic space $\Delta$, 
and $(F,h)$ be a $($possibly$)$ singular hermitian line bundle on $X$ 
with semi-positive curvature.

Then the higher direct image sheaf 
$R^q \pi_{*}(K_{X}\otimes F \otimes \I{h})$ is torsion free 
for every $q$. 
Moreover, we obtain 
$$
R^q \pi_{*}(K_{X}\otimes F \otimes \I{h})=0
\quad \text{for every $q>\dim X - \dim \Delta$. }
$$
\end{cor}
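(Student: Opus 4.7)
The plan is to deduce both assertions directly from the $m=0$ case of Theorem \ref{main}. In that case the section $s$ is simply a non-zero holomorphic function on $X$, the local sup-norm condition $\sup_K |s|_{h^0} < \infty$ is automatic (as noted in the remark after Theorem \ref{main2}), and the conclusion becomes that multiplication by $s$ induces an injection
$$
R^q \pi_*(K_X \otimes F \otimes \I{h}) \xrightarrow{\otimes s} R^q \pi_*(K_X \otimes F \otimes \I{h})
$$
for every $q$. Choosing $s$ to be the pullback of a holomorphic function on $\Delta$ will yield the torsion freeness, and the vanishing will follow from that together with a generic fibre-dimension argument.

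For the torsion freeness I would argue locally on $\Delta$. Given an open set $U \subset \Delta$, a local section $\sigma \in \Gamma(U, R^q \pi_*(K_X \otimes F \otimes \I{h}))$, and a non-zero-divisor $f \in \mathcal{O}(U)$ with $f\sigma = 0$, the restricted morphism $\pi^{-1}(U) \to U$ is still a surjective proper K\"ahler morphism from a complex manifold, and $s := \pi^* f$ is a non-zero holomorphic function on $\pi^{-1}(U)$ because $\pi$ is surjective. Applying Theorem \ref{main} to this restricted morphism with $m=0$ gives that multiplication by $s$ is injective, and since that action coincides with multiplication by $f$ through the $\mathcal{O}_\Delta$-module structure on the higher direct image, I conclude $\sigma = 0$.

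For the vanishing, I would set $d := \dim X - \dim \Delta$ and let $Z \subset \Delta$ be the analytic subset where the fibre dimension of $\pi$ exceeds $d$. Semicontinuity of fibre dimension makes $Z$ a proper analytic subset on each irreducible component of $\Delta$. By the classical fact that higher direct images of a coherent sheaf under a proper holomorphic map vanish in degrees strictly above the fibre dimension, $R^q \pi_*(K_X \otimes F \otimes \I{h})$ vanishes on $\Delta \setminus Z$ for $q > d$, so this coherent sheaf is supported on $Z$. Being also torsion free by the first part, and coherent by Grauert's direct image theorem, it must vanish identically (working on each reduced irreducible component of $\Delta$ separately if $\Delta$ is reducible).

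The main substantive input is Theorem \ref{main}; the remainder is formal. The only point requiring some care is the passage from \emph{supported on a proper analytic subset plus torsion free} to \emph{vanishing} when $\Delta$ is reducible or non-reduced, but this is handled by reducing to the irreducible reduced case. I do not anticipate any deeper obstacle.
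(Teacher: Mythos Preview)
Your argument is correct and matches the paper's approach. For torsion freeness, the paper does exactly what you do: apply Theorem~\ref{main} with $m=0$ to a non-zero holomorphic function $s$ on $\pi^{-1}(B)$ (in particular to pullbacks $s=\pi^*f$), obtaining injectivity of the multiplication map, which is precisely torsion freeness. The paper's written proof in fact stops there and does not spell out the Grauert--Riemenschneider type vanishing at all; your fibre-dimension argument (vanishing over the locus of generic fibre dimension, hence support in a proper analytic subset, hence zero by torsion freeness) is the standard deduction and fills in what the paper leaves implicit.
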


As a further application, 
we obtain a vanishing theorem of Kawamata-Viehweg-Nadel type 
(\cite{Kaw82}, \cite{Vie82}, \cite{Nad90})
for the higher direct images 
$R^q \pi_{*}(K_{X}\otimes F \otimes \I{h})$ (see Theorem \ref{KVN}). 
For the proof of Theorem \ref{KVN}, 
we need the lower semi-continuity of 
the numerical Kodaira dimension of singular hermitian line bundles, 
which is of independent interest. 
If we can prove Proposition \ref{lsc} for K\"ahler morphisms,  
we will be able to generalize Theorem \ref{KVN} for them.  
See Definition \ref{nd} or \cite{Cao14} 
for the definition of the numerical Kodaira dimension of singular hermitian line bundles. 

\begin{prop}[Quasi lower semi-continuity of the numerical Kodaira dimension]\label{lsc}
\ \\
Let $\pi \colon X \to \Delta$ be 
a surjective projective morphism 
from a complex manifold $X$ to an analytic space $\Delta$, 
and $(F,h)$ be a $($possibly$)$ singular hermitian line bundle on $X$ 
with semi-positive curvature. 
Assume that $\pi$ is smooth at a point $t_{0} \in \Delta$.

Then, there exist an open neighborhood $B$ of $t_{0}$ and 
a dense subset $Q \subset B$ 
with the following property\,$:$ 
$$\text{
For every $t \in Q $, 
we have $\nd{F|_{X_{t}},h|_{X_{t}}} \geq \nd{F|_{X_{t_{0}}},h|_{X_{t_{0}}}}$. 
}
$$
Here $(F|_{X_{t}},h|_{X_{t}})$ denotes 
the singular hermitian line bundle restricted to the fiber $X_{t}$ at $t$ 
and $\nd{F|_{X_{t}},h|_{X_{t}}}$ denotes its numerical Kodaira dimension. 
$($See Definition \ref{nd} for the precise definition$)$.
\end{prop}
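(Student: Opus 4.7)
The plan is to reduce the proposition to a combination of an Ohsawa-Takegoshi-Manivel type $L^2$ extension theorem and a Baire category argument on the base. Set $k_0 := \nd{F|_{X_{t_0}}, h|_{X_{t_0}}}$. By Definition \ref{nd} of the numerical Kodaira dimension, after shrinking $\Delta$ to an open neighborhood $B \Subset \Delta$ of $t_{0}$ and using that $\pi$ is projective, we may fix a $\pi$-very ample line bundle $A$ on $\pi^{-1}(B)$ endowed with a smooth metric of positive curvature, a constant $c > 0$, and a sequence $m_j \uparrow \infty$ such that
$$
h^0 \bigl(X_{t_0}, \mathcal{O}(m_j F + A)|_{X_{t_0}} \otimes \I{h^{m_j}|_{X_{t_0}}}\bigr) \;\geq\; c \, m_j^{k_0}.
$$
Shrink $B$ further so that $\pi$ is smooth over $B$ and so that $A$ is positive enough to absorb the curvature terms in the extension estimate below.

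Next I apply an Ohsawa-Takegoshi-type extension theorem with multiplier ideals (for instance the relative extension Corollary \ref{cor-2} of this paper, or the classical Demailly-Manivel version) to the $L^{2}$-sections of $m_j F + A$ with weight $h^{m_j}$. For each $j$ this produces a linear injection
$$
H^0 \bigl(X_{t_0}, \mathcal{O}(m_j F + A)|_{X_{t_0}} \otimes \I{h^{m_j}|_{X_{t_0}}} \bigr) \hookrightarrow H^0 \bigl(\pi^{-1}(B), \mathcal{O}(m_j F + A) \otimes \I{h^{m_j}}\bigr),
$$
sending each $s_0$ to an extension $\widetilde{s_0}$ with controlled global $L^{2}$ norm; write $V_j$ for its finite-dimensional image.

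Now I would carry out the genericity step. Fix $j$. Because every nonzero element of $V_j$ vanishes only on a proper analytic hypersurface of $\pi^{-1}(B)$, the locus
$$
Z_j := \bigl\{\, t \in B : V_j \to H^0(X_t, \mathcal{O}(m_j F + A)|_{X_t}) \text{ is not injective}\, \bigr\}
$$
is contained in a proper analytic subset of $B$. Moreover, Fubini's theorem applied to the finite $L^{2}$-norms of a basis of $V_j$ produces a measure-zero set $N_j \subset B$ outside of which each restriction $\widetilde{s_0}|_{X_t}$ has finite $L^{2}$-norm against $h^{m_j}|_{X_t}$ and hence belongs to $\I{h^{m_j}|_{X_t}}$. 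Setting
$$
Q := B \setminus \bigcup_{j=1}^{\infty} \bigl( Z_j \cup N_j \bigr),
$$
Baire's theorem ensures that $Q$ is dense in $B$. For every $t \in Q$ and every $j$,
$$
h^0 \bigl(X_t, \mathcal{O}(m_j F + A)|_{X_t} \otimes \I{h^{m_j}|_{X_t}}\bigr) \;\geq\; \dim V_j \;\geq\; c \, m_j^{k_0},
$$
so $\nd{F|_{X_t}, h|_{X_t}} \geq k_0$ as required.

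The main obstacle lies in the extension step. One must arrange the positivity of $A$ to be simultaneously large enough to run Ohsawa-Takegoshi, independently of $m_j$, while guaranteeing that the extended sections are genuinely controlled by the full multiplier ideal $\I{h^{m_j}}$ on $\pi^{-1}(B)$ and not merely by some larger sheaf. The projectivity hypothesis is used precisely to produce such a uniform $A$, which also explains why Proposition \ref{lsc} is restricted to projective rather than arbitrary K\"ahler morphisms, as noted in the remark preceding the statement. A secondary subtlety is the general incompatibility between $\I{h^{m_j}}|_{X_t}$ and $\I{h^{m_j}|_{X_t}}$; the Fubini step absorbs this on a set of full measure, which is exactly enough for the Baire argument to go through.
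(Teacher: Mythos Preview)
Your approach is essentially the paper's: extend sections from $X_{t_0}$ by Ohsawa--Takegoshi, use Fubini to control the discrepancy between $\I{h^{m}}|_{X_t}$ and $\I{h^{m}|_{X_t}}$, and intersect countably many full-measure sets to get a dense $Q$. Two points need correction, however.

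First, your growth condition does not match Definition~\ref{nd}. You fix a single $A$ and a sequence $m_j$ with $h^{0}(X_{t_0}, m_jF+A,\I{h^{m_j}})\ge c\,m_j^{k_0}$, and at the end you claim this lower bound on the fibers forces $\nd{F|_{X_t},h|_{X_t}}\ge k_0$. But the paper's definition involves $\mathrm{vol}(A^{\varepsilon}\otimes F\otimes\I{h})/\varepsilon^{\nu}$ as $\varepsilon\to 0$, i.e.\ one must control $h^{0}$ of $A^{m\varepsilon}\otimes F^{m}$ for \emph{all} small rational $\varepsilon$, not just along the diagonal $\varepsilon=1/m_j$. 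The paper avoids this by choosing $Q=\bigcap_{m\ge 1}\{t:\I{h|_{X_t}^{m}}=\I{h^{m}}|_{X_t}\}$ first, independently of $\varepsilon$, and only then running the extension argument for each pair $(m,\varepsilon)$ (writing $A^{m\varepsilon}\otimes F^{m}=K_X\otimes(A^{m\varepsilon}\otimes K_X^{-1})\otimes F^{m}$ to get positivity for $m\gg 0$). This yields $h^{0}(X_t,\dots)\ge h^{0}(X_{t_0},\dots)$ for every $\varepsilon$ and every $m\gg_{\varepsilon}0$, which is what the definition actually requires. Your set $Q$, being tied to one sequence $m_j$, is not visibly strong enough.

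Second, ``Baire's theorem'' is not the right justification: your sets $N_j$ are Lebesgue-null (by Fubini), not a priori meager, and null sets and meager sets are in general unrelated. The correct statement is simply that a countable union of null sets is null, hence has dense complement; this is exactly what the paper uses.
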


By combining the celebrated vanishing theorem proved in \cite{Cao14} and 
the (strong) openness theorem proved in \cite{GZ13} with the proof of Proposition \ref{lsc}, 
we obtain a relative vanishing theorem of 
Kawamata-Viehweg-Nadel type. 

\begin{theo}[Relative vanishing theorem of Kawamata-Viehweg-Nadel type] \label{KVN}
\ \\
Let $\pi \colon X \to \Delta$ be a surjective projective morphism 
from a complex manifold $X$ to an analytic space $\Delta$, 
and $(F,h)$ be a $($possibly$)$ singular hermitian line bundle on $X$ 
with semi-positive curvature.

Then we have 
$$
R^q \pi_{*}(K_{X}\otimes F \otimes \I{h})=0
\quad \text{for every $q>f - 
\max_{\substack{\pi \text{ is smooth} \\ \text{ at }   t \in \Delta }}$} 
\nd{F|_{X_{t}},h|_{X_{t}}}, 
$$
where $f$ is the dimension of general fibers. 
In particular, 
if $(F|_{X_{t}},h|_{X_{t}})$ is big 
for some point $t$ in the smooth locus of $\pi$, 
then we have 
$$
R^q \pi_{*}(K_{X}\otimes F \otimes \I{h})=0
\quad \text{for every $q>0$}. 
$$
\end{theo}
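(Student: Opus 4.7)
The plan is to use the torsion-freeness of the direct image to reduce the problem to a \emph{generic} vanishing on $\Delta$, and then to deduce that generic vanishing by applying Cao's absolute Kawamata--Viehweg--Nadel vanishing theorem \cite{Cao14} on individual fibers, with the help of Proposition \ref{lsc} to control the numerical Kodaira dimension along a dense set of fibers.

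Concretely, set $\mathcal{F}^q := R^q\pi_{*}(K_X \otimes F \otimes \I{h})$ and work on one irreducible component of $\Delta$ at a time. By Corollary \ref{cor-1}, $\mathcal{F}^q$ is torsion-free, so it suffices to prove that $\mathcal{F}^q$ vanishes on some non-empty open subset of each component. Choose a point $t_0$ in the smooth locus of $\pi$ at which $\nd{F|_{X_{t_0}}, h|_{X_{t_0}}}$ realises the maximum $\nu$ appearing in the statement, and apply Proposition \ref{lsc} to produce an open neighborhood $B$ of $t_0$ and a dense subset $Q \subset B$ such that $\nd{F|_{X_t}, h|_{X_t}} \geq \nu$ for every $t \in Q$. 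After replacing $\Delta$ by $B$, I may assume $\pi$ is smooth throughout.

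For each $t \in Q$ the fibre $X_t$ is a compact K\"ahler (in fact projective) manifold of dimension $f$, and Cao's vanishing theorem \cite{Cao14} gives
\[
H^q\bigl(X_t,\, K_{X_t} \otimes F|_{X_t} \otimes \I{h|_{X_t}}\bigr) = 0 \quad \text{for every } q > f - \nd{F|_{X_t}, h|_{X_t}},
\]
and in particular for every $q > f - \nu$. To promote this fibrewise vanishing to a statement about the direct image, I would combine the flat base change theorem (available since $\pi$ is smooth) with the strong openness theorem of Guan--Zhou \cite{GZ13}. Strong openness is needed to identify the restriction $\I{h}|_{X_t}$ with the fibrewise multiplier ideal $\I{h|_{X_t}}$ for sufficiently general $t$, an identification that is essentially built into the proof of Proposition \ref{lsc}. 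Together with Grauert's semi-continuity and the coherence of $\mathcal{F}^q$, this forces $\mathcal{F}^q \otimes k(t) = 0$ at sufficiently general $t \in Q$, and then torsion-freeness propagates the vanishing to the whole component. The particular case of bigness is then automatic, since bigness on $X_t$ means $\nd{F|_{X_t}, h|_{X_t}} = f$, so the bound becomes $q > 0$.

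The step I expect to be the main obstacle is the analytic identification $\I{h}|_{X_t} = \I{h|_{X_t}}$ on a generic fibre, together with the comparison between the stalk $\mathcal{F}^q \otimes k(t)$ and the cohomology of the restricted sheaf. This is the genuinely transcendental input: it requires the strong openness of Guan--Zhou to accommodate the possibly non-algebraic singularities of $h$, and one has to verify that the exceptional locus where the identification fails does not absorb the dense subset $Q$ supplied by Proposition \ref{lsc}. Once this compatibility is in place, the remainder is a formal combination of flat base change, semi-continuity, and the torsion-freeness provided by Corollary \ref{cor-1}.
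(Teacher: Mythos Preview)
Your proposal is correct and follows essentially the same route as the paper: pick $t_0$ maximizing the numerical dimension, use Proposition~\ref{lsc} (together with Remark~\ref{lsc-rem}) to produce the dense set $Q$, apply Cao's vanishing (Theorem~\ref{Cao-GZ}) fiberwise, and then use flat base change plus torsion-freeness (Corollary~\ref{cor-1}) to globalize. Two small sharpenings to match the paper: the identification $\I{h}|_{X_t}=\I{h|_{X_t}}$ for $t\in Q$ is obtained in the proof of Proposition~\ref{lsc} by Fubini and Ohsawa--Takegoshi (see Remark~\ref{lsc-rem}), not by strong openness---the Guan--Zhou input enters only through the formulation of Cao's theorem with $\I{h}$; and rather than invoking Grauert semi-continuity, the paper passes to the Zariski open set $\Delta'$ where $\pi$ is smooth and $\mathcal{F}^q$ is locally free, so that vanishing on the dense subset $Q\cap\Delta'$ forces $\mathcal{F}^q|_{\Delta'}=0$ directly.
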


Moreover, we obtain an extension theorem (see Corollary \ref{cor-2}),  
which is motivated by the following problem 
related to the invariance of plurigenera 
and the dlt extension conjecture in the minimal model program 
(see \cite{Lev83}, \cite{Siu98}, \cite{Siu02}, \cite{Taka97}, \cite{Pau07} for 
the invariance of plurigenera, 
\cite{dhp}, \cite{FG14}, \cite{GM13} 
for the dlt extension conjecture,  and the references therein).

\begin{prob}\label{prob}
Let $\pi \colon X \to \Delta$ be 
a surjective proper K\"ahler morphism 
from a complex manifold $X$ to 
an open disk $\Delta \subset \mathbb{C}$. 
Assume that $K_{X}$ is $\pi$-nef and the central fiber $X_{0}:=\pi^{-1}(0)$ 
is simple normal crossing. 
Then can we extend a section 
$u \in H^{0}(X_{0}, \mathcal{O}_{X_{0}}(K_{X}^{m}))$ 
to a section in $ H^{0}(X, \mathcal{O}_{X}(K_{X}^{m}))$ 
$($by replacing $\Delta$ with a smaller disk if necessary$)$$?$ 
\end{prob}

The formulation of the above problem seems to be reasonable, 
since it can be seen as a relative version of the dlt extension conjecture 
and follows from the abundance conjecture. 
When $\pi \colon X \to \Delta$ is a smooth projective morphism, 
the complete answer 
was given in \cite{Siu98} and \cite{Siu02} 
(see also \cite{Pau07}). 
When the central fiber $X_{0}$ is smooth and 
$\mathcal{O}_{X_{0}}(K_{X})$ is semi-ample, 
Problem \ref{prob} was affirmatively solved in \cite{Lev83}. 
It was also shown in \cite{Taka97} 
that every section of the pluri-canonical bundle 
on each component of $X_{0}$ 
can be extended. 
The following result asserts that 
the above problem is affirmative 
if $K_{X}$ admits a singular metric  
with mild singularities. 
This result is a relative version of \cite[Theorem 1.4]{GM13} and 
it can be seen as an improvement of \cite{Lev83}.

\begin{cor}\label{cor-2}
Under the same situation as in Problem \ref{prob}, 
further let $(F,h)$ be a singular hermitian line bundle on $X$ 
with semi-positive curvature. 
Then every section in 
$H^{0}(X_{0}, \mathcal{O}_{X_{0}}(K_{X}\otimes F))$ 
that comes from $H^{0}(X_{0}, \mathcal{O}_{X_{0}}(K_{X}\otimes F)\otimes \I{h})$ can be extended to a section in 
$H^{0}(X, \mathcal{O}_{X}(K_{X}\otimes F)\otimes \I{h})$ 
by replacing $\Delta$ with a smaller disk. 
In particular, 
if $K_{X}$ admits a singular metric $h$ whose curvature 
is semi-positive and Lelong number is zero at every point in $ X_{0}$, 
then Problem \ref{prob} is affirmatively solved. 
\end{cor}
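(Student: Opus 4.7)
The plan is to deduce the extension from the torsion-freeness supplied by Corollary \ref{cor-1}. Write $\mathcal{G} := \mathcal{O}_{X}(K_{X} \otimes F) \otimes \I{h}$, let $t$ denote the coordinate on $\Delta$, and shrink $\Delta$ to an open disk $\Delta'$ around $0$ whenever needed. Since $K_{X} \otimes F$ is locally free and $\I{h}$ is an ideal of $\mathcal{O}_{X}$, the sheaf $\mathcal{G}$ is torsion-free as an $\mathcal{O}_{X}$-module; the multiplication-by-$t$ short exact sequence $0 \to \mathcal{O}_{X} \xrightarrow{\pi^{*}t} \mathcal{O}_{X} \to \mathcal{O}_{X_{0}} \to 0$ therefore remains exact after tensoring by $\mathcal{G}$, yielding
\[
0 \to \mathcal{G} \xrightarrow{\,\times \pi^{*}t\,} \mathcal{G} \to \mathcal{O}_{X_{0}}(K_{X}\otimes F) \otimes \I{h} \to 0.
\]

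Pushing forward by $\pi$ over $\Delta'$ produces a long exact sequence of coherent sheaves on $\Delta'$. By Corollary \ref{cor-1}, the sheaf $R^{1}\pi_{*}\mathcal{G}$ is torsion-free over $\mathcal{O}_{\Delta'}$, so multiplication by $t$ is injective on it, and consequently the connecting homomorphism
\[
\delta \colon \pi_{*}\bigl(\mathcal{O}_{X_{0}}(K_{X}\otimes F) \otimes \I{h}\bigr) \longrightarrow R^{1}\pi_{*}\mathcal{G}
\]
vanishes. This gives the short exact sequence $0 \to \pi_{*}\mathcal{G} \xrightarrow{\times t} \pi_{*}\mathcal{G} \to \pi_{*}(\mathcal{O}_{X_{0}}(K_{X}\otimes F) \otimes \I{h}) \to 0$ of coherent sheaves on the Stein disk $\Delta'$ (coherence by properness of $\pi$), so Cartan's Theorem~B yields $H^{1}(\Delta', \pi_{*}\mathcal{G}) = 0$. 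Taking global sections, the restriction map
\[
H^{0}\bigl(\pi^{-1}(\Delta'), \mathcal{G}\bigr) \twoheadrightarrow H^{0}\bigl(X_{0}, \mathcal{O}_{X_{0}}(K_{X}\otimes F) \otimes \I{h}\bigr)
\]
is therefore surjective, which establishes the first assertion.

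For the ``in particular'' claim, I would apply the first part with $F := K_{X}^{m-1}$ equipped with $h^{m-1}$ (singular hermitian with semi-positive curvature), so that $K_{X} \otimes F = K_{X}^{m}$. The vanishing of the Lelong numbers of $h$ (and hence of $h^{m-1}$) at every point of $X_{0}$, combined with Skoda's integrability theorem, gives $\I{h^{m-1}} = \mathcal{O}_{X}$ in an open neighborhood of $X_{0}$; thus every section of $\mathcal{O}_{X_{0}}(K_{X}^{m})$ automatically comes from $H^{0}(X_{0}, \mathcal{O}_{X_{0}}(K_{X}^{m}) \otimes \I{h^{m-1}})$, and the first part extends it. The substantive content of the argument has already been absorbed into Corollary \ref{cor-1}; what remains is a routine diagram chase combined with Cartan~B, so the main obstacle lies entirely in the (already established) torsion-freeness of $R^{1}\pi_{*}\mathcal{G}$.
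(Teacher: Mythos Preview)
Your argument is correct and is essentially the paper's own proof: both set up the short exact sequence $0 \to \mathcal{G} \xrightarrow{\times t} \mathcal{G} \to \mathcal{G}|_{X_{0}} \to 0$ and kill the degree-one obstruction using injectivity of multiplication by $t$. The only cosmetic difference is that the paper invokes Theorem~\ref{main} directly on $H^{1}(X,\mathcal{G})$ (using that $X$ is holomorphically convex over the Stein base), whereas you route through Corollary~\ref{cor-1} on $R^{1}\pi_{*}\mathcal{G}$ and then apply Cartan~B; since Corollary~\ref{cor-1} is itself Theorem~\ref{main} with $m=0$, and since $H^{q}(X,\mathcal{G}) \cong H^{0}(\Delta', R^{q}\pi_{*}\mathcal{G})$ over a Stein base, the two presentations are equivalent.
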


At the end of this section, 
we briefly explain the sketch of the proof of Theorems \ref{main}, 
comparing with the absolute case established in \cite{Mat13}. 
The proof of Theorem \ref{main2} is essentially the same as in Theorem \ref{main}. 
For the proof of the main results, 
we generalize methods in \cite{Tak95} and \cite{Fuj13},  
and combine them with techniques in \cite{FM16} and \cite{Mat13}.

In Step \ref{S1}, 
we approximate a given singular metric $h$ 
by singular metrics $\{ h_{\e} \}_{\e>0}$ 
that are smooth on a Zariski open set $Y_{\e}$, 
which enables us to use the theory of harmonic integrals on $Y_{\e}$. 
Note that we can not directly use the theory of harmonic integrals 
since $h$ may have non-algebraic  singularities. 
The subvariety $Y_{\e}$ is independent of $\e$ in \cite{Mat13},  
but $Y_{\e}$ may essentially depend on $\e $ in our case.  
For this reason, 
we construct a complete K\"ahler form $\omega_{\e, \delta}$ 
on $Y_{\e}$ 
such that $\omega_{\e, \delta}$ converges to 
a K\"ahler metric $\omega$ on $X$ as $\delta$ tends to zero. 
By the standard De Rham-Weil isomorphism, 
we can represent a given cohomology class $\{u\}$ by 
an $F$-valued differential form $u$. 
The $F$-valued form $u$ is locally $L^2$-integrable, 
but unfortunately $u$ may not be $L^2$-integrable on $X$
due to the non-compactness of $X$. 
For this reason, 
in Step \ref{S2}, we  construct a new metric $H_{\e}$ on $F$ 
by suitably choosing an exhaustive plurisubharmonic function on $X$, 
which enables us to take 
harmonic $L^{2}$-forms $u_{\e, \delta}$ with respect to $H_{\e}$ 
and $\omega_{\e, \delta}$ representing the same cohomology class $\{u\}$.   
In Step \ref{S3}, we reduce the proof 
to show that 
the $L^2$-norm $\|su_{\e, \delta}\|_{X_{c}, H_{\e}, \omega_{\e, \delta}}$ 
on a relatively compact set $X_{c} \Subset X$ 
converges to zero as $\e \to 0$ and $\delta \to 0$. 
For this step, we need that the quotient map from the $L^{2}$-space 
to the $\dbar$-cohomology group is a compact operator 
(see Proposition \ref{DWiso}). 
In Step \ref{S4},  
we construct a solution $v_{\e, \delta}$ of 
the $\dbar$-equation $\dbar v_{\e, \delta} = su_{\e, \delta}$ 
such that   
the $L^{2}$-norm $\| v_{\e, \delta} \|_{X_{c}, H_{\e},\omega_{\e, \delta}}$ on $X_{c}$  
is uniformly bounded, 
by using the $\rm{\check{C}}$ech complex and the De Rham-Weil isomorphism. 
Finally  we prove that 
\begin{align*}
\| su_{\e, \delta} \|_{X_{c}, H_{\e}, \omega_{\e, \delta}} ^{2} &= 
\lla su_{\e,\delta}, \dbar v_{\e, \delta} \rra_{X_{c}, H_{\e}, \omega_{\e, \delta}} \\
&=\lla \dbar^{*} su_{\e, \delta}, v_{\e,\delta} \rra_{X_{c}, H_{\e}, \omega_{\e, \delta}} 
+\la (d\Phi)^{*} su_{\e,\delta}, v_{\e,\delta} \ra_{\partial X_{c}, H_{\e},\omega_{\e, \delta}}
\end{align*}
for almost all $X_{c}\Subset X$, 
by generalizing the formula in \cite{FK} (see Proposition \ref{key}). 
Here ${\bullet}^{*}$  denotes 
the adjoint operator of $\bullet$ 
and $\la \bullet, \bullet \ra_{\partial X_{c}}$ denotes the norm 
on the boundary $\partial X_{c}$. 
We remark that the norm on the boundary $\partial X_{c}$ appears 
due to the non-compactness of $X$. 
In Step \ref{S5}, we show that 
the norm of $\dbar^{*} su_{\e, \delta}$ and 
$(d\Phi)^{*} su_{\e,\delta}$ 
converges to zero 
by using Ohsawa-Takegoshi's twisted Bochner-Kodaira-Nakano identity.

This paper is organized as follows\,$:$  
In Section \ref{Sec2}, 
we summarize the results needed in this paper.  
Moreover, in this section, 
we give a generalization of \cite[(1.3.2) Proposition]{FK} 
and recall the fundamental facts on the construction of the De Rham-Weil isomorphism 
in our situation.  
In Section \ref{Sec3}, we prove Theorem \ref{main} and Theorem \ref{main2}. 
In Section \ref{Sec4}, we prove Corollary \ref{cor-1}, Proposition \ref{lsc}, 
Theorem \ref{KVN}, and Corollary \ref{cor-2}.

\subsection*{Acknowledgements}
The author wishes to express his gratitude to Professor Osamu
Fujino for giving several remarks on the proof for the case $m=0$ of Theorem \ref{main} 
and for suggesting him to consider Theorem \ref{KVN}. 
He is supported by the Grant-in-Aid for 
Young Scientists (B) $\sharp$25800051 from JSPS.

\section{Preliminaries}\label{Sec2}
In this section, 
we summarize the results used in this paper with our notation. 
Throughout this section, 
let $X$ be a complex manifold of dimension $n$ 
and $F$ be a (holomorphic) line bundle on $X$.

\subsection{$L^2$-spaces of differential forms}\label{Sec2-1}
In this subsection, 
we recall $L^2$-spaces of $F$-valued differential forms and 
operators defined on them. 
Let $\omega$ be a positive $(1,1)$-form on $X$ 
and $h$ be a smooth (hermitian) metric on $F$. 

For $F$-valued $(p,q)$-forms $u$ and $v$,  
the (global) inner product 
$\lla u, v \rra  _{h, \omega}$ 
is defined by
\begin{equation*}
\lla u, v \rra  _{h, \omega}:=
\int_{X} 
\langle u, v\rangle _{h, \omega}\, dV_{\omega}, 
\end{equation*}
where $dV_{\omega}$ is the volume form defined by $dV_{\omega}:=\omega^{n}/n!$ 
and $\langle u, v\rangle _{h, \omega}$ is the point-wise inner product 
with respect to $h$ and $\omega$.  
The $L^{2}$-space of $F$-valued $(p, q)$-forms with respect to 
$h$ and $\omega$ is defined by
\begin{align*}
L_{(2)}^{p, q}(X, F)_{h, \omega}&:= 
\{u  \,|\,  u \text{ is an }F\text{-valued }(p, q)\text{-form with } 
\|u \|_{h, \omega}< \infty. \}. 
\end{align*}

The Chern connection $D=D_{(F, h)}$ on $F$  
is canonically determined by the holomorphic structure and 
the smooth metric $h$ on $F$, 
which can be written as 
$D = D'_{h} + D''_{h}$ 
with the $(1,0)$-connection $D'_{h}$ and the 
$(0,1)$-connection $D''_{h}$.
We remark that $D''_{h}$ agrees with the $\overline{\partial}$-operator. 
The connections $\dbar$ and $D'_{h}$   
(strictly speaking, their maximal extension) 
can be seen as a densely defined closed operator    
on $L_{(2)}^{p, q}(X, F)_{h, \omega}$ 
with the following domain\,$:$ 
$$
{\rm{Dom}}\, \dbar:=\{u \in L_{(2)}^{p, q}(X, F)_{h, \omega} \, | \, 
\dbar u \in L_{(2)}^{p, q+1}(X, F)_{h, \omega} \}. 
$$ 
Strictly speaking, these operators depend on $h$ and $\omega$
since their domain and range depend on them, 
but we often omit the subscript  
(for example, we abbreviate $\dbar_{h,\omega}$ to $\dbar$).

We consider the Hodge star operator $\ast$
with respect to $\omega$
$$
\ast=\ast_{\omega}\, \colon \, C_{\infty}^{p,q}(X,F) \to 
C_{\infty}^{n-q,n-p}(X,F),  
$$
where $C_{\infty}^{p,q}(X,F)$ is 
the set of smooth $F$-valued $(p,q)$-forms on $X$. 
By the definition, we have 
$\langle u, v\rangle _{h, \omega}\, dV_{\omega}=
u \wedge H \overline{\ast v}$, 
where $H$ is a local function representing $h$. 
In this paper, the notation $A ^{*}$ denotes 
the formal adjoint of an operator $A$. 
For example ${D_{h,\omega}'^{*}}$ and $\dbar^{*}_{h,\omega}$ 
are respectively the formal adjoint operator of $D_{h,\omega}'$ and $\dbar$.  
We remark that 
$$
{D_{h,\omega}'^{*}}=-\ast \dbar \ast \quad \text{and} \quad 
\dbar^{*}_{h,\omega}=-\ast {D'}_{h,\omega} \ast. 
$$
Further, for a differential form $\theta$, 
the notation $\theta^{\ast}$ denotes the adjoint operator 
with respect to the point-wise inner product. 
When $\theta$ is of type $(s,t)$ and 
$\theta^{\ast}$ acts on $C_{\infty}^{p,q}(X,F)$, 
we have 
$$
\theta^{\ast}= (-1)^{(p+q)(s+t+1)}{\ast} {\overline{\theta}} {\ast}. 
$$
For operators $A$ and $B$ with pure degree, 
the graded bracket $[\bullet, \bullet]$ is defined by 
$$
[A,B]:=AB-(-1)^{\deg A \deg B}BA.
$$

In the proof of the main results, 
we often use the following lemmas, 
which are obtained from simple computations 
(for example, see \cite{Tak95} for Lemma \ref{koukan}).

\begin{lemm}\label{koukan}
If $\omega$ is a K\"ahler form, then we have the following identities\,$:$ 
\begin{itemize}
\item[$\bullet$] $\theta^{*}=\sqrt{-1}[\overline{\theta}, \Lambda_{\omega}]$
for a $(1,0)$-form $\theta$. 
\item  $\eta^{*}=-\sqrt{-1}[\overline{\eta}, \Lambda_{\omega}]$
for a $(0,1)$-form $\eta$. 
\item[$\bullet$] $[\dbar, (\dbar \Phi)^{*}]+[{D_{h,\omega}'^{*}}, \partial \Phi]
=[\deldel \Phi, \Lambda_{\omega}]$
for a smooth function $\Phi$. 
\end{itemize}
Here $\Lambda_{\omega}$ denotes the adjoint operator 
defined by $\Lambda_{\omega}:=\omega^{\ast}$. 
\end{lemm}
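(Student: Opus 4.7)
The plan is to prove the three identities separately: (1) and (2) as pointwise linear-algebraic K\"ahler identities, and (3) as an operator-commutator consequence of them combined with the standard K\"ahler identity relating $\Lambda_{\omega}$ and $\dbar$.

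For identities (1) and (2), both sides are $\mathbb{C}$-linear operators whose value at a point $p\in X$ depends only on the fiber data $(\omega_{p}, \theta_{p})$, respectively $(\omega_{p}, \eta_{p})$; no derivative is involved. It therefore suffices to verify them fiberwise, and at any $p$ one may choose unitary coordinates in which $\omega = \tfrac{\sqrt{-1}}{2}\sum dz^{j}\wedge d\bar z^{j}$. Expanding $\theta = \sum a_{j}\,dz^{j}$ (resp.\ $\eta = \sum b_{j}\,d\bar z^{j}$) by linearity and testing both sides on the standard monomial basis of $\bigwedge^{p,q}T^{*}_{p}X$, the identities reduce to the classical pointwise relations between the wedge products by $dz^{j}$ (resp.\ $d\bar z^{j}$) and the interior products with their dual vectors. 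This elementary check is carried out in \cite{Tak95}.

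For identity (3), first apply (2) to $\eta = \dbar\Phi$, using the reality of $\Phi$ (so that $\overline{\dbar\Phi} = \partial\Phi$), to rewrite $(\dbar\Phi)^{*} = -\sqrt{-1}\,[\partial\Phi, \Lambda_{\omega}]$. The graded Jacobi identity, applied to the odd operators $\dbar$, $\partial\Phi\wedge\cdot$ and the even operator $\Lambda_{\omega}$, then gives
\begin{equation*}
[\dbar,\, [\partial\Phi, \Lambda_{\omega}]] \;=\; [\partial\Phi,\, [\Lambda_{\omega}, \dbar]] \;-\; [\Lambda_{\omega},\, [\dbar, \partial\Phi]].
\end{equation*}
Now use the classical K\"ahler identity $[\Lambda_{\omega}, \dbar] = -\sqrt{-1}\,{D_{h,\omega}'^{*}}$ together with the elementary relation $[\dbar,\, \partial\Phi\wedge\cdot] = -\partial\dbar\Phi\wedge\cdot$ to reduce the right-hand side to $-\sqrt{-1}\,[{D_{h,\omega}'^{*}}, \partial\Phi] - [\partial\dbar\Phi, \Lambda_{\omega}]$. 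Multiplying through by $-\sqrt{-1}$ and rearranging produces exactly $[\dbar, (\dbar\Phi)^{*}] + [{D_{h,\omega}'^{*}}, \partial\Phi] = \sqrt{-1}\,[\partial\dbar\Phi, \Lambda_{\omega}] = [\deldel\Phi, \Lambda_{\omega}]$, as required.

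The main obstacle is purely bookkeeping, namely the careful tracking of signs in the graded commutators, especially when invoking Jacobi and the antisymmetry $[A,B] = -(-1)^{|A||B|}[B,A]$. No analytic input is required: the proof is pointwise linear algebra on $\bigwedge^{\bullet,\bullet}T^{*}X$, supplemented only by the one standard K\"ahler identity for $\Lambda_{\omega}$ and $\dbar$.
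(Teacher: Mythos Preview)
Your argument is correct. The paper itself does not give a proof of this lemma; it simply states that the identities ``are obtained from simple computations (for example, see \cite{Tak95} for Lemma \ref{koukan}).'' Your write-up supplies exactly those computations: the pointwise verification of (1) and (2) in a unitary frame, and then the derivation of (3) from (2) via the graded Jacobi identity and the classical K\"ahler identity $[\Lambda_{\omega},\dbar]=-\sqrt{-1}\,D_{h,\omega}'^{*}$. The sign bookkeeping checks out (in particular $[\dbar,\partial\Phi\wedge\cdot]=\dbar\partial\Phi\wedge\cdot=-\partial\dbar\Phi\wedge\cdot$ and the antisymmetry of even--even brackets give the right cancellations), so your proof is a valid expansion of what the paper leaves as a reference.
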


\begin{lemm}\label{mul}
Let $\ome$ and $\omega$ be positive $(1,1)$-forms such that $\ome \geq \omega$. 
Then we have the following\,$:$
\begin{itemize}
\item[$\bullet$] There exists $C>0$ such that 
$|\theta^{*}u|_{\omega}\leq C |\theta|_{\omega}|u|_{\omega}$ 
for differential forms $\theta$, $u$.
\item[$\bullet$] The inequality $|\theta|_{\ome} \leq |\theta|_{\omega}$ holds for a differential form $\theta$. 
\item[$\bullet$] The inequality $|\theta|_{\ome}\, dV_{\ome} 
\leq |\theta|_{\omega}\,  dV_{\omega}$ holds
for an $(n,q)$-form $\theta$. 
\item[$\bullet$] The equality $|\theta|_{\ome}\, dV_{\ome} 
= |\theta|_{\omega}\,  dV_{\omega}$ holds
for an $(n,0)$-form $\theta$. 
\end{itemize}
\end{lemm}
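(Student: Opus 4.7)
The plan is to reduce each of the four claims to a point-wise multilinear algebra computation in a frame that simultaneously diagonalizes $\omega$ and $\ome$. Fix an arbitrary point $p\in X$; since $\omega_p$ and $\ome_p$ are positive Hermitian forms on $T_pX$, simultaneous diagonalization furnishes holomorphic coordinates $(z_1,\dots,z_n)$ around $p$ and real numbers $\lambda_1,\dots,\lambda_n > 0$ with
$$
\omega_p = \sqrt{-1}\sum_{i=1}^{n}dz_i\wedge d\bar z_i, \qquad \ome_p = \sqrt{-1}\sum_{i=1}^{n}\lambda_i\, dz_i\wedge d\bar z_i.
$$
The hypothesis $\ome \ge \omega$ is equivalent to $\lambda_i\ge 1$ for every $i$, and at $p$ the volume forms are related by $dV_{\ome} = (\lambda_1\cdots\lambda_n)\, dV_{\omega}$.

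For the first bullet, I would expand $\theta$ and $u$ in the $\omega$-orthonormal basis $\{dz_I\wedge d\bar z_J\}_{I,J}$. The operator $\theta^{*}$ is the point-wise adjoint of exterior multiplication by $\theta$, and in this basis it is a finite sum of contractions whose combinatorial coefficients depend only on $n$. A direct application of the Cauchy--Schwarz inequality then yields $|\theta^{*}u|_{\omega}\le C_n\,|\theta|_{\omega}\,|u|_{\omega}$ with a dimensional constant $C_n$.

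For the remaining three bullets, I would expand $\theta = \sum_{I,J}\theta_{I\bar J}\, dz_I\wedge d\bar z_J$ and set $\lambda_I := \prod_{i\in I}\lambda_i$. The definitions of the point-wise norms give
$$
|\theta|_{\omega}^{2} = \sum_{I,J}|\theta_{I\bar J}|^{2}, \qquad |\theta|_{\ome}^{2} = \sum_{I,J}(\lambda_I\lambda_J)^{-1}|\theta_{I\bar J}|^{2}.
$$
Because $\lambda_I\lambda_J\ge 1$, the second bullet $|\theta|_{\ome}\le|\theta|_{\omega}$ is immediate. For a form of bidegree $(n,q)$ the index $I$ is forced to be $\{1,\dots,n\}$, so $\lambda_I = \lambda_1\cdots\lambda_n$; multiplying by $dV_{\ome} = (\lambda_1\cdots\lambda_n)\, dV_{\omega}$, the factor $(\lambda_1\cdots\lambda_n)^{-1}$ cancels exactly against the Jacobian and produces
$$
|\theta|_{\ome}^{2}\, dV_{\ome} = \Bigl(\sum_{|J|=q}\lambda_J^{-1}|\theta_{\bar J}|^{2}\Bigr)\, dV_{\omega} \le |\theta|_{\omega}^{2}\, dV_{\omega}.
$$
In the case $q=0$, only $J=\emptyset$ contributes and $\lambda_{\emptyset}=1$, turning the inequality into an equality.

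The argument contains no real obstacle; the whole lemma is multilinear algebra at a point after simultaneous diagonalization. The only care needed is to track the combinatorial constant $C_n$ in the first bullet and to verify that the cancellation between the Jacobian $\lambda_1\cdots\lambda_n$ and the factor $(\lambda_1\cdots\lambda_n)^{-1}$ coming from the $(n,0)$-part of the norm is exact, which is precisely what delivers the equality in the fourth bullet.
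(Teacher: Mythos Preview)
Your argument is correct and follows the same route as the paper: simultaneous diagonalization of $\omega$ and $\ome$ at a point, followed by an explicit computation in the resulting frame. The only cosmetic difference is in the first bullet, where the paper passes through the Hodge-star identity $\theta^{*}=\pm\ast\,\overline{\theta}\,\ast$ together with a crude bound on $|\theta\wedge u|_{\omega}$, while you invoke the contraction description of the adjoint directly; the two are equivalent.
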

\begin{proof}
For a given point $x \in X$, we choose  
a local coordinate $(z_{1}, z_{2}, \dots, z_{n})$ such that 
\begin{align*}
\ome = \frac{\sqrt{-1}}{2} \sum_{j=1}^{n} 
\lambda_{j} dz_{j} \wedge d\overline{z_{j}} \quad \text{and} \quad 
\omega = \frac{\sqrt{-1}}{2} \sum_{j=1}^{n} 
 dz_{j} \wedge d\overline{z_{j}}
\quad  {\rm{ at}}\ x. 
\end{align*}
When differential forms $\theta$ and $u$ are written as 
$\theta=\sum_{I, J}\theta_{I,J}dz_{I} \wedge d\overline{z}_{J}$ 
and 
$u=\sum_{K, L}u_{K,L}dz_{K} \wedge d\overline{z}_{L}$ 
in terms of this coordinate, 
we have  
$$|\theta|^2_{\omega}=\sum_{I, J}|\theta_{I,J}|^{2} 
\quad \text{and} \quad  
|\theta|^2_{\ome}=\sum_{I, J}|\theta_{I,J}|^{2} 
\frac{1}{\prod_{(i,j) \in (I, J)} \lambda_{i} \lambda_{j}}
\quad  {\rm{ at}}\ x, 
$$ 
where $I$, $J$, $K$, $L$  are ordered multi-indices. 
The second claim follows from $\lambda_{i} \geq 1$. 
Further 
we can easily check the third claim and the fourth claim from the above equalities. 
By $|\theta_{I,J}| \leq |\theta|_{\omega}$ and $|u_{K,L}| \leq |u|_{\omega}$,  
we have  
\begin{align*}
| \theta \wedge u |_{\omega} 
& =
\big| \sum_{I, J, K, L} 
\theta_{I,J}\, u_{K,L}\, dz_{I} \wedge d\overline{z}_{J}
\wedge dz_{K} \wedge d\overline{z}_{L} \, \big|_{\omega} \\
& \leq \sum_{I, J, K, L} |\theta_{I,J}|  |u_{K,L}| \leq \sum_{I, J, K, L}  |\theta|_{\omega} |u|_{\omega}=C_{1} |\theta|_{\omega} |u|_{\omega}.   
\end{align*}
Here $C_{1}$ is a positive constant 
depending only on the degree of differential forms. 
On the other hand, since the Hodge star operator $*$ preserves 
the point-wise norm, we have 
$$|\theta^{*}u|_{\omega}=|\ast \overline{\theta} \ast u|_{\omega}=
|\overline{\theta} \ast u|_{\omega}\leq C_{2}|\overline{\theta}|_{\omega}\, |\ast u|_{\omega}
= C_{2}|\theta|_{\omega} |u|_{\omega}
$$
for some constant $C_{2}>0$.
\end{proof}

\subsection{Ohsawa-Takegoshi's twisted 
Bochner-Kodaira-Nakano identity}\label{Sec2-2}

The following proposition is obtained from 
the twisted Bochner-Kodaira-Nakano identity 
(cf. \cite{DF83}, \cite{DX84}, \cite{OT87}, \cite{Ohs95}), 
which plays an important role in Step \ref{S5}. 
For example, see \cite[Theorem 2.2]{Tak95} for the precise proof.

\begin{prop}[Twisted Bochner-Kodaira-Nakano identity]\label{Nak}
Let $\omega$ be a complete K\"ahler form on $X$ 
such that  $\sqrt{-1}\Theta_{h}(F)\geq -C_{1} \omega $ 
for some constant $C_{1}$. 
Further let $\Phi$ be a bounded smooth function on $X$ 
such that $\sup_{X}|d\Phi|_{\omega}<\infty$ and 
$\deldel \Phi \geq -C_{2}\omega$ for some constant $C_{2}$.

Then, for every $u \in {\rm{Dom}}\, \dbar^{*}_{h, \omega} \cap {\rm{Dom}}\, \dbar 
\subset L^{n,q}_{(2)}(X,F)_{h,\omega}$, 
we have 
\begin{align*}
&\|\sqrt{\eta}(\dbar+\dbar \Phi) u\|^2_{h, \omega}+
\|\sqrt{\eta}\dbar^{*}_{h, \omega}u \|^2_{h, \omega}\\
=&
\|\sqrt{\eta}({D_{h,\omega}'^{*}}-(\partial \Phi)^{*}) u\|^2_{h, \omega}
+\lla \eta \sqrt{-1} (\Theta_{h}(F) +\partial \dbar \Phi)\Lambda_{\omega} u, u\rra_{h, \omega}\, ,   
\end{align*}
where $\eta$ is the function defined by $\eta:=e^{\Phi}$.
\end{prop}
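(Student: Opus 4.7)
My plan is to derive the identity from the classical (untwisted) Bochner-Kodaira-Nakano identity applied to $F$ equipped with the modified smooth Hermitian metric $\tilde h:=he^{-\Phi}$, whose Chern curvature is
\begin{equation*}
\sqrt{-1}\Theta_{\tilde h}(F)=\sqrt{-1}\Theta_h(F)+\sqrt{-1}\partial\dbar\Phi,
\end{equation*}
and whose $(1,0)$-Chern connection satisfies $D'_{\tilde h}=D'_h-\partial\Phi\wedge\cdot$. Since $u$ is an $F$-valued $(n,q)$-form, the form $D'_{\tilde h}u$ has bidegree $(n+1,q)$ and hence vanishes; accordingly, the classical BKN identity, integrated against $u$ in the $\tilde h$-inner product, collapses on $(n,q)$-forms to
\begin{equation*}
\|\dbar u\|^2_{\tilde h,\omega}+\|\dbar^{*}_{\tilde h,\omega}u\|^2_{\tilde h,\omega}=\|D^{\prime *}_{\tilde h,\omega}u\|^2_{\tilde h,\omega}+\lla\sqrt{-1}\Theta_{\tilde h}(F)\Lambda_\omega u,u\rra_{\tilde h,\omega}.
\end{equation*}

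Second, I will translate this identity back into the form stated in the proposition by writing $u=\eta v$ and rewriting everything in terms of the original metric $h$ with the weight $\eta=e^{\Phi}$. From $|\cdot|^2_{\tilde h}=e^{-\Phi}|\cdot|^2_h$ one immediately obtains $\|\eta v\|^2_{\tilde h,\omega}=\|\sqrt{\eta}\,v\|^2_{h,\omega}$; direct computations then yield
\begin{equation*}
\dbar(\eta v)=\eta(\dbar v+\dbar\Phi\wedge v),\quad \dbar^{*}_{\tilde h,\omega}(\eta v)=\eta\,\dbar^{*}_{h,\omega}v,\quad D^{\prime *}_{\tilde h,\omega}(\eta v)=\eta\bigl(D^{\prime *}_{h,\omega}v-(\partial\Phi)^{*}v\bigr),
\end{equation*}
where the last two formulas follow from the adjoint calculations $\dbar^{*}_{\tilde h,\omega}w=e^{\Phi}\dbar^{*}_{h,\omega}(e^{-\Phi}w)$ and its $D^{\prime *}$-analogue, combined with the identity $(\partial\Phi)^{*}=\sqrt{-1}[\dbar\Phi,\Lambda_\omega]$ from Lemma \ref{koukan}. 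Substituting term by term and observing that for $(n,q)$-forms the curvature commutator $[\sqrt{-1}\Theta_{\tilde h}(F),\Lambda_\omega]u$ reduces to $\sqrt{-1}\Theta_{\tilde h}(F)\Lambda_\omega u$ (the other summand vanishes by bidegree) produces exactly the stated identity.

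The main obstacle is that the classical BKN identity is only immediately available on smooth compactly supported forms, whereas the statement requires it on the full domain ${\rm Dom}\,\dbar\cap{\rm Dom}\,\dbar^{*}_{h,\omega}$ on the possibly non-compact $X$. This is where all the hypotheses enter: the completeness of $\omega$ furnishes cutoff functions $\chi_j\nearrow 1$ with $|d\chi_j|_\omega\to 0$, and the lower bounds $\sqrt{-1}\Theta_h(F)\geq -C_1\omega$ and $\sqrt{-1}\partial\dbar\Phi\geq -C_2\omega$, together with the boundedness of $\Phi$ and of $\sup_X|d\Phi|_\omega$, give uniform $L^{2}$-control on all the error terms arising from commuting $\dbar$ and $\dbar^{*}_{h,\omega}$ past $\chi_j$, allowing a density argument to extend the identity. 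Boundedness of $\Phi$ further ensures that the $\tilde h$-norms and weighted $h$-norms are uniformly comparable, so the limit identity is precisely the one formulated in the proposition. This approximation is standard and is carried out in detail in \cite{Tak95}, whose argument I will follow.
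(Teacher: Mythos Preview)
Your proposal is correct and aligns with the paper's treatment: the paper does not give its own proof of this proposition but simply refers to \cite[Theorem 2.2]{Tak95}, noting that the case $\Phi\equiv 0$ is the untwisted identity and that the extension to ${\rm Dom}\,\dbar\cap{\rm Dom}\,\dbar^{*}_{h,\omega}$ uses the density lemma (Lemma~\ref{density}). Your outline---reducing the twisted identity to the untwisted one for the conformally modified metric $\tilde h=he^{-\Phi}$, then substituting $u=\eta v$ and invoking density---is a standard and correct derivation, and your identification of how each hypothesis (completeness of $\omega$, boundedness of $\Phi$ and $|d\Phi|_{\omega}$, lower curvature bounds) enters the approximation step is accurate.
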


We remark that the case $\Phi \equiv 0$ corresponds to the non-twisted version.   
In the proof of Proposition \ref{key} (also Proposition \ref{Nak}), 
we use the following lemma due to  Andreotti-Vesentini 
(see \cite{AV65}, \cite[LEMME 4.3]{Dem82}, \cite{Ves67}).

\begin{lemm}[Density lemma]\label{density}
Let $\omega$ be a complete positive $(1,1)$-form on $X$. 
\vspace{0.1cm}
\\
$\bullet$
There exists a sequence of cut-off functions $\{\theta_{k} \}_{k=1}^{\infty}$ on $X$ 
such that ${\rm{Supp}}\, \theta_{k} \Subset X$, $|d \theta _{k}|_{\omega} \leq 1$, and 
that $\theta_{k} \to 1$ as $k \to \infty$. 
\vspace{0.1cm}\\
$\bullet$
The set of smooth $F$-valued $(p,q)$-forms with compact support 
is dense in ${\rm{Dom}}\, \dbar^{*}_{h, \omega}$, ${\rm{Dom}}\, \dbar$, and  
${\rm{Dom}}\, \dbar^{*}_{h, \omega} \cap {\rm{Dom}}\, \dbar$ respectively  
with respect to the following graph norms\,$:$ 
\begin{align*}
\|u\|_{h,\omega}+\|\dbar^{*}_{h, \omega} u\|_{h,\omega}, \quad 
\|u\|_{h,\omega}+\|\dbar u\|_{h,\omega},  \quad  \text{and}  \quad    
\|u\|_{h,\omega}+\|\dbar^{*}_{h, \omega} u\|_{h,\omega} +  \|\dbar u\|_{h,\omega}. 
\end{align*}

\end{lemm}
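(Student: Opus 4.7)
This is the classical Hopf--Rinow exhaustion argument adapted to the Hermitian setting. My plan is to fix a base point $x_{0} \in X$ and set $\rho(x) := d_{\omega}(x, x_{0})$, the Riemannian distance induced by the complete metric associated with $\omega$. The real Hopf--Rinow theorem applied to the underlying Riemannian structure guarantees that each closed ball $\{\rho \leq R\}$ is compact, so $\rho$ is a proper exhaustion; the triangle inequality gives $|d\rho|_{\omega} \leq 1$ almost everywhere. Since $\rho$ is only Lipschitz, I would smooth it by standard convolution in local charts glued with a partition of unity to obtain a smooth proper exhaustion $\tilde{\rho}$ with $|d\tilde{\rho}|_{\omega} \leq C$ for some uniform constant $C$. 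Fixing a smooth $\chi \colon [0, \infty) \to [0,1]$ with $\chi \equiv 1$ near $0$, $\chi \equiv 0$ on $[2, \infty)$, and $|\chi'| \leq 2$, the family $\theta_{k}(x) := \chi(\tilde{\rho}(x)/k)$ has compact support, satisfies $|d\theta_{k}|_{\omega} \leq 2C/k \leq 1$ for $k$ sufficiently large, and converges pointwise to $1$.

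\textbf{Plan for the density statement.} I follow the classical two-step scheme: first truncate, then mollify. Given $u$ in any of the three domains, I form $\theta_{k} u$, which is compactly supported and lies in $\mathrm{Dom}\, \dbar \cap \mathrm{Dom}\, \dbar^{*}_{h, \omega}$. The Leibniz identities
\begin{align*}
\dbar(\theta_{k} u) &= \theta_{k} \dbar u + \dbar \theta_{k} \wedge u, \\
\dbar^{*}_{h, \omega}(\theta_{k} u) &= \theta_{k} \dbar^{*}_{h, \omega} u - \ast(\partial \theta_{k} \wedge \ast u),
\end{align*}
where the second follows from $\dbar^{*}_{h, \omega} = -\ast D'_{h, \omega} \ast$, combined with $|d\theta_{k}|_{\omega} \leq 1$ and the pointwise inequalities in Lemma \ref{mul}, show that both remainder terms are dominated by $C|u|_{\omega}$ on $\mathrm{supp}\,(d\theta_{k}) \subset \{\tilde{\rho} \geq k\}$. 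Since $u \in L^{2}$ and these supports exhaust the complement of larger and larger compact sets, absolute continuity of the integral forces the remainders to tend to $0$ in $L^{2}$, while $\theta_{k} \dbar u \to \dbar u$ and $\theta_{k} \dbar^{*}_{h,\omega} u \to \dbar^{*}_{h,\omega} u$ in $L^{2}$ by dominated convergence. This establishes density of compactly supported $L^{2}$ forms in each of the three graph norms simultaneously.

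\textbf{Second step and main obstacle.} It then remains to approximate each compactly supported form $\theta_{k} u$ by smooth compactly supported forms. Using a finite partition of unity subordinate to a coordinate cover of $\mathrm{supp}\, \theta_{k} u$, the problem reduces to the Euclidean setting, and I would invoke Friedrichs' classical mollifier lemma: for a standard approximation of the identity $\eta_{\varepsilon}$, the convolutions $\eta_{\varepsilon} \ast (\theta_{k} u)$ are smooth with compact support, converge to $\theta_{k} u$ in $L^{2}$, and Friedrichs' commutator estimate ensures that $\dbar(\eta_{\varepsilon} \ast (\theta_{k} u)) \to \dbar(\theta_{k} u)$ and $\dbar^{*}_{h, \omega}(\eta_{\varepsilon} \ast (\theta_{k} u)) \to \dbar^{*}_{h, \omega}(\theta_{k} u)$ in $L^{2}$ even though $\dbar^{*}_{h, \omega}$ has variable coefficients depending on $h$ and $\omega$. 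The main technical hurdle is precisely this commutator estimate for $\dbar^{*}_{h, \omega}$; however, it is purely local, entirely classical, and independent of the completeness hypothesis on $\omega$. A diagonal argument combining the truncation step with the mollification step then concludes the proof.
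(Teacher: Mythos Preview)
Your proposal is correct and follows the classical two-step scheme (Hopf--Rinow exhaustion plus Friedrichs mollification) that the paper itself invokes by citation rather than by proof: the paper does not give its own argument for Lemma~\ref{density} but simply refers to \cite[(3.2) Theorem, Chapter~VIII]{Dem-book} and \cite[Lemma~4.3]{Dem82}, whose content is precisely what you have outlined. One minor cosmetic point: the bound $|d\theta_k|_\omega \le 2C/k \le 1$ holds only for $k$ large, so strictly speaking you should re-index the sequence, but this is harmless.
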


\subsection{Adjoint operators on domains with boundaries}\label{Sec2-3}

Let $\Phi$ be a smooth function on $X$. 
In this subsection, we consider the level set $X_{c}$ defined by 
$X_{c}:= \{x\in X\,  | \, \Phi(x) < c \}$ 
such that 
$X_{c} \Subset X$ and 
$d\Phi \not= 0$ on the boundary $\partial X_{c}$ of $X_{c}$. 
The inner product on the boundary $\partial X_{c}$ 
is defined to be  
$$
\la u, v \ra_{\partial X_{c}, h,\omega}:=\int_{\partial X_{c}} 
\langle u, v\rangle_{h, \omega} \, dS_{\omega} 
$$
for $F$-valued $(p,q)$-forms $u$, $v$ 
that are smooth on a neighborhood of $\partial X_{c}$. 
Here $dS_{\omega}$ denotes the volume form on $\partial X_{c}$ 
defined by $dS_{\omega}:=\ast d\Phi / |d \Phi|^2_{\omega}$. 
Note that we have $dV_{\omega}=d \Phi \wedge dS_{\omega}$ by the definition. 
Then Stoke's theorem yields  
$$
\lla \dbar u, v\rra_{X_{c}, h, \omega}=
\lla u, \dbar^{*}_{h, \omega} v\rra_{X_{c}, h, \omega} + 
\la u, (\dbar \Phi)^{*} v\ra_{\partial X_{c}, h, \omega}
$$
for a smooth $F$-valued $(p,q-1)$-form $u$ and 
a $(p,q)$-form $v$ on $X$
(see \cite[(1.3.2) Proposition]{FK}).

For our purposes, we need to generalize the above formula to 
a Zariski open set $Y \subset X$ equipped with  
a complete positive $(1,1)$-form $\ome$.  
In the following proposition, 
we consider the Hodge star operator $\ast:=\ast_{\ome}$, 
the volume form $dS_{\ome}:=\ast_{\ome} d\Phi / |d \Phi|^2_{\ome}$, 
the inner product 
$\la  \dbar u, v\ra_{\partial X_{c}, h, \ome}:=\int_{\partial X_{c} \cap Y} 
\langle u, v\rangle_{h, \ome} \, dS_{\ome}$, and so on  
with respect to $\ome$ (not $\omega$).

\begin{prop}\label{key}
Let $\ome$ be a complete positive $(1,1)$-form 
on a Zariski open set $Y$ of a complex manifold $X$, 
and $u$ $($resp. $v$$)$ be a smooth $F$-valued $(p,q-1)$-form $($resp. $(p,q)$-form$)$ on $Y$ 
with the finite $L^{2}$-norms $\|u\|_{h,\ome}$, $\|v\|_{h,\ome}$, 
$\|\dbar u\|_{h,\ome}$, $\|\dbar^{*}_{h,\ome} v\|_{h,\ome}< \infty$. 
Consider a smooth function $\Phi$ on $X$ and 
the level set $X_{d}$ defined by 
$X_{d}:= \{x\in X\,  | \, \Phi(x) < d \}$ 
for $d \in \mathbb{R}$. 
If $X_{c} \Subset X$ and 
$d\Phi \not= 0$ on the boundary $\partial X_{c}$ of $X_{c}$ for some $c \in \mathbb{R}$, 
then there exists a sufficiently small number $a>0$ with the following properties\,$:$
\begin{itemize}
 \item[$\bullet$] $d\Phi \not = 0$ on $\partial X_{d}$ 
for every $d\in (c-a, c+a)$. 
\item[$\bullet$] $\lla \dbar u, v\rra_{X_{d}, h, \ome}=
\lla u, \dbar^{*}_{h,\ome} v\rra_{X_{d},h, \ome} + 
\la u, (\dbar \Phi)^{*} v\ra_{\partial X_{d}, h, \ome} $\, 
for almost all $d\in (c-a, c+a)$.
\end{itemize}
\end{prop}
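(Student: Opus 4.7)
The plan is to handle the two bullets separately. For the first bullet, since $\Phi$ is smooth and $d\Phi$ is nonvanishing on the compact set $\partial X_{c}$, the continuous function $|d\Phi|_{\omega}$ is strictly positive on an open neighborhood $U$ of $\partial X_{c}$ in $X$. Choosing $a>0$ small enough so that $\overline{X_{c+a}} \setminus X_{c-a} \subset U$, which is possible by continuity of $\Phi$ and compactness of $\partial X_{c}$, forces $d\Phi \neq 0$ on every $\partial X_{d}$ with $d \in (c-a, c+a)$.

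For the second bullet, the strategy is to approximate $u$ by smooth forms compactly supported in $Y$, apply the classical formula \cite[(1.3.2) Proposition]{FK}, and pass to the limit. The completeness of $\ome$ on $Y$, combined with the density lemma (Lemma \ref{density}) applied to the complete manifold $(Y, \ome)$, yields a sequence $\{u_{k}\}_{k=1}^{\infty}$ of smooth $F$-valued $(p, q-1)$-forms with compact support in $Y$ such that $u_{k} \to u$ and $\dbar u_{k} \to \dbar u$ in $L^{2}(Y)_{h, \ome}$. For each $u_{k}$, extending by zero to $X$ gives a smooth form, and since $\mathrm{supp}\, u_{k}$ stays inside $Y$ where $v$ and $\ome$ are smooth, the classical identity applied on $X_{d}$ yields
\begin{align*}
\lla \dbar u_{k}, v\rra_{X_{d}, h, \ome}
= \lla u_{k}, \dbar^{*}_{h,\ome} v\rra_{X_{d}, h, \ome}
+ \la u_{k}, (\dbar \Phi)^{*} v\ra_{\partial X_{d}, h, \ome}
\end{align*}
for every $d \in (c-a, c+a)$. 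The two bulk terms converge uniformly in $d$ by Cauchy--Schwarz and the $L^{2}$-convergence of $u_{k}$ and $\dbar u_{k}$.

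The main obstacle is the convergence of the boundary integral, since $L^{2}$-convergence in the bulk does not force $L^{2}$-convergence of traces on any fixed level hypersurface. To overcome this, I would invoke the coarea identity $dV_{\ome} = d\Phi \wedge dS_{\ome}$: applied to $f = |u_{k} - u|^{2}_{h, \ome}$ on the slab $\{c-a < \Phi < c+a\} \cap Y$ it gives
\begin{align*}
\int_{c-a}^{c+a} \|u_{k} - u\|^{2}_{L^{2}(\partial X_{d}, h, \ome)}\, dd
= \int_{\{c-a < \Phi < c+a\} \cap Y} |u_{k} - u|^{2}_{h, \ome}\, dV_{\ome}
\longrightarrow 0,
\end{align*}
so $d \mapsto \|u_{k} - u\|^{2}_{L^{2}(\partial X_{d}, h, \ome)}$ tends to zero in $L^{1}(c-a, c+a)$. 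After extracting a subsequence, it converges to zero for a.e. $d$, and the same coarea applied to $|(\dbar \Phi)^{*} v|^{2}_{h, \ome}$ together with Lemma \ref{mul} shows that $(\dbar \Phi)^{*} v$ has finite $L^{2}$-trace on $\partial X_{d}$ for a.e. $d$. Cauchy--Schwarz on $\partial X_{d}$ then yields convergence of the boundary term on the same almost-every set, establishing the identity for a.e. $d \in (c-a, c+a)$. The ``almost all $d$'' qualifier in the statement is the direct reflection of this coarea trade-off, which is the main new difficulty absent from the compact case of \cite{FK}.
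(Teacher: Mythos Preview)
Your proof is correct and follows essentially the same route as the paper's: approximate by smooth forms compactly supported in $Y$ using the completeness of $\ome$, apply the classical Stokes-type identity to the approximants, and then use the coarea relation $dV_{\ome} = d\Phi \wedge dS_{\ome}$ together with Fubini to control the boundary term for almost every $d$. The only cosmetic difference is that the paper multiplies $u$ by Hopf--Rinow cut-offs $\theta_{k}$ (so that $|\theta_{k}u| \leq |u|$ and dominated convergence applies directly on each $\partial X_{d}$ where $\int_{\partial X_{d}} |u|_{\ome}|v|_{\ome}\, dS_{\ome} < \infty$), whereas you use density-lemma approximants and pass to a subsequence via $L^{1}$-convergence of the boundary norms; both devices are standard and yield the same conclusion.
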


\begin{rem}\label{key-rem}
(1) In the case of $Y=X$, the above equality 
in the second property holds for arbitrary 
$d \in (c-a, c+a)$  
(see \cite[(1.3.2) Proposition]{FK}). 
Proposition \ref{key} can be seen as a generalization of this result 
to Zariski open sets. 
\vspace{0.1cm}\\ 
(2) By the proof, we see that $a$ depends only on $\Phi$, 
but does not depend on $u$, $v$, and $\ome$. 
\vspace{0.1cm}\\ 
(3) In the proof of the main results, 
we apply Proposition \ref{key}  
to a family of countably many differential forms. 
The subset $I$ defined by 
\begin{equation*}
I :=\{d \in (c-a, c+a) \,  |  \, 
\text{The above equality does \lq \lq not" hold for $d$. }\}
\end{equation*}
depends on $u$, $v$, $\ome$. 
The Lebesgue measure of $I$ is zero by the proposition,   
and a countable union of subsets of zero Lebesgue measure 
also has Lebesgue measure zero. 
Therefore, for given countably many differential forms,  
there exists a common $I$ of zero Lebesgue measure 
such that the above equality in the second property holds for $d \not \in I$. 
\end{rem}

\begin{proof}
For simplicity, we consider only the case 
where $(F, h)$ is trivial. 
For a sufficiently small $a>0$, 
we have $d\Phi \not = 0$ on $\partial X_{d}$ 
for every $d \in (c-a, c+a)$  
by the assumption $d \Phi \not = 0$ on ${\partial X_{c}}$. 
For the proof of the second property,  
we take a sequence of cut-off functions 
$\{\theta_{k} \}_{k=1}^{\infty}$ on $Y$ 
such that ${\rm{Supp}}\, \theta_{k} \Subset Y$ and 
$\theta_{k} \to 1$ as $k \to \infty$. 
Since $\ome$ is complete on $Y$, 
we can add the property $|d \theta _{k}|_{\ome} \leq 1$ 
(see Lemma \ref{density}). 
Then Stoke's theorem implies 
\begin{align}\label{stokes}
\lla \dbar  (\theta _{k} u),v\rra_{X_{d}, \ome}=
\lla  \theta _{k}u, \dbar^{*}_{\ome} v\rra_{X_{d}, \ome} + 
\la  \theta _{k}u, (\dbar \Phi)^{*} v\ra_{\partial X_{d}, \ome}.  
\end{align}
We remark that all integrals and adjoint operators are 
computed with respect to $\ome$ (not $\omega$). 
There is no difficulty in proving equality (\ref{stokes}) 
since all integrands that appear in equality (\ref{stokes})  are zero 
on a neighborhood of the subvariety $X\setminus Y$. 
Indeed, we have    
$$
\lla \dbar  (\theta _{k} u),v\rra_{X_{d}, \ome}=
\lla  \theta _{k}u, \dbar^{*}_{\ome} v\rra_{X_{d}, \ome} + 
\int_{\partial X_{d}} \theta _{k}u \wedge \overline{\ast v}  
$$
by $(d\theta _{k}u) \wedge \overline{\ast v}=
-\theta _{k}u \wedge (\overline{\ast \ast d \ast v})+
d(\theta _{k}u \wedge \overline{\ast v})$ and Stoke's theorem. 
Further we have   
\begin{align*}
\big\{ u \wedge \overline{\ast v} \big\}  \wedge d \Phi
= - u \wedge \overline{\ast \ast d \Phi \wedge \ast v}
= -\langle u, (d\Phi)^{\ast} v \rangle\, dV_{\ome}
= \big\{  \langle u, (d\Phi)^{\ast} v \rangle\, dS_{\ome} \big\}\wedge d\Phi.  
\end{align*}
Moreover it follows that 
$d \Phi$ is non-zero in the normal direction of $\partial X_{d}$
from $d \Phi |_{\partial X_{d}} = 0$ and $d \Phi \not=0$ on $\partial X_{d}$. 
Therefore we can conclude that 
$\int_{\partial X_{d}} u \wedge \overline{\ast v}=
\int_{\partial X_{d}} \langle u, (d\Phi)^{\ast} v \rangle\, dS_{\ome}$, 
and thus we obtain equality (\ref{stokes}).

Now we observe the limit of each term. 
By the bounded Lebesgue convergence theorem, 
$\theta _{k}u$ converges to $u$ as $k \to \infty$
in the $L^2$-topology with respect to $\ome$. 
Here we used the assumption $\|u\|_{\ome}<\infty$. 
On the other hand, from 
$\dbar  (\theta _{k} u)=\dbar \theta _{k} \wedge u +\theta _{k} \dbar u$ and $|d \theta _{k}|_{\ome} \leq 1$ and $d \theta _{k} \to 0$ 
in the point-wise sense,   
we can easily see that $\dbar \theta _{k} \wedge u \to 0$ and 
$\theta _{k} \dbar u \to \dbar u$ 
in the $L^2$-topology, 
by using the bounded Lebesgue convergence theorem again. 
Here we used the assumption $ \|\dbar u\|_{\ome}<\infty$. 
From the above argument,  the left hand side 
(resp. the first term of the right hand side) in equality (\ref{stokes})  
converges to $\lla \dbar u,v\rra_{X_{d}, \ome}$
(resp. $\lla u, \dbar^{*}_{\ome} v\rra_{X_{d}, \ome}$) 
for every $d$. 

It remains to show that 
the second term of the right hand side in equality (\ref{stokes})  converges to 
$\la u, (\dbar \Phi)^{*} v\ra_{\partial X_{d}, \ome}$ for almost all $d$. 
By Cauchy-Schwarz's inequality and Lemma \ref{mul}, 
the integrand of the second term can be estimated as follows$:$
\begin{align*}
|\langle \theta _{k}u, (\dbar \Phi)^{*} v \rangle_{\ome}|
\leq | \theta _{k}u|_{\ome}\,  |(\dbar \Phi)^{*} v|_{\ome}
\leq  C\sup_{\partial X_{d}} \big(|\dbar \Phi|_{\ome}\big)\,   
| u|_{\ome}\, |v|_{\ome}. 
\end{align*}
We may assume $\ome \geq \overline{\omega}$ 
for some positive $(1,1)$ form $\overline{\omega}$ on $X$ 
since $\ome$ is complete on $Y$. 
Then the inequality $|\dbar \Phi|_{\ome} 
\leq |\dbar \Phi|_{\overline{\omega}}$ holds by Lemma \ref{mul}. 
In particular, the sup-norm $\sup_{\partial X_{d}}|\dbar \Phi|_{\ome}$ is finite 
since the function $\Phi$ is smooth on $X$ (not $Y$). 
If the integral of 
$ | u|_{\ome}\, |v|_{\ome}$ 
on $\partial X_{d}$ is finite,  
the integral 
$\la  \theta _{k}u, (\dbar \Phi)^{*} v\ra_{\partial X_{d}, \ome}$ 
converges to $\la u, (\dbar \Phi)^{*} v\ra_{\partial X_{d}, \ome}$
by the bounded Lebesgue convergence theorem. 
In order to prove that the integral of 
$ | u|_{\ome}\, |v|_{\ome}$ on $\partial X_{d}$ 
is finite  for almost all $d$, 
we apply Fubini's theorem and H\"older's inequality. 
Then we obtain  
\begin{align*}
\int_{d \in (c-a, c+a)}
\Big( \int_{\partial X_{d}} | u|_{\ome} |v|_{\ome} 
\, dS_{\ome} \Big) d\Phi 
&=
\int_{\{c-a<\Phi<c+a\}}\hspace{-0.9cm}
 | u|_{\ome} |v|_{\ome}\,  dV_{\ome}\\
&\leq \Big( \int_{\{c-a<\Phi<c+a\}}\hspace{-0.9cm}
 | u|^2_{\ome}\,   dV_{\ome} \Big)^{1/2} 
\Big( \int_{\{c-a<\Phi<c+a\}}\hspace{-0.9cm}
 | v|^2_{\ome}\,   dV_{\ome} \Big)^{1/2}.
\end{align*}
Here we used the equality $ d\Phi \wedge dS_{\ome} = dV_{\ome}$. 
Since the right hand side is finite thanks 
to the assumptions $\|u\|_{\ome}, \|v\|_{\ome} < \infty$, the integral 
$\int_{\partial X_{d}} | u|_{\ome} |v|_{\ome} \, dS_{\ome}$ 
should be finite for almost all $d$ in $(c-a, c+a)$.
(Otherwise, the left hand side becomes infinity.)  
This completes the proof.
\end{proof}

\subsection{Singular hermitian metrics and multiplier ideal sheaves}
\label{Sec2-4}

We recall the definition 
of singular hermitian metrics and curvatures. 
Fix a smooth (hermitian) metric $g$ on $F$. 

\begin{defi}\label{s-met}
{(Singular hermitian metrics and curvatures).}
(1) For an $L^{1}_{\rm{loc}}$-function $\varphi$ on a complex manifold $X$, 
the hermitian metric $h$ defined by 
\begin{equation*}
h:= g e^{-2\varphi} 
\end{equation*}
is called a \textit{singular hermitian metric} on $F$. 
Further $\varphi$ is called the \textit{weight} of $F$ 
with respect to the fixed smooth metric $g$. 
\vspace{0.2cm} \\ 
(2) The {\textit{curvature}} 
$\sqrt{-1} \Theta_{h}(F)$ 
associated to $h$ is defined by   
\begin{equation*}
\sqrt{-1} \Theta_{h}(F) := \sqrt{-1} \Theta_{g}(F)+ 2\deldel \varphi, 
\end{equation*}
where $  \sqrt{-1} \Theta_{g}(F)$ is 
the Chern curvature of $g$. 
\end{defi}

For simplicity, we call the singular hermitian metric as the singular metric.
In this paper, we consider only a singular metric $h$ such that 
$\sqrt{-1} \Theta_{h}(F) \geq \gamma$ holds 
for some smooth $(1,1)$-form $\gamma$ on $X$.
Then the weight function $\varphi$ becomes 
a quasi-plurisubharmonic (quasi-psh for short) function.  
In particular, the function $\varphi$ is upper semi-continuous. 
Moreover, then, the multiplier ideal sheaf defined below 
is coherent by a theorem of Nadel.

\begin{defi}[Multiplier ideal sheaves]\label{multiplier}
Let $h$ be a singular metric on $F$ such that 
$\sqrt{-1} \Theta_{h}(F) \geq \gamma$ for some smooth $(1,1)$-form $\gamma$ on $X$. 
The ideal sheaf $\I{h}$ defined 
to be  
\begin{equation*}
\I{h}(B):= \I{\varphi}(B):= \{f \in \mathcal{O}_{X}(B)\ \ |\ \  
|f|\, e^{-\varphi} \in L^{2}_{\rm{loc}}(B) \}
\end{equation*}
for every open set $B \subset X$, is called 
the \textit{multiplier ideal sheaf} associated to $h$. 
\end{defi}

In Step \ref{S1}, 
we approximate a given singular metric by singular metrics 
that are smooth on a Zariski open set. 
The following theorem is a reformulation of 
the equisingular approximation, 
which is proved by a slight revision of 
the proof of \cite[Theorem 2.3]{DPS01}. 

\begin{theo}[{\cite[Theorem 2.3]{DPS01}}] \label{equi}
Let $\omega$ be a positive $(1,1)$-form on a complex manifold $X$ 
and $(F, h)$ be a singular hermitian line bundle on $X$. 
Assume that $\sqrt{-1}\Theta_{h}(F) \geq \gamma$ holds 
for a smooth $(1,1)$-form $\gamma $ on $X$. 
Then, for a relatively compact set $K \Subset X$, 
there exist singular metrics $\{h_{\e} \}_{1\gg \e>0}$ 
on $F|_{K}$ with the following properties\,$:$
\begin{itemize}
\item[(a)] $h_{\e}$ is smooth on $K \setminus Z_{\e}$, 
where $Z_{\e}$ is a proper subvariety of $K$.
\item[(b)]$h_{\e''} \leq h_{\e'} \leq h$ holds on $K$ 
for any $0< \e' < \e'' $.
\item[(c)]$\I{h}= \I{h_{\e}}$ on $K$.
\item[(d)]$\sqrt{-1} \Theta_{h_{\e}}(F) \geq \gamma -\e \omega$ on $K$. 
\end{itemize}
\end{theo}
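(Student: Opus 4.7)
The plan is to adapt the Bergman-kernel construction of Demailly--Peternell--Schneider; the essential analytic inputs are the Ohsawa--Takegoshi $L^{2}$-extension theorem, for the lower bound comparing the approximants to $\varphi$, and the subadditivity of multiplier ideals, for the equisingularity~(c).

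\emph{Setup and construction.} Write $h = g e^{-2\varphi}$ for a quasi-psh weight $\varphi$, and fix a slightly larger relatively compact open neighborhood $U \supset K$. After adding a smooth function (whose Hessian is absorbed into the $\e\omega$ slack in~(d)), we may assume $\varphi$ is plurisubharmonic on $U$. For each integer $m \geq 1$, let $\mathcal{H}_m$ denote the Hilbert space of holomorphic sections $\sigma$ of $F^m|_U$ with $\int_U |\sigma|_{g^m}^2\, e^{-2m\varphi}\, dV_\omega < \infty$, choose an orthonormal basis $\{\sigma_{m,j}\}_j$, and set
\[
\varphi_m := \frac{1}{2m}\log \sum_j |\sigma_{m,j}|_{g^m}^2,\qquad h_m := g\, e^{-2\varphi_m}.
\]
Then $\varphi_m$ has analytic singularities along the common zero locus $Z_m$ of the $\sigma_{m,j}$, so $h_m$ is smooth on $K\setminus Z_m$, giving~(a). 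A direct computation using that each $\sigma_{m,j}$ is holomorphic yields $\sqrt{-1}\Theta_{h_m}(F)\geq \gamma - (C/m)\omega$, which gives~(d) after reparametrizing $\e = \e(m)$.

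\emph{Two-sided comparison.} The upper bound $\varphi_m \leq \varphi + o(1)$ on $K$ follows from the sub-mean-value inequality applied to the holomorphic functions $|\sigma_{m,j}|^2$ together with the upper semi-continuity of $\varphi$; this gives $h_m \leq h$, i.e., half of~(b). The crucial lower bound comes from Ohsawa--Takegoshi: for every $x\in K$, extending the germ of the constant section $1$ to a global $\sigma \in \mathcal{H}_m$ with $L^2$-norm controlled by $e^{-2m\varphi(x)}$, and evaluating the Bergman kernel at $x$, yields
\[
\sum_j |\sigma_{m,j}(x)|_{g^m}^2 \;\geq\; c\, e^{2m\varphi(x)},
\]
so that $\varphi_m \geq \varphi - C/m$ uniformly on $K$.

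\emph{Multiplier ideals and monotonicity.} From $\varphi_m \geq \varphi - C/m$ one immediately obtains $\I{h} \subset \I{h_m}$. The reverse inclusion is the hard direction: each $\sigma_{m,j}$ lies in $\I{m\varphi}$ by definition of $\mathcal{H}_m$, so $e^{2m\varphi_m}$ is locally bounded above by a sum of squares of generators of $\I{m\varphi}$; combining this with the subadditivity $\I{m\varphi}\subset \I{\varphi}^m$ of Demailly--Ein--Lazarsfeld and a Skoda-type integrability argument yields $\I{h_m}\subset \I{h}$ for $m$ sufficiently large, which is~(c). For the monotonicity in~(b), I would index by $\e_m = 1/m$ and recursively replace $\varphi_{m}$ by a regularized maximum with $\varphi_{m-1}$ shifted by a small positive constant $\alpha_m \to 0$; regularized maxima preserve plurisubharmonicity, analytic singularities, and the multiplier ideal, and the shifts can be chosen so that $\{\varphi_{\e_m}\}$ is decreasing and still dominates $\varphi$. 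The main obstacle is the multiplier-ideal equality~(c): the Ohsawa--Takegoshi estimate alone yields only one inclusion, and the reverse rests on the subadditivity theorem (equivalently, the strong openness theorem of Guan--Zhou cited later in the paper).
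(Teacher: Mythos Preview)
The paper's proof is a one-paragraph reduction: it observes that \cite[Theorem~2.3]{DPS01} is stated for compact $X$, that compactness enters only in choosing a \emph{finite} cover by coordinate balls on which the uniform estimates \cite[(2.1)--(2.6)]{DPS01} hold, and that such a finite cover exists over any relatively compact $K\Subset X$. Everything else is delegated to \cite{DPS01}. Your proposal instead reruns the full Bergman-kernel construction from scratch.

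As written, your reconstruction has a genuine gap. You form a single global Hilbert space $\mathcal{H}_m$ of $L^2$ sections over a neighborhood $U\supset K$ and invoke Ohsawa--Takegoshi to extend the constant germ at each $x\in K$; but Ohsawa--Takegoshi requires (weak) pseudoconvexity, and an arbitrary relatively compact open set in a complex manifold need not be pseudoconvex. This is exactly why DPS01 work in local coordinate balls (which are Stein) and then glue, and it is precisely this local-to-global step that the paper's proof is invoking when it says ``take a finite open cover of $K$ by open balls''. There are also two smaller slips. First, $h_m\le h$ is equivalent to $\varphi_m\ge\varphi$, so it is the Ohsawa--Takegoshi \emph{lower} bound $\varphi_m\ge\varphi-C/m$ (after absorbing the constant) that delivers half of~(b); the sub-mean-value estimate only gives $\varphi_m(x)\le\sup_{B(x,r)}\varphi+\tfrac{n}{m}\log\tfrac{C}{r}$, which yields pointwise convergence but not $\varphi_m\le\varphi$. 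Second, subadditivity of multiplier ideals is not the same as, nor equivalent to, the Guan--Zhou strong openness theorem; \cite{DPS01} predates the latter by over a decade, and their argument for the hard inclusion in~(c) does not use it.
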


\begin{rem}\label{equi-rem}
For a complete positive $(1,1)$-form $\omega_{K}$ on $K$, 
we may assume that property (d) holds for $\omega_{K}$, 
that it, $\sqrt{-1} \Theta_{h_{\e}}(F) \geq \gamma -\e \omega_{K}$ holds on $K$. 
Indeed we have $\omega_{K} \geq a \omega$ for some small number $a>0$ 
since $\omega_{K}$ is complete, $\omega$ is defined on $X$, and 
$K$ is relatively compact in $X$. 
By applying Theorem \ref{equi} to $a \omega$, 
we can easily see that $\sqrt{-1} \Theta_{h_{\e}}(F) \geq \gamma -\e a \omega 
\geq \gamma -\e \omega_{K}$. 
\end{rem}

\begin{proof}
In \cite{DPS01}, 
the theorem has been proved 
in the case where $K=X$ and $X$ is compact.  
In their proof, 
we essentially use the assumption that $X$ is compact 
only when we take a finite cover of $X$ 
with inequality \cite[(2.1)]{DPS01}. 
Since $K$ is a relatively compact in $X$, 
for a given $\e>0$, we can take a finite open cover of $K$ 
by open balls $B_{i}:=\{|z^{(i)}|<r_{i}\}$ with a local coordinate $z^{(i)}$
satisfying inequality \cite[(2.1)]{DPS01}. 
Then we can easily see that 
the same argument works 
by suitably replacing $X$ in \cite{DPS01} with $K$.
Indeed, uniform estimates \cite[(2.4), (2.5), (2.6)]{DPS01} 
can be checked in the same way
since it is sufficient to consider only $B_{i}$ in this step. 
Further the function $\psi_{\e, \nu}$ can be defined 
on a neighborhood of $K$ by the fourth line in \cite[page 700]{DPS01}. 
Therefore we can repeat the same argument.  
\end{proof}

\subsection{Fr\'echet spaces}
\label{Sec2-5}
In this subsection, 
we summarize fundamental facts on 
Hilbert spaces and Fr\'echet spaces. 
We  give a proof of them for the reader's convenience.

\begin{lemm}\label{weak-clo}
Let $L$ be a closed subspace in a Hilbert space $\mathcal{H}$. 
Then $L$ is closed with respect to the weak topology of $\mathcal{H}$, 
that is, if a sequence $\{w_{k}\}_{k=1}^{\infty}$ in $L$ 
weakly converges to $w$, then the weak limit $w$ belongs to $L$. 
\end{lemm}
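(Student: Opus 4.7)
The plan is to exploit the orthogonal decomposition $\mathcal{H}=L\oplus L^{\perp}$, which is available because $L$ is a closed subspace of a Hilbert space. Decompose the weak limit as $w=w_{L}+w_{\perp}$ with $w_{L}\in L$ and $w_{\perp}\in L^{\perp}$; the goal is to show $w_{\perp}=0$.

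First, I would note that for any fixed element $v\in\mathcal{H}$, the linear functional $x\mapsto\langle x,v\rangle$ is norm-continuous, hence it is by definition weakly continuous. Applying this to $v:=w_{\perp}$, the hypothesis $w_{k}\rightharpoonup w$ yields $\langle w_{k},w_{\perp}\rangle\to\langle w,w_{\perp}\rangle$. Since every $w_{k}$ lies in $L$ and $w_{\perp}\in L^{\perp}$, the left-hand side is identically zero, so $\langle w,w_{\perp}\rangle=0$.

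On the other hand, using the orthogonal decomposition of $w$, we compute $\langle w,w_{\perp}\rangle=\langle w_{L}+w_{\perp},w_{\perp}\rangle=\|w_{\perp}\|^{2}$, where the first summand vanishes by orthogonality. Combining the two evaluations gives $\|w_{\perp}\|^{2}=0$, hence $w=w_{L}\in L$.

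The argument is completely routine; there is no genuine obstacle, since the weak continuity of inner-product functionals together with the Hilbert space orthogonal projection does all the work. The only subtlety worth flagging is that the closedness of $L$ is essential to guarantee the existence of the orthogonal complement and the decomposition $\mathcal{H}=L\oplus L^{\perp}$; without it, one would only get $w\in\overline{L}$ rather than $w\in L$.
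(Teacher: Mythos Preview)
Your proof is correct and follows essentially the same approach as the paper: both use the orthogonal decomposition $\mathcal{H}=L\oplus L^{\perp}$ and test the weak convergence against elements of $L^{\perp}$ to conclude $w\in L$. The only cosmetic difference is that the paper tests against every $v\in L^{\perp}$ to obtain $w\in (L^{\perp})^{\perp}=L$ directly, whereas you test against the single element $w_{\perp}$ to get $\|w_{\perp}\|^{2}=0$; the underlying idea is identical.
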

\begin{proof}
By the orthogonal decomposition, there exists a closed subspace $M$ 
satisfying $L=M^{\perp}$. 
Then we obtain that 
$
0=\lla w_{k}, v \rra_{\mathcal{H}}
\to \lla w, v \rra_{\mathcal{H}}
$
as $k \to \infty$ for every $v \in M$. 
Therefore we have $w \in M^{\perp}=L$. 
\end{proof}

\begin{lemm}\label{weak-hil}
Let $\varphi: \mathcal{H}_{1} \to \mathcal{H}_{2}$ be 
a bounded operator $($continuous linear map$)$ between Hilbert spaces
$\mathcal{H}_{1}$ and $\mathcal{H}_{2}$. 
If $\{w_{k}\}_{k=1}^{\infty}$ weakly converges to $w$ in $\mathcal{H}_{1}$, 
then $\{\varphi(w_{k})\}_{k=1}^{\infty}$ weakly converges to 
$\varphi(w)$ in $\mathcal{H}_{2}$. 
\end{lemm}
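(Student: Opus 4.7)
The plan is to reduce weak convergence in $\mathcal{H}_2$ to weak convergence in $\mathcal{H}_1$ via the Hilbert space adjoint of $\varphi$. By definition, proving that $\{\varphi(w_k)\}_{k=1}^{\infty}$ converges weakly to $\varphi(w)$ in $\mathcal{H}_2$ amounts to showing that
\begin{equation*}
\lla \varphi(w_k), v \rra_{\mathcal{H}_2} \longrightarrow \lla \varphi(w), v \rra_{\mathcal{H}_2} \quad \text{as } k \to \infty
\end{equation*}
for every $v \in \mathcal{H}_2$. So the natural strategy is to fix such a $v$ and rewrite the pairing on the $\mathcal{H}_1$-side.

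First I would invoke the existence of the Hilbert adjoint $\varphi^{*}\colon \mathcal{H}_2 \to \mathcal{H}_1$, which is itself a bounded operator because $\varphi$ is. By the defining property of the adjoint,
\begin{equation*}
\lla \varphi(w_k), v \rra_{\mathcal{H}_2} = \lla w_k, \varphi^{*}(v) \rra_{\mathcal{H}_1}
\end{equation*}
for every $k$, and similarly with $w_k$ replaced by $w$. Applying the hypothesis that $w_k \to w$ weakly in $\mathcal{H}_1$ to the particular element $\varphi^{*}(v) \in \mathcal{H}_1$ yields
\begin{equation*}
\lla w_k, \varphi^{*}(v) \rra_{\mathcal{H}_1} \longrightarrow \lla w, \varphi^{*}(v) \rra_{\mathcal{H}_1} = \lla \varphi(w), v \rra_{\mathcal{H}_2},
\end{equation*}
which gives exactly the desired convergence.

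There is essentially no obstacle here; the only ingredient beyond the definition of weak convergence is the existence and boundedness of the adjoint $\varphi^{*}$, which is a standard consequence of the Riesz representation theorem applied to the continuous linear functional $\mathcal{H}_1 \ni u \mapsto \lla \varphi(u), v \rra_{\mathcal{H}_2}$ for each fixed $v \in \mathcal{H}_2$. Since $v \in \mathcal{H}_2$ was arbitrary, the conclusion follows.
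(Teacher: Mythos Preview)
Your proof is correct and follows exactly the same approach as the paper: both arguments pass to the Hilbert adjoint $\varphi^{*}$ and use the defining identity $\lla \varphi(w_k), v \rra_{\mathcal{H}_2} = \lla w_k, \varphi^{*}(v) \rra_{\mathcal{H}_1}$ to reduce to the assumed weak convergence in $\mathcal{H}_1$.
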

\begin{proof}
By taking the adjoint operator $\varphi^*$, 
we obtain
$$
\lla \varphi(w_{k}), v \rra_{\mathcal{H}_{2}}
= \lla w_{k}, \varphi^*(v) \rra_{\mathcal{H}_{1}}
\to \lla w, \varphi^*(v) \rra_{\mathcal{H}_{1}}
=\lla \varphi(w), v \rra_{\mathcal{H}_{2}} 
$$
for every $v \in \mathcal{H}_{2}$. 
This completes the proof. 
\end{proof}

We apply the following lemma for 
the compact operator in Proposition \ref{DWiso}. 

\begin{lemm}\label{comp-hil}
Let $\varphi: \mathcal{H} \to \mathcal{F}$ be 
a compact operator from a Hilbert space 
$\mathcal{H}$ to a Fr${\acute{e}}$chet space $\mathcal{F}$. 
Assume $\mathcal{F}$ is the inverse limit of Hilbert spaces. 
If $\{w_{k}\}_{k=1}^{\infty}$ weakly converges to $w$ in $\mathcal{H}$, 
then $\{\varphi(w_{k})\}_{k=1}^{\infty}$ 
converges to $\varphi(w)$ in $\mathcal{F}$. 
\end{lemm}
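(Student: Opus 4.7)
The plan is to reduce the Hilbert-to-Fr\'echet statement to the classical Hilbert-to-Hilbert version by exploiting the inverse limit presentation of $\mathcal{F}$. Write $\mathcal{F} = \varprojlim \mathcal{F}_n$ with continuous linear projections $p_n \colon \mathcal{F} \to \mathcal{F}_n$ to Hilbert spaces $\mathcal{F}_n$; by definition of the inverse limit topology, a sequence converges in $\mathcal{F}$ if and only if its image under every $p_n$ converges in $\mathcal{F}_n$. Thus it is enough to show that, for each fixed $n$, the sequence $p_n \circ \varphi(w_k)$ converges to $p_n \circ \varphi(w)$ in the Hilbert space $\mathcal{F}_n$.

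First I would observe that the composition $\Phi_n := p_n \circ \varphi \colon \mathcal{H} \to \mathcal{F}_n$ is itself a compact operator between Hilbert spaces, because it is the composition of the compact operator $\varphi$ with the continuous linear map $p_n$, hence carries bounded sets in $\mathcal{H}$ to relatively compact sets in $\mathcal{F}_n$. Next I would recall that the weakly convergent sequence $\{w_k\}$ is norm-bounded in $\mathcal{H}$ (by the uniform boundedness principle applied to the bounded family of linear functionals $\langle \cdot, w_k\rangle$), so $\{\Phi_n(w_k)\}$ lies in a relatively compact subset of $\mathcal{F}_n$.

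Now I would run the standard subsequence argument. Pick any subsequence of $\{\Phi_n(w_k)\}$; by relative compactness it admits a further subsequence converging in norm in $\mathcal{F}_n$ to some limit $v$. On the other hand, Lemma \ref{weak-hil} applied to the bounded operator $\Phi_n$ implies that the full sequence $\Phi_n(w_k)$ converges weakly to $\Phi_n(w)$ in $\mathcal{F}_n$; since strong convergence implies weak convergence and weak limits are unique, we must have $v = \Phi_n(w)$. As every subsequence has a further subsequence converging to the same limit $\Phi_n(w)$, the whole sequence $\Phi_n(w_k)$ converges to $\Phi_n(w)$ in $\mathcal{F}_n$. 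Since this holds for every $n$, we conclude $\varphi(w_k) \to \varphi(w)$ in $\mathcal{F}$.

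The argument is entirely formal once the inverse-limit description is invoked, so there is no real obstacle; the only point that requires a bit of care is recording that boundedness of weakly convergent sequences and the subsequence principle both transfer cleanly through the continuous projections $p_n$, but these are standard facts and are packaged by the preceding lemmas on Hilbert spaces.
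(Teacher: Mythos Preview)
Your proof is correct and follows essentially the same route as the paper: reduce via the inverse limit to the compositions $p_n\circ\varphi$, observe these are compact operators between Hilbert spaces, and conclude that they send weakly convergent sequences to norm-convergent ones. The paper's proof simply asserts this last Hilbert-to-Hilbert fact without justification, whereas you spell out the standard subsequence argument using Lemma~\ref{weak-hil}; otherwise the arguments coincide.
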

\begin{proof}
By the assumption, there exist Hilbert spaces $\mathcal{F}_{m}$ 
such that $\mathcal{F} = \varprojlim \mathcal{F}_{m}$. 
The operator $\varphi_{m}:\mathcal{H} \to \mathcal{F} \to \mathcal{F}_{m}$ 
induced by $\varphi$ is a compact operator. 
In particular, the vector $\varphi_{m}(w_{k})$ converges to 
$\varphi_{m}(w)$ in the Hilbert space $\mathcal{F}_{m}$. 
This means that $\varphi(w_{k})$ 
converges to $\varphi(w)$ in $\mathcal{F}$.
\end{proof}

\subsection{De Rham-Weil isomorphisms}\label{Sec2-6}

In this subsection, 
we observe the De Rham-Weil isomorphism 
from the $\dbar$-cohomology to the $\rm{\check{C}}$ech cohomology 
and give a refinement of \cite[Section 5]{Mat13} for our purpose. 
The contents in this subsection (including \cite[Section 5]{Mat13}) 
may be known for specialists, 
but we will explain them in detail for the reader's convenience.

Throughout this subsection, 
let $\omega$ be a K\"ahler form on a complex manifold $X$ 
and $(F,h)$ be a singular hermitian line bundle on $X$ 
such that $\sqrt{-1}\Theta_{h}(F) \geq -\omega$. 
Fix a locally finite 
open cover $\mathcal{U}:=\{B_{i} \}_{i \in I}$ of $X$ by 
sufficiently small Stein open sets $B_{i} \Subset X$.
We consider the set of $q$-cochains $C^{q}(\mathcal{U}, K_{X} \otimes F \otimes \I{h})$
with coefficients in $K_{X} \otimes F \otimes \I{h}$ calculated by $\mathcal{U}$ 
and the coboundary operator $\mu$ defined to be 
\begin{equation*}
\mu \{ \alpha_{i_{0}...i_{q}} \}_{i_{0}...i_{q}} := 
\{ \sum_{\ell = 0}^{q+1} (-1)^{\ell} 
\alpha_{  i_{0}...\hat{i_{\ell}}...i_{q+1} }
\ |_{B_{i_{0}...i_{q+1}} } \}_{i_{0}...i_{q+1}} 
\end{equation*}  
for every $q$-cochain $\{ \alpha_{i_{0}...i_{q}} \}_{i_{0}...i_{q}}$,   
where $B_{i_{0}...i_{q+1}} := B_{i_{0}}\cap B_{i_{1}}\cap \dots \cap B_{i_{q+1}}$.  
In this paper, we will omit the notation of the restriction, 
the subscript $\lq \lq i_{0}...i_{q}"$, and so on.  
The semi-norm $p_{K_{i_{0}...i_{q}}}(\bullet)$ is defined to be  
\begin{equation}\label{semi-defi}
p_{K_{i_{0}...i_{q}}}(\{\alpha_{i_{0}...i_{q}}\})^{2}:=
\int_{K_{i_{0}...i_{q}}} |\alpha_{i_{0}...i_{q}}|_{h, \omega}^{2} \, dV_{\omega}
\end{equation}
for a $q$-cochain $\{\alpha_{i_{0}...i_{q}}\}$ and 
a relatively compact set $K_{i_{0}...i_{q}} \Subset B_{i_{0}...i_{q}}$. 
Note that the semi-norm $p_{K_{i_{0}...i_{q}}}(\bullet)$ is independent of the choice of $\omega$ 
(see Lemma \ref{mul}). 
The set of $q$-cochains can be regarded as a topological vector space 
by the family of the semi-norms  $\{p_{K_{i_{0}...i_{q}}}(\bullet)\}_{K_{i_{0}...i_{q}} \Subset B_{i_{0}...i_{q}}}$. 
Then we have the following lemma.  

\begin{lemm}\label{Fre}
The set of $q$-cochains $C^{q}(\mathcal{U}, K_{X} \otimes F \otimes \I{h})$ 
and the set of $q$-cocycles 
$Z^{q}(\mathcal{U}, K_{X} \otimes F \otimes \I{h}):= {{\rm Ker}}\,\mu$ 
are Fr\'echet spaces with respect to the semi-norms 
$\{p_{K_{i_{0}...i_{q}}}(\bullet)\}_{K_{i_{0}...i_{q}} \Subset B_{i_{0}...i_{q}}}$. 
Moreover, if $X$ is holomorphically convex, 
then the set of $q$-coboundaries 
$B^{q}(\mathcal{U}, K_{X} \otimes F \otimes \I{h}):= {{\rm Im}}\,\mu$ is also 
a Fr\'echet space. 
\end{lemm}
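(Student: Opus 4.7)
The plan is to treat the three assertions separately, building from the concrete $L^{2}$-description of the topology up to the more delicate closedness of the coboundary space.

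First, I would establish that $C^{q}(\mathcal{U}, K_{X} \otimes F \otimes \I{h})$ is Fr\'echet. Since $X$ is paracompact and each $B_{i}$ is Stein, the locally finite cover $\mathcal{U}$ can be chosen countable, and each intersection $B_{i_{0}\dots i_{q}}$ admits a countable exhaustion by relatively compact Stein open sets $\{K_{i_{0}\dots i_{q}}^{(\ell)}\}_{\ell}$. Thus the topology defined by the family of semi-norms $\{p_{K_{i_{0}\dots i_{q}}}(\bullet)\}$ in equation (\ref{semi-defi}) is equivalent to one given by countably many semi-norms, and it is Hausdorff because a holomorphic section vanishing in $L^{2}$ on a non-empty open set vanishes identically. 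For completeness, if $\{\alpha^{(k)}\}$ is a Cauchy sequence, then on each $K_{i_{0}\dots i_{q}}$ the restrictions form an $L^{2}$-Cauchy sequence in the closed subspace of holomorphic sections of $K_{X} \otimes F$ satisfying the local integrability condition defining $\I{h}$; the limit inherits the integrability by Fatou's lemma, and the mean-value-type property for holomorphic sections ensures the limit is again a holomorphic section in $K_{X} \otimes F \otimes \I{h}(B_{i_{0}\dots i_{q}})$.

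Second, for $Z^{q}(\mathcal{U}, K_{X} \otimes F \otimes \I{h})$, I would observe that the coboundary operator $\mu\colon C^{q}\to C^{q+1}$ is continuous with respect to the semi-norms $p_{K}$: the restriction maps between nested relatively compact subsets are bounded by the very definition (\ref{semi-defi}), and $\mu$ is a finite alternating sum of such restrictions. Hence $Z^{q} = \operatorname{Ker}\mu$ is a closed subspace of the Fr\'echet space $C^{q}$, and therefore Fr\'echet in its own right.

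The hard part is the closedness of $B^{q}=\operatorname{Im}\mu$ when $X$ is holomorphically convex. The strategy is to identify the $\check{\mathrm{C}}$ech cohomology $H^{q}(\mathcal{U}, K_{X}\otimes F\otimes \I{h})=Z^{q}/B^{q}$ with the sheaf cohomology $H^{q}(X, K_{X}\otimes F\otimes \I{h})$ via Leray, since each $B_{i_{0}\dots i_{q}}$ is Stein and $K_{X}\otimes F\otimes \I{h}$ is coherent (Nadel). Then I would invoke Grauert's direct image theorem, or equivalently the separatedness of cohomology of coherent analytic sheaves on holomorphically convex manifolds (pushing forward along the Remmert reduction $X\to\overline{X}$ gives a coherent sheaf on the Stein space $\overline{X}$, whose cohomology on relatively compact Stein subsets carries a canonical Fr\'echet topology for which the natural projection $Z^{q}\to H^{q}$ is continuous and Hausdorff). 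Equivalently, one can apply Serre's classical argument using the open mapping theorem to the continuous surjection $C^{q-1}\to B^{q}$ once $B^{q}$ is shown to equal the kernel of the connecting map into a Fr\'echet quotient, forcing $B^{q}$ to be closed. I expect this last Hausdorff/closedness step, which genuinely uses holomorphic convexity, to be the main obstacle; everything else is a routine check of continuity and completeness.
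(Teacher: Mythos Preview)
Your proposal is correct and follows essentially the same route as the paper: the Fr\'echet structure of $C^{q}$ and the closedness of $Z^{q}=\operatorname{Ker}\mu$ are handled by direct completeness/continuity checks (the paper outsources these to \cite[Lemma 5.2, Theorem 5.3]{Mat13}), and for the closedness of $B^{q}$ both you and the paper use holomorphic convexity via the Remmert reduction to a Stein space to conclude that $\check{H}^{q}$ is Hausdorff. The paper makes this last step crisp by invoking the topological isomorphism $H^{q}(X,\mathcal{F})\cong H^{0}(\Delta,R^{q}\pi_{*}\mathcal{F})$ from \cite{Pri71}; your final ``equivalently'' sentence about the open mapping theorem is a bit tangled (open mapping alone does not give closedness of the image), but your primary argument through separatedness of the cohomology is the right one and matches the paper.
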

\begin{proof}
We first remark that the topology of coherent ideal sheaves 
induced by the local sup-norms $\sup_{K_{i_{0}...i_{q}}}(\bullet)$ 
is equivalent to the topology 
induced by the local $L^2$-norms $p_{K_{i_{0}...i_{q}}}(\bullet)$ 
(for example see \cite[Theorem 2, Section D, Chapter I\hspace{-.1em}I]{GR65} 
or \cite[Lemma 5.2, Theorem 5.3, Lemma 5.7]{Mat13}). 
We can easily see that the metric induced by $p_{K_{i_{0}...i_{q}}}(\bullet)$ 
is complete, and thus $C^{q}(\mathcal{U}, K_{X} \otimes F \otimes \I{h})$ 
is a Fr\'echet space. 
It follows that 
$Z^{q}(\mathcal{U}, K_{X} \otimes F \otimes \I{h})= {{\rm Ker}}\,\mu$ 
is a closed subspace (in particular a Fr\'echet space) 
since the coboundary operator $\mu$ is continuous. 
For the latter conclusion, 
we consider the $\rm{\check{C}}$ech cohomology group
$$
\check{H}^{q}(\mathcal{U}, K_{X}\otimes F \otimes \I{h}):=
\dfrac{{\rm{Ker}}\, \mu}{{\rm{Im}}\, \mu} 
\text{ of } C^{q}(\mathcal{U}, K_{X}\otimes F \otimes \I{h}). 
$$
Since $X$ is a holomorphically convex, 
there exists a proper holomorphic map $\pi : X \to \Delta$ to a Stein space $\Delta$. 
Then, for a coherent sheaf $\mathcal{F}$ on $X$, 
the natural morphism 
\begin{equation}
\pi_{*}\, \colon \, H^{q}(X, \mathcal{F}) \to 
H^{0}(\Delta, R^{q}\pi_{*}\mathcal{F})
\end{equation}
is an isomorphism of topological vector spaces 
(for example, see \cite[Lemma II.1]{Pri71}). 
Hence we have the isomorphism between topological vector spaces 
\begin{equation*}
\pi_{*}\, \colon \, H^{q}(X, K_{X} \otimes F \otimes \I{h}) \to 
H^{0}(\Delta, R^{q}\pi_{*}(K_{X} \otimes F \otimes \I{h})). 
\end{equation*}
In particular, the $\rm{\check{C}}$ech cohomology group is 
a separated topological vector space when $X$ is holomorphically convex. 
Therefore $B^{q}(\mathcal{U}, K_{X} \otimes F \otimes \I{h})= {{\rm Im}}\,\mu$ 
must be closed. 
\end{proof}

Now we observe the $\dbar$-cohomology group of $L^2$-spaces defined on 
Zariski open sets equipped with suitable K\"ahler forms. 
Let $Z$ be a proper subvariety on $X$ and 
$\ome$ be a K\"ahler form on the Zariski open set 
$Y:=X \setminus Z$ with the following properties\,$:$
\begin{itemize}
\item[(B)] $\ome \geq \omega $ on $Y=X\setminus Z$. 
\item[(C)] For every point $p \in X$, 
there exists a bounded function $\Psi$ 
on an open neighborhood of $p$ such that $\ome =\deldel \Psi$. 
\end{itemize} 
The important point here is that $\ome$ locally admits 
a bounded potential function on $X$ (not $Y$). 
The above situation seems to be rather technical, 
but naturally appears in the proof of the main results. 
We define the local $L^{2}$-space of $F$-valued $(p, q)$-forms as follows\,: 
\begin{align}
& L_{(2, {\rm{loc}})}^{p, q}(F)_{h, \ome}:= \\ 
&\{u  \,|\,  u \text{ is an }F\text{-valued }(p, q)\text{-form with } 
\|u \|_{K, h, \ome}< \infty \text{ for every $K \Subset X$}. \}, \notag
\end{align}
where $\|u \|_{K, h, \omega}$ is the $L^2$-norm 
on a relatively compact set $K \Subset X$, that is, 
$$
\|u \|^2_{K, h, \ome}:=\int_{K \setminus Z} |u|^2 _{h, \ome}\, dV_{\ome}. 
$$

For the proof of Proposition \ref{DWiso}, 
we concretely construct  the De Rham-Weil isomorphism 
from the $\dbar$-cohomology to the $\rm{\check{C}}$ech cohomology
in Proposition \ref{DW-iso}.  
This construction plays an important role in the proof of the main results. 
To construct the De Rham-Weil isomorphism, 
we locally solve the $\dbar$-equation by Lemma \ref{LL}, 
which is obtained from the standard technique of 
the theory of Kodaira-Andreotti-Vesentini-H\"ormander 
(\cite{AV61}, \cite{AV65}, \cite{Hor65}, \cite{Kod53}).
The reader can check the proof of Lemma \ref{LL} 
in \cite[Lemma 5.4]{Mat13} with the same notation. 
We need to generalize Lemma \ref{LL} to Lemma \ref{loc-sol} 
for the proof of the main results.

\begin{lemm} \label{LL}
Under the same situation as above, 
we assume that $B$ is a sufficiently small Stein open set in $X$. 
Then, for an arbitrary 
$U \in {\rm{Ker}}\, \dbar \subset L^{n, q}_{(2)}(B\setminus Z, F)_{h, \ome}$,  
there exist $V \in L^{n, q-1}_{(2)}(B\setminus Z, F)_{h, \ome}$ 
and a positive constant $C$ $($depending only on $\Psi$, $q$$)$ 
such that 
\begin{align*}
 \dbar V = U \quad \text{ and }\quad 
\|V\|_{h, \ome}\leq C \|U\|_{h, \ome}. 
\end{align*}
\end{lemm}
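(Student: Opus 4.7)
The plan is to convert the mild curvature negativity $\sqrt{-1}\Theta_{h}(F) \geq -\omega$ into strict positivity by twisting $h$ with the bounded local potential $\Psi$ of $\ome$, and then to invoke the $L^{2}$-existence theorem of H\"ormander-Demailly for the $\dbar$-equation on the Stein open set $B \setminus Z$.

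After shrinking $B$ if necessary, Property (C) provides a bounded smooth function $\Psi$ on an open neighborhood of $\overline{B}$ in $X$ with $\ome = \deldel \Psi$ on $B \setminus Z$. Set $\tilde{h} := h \, e^{-2\Psi}$; then, using the sign conventions of Definition \ref{s-met} so that twisting by $e^{-2\Psi}$ adds $2\deldel\Psi$ to the curvature,
\begin{equation*}
\sqrt{-1}\Theta_{\tilde{h}}(F) \,=\, \sqrt{-1}\Theta_{h}(F) + 2\,\deldel\Psi \,\geq\, -\omega + 2\,\ome \,\geq\, \ome,
\end{equation*}
where the last inequality uses Property (B). Writing $M := \sup_{B}|\Psi|$, any local section $s$ of $F$ satisfies $e^{-2M}\,|s|^{2}_{h} \leq |s|^{2}_{\tilde{h}} \leq e^{2M}\,|s|^{2}_{h}$, so any $L^{2}$-estimate with respect to $\tilde{h}$ transfers to one with respect to $h$ with a constant depending only on $M$, hence only on $\Psi$.

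I would then apply Demailly's $L^{2}$-existence theorem \cite[4.1 Th\'eor\`eme]{Dem82} to the pair $(F, \tilde{h})$ on $(B \setminus Z, \ome)$. The open set $B \setminus Z$ is Stein, since the complement of an analytic subvariety in a Stein open set is locally Stein and hence Stein by Docquier-Grauert; in particular, it admits a complete K\"ahler metric. The curvature bound $\sqrt{-1}\Theta_{\tilde{h}}(F) \geq \ome$ then produces a solution $V \in L^{n, q-1}_{(2)}(B\setminus Z, F)_{\tilde{h}, \ome}$ of $\dbar V = U$ with $\|V\|^{2}_{\tilde{h}, \ome} \leq (1/q)\, \|U\|^{2}_{\tilde{h}, \ome}$, and the metric comparison above converts this into the desired bound $\|V\|_{h, \ome} \leq C\, \|U\|_{h, \ome}$ with $C$ depending only on $\Psi$ and $q$.

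The main obstacle is that $\ome$ itself is not assumed to be complete on $B \setminus Z$, so Demailly's theorem cannot be applied to $(B\setminus Z, \ome)$ naively. The standard workaround, built into \cite[4.1 Th\'eor\`eme]{Dem82}, is to approximate $\ome$ from above by complete K\"ahler forms $\ome + \eta\,\hat\ome$ (with $\hat\ome$ a fixed complete K\"ahler metric on the Stein manifold $B \setminus Z$), solve the $\dbar$-equation on each $(B\setminus Z, \ome + \eta\,\hat\ome)$ with the uniform curvature bound $\sqrt{-1}\Theta_{\tilde h}(F) \geq \ome$, and extract a weak limit $V$ as $\eta \to 0$. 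The delicate ingredient that keeps the limit estimate in terms of the original $\ome$ is the $(n,q)$-form monotonicity $|\alpha|^{2}_{\ome'}\,dV_{\ome'} \leq |\alpha|^{2}_{\ome}\,dV_{\ome}$ for $\ome' \geq \ome$ recorded in Lemma \ref{mul}, which keeps the right-hand side of the Bochner-Kodaira estimate uniformly bounded along the approximation.
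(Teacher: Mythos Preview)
Your proposal is correct and follows exactly the approach the paper itself uses for the closely related Lemma \ref{loc-sol}: twist the metric by the bounded local potential $\Psi$ of $\ome$ to force strictly positive curvature, apply the standard $L^{2}$-existence theorem for $\dbar$ (\cite[4.1 Th\'eor\`eme]{Dem82}) on the Stein set $B\setminus Z$, and then undo the twist using boundedness of $\Psi$ to recover an estimate in the original metric $h$ with constant depending only on $\Psi$ and $q$. The paper does not spell out a proof of Lemma \ref{LL} directly (it cites \cite[Lemma 5.4]{Mat13}), but the argument there is the same, and your handling of the completeness issue via the approximation built into \cite{Dem82} together with the $(n,q)$-monotonicity of Lemma \ref{mul} is precisely what is needed.
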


\begin{prop}\label{DW-iso}
We consider the same situation as above. 
That is, we consider a singular hermitian line bundle $(F,h)$ on 
a K\"ahler manifold $(X,\omega)$ such that $\sqrt{-1}\Theta_{h}(F)\geq -\omega$ 
and a K\"ahler form $\ome$  on a Zariski open set $Y$ 
satisfying properties $(B)$, $(C)$. 
Then there exist continuous maps    
\begin{align*}
f: {\rm{Ker}}\, \dbar \text{ in } L^{n,q}_{(2, {\rm{loc}})}(F)_{h, \ome} 
\rightarrow 
{\rm{Ker}}\, \mu \text{ in } 
C^{q}(\mathcal{U}, K_{X} \otimes F \otimes \I{h}) \\
g: {\rm{Ker}}\, \mu \text{ in } 
C^{q}(\mathcal{U}, K_{X} \otimes F \otimes \I{h})
\rightarrow 
{\rm{Ker}}\, \dbar \text{ in } L^{n,q}_{(2, {\rm{loc}})}(F)_{h, \ome}
\end{align*}
satisfying the following properties\,$:$ 
\begin{itemize}
\item[$\bullet$] $f$ induces the isomorphism 
\begin{align*}
\overline{f} \colon \dfrac{{\rm{Ker}}\, \dbar}{{\rm{Im}}\, \dbar} 
\text{ of } L^{n,q}_{(2, {\rm{loc}})}(F)_{h, \ome}   
\xrightarrow{\quad \cong \quad }
\dfrac{{\rm{Ker}}\, \mu}{{\rm{Im}}\, \mu} 
\text{ of } C^{q}(\mathcal{U}, K_{X}\otimes F \otimes \I{h}).  
\end{align*}
\item[$\bullet$] $g$ induces the isomorphism 
\begin{align*}
\overline{g} \colon \dfrac{{\rm{Ker}}\, \mu}{{\rm{Im}}\, \mu} 
\text{ of } C^{q}(\mathcal{U}, K_{X}\otimes F \otimes \I{h})
\xrightarrow{\quad \cong \quad }
\dfrac{{\rm{Ker}}\, \dbar}{{\rm{Im}}\, \dbar} 
\text{ of } L^{n,q}_{(2, {\rm{loc}})}(F)_{h, \ome}.  
\end{align*}
\item[$\bullet$] $\overline{f}$ is the inverse map of $\overline{g}$. 
\end{itemize}
\end{prop}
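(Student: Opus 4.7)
The strategy is to adapt the standard Leray/De Rham-Weil argument, using Lemma \ref{LL} as the local $\dbar$-solvability input in place of the usual Dolbeault lemma, and using hypotheses $(B)$ and $(C)$ on $\ome$ to transfer information between the Zariski open set $Y$ and the ambient manifold $X$. The backbone of the argument is the following sheaf-theoretic identification: a holomorphic section $\alpha$ of $K_{X} \otimes F$ defined on $B\setminus Z$ for a Stein open $B \subset X$ that is locally $L^{2}$ with respect to $h$ and $\ome$ extends uniquely to a section of $K_{X} \otimes F \otimes \I{h}$ on $B$. Indeed, Lemma \ref{mul} yields $|\alpha|_{\ome}\, dV_{\ome}=|\alpha|_{\omega}\, dV_{\omega}$ for $(n,0)$-forms, so $\alpha$ is locally $L^{2}$ on $B$ with respect to any smooth background metric; it then extends holomorphically across the subvariety $Z$ by the Riemann extension theorem, and the $L^{2}_{\mathrm{loc}}$ condition with respect to $h$ is the defining condition for membership in $\I{h}$.

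To construct $f$, I would descend along the cover $\mathcal{U}$ in the \v{C}ech-Dolbeault manner. Given a $\dbar$-closed $u \in L^{n,q}_{(2,\mathrm{loc})}(F)_{h,\ome}$, Lemma \ref{LL} on each sufficiently small Stein set $B_{i_{0}} \setminus Z$ produces $v^{(q-1)}_{i_{0}}$ with $\dbar v^{(q-1)}_{i_{0}} = u$. On intersections $B_{i_{0}i_{1}}\setminus Z$ the difference $v^{(q-1)}_{i_{1}} - v^{(q-1)}_{i_{0}}$ is $\dbar$-closed and $L^{2}$; applying Lemma \ref{LL} again produces $v^{(q-2)}_{i_{0}i_{1}}$, and iterating $q$ times yields a $q$-cochain of holomorphic $F$-valued $(n,0)$-forms on $B_{i_{0}\ldots i_{q}}\setminus Z$ that are $L^{2}$ in $h$. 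By the identification above these form $f(u) \in C^{q}(\mathcal{U}, K_{X}\otimes F \otimes \I{h})$, and $\mu f(u)=0$ is formal. For continuity, I would select at each step the minimal $L^{2}$-solution (the orthogonal projection onto $(\Ker \dbar)^{\perp}$), so that the $L^{2}$-estimate in Lemma \ref{LL} controls $p_{K_{i_{0}\ldots i_{q}}}(f(u))$ by the $L^{2}$-norm of $u$ on a slightly larger set.

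To construct $g$, I would fix a smooth partition of unity $\{\rho_{i}\}$ subordinate to $\mathcal{U}$ and iterate the classical Weil construction: set $\beta^{(q-1)}_{i_{1}\ldots i_{q}}:=\sum_{i_{0}}\rho_{i_{0}}\alpha_{i_{0}i_{1}\ldots i_{q}}$, so that $\mu \beta^{(q-1)}=\alpha$; then $\dbar \beta^{(q-1)}$ is a \v{C}ech $(q-1)$-cocycle of smooth $(n,1)$-cochains, to which the same procedure is applied, and so on. After $q$ iterations we obtain a globally defined smooth $\dbar$-closed $F$-valued $(n,q)$-form $g(\alpha)$. The local finiteness of $\mathcal{U}$, the compact support of each $\rho_{i}$, and the bound $|\dbar \rho_{i}|_{\ome}\leq |\dbar\rho_{i}|_{\omega}$ from Lemma \ref{mul} ensure $g(\alpha) \in L^{n,q}_{(2,\mathrm{loc})}(F)_{h,\ome}$ with continuous dependence on $\alpha$. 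Well-definedness of $\overline{f}$ and $\overline{g}$ on the quotients and the inversion identities $\overline{g}\circ \overline{f}=\mathrm{id}$, $\overline{f}\circ \overline{g}=\mathrm{id}$ are verified by the standard telescoping argument in the Weil proof.

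The main obstacle is the interplay between the $L^{2}$-structure on $Y$ and the sheaf-theoretic target on $X$. Concretely, one must confirm that the iterated descent defining $f$ terminates in genuine holomorphic sections that extend across $Z$ into $\I{h}$: hypothesis $(B)$ permits comparing $\ome$-norms with $\omega$-norms, and hypothesis $(C)$ (bounded local potentials on $X$, not merely on $Y$) is what makes Lemma \ref{LL} applicable. A secondary subtlety is the continuity of $f$, which requires selecting $\dbar$-solutions coherently; taking minimal $L^{2}$-solutions at every step is the natural choice, and the $L^{2}$-bound in Lemma \ref{LL} then does the rest.
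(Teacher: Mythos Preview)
Your proposal is correct and follows essentially the same approach as the paper: iterated local $\dbar$-descent via Lemma~\ref{LL} with minimal-norm solutions to define $f$ (extending the resulting $(n,0)$-forms across $Z$ by Riemann extension), and the explicit partition-of-unity Weil construction to define $g$. The paper likewise sketches only the constructions of $f$ and $g$, referring to \cite[Proposition 5.5]{Mat13} and \cite[Lemma 3.20]{Fuj13} for the verification that $\overline{f}$ and $\overline{g}$ are mutually inverse.
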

\begin{proof}
The construction is essentially the same as the standard De Rham-Weil isomorphism. 
We briefly review only the construction of $f$ and $g$. 
See \cite[Proposition 5.5]{Mat13} and \cite[Lemma 3.20]{Fuj13} for more details.

We first define $f(U)$ 
for $U \in {\rm{Ker}}\, \dbar \subset L^{n,q}_{(2, {\rm{loc}})}(F)_{h, \ome}$ 
by using the local solution of the $\dbar$-equation with minimum $L^2$-norm. 
We consider the $\dbar$-equation 
$\dbar \beta_{i_{0}} =U |_{B_{i_{0}}\setminus Z}$ on $B_{i_{0}} \setminus Z$. 
Then we can take the solution $\beta_{i_{0}}$ whose $L^2$-norm 
is minimum among all solutions. 
Further, we can see that
$\|\beta_{i_{0}}\|_{h, \ome} \leq C \|U\|_{B_{i_{0}}, h, \ome} \leq 
C \|U\|_{h, \ome}$ 
for some constant $C$ (independent of $U$) by Lemma \ref{LL}. 
Next we take the solution of the $\dbar$-equation 
$\dbar \beta_{i_{0}i_{1}} =\beta_{i_{1}}-\beta_{i_{0}}$ 
on $B_{i_{0}i_{1}} \setminus Z$ with minimum $L^2$-norm. 
By Lemma \ref{LL}, we have 
$$
\|\beta_{i_{0}i_{1}}\|_{h, \ome} \leq 
D \|\beta_{i_{1}}-\beta_{i_{0}}\|_{B_{i_{0}i_{1}}, h, \ome} \leq 
D (\|\beta_{i_{1}}\|_{h, \ome}+ \|\beta_{i_{0}}\|_{h, \ome}) \leq 
2CD \|U\|_{h, \ome}
$$
for some constant $D$. 
By repeating this process, 
we can obtain the $F$-valued $(n,q-k-1)$-forms 
$\beta_{i_{0}\dots i_{k}}$ 
on $B_{i_{0}\dots i_{k}} \setminus Z$ 
satisfying 
\[
  (*) \left\{ \quad
  \begin{array}{ll}
\vspace{0.2cm}
\dbar \{ \beta_{i_{0}} \} &=\{U |_{B_{i_{0}}\setminus Z}\},  \\
\dbar \{ \beta_{i_{0}i_{1}} \}&=\mu  \{\beta_{i_{0}}\},  \\
\dbar \{ \beta_{i_{0}i_{1}i_{2}} \}&
=\mu   \{\beta_{i_{0}i_{1}}\},  \\
 & \vdots   \\
\dbar \{ \beta_{i_{0}\dots i_{q-1}} \}
&=\mu   \{\beta_{i_{0}\dots i_{q-2}}\}.  
  \end{array} \right.
\]
Then $\mu \{ \beta_{i_{0}\dots i_{q-1}} \}=:\{\beta_{i_{0}\dots i_{q}}\}$ is a $q$-cocycle 
of $\dbar$-closed $F$-valued $(n,0)$-forms on $B_{i_{0}...i_{q}} \setminus Z$ 
such that $\|\beta_{i_{0}\dots i_{q}} \|_{h, \ome} < \infty$. 
We remark that 
$\|\beta_{i_{0}\dots i_{q}} \|_{h, \ome}=
\|\beta_{i_{0}\dots i_{q}} \|_{h, \omega}$ by Lemma \ref{mul}. 
We locally regard $\beta_{i_{0}\dots i_{q}}$
as a holomorphic function.
Then, by the Riemann extension theorem, 
$\beta_{i_{0}\dots i_{q}}$ can be extend 
from $B_{i_{0}...i_{q}} \setminus Z$ to $B_{i_{0}...i_{q}}$. 
Therefore $\mu \{ \beta_{i_{0}\dots i_{q-1}} \}
=\{\beta_{i_{0}\dots i_{q}}\}$ determines  
a $q$-cocycle in $C^{q}(\mathcal{U}, K_{X}\otimes F \otimes \I{h})$. 
We define $f$ by $f(U):=\mu \{ \beta_{i_{0}\dots i_{q-1}} \}$. 

\begin{rem}\label{DW-rem}
By the construction, 
we can obtain the $L^2$-estimate 
$\|\beta_{i_{0}\dots i_{k}}\|_{h,\ome} \leq C \|U\|_{h,\ome} $ 
for some constant $C$. 
Here $C$ essentially depends on a constant 
that appears when we solve the $\dbar$-equation by the $L^2$-method, 
such as Lemma \ref{LL}. 
We remark that the constant $C$ does not depend on $U$. 
\end{rem}

In order to define $g$, we fix a partition of unity $\{\rho_{i} \}_{i \in I}$ of 
$\mathcal{U}=\{B_{i}\}_{i \in I}$. 
For a $q$-cocycle $\alpha=\{\alpha_{i_{0}...i_{q}}\}\in 
C^{q}(\mathcal{U}, K_{X}\otimes F \otimes \I{h})$, 
we define $g(\alpha)$ by 
\begin{align*}
& g(\alpha)\\
:=&\dbar \Bigg(  \sum_{k_{q}\in I}\rho_{k_{q}}
\dbar \bigg( \sum_{k_{q-1}\in I}\rho_{k_{q-1}} \cdots
\dbar \Big( \sum_{k_{3}\in I}\rho_{k_{3}}
\dbar \big( \sum_{k_{2}\in I}\rho_{k_{2}}
\dbar (\sum_{k_{1}\in I}\rho_{k_{1}} \alpha_{k_{1}...k_{q}i_{0}})
\big)\Big)\bigg) \Bigg)\\
=&
\sum_{k_{q}\in I} \dbar \rho_{k_{q}}\wedge
\sum_{k_{q-1}\in I} \dbar \rho_{k_{q-1}} \wedge \cdots
\sum_{k_{3}\in I} \dbar \rho_{k_{3}} \wedge
\sum_{k_{2}\in I} \dbar \rho_{k_{2}} \wedge
\dbar (\sum_{k_{1}\in I}\rho_{k_{1}} \alpha_{k_{1}...k_{q}i_{0}}). 
\end{align*}
This determines the $\dbar$-closed $F$-valued $(n,q)$-form 
with locally bounded $L^2$-norm. 
\end{proof}

From Proposition \ref{DW-iso}, we obtain the following proposition.  

\begin{prop}\label{closed}
Under the same situation as in Proposition \ref{DW-iso}, 
we assume that $X$ is holomorphically convex. 
Then ${{\rm{Im}}\, \dbar}$ is a closed subspace in 
$L^{n,q}_{(2, {\rm{loc}})}(F)_{h, \ome}$. 
In particular the $\dbar$-cohomology group  
$\rm{Ker}\, \dbar/\rm{Im}\, \dbar$ of  $L^{n,q}_{(2, {\rm{loc}})}(F)_{h, \ome}$ 
is a Fr\'echet space. 
\end{prop}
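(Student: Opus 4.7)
The plan is to transfer the closedness of $\mathrm{Im}\,\mu$ from the \v{C}ech side (established in Lemma \ref{Fre} under the holomorphic convexity assumption) to the $\dbar$-side via the continuous maps $f$ and $g$ from Proposition \ref{DW-iso}. The essential point is that $f$ sends $\dbar$-exact forms to $\mu$-coboundaries (by the isomorphism statement for $\overline{f}$), so the preimage of the closed set $\mathrm{Im}\,\mu$ under $f$ will capture $\mathrm{Im}\,\dbar$.

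First I would fix a countable exhaustion $\{K_j\}_{j=1}^{\infty}$ of $X$ by relatively compact subsets, and observe that $L^{n,q}_{(2,\mathrm{loc})}(F)_{h,\ome}$ is a Fr\'echet space with respect to the countable family of seminorms $\|\cdot\|_{K_j, h, \ome}$. The operator $\dbar$ acts as a closed operator in the distribution sense, so $\mathrm{Ker}\,\dbar$ is a closed subspace (hence Fr\'echet). Next, given a sequence $\{U_k\}\subset \mathrm{Im}\,\dbar$ converging to some $U$ in $L^{n,q}_{(2,\mathrm{loc})}(F)_{h,\ome}$, convergence in the local $L^2$-topology forces $\dbar U_k \to \dbar U$ as distributions, and since each $U_k \in \mathrm{Ker}\,\dbar$ we get $U \in \mathrm{Ker}\,\dbar$. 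Now apply the continuous map $f$ from Proposition \ref{DW-iso}: we obtain $f(U_k)\to f(U)$ in the Fr\'echet space $C^{q}(\mathcal{U}, K_X\otimes F\otimes \I{h})$.

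By Proposition \ref{DW-iso}, since $[U_k]=0$ in $\mathrm{Ker}\,\dbar/\mathrm{Im}\,\dbar$, the isomorphism $\overline{f}$ yields $[f(U_k)]=0$ in the \v{C}ech cohomology, i.e.\ $f(U_k)\in \mathrm{Im}\,\mu = B^{q}(\mathcal{U}, K_X\otimes F\otimes \I{h})$. Because $X$ is holomorphically convex, Lemma \ref{Fre} tells us that $\mathrm{Im}\,\mu$ is closed, so the limit $f(U)$ also belongs to $\mathrm{Im}\,\mu$. Applying $\overline{f}^{-1}=\overline{g}$ to $[f(U)]=0$ we conclude $[U]=0$ in $\mathrm{Ker}\,\dbar/\mathrm{Im}\,\dbar$, i.e.\ $U\in \mathrm{Im}\,\dbar$. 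This proves $\mathrm{Im}\,\dbar$ is closed.

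For the final assertion, once $\mathrm{Im}\,\dbar$ is a closed subspace of the Fr\'echet space $\mathrm{Ker}\,\dbar$, the quotient $\mathrm{Ker}\,\dbar/\mathrm{Im}\,\dbar$ inherits a Fr\'echet structure (quotients of Fr\'echet spaces by closed subspaces are Fr\'echet).

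The main obstacle I anticipate is verifying the continuity of $f$ and the correct topological setup for $L^{n,q}_{(2,\mathrm{loc})}(F)_{h,\ome}$. Continuity of $f$ is inherited from Proposition \ref{DW-iso}, but one needs to check carefully that the local $L^{2}$-estimates in the construction of $f$ (via the minimum-norm solutions of local $\dbar$-equations as in Remark \ref{DW-rem}) are uniform in $U$ with respect to the chosen exhaustion; this is exactly what Lemma \ref{LL} provides. Everything else is a formal transfer through the De Rham--Weil isomorphism.
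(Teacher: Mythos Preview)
Your proposal is correct and follows essentially the same approach as the paper: transfer the closedness of $\mathrm{Im}\,\mu$ (Lemma~\ref{Fre}, using holomorphic convexity) to the $\dbar$-side via the continuous map $f$ and the induced isomorphism $\overline{f}$ of Proposition~\ref{DW-iso}. The paper's own proof is a two-line sketch deferring to \cite[Proposition 5.8]{Mat13} for details, and what you have written is exactly the unpacking of that sketch; your closing worry about continuity of $f$ is already handled by the local estimate $\|\beta_{i_0\dots i_k}\|_{h,\ome}\leq C\|U\|_{B_{i_0\dots i_k},h,\ome}$ built into the construction (cf.\ Remark~\ref{DW-rem}).
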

\begin{proof}
Since $X$ is holomorphically convex, 
the $\rm{\check{C}}$ech cohomology group is a separated topological vector space 
(see Lemma \ref{Fre}). 
Therefore ${{\rm{Im}}\, \dbar}$ 
is a closed subspace by Proposition \ref{DW-iso}. 
\end{proof}

We close this subsection with the following proposition\,:

\begin{prop}\label{DWiso}
Under the same situation as in Proposition \ref{DW-iso}, 
we assume that $X$ is holomorphically convex. 
Then the following composite map is a compact operator. 
\[\xymatrix{
 {\rm{Ker}}\, \dbar \text{ in } 
L^{n,q}_{(2)}(Y,F)_{h, \ome} \ar[r] & 
{\rm{Ker}}\, \dbar \text{ in } 
L^{n,q}_{(2, {\rm{loc}})}(F)_{{h}, \ome}
\ar[r] & 
\dfrac{{\rm{Ker}}\, \dbar}{{\rm{Im}}\, \dbar} 
\text{ of } L^{n,q}_{(2, {\rm{loc}})}(F)_{h, \ome}. 
}\]
\end{prop}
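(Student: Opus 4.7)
The plan is to transport the compactness question through the De Rham-Weil isomorphism of Proposition~\ref{DW-iso}, thereby reducing it to a Montel-type compactness statement for holomorphic sections of the coherent sheaf $K_{X}\otimes F\otimes \I{h}$. Since $X$ is holomorphically convex, Lemma~\ref{Fre} and Proposition~\ref{closed} together with Proposition~\ref{DW-iso} identify the target $\dbar$-cohomology, topologically, with the \v{C}ech cohomology $\check{H}^{q}(\mathcal{U}, K_{X}\otimes F\otimes \I{h})=Z^{q}/B^{q}$, where $Z^{q}$ and $B^{q}$ are closed subspaces of the Fr\'echet space $C^{q}(\mathcal{U}, K_{X}\otimes F\otimes \I{h})$. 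Because the induced isomorphism $\overline{f}$ and the canonical projection $Z^{q}\to Z^{q}/B^{q}$ are continuous, it suffices to show that for every bounded sequence $\{U_{k}\}$ in ${\rm Ker}\,\dbar \subset L^{n,q}_{(2)}(Y,F)_{h,\ome}$, the \v{C}ech representatives $\{f(U_{k})\} \subset C^{q}(\mathcal{U}, K_{X}\otimes F\otimes \I{h})$ admit a subsequence convergent in the Fr\'echet topology.

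To produce such representatives, I apply the explicit construction of $f$ reviewed in Proposition~\ref{DW-iso}: for each $U_{k}$ one solves a cascade of $\dbar$-equations on progressively smaller multi-intersections to obtain $F$-valued forms $\beta^{(k)}_{i_{0}\dots i_{j}}$, and Remark~\ref{DW-rem} furnishes uniform estimates $\|\beta^{(k)}_{i_{0}\dots i_{j}}\|_{h,\ome}\leq C \|U_{k}\|_{h,\ome}\leq CM$ with $C$ independent of $k$. The top cochain $f(U_{k})=\mu\{\beta^{(k)}_{i_{0}\dots i_{q-1}}\}$ consists of $\dbar$-closed $F$-valued $(n,0)$-forms which, by the Riemann extension theorem, extend to holomorphic sections of $K_{X}\otimes F$ on $B_{i_{0}\dots i_{q}}$ lying in the multiplier ideal sheaf $\I{h}$. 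By Lemma~\ref{mul} the identity $|\theta|_{\ome}\,dV_{\ome}=|\theta|_{\omega}\,dV_{\omega}$ holds for $(n,0)$-forms, so $p_{K_{i_{0}\dots i_{q}}}(f(U_{k}))$ is bounded uniformly in $k$ for every relatively compact $K_{i_{0}\dots i_{q}}\Subset B_{i_{0}\dots i_{q}}$.

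The final step is to extract a convergent subsequence from this uniformly bounded family, which is the crux of the proof. By \cite[Theorem~5.3]{Mat13}, which underlies Lemma~\ref{Fre}, the topology on $\Gamma(B_{i_{0}\dots i_{q}}, K_{X}\otimes F\otimes \I{h})$ defined by the local $L^{2}$-semi-norms $p_{K_{i_{0}\dots i_{q}}}$ coincides with the topology of uniform convergence on compact subsets of the coherent sheaf $K_{X}\otimes F\otimes \I{h}$. The classical Montel theorem for sections of coherent analytic sheaves on Stein open sets then makes the restriction map $\Gamma(B_{i_{0}\dots i_{q}}, K_{X}\otimes F\otimes \I{h})\to \Gamma(K_{i_{0}\dots i_{q}}, K_{X}\otimes F\otimes \I{h})$ compact. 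A diagonal argument over the countable index set and an exhausting countable family of compacts $K_{i_{0}\dots i_{q}}\Subset B_{i_{0}\dots i_{q}}$ yields a subsequence $\{U_{k_{j}}\}$ whose \v{C}ech representatives $\{f(U_{k_{j}})\}$ converge in the full Fr\'echet topology of $C^{q}(\mathcal{U}, K_{X}\otimes F\otimes \I{h})$, proving the compactness of the composite.

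The main obstacle is this last Montel-type compactness in the presence of the singular weight $h$: a direct local uniform argument would require $e^{-2\varphi}$ to be locally integrable, which in general fails. The key point that circumvents this difficulty is to view the sections not merely as weighted holomorphic objects but as sections of the coherent sheaf $K_{X}\otimes F\otimes \I{h}$, using the equivalence of the local sup and local $L^{2}$ topologies from \cite[Theorem~5.3]{Mat13} to reduce to the standard Montel theorem for coherent sheaves on Stein domains.
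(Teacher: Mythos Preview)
Your proof is correct and follows essentially the same route as the paper: push a bounded sequence through the map $f$ of Proposition~\ref{DW-iso} to obtain \v{C}ech cocycles of holomorphic $(n,0)$-forms with uniform local $L^{2}$-bounds, extract a convergent subsequence by Montel, and transport back to the $\dbar$-cohomology via the continuous map $g$ (equivalently, via the topological isomorphism $\overline{f}^{-1}$). Your concern about the singular weight in the Montel step is unnecessary and the paper's treatment is simpler: since the weight $\varphi$ is quasi-psh and hence locally bounded above, the weighted local $L^{2}$-norm already dominates the unweighted one, so local sup bounds for the holomorphic sections follow directly from the usual mean-value inequality and ordinary Montel applies---no appeal to a Montel theorem for coherent sheaves is needed.
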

\begin{rem}
By Proposition \ref{closed}, 
the $\dbar$-cohomology group of 
$L^{n,q}_{(2, {\rm{loc}})}(F)_{h, \ome}$ is a 
Fr\'echet space. 
The natural quotient map 
$${\rm{Ker}}\, \dbar \text{ in } 
L^{n,q}_{(2, {\rm{loc}})}(F)_{{h}, \ome} \to 
\dfrac{{\rm{Ker}}\, \dbar}{{\rm{Im}}\, \dbar} 
\text{ of } L^{n,q}_{(2, {\rm{loc}})}(F)_{h, \ome}$$ 
is not a compact operator, and thus we need to consider 
the map defined on ${\rm{Ker}}\, \dbar \subset L^{n,q}_{(2)}(Y,F)_{h, \ome}$. 
\end{rem}

In the proof of Proposition \ref{DWiso}, 
we use the construction of $f$ and $g$ 
in Proposition \ref{DW-iso}.  
For the reader's convenience, we first give a proof for the case $q=1$.

\begin{proof}[Proof of Proposition \ref{DWiso} for the case $q=1$]
We take a bounded sequence $\{U_{\ell}\}_{\ell=1}^{\infty} $ 
in ${\rm{Ker}}\, \dbar \subset  L^{n,1}_{(2)}(Y,F)_{h, \ome}$, 
that is,  $F$-valued $(n,1)$-forms 
$\{U_{\ell}\}_{\ell=1}^{\infty} \subset 
{\rm{Ker}}\, \dbar \subset  L^{n,1}_{(2)}(Y,F)_{h, \ome}$ 
such that $\|U_{\ell}\|_{h,\ome} \leq C_{1}$  for some constant 
$C_{1}$ (independent of $\ell$). 
For the restriction $U_{\ell,i}:=U_{\ell}|_{B_{i} \setminus Z}$, 
by solving the $\dbar$-equation 
$\dbar \beta_{\ell,i}= U_{\ell,i}$
on $B_{i} \setminus Z$,  
we obtain $\beta_{\ell,i}$ and a constant $C$ independent of $\ell$ 
with the following properties\,: 
$$ 
\dbar \beta_{\ell,i} = U_{\ell,i}\quad \text{on}\ B_{i}\setminus Z
\quad \text{and} \quad 
\|\beta_{\ell,i}\|_{h, \ome} \leq 
C \|U_{\ell,i}\|_{B_{i}, h, \ome}\leq C \|U_{\ell}\|_{h, \ome}. 
$$
Here we used the assumption that $\ome$ locally admits  
a bounded potential function $\Psi$ (see Lemma \ref{LL}). 
In particular, 
the $F$-valued $(n,0)$-form $\beta_{\ell,j} - \beta_{\ell,i}$ 
is $\dbar$-closed on $B_{ij} \setminus Z$ 
and it is (uniformly) $L^2$-bounded. 
Therefore it can be extended to 
the $\dbar$-closed  $F$-valued $(n,0)$-form on $B_{i}\cap B_{j}$
by the Riemann extension theorem.

From now on,  
we construct the $F$-valued $(n,1)$-form $V_{\ell}$
such that $V_{\ell}$ determines the same cohomology class as $U_{\ell}$ 
and $V_{\ell}$ converges to some $F$-valued $(n,1)$-form. 
(This completes the proof.)
By the construction, 
the sup-norm $\sup_{K}|\beta_{\ell,j} -\beta_{\ell,i}|$ is 
uniformly bounded for every $K \Subset B_{ij}$. 
(Recall the local sup-norm of holomorphic functions 
can be bounded by the $L^{2}$-norm). 
Therefore, 
by Montel's theorem, there exists 
a subsequence of $\{\beta_{\ell,j} - \beta_{\ell,i}\}_{\ell=1}^{\infty}$ 
such that it uniformly converges to some $\alpha_{ ij}$ as $\ell \to \infty$ 
on every relatively compact set in $B_{ij}$. 
We use the same notation for this subsequence. 
This subsequence also converges to $\alpha_{ij}$ 
with respect to the local $L^2$-norms (for example see \cite[Lemma 5.2]{Mat13}), 
that is, $p_{K_{ij}}(\beta_{\ell,j} - \beta_{\ell,i}-\alpha_{ ij}) \to 0$.

For a fixed partition of unity $\{\rho_{i}\}_{i \in I}$ 
of $\mathcal{U}$, 
we define the $F$-valued $(n,1)$-form $V_{\ell,i}$ by 
$$
V_{\ell,i}:=\dbar \big( \sum_{k\in I} \rho_{k} 
(\beta_{\ell, i}-\beta_{\ell, k}) \big) = \dbar (\beta_{\ell, i}-\sum_{k\in I} \rho_{k} \beta_{\ell, k}) 
=U_{\ell}-\dbar (\sum_{k\in I} \rho_{k} \beta_{\ell, k})
\text{ on }  B_{i}\setminus Z. 
$$ 
Since $\sum_{k\in I} \rho_{k} \beta_{\ell, k}$ is independent of $i$, 
the family $\{V_{\ell,i}\}_{i \in I}$ determines 
the $F$-valued $(n,1)$-form $V_{\ell}$ globally defined on $Y$. 
Further the $F$-valued $(n,1)$-form $V_{\ell}$  
determines the same cohomology class as $U_{\ell}$. 
It is sufficient for the proof to show that 
$V_{\ell}$ converges to  
$\sum_{k\in I} \dbar (\rho_{k} \alpha_{ki})=
\sum_{k\in I} \dbar \rho_{k} \wedge \alpha_{ki}$ in 
$L^{n,1}_{(2, {\rm{loc}})}(F)_{h, \ome}$. 
For a given $K \Subset X$, 
the cardinality of $I_{K}$ defined by 
$$
I_{K}:=\{i \in I \, | \, B_{i} \cap K \not=\emptyset \}
$$ 
is finite. 
Hence we obtain 
\begin{align*}
\| V_{\ell} - \sum_{k\in I} \dbar (\rho_{k}  \alpha_{ki}) \|_{K, h, \ome}
&\leq 
\sum_{i\in I_{K} }\| \sum_{k\in I}  \dbar \rho_{k} \wedge
(\beta_{\ell, i}-\beta_{\ell, k}- \alpha_{ki})\|_{B_{i}, h, \ome}\\
&\leq C_{2}\| 
(\beta_{\ell, i}-\beta_{\ell, k}- \alpha_{ki})\|_{B_{i} \cap {\rm{Supp}}\,\rho_{k}, h, \ome} 
\end{align*}
for some constant $C_{2}>0$. 
Here we used $|\dbar \rho_{k}|_{\ome} \leq |\dbar \rho_{k}|_{\omega}$
(see Lemma \ref{mul} and property (B)). 
By $K_{i} \cap {\rm{Supp}}\rho_{k} \Subset B_{ik}$, 
the right hand side converges to zero as $\ell$ tends to $\infty$. 
This completes the proof. 
\end{proof}

\begin{proof}[Proof of Proposition \ref{DWiso} for the general case]
We take a bounded sequence $\{U_{\ell}\}_{\ell=1}^{\infty} $ in $ 
{\rm{Ker}}\, \dbar \subset  L^{n,q}_{(2)}(Y,F)_{h, \ome}$. 
Then, by the construction of $f$, 
we can obtain the $F$-valued $(n,q-k-1)$-forms 
$\beta_{\ell, i_{0}\dots i_{k}}$ 
on $B_{i_{0}\dots i_{k}} \setminus Z$ 
satisfying equality $(*)$. 
Further 
we obtain 
$$
f(U_{\ell})=\mu \{ \beta_{\ell, i_{0}\dots i_{q-1}} \}
=:\{ \beta_{\ell, i_{0}\dots i_{q}} \}
\quad \text{ and }\quad 
\|\beta_{\ell, i_{0}\dots i_{q}}\|_{B_{i_{0}\dots i_{q}}, h,\ome} \leq 
C \|U_{\ell}\|_{h,\ome}.  
$$
Here $C$ is a positive constant independent of $\ell$ (see Remark \ref{DW-rem}).
Note that $\beta_{\ell, i_{0}\dots i_{q}}$ can be regarded as a holomorphic 
function on $B_{i_{0}\dots i_{q}}$ 
since it is a $\dbar$-closed $(n,0)$-form. 
The local sup-norm $\sup_{K} |\beta_{\ell, i_{0}\dots i_{q}}|$ 
is uniformly bounded for every relatively compact set 
$K \Subset B_{i_{0}\dots i_{q}}$.  
Therefore, by Montel's theorem, we can take a subsequence of 
$\{f(U_{\ell})\}_{\ell=1}^{\infty}$ converging some $q$-cocycle $\alpha$ 
with respect to the local sup-norms. 
This subsequence converges to $\alpha$ also with respect to 
the local $L^2$-norms. 
We use the same notation $\{f(U_{\ell})\}_{\ell=1}^{\infty}$ for 
this subsequence.

When we fix a partition of unity $\{\rho_{i} \}_{i \in I}$ of 
$\mathcal{U}$, 
we can define the map $g$ in Proposition \ref{DW-iso}. 
It follows that $g(f(U_{\ell}))$ converges to $g(\alpha)$ 
in $L^{n,q}_{(2, {\rm{loc}})}(F)_{h, \ome}$ 
since the map $g$ is continuous. 
Further 
we can see that $g(f(U_{\ell}))$  
determines the same cohomology class as 
$U_{\ell}$ by Proposition \ref{DW-iso}. 
This completes the proof. 
\end{proof}

\section{Proof of the main results}\label{Sec3}
\subsection{Proof of Theorem \ref{main}}\label{Sec3-2}
In this subsection, we prove Theorem \ref{main}. 
The proof can be divided into five steps. 
The proof of Theorem \ref{main2} will be obtained from  
a slight revision of Step \ref{S5} (see subsection \ref{Sec3-2}). 
In the following proof, 
we often write (possibly different) positive constants as $C$ 
and use the same notation for suitably chosen subsequences.

\begin{step}[Reduction of the proof to the local problem]\label{S1}

In this step, we fix the notation used in this subsection 
and reduce the proof to a local problem on $\Delta$. 
At the end of this step, the rough strategy of the proof 
will be given, 
which helps us to understand the complicated and technical arguments.

The problem is local on $\Delta$, 
and thus we may assume that $\pi\,\colon\,X \to \Delta$ 
is a surjective proper holomorphic map from 
a  K\"ahler manifold $X$ to 
a Stein subvariety $\Delta $ in $ \mathbb{C}^{N}$. 
Then the manifold $X$ is holomorphically convex 
since $X$ admits a proper holomorphic map to a Stein space. 
In particular, for a coherent sheaf $\mathcal{F}$ on $X$, 
the natural morphism 
\begin{equation}
\pi_{*}\, \colon \, H^{q}(X, \mathcal{F}) \to 
H^{0}(\Delta, R^{q}\pi_{*}\mathcal{F})
\end{equation}
is an isomorphism of topological vector spaces 
(for example, see \cite[Lemma II.1]{Pri71}). 
Therefore it is sufficient to show that 
the multiplication map 
$$
H^{q}(X, K_{X}\otimes F \otimes \I{h}) \xrightarrow{\otimes s} 
H^{q}(X, K_{X}\otimes F^{m+1} \otimes \I{h^{m+1}})
$$
is injective. 
In \cite{Mat13}, 
we have already proved that 
the above multiplication map is injective when $X$ is compact. 
One of the difficulties of the proof is 
to deal with the non-compact manifold $X$.

By replacing $\Delta$ with smaller one (if necessary), 
we may assume that $X$ is a relatively compact set 
in the initial ambient space. 
In particular, the point-wise norm $|s|_{h^{m}}$ can be assumed to be  
bounded on $X$ by the assumption. 
Note that $X$ admits  
a complete K\"ahler form 
since $X$ is a weakly pseudoconvex K\"ahler manifold. 
For a fixed complete K\"ahler form  $\omega$ on $X$, 
by applying Theorem \ref{equi} and Remark \ref{equi-rem} for $\gamma=0$,  
we can take a family of singular metrics 
$\{h_{\e} \}_{1\gg \e>0}$ on $F$ with 
the following properties\,: 
\begin{itemize}
\item[(a)] $h_{\e}$ is smooth on $X \setminus Z_{\e}$ 
for some proper subvariety $Z_{\e}$. 
\item[(b)]$h_{\e''} \leq h_{\e'} \leq h$ holds on $X$ 
for any $0< \e' < \e'' $.
\item[(c)]$\I{h}= \I{h_{\e}}$ on $X$.
\item[(d)]$\sqrt{-1} \Theta_{h_{\e}}(F) \geq -\e \omega$ on $X$. 
\end{itemize}

\begin{rem}\label{m0}
In the case $m>0$, 
the set $\{x \in X\,|\,\nu(h,x)>0\}$ 
is contained in the zero set $s^{-1}(0)$ of $s$ 
by $\sup |s|_{h^{m}} < \infty$, 
where $\nu(h,x)$ is the Lelong number of the weight of $h$ at $x$. 
Then we can assume that $Z_{\e}$ is independent of $\e$. 
However, in the case $m=0$, 
the subvariety $Z_{\e}$ may essentially depend on $\e$, 
which is different from \cite{Mat13}. 
For this reason, we need to consider a complete K\"ahler form $\omega_{\e, \delta}$ on $Y_{\e}:=X \setminus Z_{\e}$ 
such that $\omega_{\e, \delta}$ converges to 
$\omega$ as $\delta$ goes to zero. 
\end{rem}

To use the theory of harmonic integrals 
on the Zariski open set  $Y_{\e}$, 
we first take a complete K\"ahler form  $\omega_{\e}$ on $Y_{\e}$ 
with the following properties\,$:$ 
\begin{itemize}
\item[$\bullet$] $\omega_{\e}$ is a complete K\"ahler form on 
$Y_{\e}$.
\item[$\bullet$] $\omega_{\e} \geq \omega $ on $Y_{\e}$. 
\item[$\bullet$] $\omega_{\e}=\deldel \Psi_{\e}$ 
for some bounded function $\Psi_{\e}$ 
on a neighborhood of every $p \in X$.    
\end{itemize} 
See \cite[3.11]{Fuj13} for the construction of $\omega_{\e}$. 
The key point here is the third property on the bounded potential function, 
which enables us to construct the De Rham-Weil isomorphism 
from  the $\dbar$-cohomology group on $Y_{\e}$ to 
the $\rm{\check{C}}$ech cohomology group on $X$ (see Proposition \ref{DW-iso}).  
We define the K\"ahler form $\omega_{\e, \delta}$ 
on $Y_{\e}$ by 
$$
\omega_{\e, \delta}:=\omega + \delta \omega_{\e} 
\text{ for } \e \text{ and } \delta  \text{ with } 0<\delta \ll \e. 
$$
It is easy to see the following properties\,$:$ 
\begin{itemize}
\item[(A)] $\omega_{\e, \delta}$ is a complete K\"ahler form on $Y_{\e}=X\setminus Z_{\e}$ for every $\delta > 0$.
\item[(B)] $\omega_{\e, \delta} \geq \omega $ on $Y_{\e}$ 
for every  $\delta>0$. 
\item[(C)] For every point $p \in X$, there exists a bounded function 
$\Psi_{\e,\delta}=\Psi + \delta \Psi_{\e}$
on an open neighborhood of $p$ in $X$ such that 
$\deldel \Psi_{\e,\delta}=\omega_{\e,\delta}$ and $\lim_{\delta \to 0}\Psi_{\e,\delta}=\Psi$. Here $\Psi$ is a local potential function of $\omega$. 
\end{itemize} 

\begin{rem}\label{countable}
Strictly speaking, by Theorem \ref{equi} (\cite[Theorem 2.3]{DPS01}), 
we obtain a countable family $\{h_{\e_{k}} \}_{k=1}^{\infty}$ 
of singular metrics satisfying the above properties and $\e_{k} \to 0$. 
In our proof, 
we actually consider only countable sequences $\{\e_{k}\}_{k=1}^{\infty}$ 
and $\{\delta_{\ell}\}_{\ell=1}^{\infty}$ conversing to zero 
since we need to use Cantor's diagonal argument, 
but we often use the notations $\e$ and $\delta$ for simplicity. 
\end{rem}

We define the function $\Phi$ on $X$ by 
\begin{equation}
\Phi:=\pi^{*}i^{*}(|z_{1}|^{2}+|z_{2}|^{2}+\dots+|z_{N}|^{2}), 
\end{equation}
where $i \colon \Delta \to \mathbb{C}^{N}$ is a local embedding 
of the Stein subvariety $\Delta$ 
and $(z_{1}, z_{2}, \dots, z_{N})$ is a coordinate 
of $\mathbb{C}^{N}$. 
By the construction, 
the function $\Phi$ is a psh function on $X$. 
Since $\pi$ is a proper morphism, 
the function $\Phi$ is an exhaustive function on $X$ 
(that is, the level set $X_{c}:=\{x \in X\, |\, \Phi(x)<c \}$
is relatively compact in $ X$ for every $c$ with $c < \sup_{X}\Phi$). 
Moreover, we may assume that 
$$\sup_{X} \Phi < \infty \quad \text{ and }\quad  \sup_{X} |d \Phi|_{\omega_{\e, \delta}}<C$$
by replacing $\Delta$ with smaller one.   
Indeed, we can assume that 
$\Phi$ is defined on a neighborhood of $\partial X$ 
in the initial ambient space 
by taking smaller $\Delta$. 
This implies that $\sup_{X} \Phi < \infty$ and 
$\sup_{X} |d \Phi|_{\overline{\omega}} <\infty$
for some positive $(1,1)$-form $\overline{\omega}$ defined on 
the neighborhood of $\partial X$. 
We may assume that $\omega_{\e, \delta} \geq 
\omega \geq \overline{\omega}$
since $\omega$ is a complete form on $X$. 
By Lemma \ref{mul}, we have the inequality 
$\sup_{X} |d \Phi|_{\omega_{\e, \delta}} \leq 
\sup_{X} |d \Phi|_{\overline{\omega}} < \infty$. 
In particular, it was shown that the function $\Phi$ and the complete K\"ahler form 
$\omega_{\e, \delta}$ satisfy the assumptions 
in Proposition \ref{Nak}.

Let $A$ be a cohomology class in $H^{q}(X, K_{X}\otimes F \otimes \I{h})$ 
such that  $sA =0 \in H^{q}(X, K_{X}\otimes F^{m+1} \otimes \I{h^{m+1}})$. 
Our goal is to prove that $A$ 
is actually the zero cohomology class.

We briefly explain the strategy of the proof 
with the above notations. 
In Step \ref{S2}, by suitably choosing an increasing convex function 
$\chi\, \colon\, \mathbb{R} \to \mathbb{R}$, 
we represent $A$ by 
harmonic $L^2$-forms $u_{\e, \delta}$ with respect to 
$\omega_{\e, \delta}$ and 
the new metric $H_{\e}$ 
defined by $H_{\e}:=h_{\e}e^{-\chi\circ \Phi}$.  
In Step \ref{S3}, we consider the level set 
$X_{c}:= \{x\in X\, |\, \Phi(x) < c \}$, 
and show that if the $L^2$-norm 
$\|su_{\e, \delta}\|_{X_{c}, H_{\e}h_{\e}^m, \omega_{\e, \delta}}$ 
on $X_{c}$ converges to zero for almost all $c$, 
then $A$ is zero. 
To prove this convergence, 
in Step \ref{S4}, 
we construct a solution $v_{\e, \delta}$ of 
the $\dbar$-equation $\dbar v_{\e, \delta} = su_{\e, \delta}$ 
such that   
the $L^{2}$-norm $\| v_{\e, \delta} \|_{X_{c}, H_{\e},\omega_{\e, \delta}}$ on $X_{c}$  
is uniformly bounded. 
In Step \ref{S5},  we show that  
\begin{align*}
\| su_{\e, \delta} \|_{X_{c}, H_{\e}h_{\e}^m,\omega_{\e, \delta}} ^{2} &= 
\lla su_{\e, \delta}, \dbar v_{\e, \delta} \rra_{X_{c}, H_{\e}h_{\e}^m,\omega_{\e, \delta}} \\
&=\lla \dbar^{*} su_{\e, \delta}, v_{\e, \delta} \rra_{X_{c}, H_{\e}h_{\e}^m,\omega_{\e, \delta}} 
+\la (d\Phi)^{*} su_{\e, \delta}, v_{\e, \delta} \ra_{\partial X_{c}, H_{\e}h_{\e}^m,\omega_{\e, \delta}}
\to 0 
\end{align*}
by using the twisted Bochner-Kodaira-Nakano identity. 
\end{step}

\begin{step}[$L^2$-spaces and representations by harmonic forms]\label{S2}
In this step, 
we construct harmonic $L^2$-forms $u_{\e, \delta}$ representing 
the cohomology class $A \in H^{q}(X, K_{X}\otimes F \otimes \I{h})$, 
and prove Proposition \ref{finish}, 
which says that if $u_{\e, \delta}$ converges to zero 
in a suitable sense, 
then $A$ is zero (that is, the proof is completed).

We first consider the standard De Rham-Weil isomorphism\,:
\begin{equation}\label{DW}
H^{q}(X, K_{X}\otimes F \otimes \I{h})
\cong 
\frac{\, {\rm{Ker}}\,\dbar: L^{n,q}_{(2, {\rm{loc}})}(F)_{h,\omega}
\to L^{n,q+1}_{(2, {\rm{loc}})}(F)_{h,\omega}}
{{\rm{Im}}\,\dbar: L^{n,q-1}_{(2, {\rm{loc}})}(F)_{h,\omega}
\to L^{n,q}_{(2, {\rm{loc}})}(F)_{h,\omega}}, 
\end{equation}
where  $L^{n,\bullet}_{(2, {\rm{loc}})}(F)_{h,\omega}$ is 
the set of $F$-valued $(n,\bullet)$-forms $f$ on $X$ 
with locally bounded $L^{2}$-norm 
(that is, the $L^{2}$-norm $\|f\|_{K,h,\omega}$ on $K$ 
with respect to $h$ and $\omega$
is finite  
for every relatively compact set $K\Subset X$).  
By the above isomorphism, 
the cohomology class $A$ can be represented by 
a $\dbar$-closed $F$-valued $(n,q)$-form $u$ on $X$ 
with locally bounded  $L^{2}$-norm. 
Our goal is to prove that 
$u \in {\rm{Im}}\,\dbar \subset L^{n,q}_{(2, {\rm{loc}})}(F)_{h,\omega}$.

We want to represent the cohomology class $A$ by harmonic forms 
in $L^2$-spaces, but unfortunately, 
$u$ may not be globally $L^{2}$-integrable on $X$. 
For this reason, we construct a new metric  on $F$ 
that makes $u$ globally $L^{2}$-integrable. 
Since $u$ is locally $L^{2}$-integrable and 
$\Phi$ is exhaustive, 
there exists an increasing convex function 
$\chi\, \colon\, \mathbb{R} \to \mathbb{R}$ 
such that the $L^{2}$-norm 
$\|u\|_{h e^{-\chi(\Phi)}, \omega}$ on $X$ is finite.  
For simplicity we put
$$
H:=h e^{-\chi(\Phi)}, \quad  \quad 
H_{\e}:=h_{\e} e^{-\chi(\Phi)}, \quad \text{and} \quad 
\|u\|_{\e, \delta}:=\|u\|_{H_{\e}, \omega_{\e, \delta}}. 
$$
Then we obtain the following inequality\,:  
\begin{align}\label{ineq-1}
\|u\|_{\e, \delta} \leq \|u\|_{H, \omega_{\e, \delta}} \leq \|u\|_{H, \omega} <\infty.  
\end{align}
Strictly speaking, the left hand side should be 
$\|u|_{Y_{\e}}\|_{\e, \delta}$, 
but we often omit the symbol of restriction.  
The first inequality follows from property (b) of $h_{\e}$, 
and the second inequality follows from 
Lemma \ref{mul} and property (B) of $\omega_{\e, \delta}$. 
Here we used a special characteristic of the canonical bundle $K_{X}$ 
since the second inequality holds only for $(n,q)$-forms. 
The norm $\|u\|_{\e, \delta}$ is uniformly bounded 
since the right hand side is independent of $\e$, $\delta$. 
These inequalities play an important role in the proof.

We consider 
the $L^{2}$-space 
$$
L^{n,q}_{(2)}(F)_{\e, \delta}:=
L^{n,q}_{(2)}(Y_{\e},F)_{H_{\e},\omega_{\e, \delta}}
$$ 
on $Y_{\e}$ with respect to 
$H_{\e}$ and $\omega_{\e, \delta}$ (not $H$ and $\omega$). 
In general, we have the following orthogonal decomposition\,: 
$$
L^{n,q}_{(2)}(F)_{\e, \delta}= 
\overline{\rm{Im}\, \dbar}\,  \oplus 
\mathcal{H}^{n,q}_{\e, \delta}(F)\, \oplus 
\overline{\rm{Im}\, \dbar^{*}_{\e, \delta}}, 
$$
where $\overline{\bullet}$ denotes the closure of $\bullet$ with respect to the $L^{2}$-topology and 
$\mathcal{H}^{n,q}_{\e, \delta}(F)$ denotes the set of 
harmonic $F$-valued $(n,q)$-forms on $Y_{\e}$, namely    
$$
\mathcal{H}^{n,q}_{\e, \delta}(F):= 
\{ v \in L^{n,q}_{(2)}(F)_{\e, \delta} \, | \, 
\dbar v=\dbar^{*}_{\e, \delta}v=0 \}. 
$$
We remark that (the maximal extension of) 
the formal adjoint $\dbar^{*}_{\e, \delta}$ 
agrees with the Hilbert space adjoint
since $\omega_{\e, \delta}$ is complete for 
$\delta >0$ (see Lemma \ref{density}). 
Strictly speaking, 
$\dbar$ also depends on $H_{\e}$ and $\omega_{\e, \delta}$ 
since the domain and range of the closed operator $\dbar$ depend on them,  
but we abbreviate $\dbar_{\e, \delta}$ to $\dbar$.
  
The $F$-valued $(n,q)$-form $u$ belongs to $L^{n,q}_{(2)}(F)_{\e,\delta}$ 
by inequality $(\ref{ineq-1})$. 
By the above orthogonal decomposition, 
the $F$-valued $(n,q)$-form $u$ can be decomposed as follows\,:  
\begin{align}\label{ortho}
u=w_{\e, \delta}+u_{\e, \delta}\quad 
\text{for some } 
w_{\e, \delta} \in \overline{\rm{Im}\, \dbar}\text{ and } 
u_{\e, \delta} \in \mathcal{H}^{n,q}_{\e, \delta}(F) 
\text{ in } L^{n,q}_{(2)}(F)_{\e,\delta}. 
\end{align}
Note that the orthogonal projection of $u$ to 
$\overline{\rm{Im}\, \dbar^{*}_{\e, \delta}}$ 
is zero by the following lemma.

\begin{lemm}\label{tiny}
If $u$ belongs to ${\rm{Ker}}\, \dbar$, then 
the orthogonal projection of $u$ 
to $\overline{\rm{Im}\, \dbar^{*}_{\e, \delta}}$  is zero. 
\end{lemm}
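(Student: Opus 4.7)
The plan is to deduce the claim from the general functional-analytic identity $\operatorname{Ker} T = (\operatorname{Im} T^{*})^{\perp}$, valid for any closed densely defined operator $T$ between Hilbert spaces. Apply this with $T = \dbar \colon L^{n,q}_{(2)}(F)_{\e,\delta} \to L^{n,q+1}_{(2)}(F)_{\e,\delta}$, so that $T^{*} \colon L^{n,q+1}_{(2)}(F)_{\e,\delta} \to L^{n,q}_{(2)}(F)_{\e,\delta}$ is the Hilbert-space adjoint and the conclusion will read $\operatorname{Ker}\dbar = \overline{\operatorname{Im}\dbar^{*}_{\e,\delta}}^{\perp}$ inside $L^{n,q}_{(2)}(F)_{\e,\delta}$.

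The only preliminary point that must be verified is that the formal adjoint denoted $\dbar^{*}_{\e,\delta}$ (the one that appears in the orthogonal decomposition just above Lemma \ref{tiny}) coincides, as a closed operator, with the Hilbert-space adjoint $T^{*}$. This is exactly where property (A) enters: the K\"ahler form $\omega_{\e,\delta}$ is complete on $Y_{\e}$ for every $\delta > 0$, and the identification of the formal and Hilbert-space adjoints under this completeness hypothesis is precisely the content of \cite[(8.2) Lemma]{Dem}, which is already cited in the excerpt immediately before the lemma. No additional ingredient about $H_{\e}$ or $\Phi$ is needed here.

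Once the two adjoints are identified, the implication is immediate: if $u \in \operatorname{Ker}\dbar$, then $u$ is orthogonal to every element of $\operatorname{Im}\dbar^{*}_{\e,\delta}$, hence orthogonal to its closure $\overline{\operatorname{Im}\dbar^{*}_{\e,\delta}}$, and therefore the component of $u$ in that summand of the three-term orthogonal decomposition vanishes. There is really no obstacle in this proof; the single delicate item, the completeness-driven identification of the two adjoints, has already been set up in Step \ref{S1} and acknowledged in the paragraph preceding the lemma, so the remainder is a one-line application of the closed-range style adjoint identity.
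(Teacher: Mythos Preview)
Your proposal is correct and is essentially the same as the paper's proof: the paper carries out directly the computation $\lla u, \dbar^{*}_{\e,\delta} c_{k}\rra_{\e,\delta} = \lla \dbar u, c_{k}\rra_{\e,\delta} = 0$ and passes to the limit, which is precisely the content of the identity $\operatorname{Ker}\dbar = (\operatorname{Im}\dbar^{*}_{\e,\delta})^{\perp}$ that you invoke. The identification of the formal and Hilbert-space adjoints via completeness (property (A) and \cite[(8.2) Lemma]{Dem}) is indeed already recorded in the paragraph preceding the lemma, so nothing is missing.
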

\begin{proof}
For an arbitrary element $\lim_{k \to \infty}
\dbar^{*}_{\e, \delta} c_{k} \in \overline{\rm{Im}\, \dbar^{*}_{\e, \delta}}$, 
we have
\begin{align*}
\lla u, \lim_{k \to \infty}\dbar^{*}_{\e, \delta} c_{k} \rra_{\e,\delta} 
=\lim_{k \to \infty} \lla u, \dbar^{*}_{\e, \delta} c_{k} \rra_{\e,\delta} 
=\lim_{k \to \infty} \lla \dbar u,  c_{k} \rra_{\e,\delta} =0.
\end{align*}
This leads to the conclusion. 
\end{proof}

From now on, we take a suitable limit of $u_{\e, \delta}$. 
We need to carefully choose the $L^2$-space,
since the $L^2$-space $L^{n,q}_{(2)}(F)_{\e,\delta}$ 
depends on $\e, \delta$ 
although we have property (c). 
We remark that $\{ \e \}_{\e>0}$ and $\{ \delta \}_{\delta>0}$ 
denote countable sequences converging to zero (see Remark \ref{countable}). 
Let $\{ \delta_{0} \}_{\delta_{0}>0}$ be another countable
sequence converging to zero.

\begin{prop}\label{limit}
There exist a subsequence $\{\delta_{\nu}\}_{\nu=1}^{\infty}$ of 
$\{\delta\}_{\delta>0}$ and 
$\alpha_{\e} \in L^{n,q}_{(2)}(F)_{H_{\e}, \omega}$ with 
the following properties\,$:$
\begin{itemize}
\item[$\bullet$] For any $\e, \delta_{0}>0$, 
as $\delta_{\nu}$ tends to $0$, 
$$
u_{\e, \delta_{\nu}} \text{ converges to } \alpha_{\e} 
\text{ with respect to the weak } L^2\text{-topology in } 
L^{n,q}_{(2)}(F)_{\e,  \delta_{0}}. 
$$
\item[$\bullet$] For any $\e>0$, we have 
$$
\| \alpha_{\e} \|_{H_{\e},\omega}
\leq \varliminf_{ \delta_{0} \to 0}\| \alpha_{\e} \|_{\e, \delta_{0}}
\leq \varliminf_{\delta_{\nu} \to 0}\| u_{\e, \delta_{\nu}} \|_{\e,\delta_{\nu}}
\leq \|u\|_{H, \omega}. 
$$
\end{itemize}
\end{prop}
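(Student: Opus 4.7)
The plan is to extract a single subsequence $\{\delta_\nu\}$ by applying Banach--Alaoglu together with a Cantor diagonal argument in a countable collection of Hilbert spaces $L^{n,q}_{(2)}(F)_{\e,\delta_{0}}$, identify the various weak limits with one measurable section $\alpha_\e$ on $Y_\e$, and finally transfer the $L^2$-bound to the target space $L^{n,q}_{(2)}(F)_{H_\e,\omega}$ by monotone convergence.

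First I would record the uniform bound $\|u_{\e,\delta}\|_{\e,\delta}\leq\|u\|_{H,\omega}$: the orthogonal decomposition (\ref{ortho}) gives $\|u_{\e,\delta}\|_{\e,\delta}\leq\|u\|_{\e,\delta}$, and inequality (\ref{ineq-1}) bounds the latter by $\|u\|_{H,\omega}$. Next, for $\delta\leq\delta_{0}$ we have $\omega_{\e,\delta}\leq\omega_{\e,\delta_{0}}$, so Lemma \ref{mul} applied to the $(n,q)$-form $u_{\e,\delta}$ yields
$\|u_{\e,\delta}\|_{\e,\delta_{0}}\leq \|u_{\e,\delta}\|_{\e,\delta}\leq\|u\|_{H,\omega}$,
so $\{u_{\e,\delta}\}_{\delta\leq\delta_{0}}$ is bounded in the Hilbert space $L^{n,q}_{(2)}(F)_{\e,\delta_{0}}$. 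Since the parameters $\{\e\}$ and $\{\delta\}$ are countable sequences converging to $0$ (Remark \ref{countable}), Banach--Alaoglu together with Cantor's diagonal argument over the countable collection of pairs $(\e_{k},\delta_{0}^{(\ell)})$ produces one subsequence $\{\delta_\nu\}$ such that $u_{\e_{k},\delta_\nu}$ converges weakly in each $L^{n,q}_{(2)}(F)_{\e_{k},\delta_{0}^{(\ell)}}$. For an arbitrary $\delta_0>0$, choose $\delta_{0}^{(\ell)}\leq \delta_0$; the continuous inclusion $L^{n,q}_{(2)}(F)_{\e,\delta_{0}^{(\ell)}}\hookrightarrow L^{n,q}_{(2)}(F)_{\e,\delta_{0}}$ (again via Lemma \ref{mul}) propagates the weak convergence to $L^{n,q}_{(2)}(F)_{\e,\delta_{0}}$.

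The subtle technical point is identifying the weak limits across these different Hilbert spaces with a single measurable section $\alpha_\e$ on $Y_\e$. On any relatively compact $K\Subset Y_\e$ all the norms $\|\cdot\|_{\e,\delta_{0}}$ are mutually equivalent to the standard $L^2$-norm with respect to any fixed smooth reference, so weak convergence restricted to $K$ coincides with ordinary weak $L^2$-convergence on $K$, which determines the limit uniquely in $L^2(K)$. Patching over a compact exhaustion of $Y_\e$ yields one measurable form $\alpha_\e$ that serves as the common weak limit. For the norm chain, weak lower semi-continuity combined with the previous comparison gives
\begin{equation*}
\|\alpha_\e\|_{\e,\delta_{0}}
\leq \varliminf_{\nu\to\infty}\|u_{\e,\delta_\nu}\|_{\e,\delta_{0}}
\leq \varliminf_{\nu\to\infty}\|u_{\e,\delta_\nu}\|_{\e,\delta_\nu}
\leq \|u\|_{H,\omega}.
\end{equation*}
Letting $\delta_{0}\to 0$, the pointwise density $|\alpha_\e|^{2}_{\omega_{\e,\delta_{0}}}\,dV_{\omega_{\e,\delta_{0}}}$ increases monotonically to $|\alpha_\e|^{2}_{\omega}\,dV_{\omega}$ (Lemma \ref{mul} for $(n,q)$-forms as $\omega_{\e,\delta_{0}}\searrow\omega$), so the monotone convergence theorem gives $\|\alpha_\e\|_{\e,\delta_{0}}\nearrow\|\alpha_\e\|_{H_\e,\omega}$. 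This simultaneously places $\alpha_\e$ in $L^{n,q}_{(2)}(F)_{H_\e,\omega}$ and delivers the chain of inequalities in the proposition. The main obstacle is the consistency of the weak limits across Hilbert spaces with incompatible inner products, and it is resolved by reducing to standard weak $L^2$-convergence on each compact subset of $Y_\e$.
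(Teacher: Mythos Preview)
Your proof is correct and takes essentially the same approach as the paper: the uniform bound from orthogonality and (\ref{ineq-1}), Banach--Alaoglu with Cantor diagonalization over the countable $(\e,\delta_{0})$ grid, identification of the limits via the bounded inclusions between the $L^{2}$-spaces, and weak lower semi-continuity for the norm chain. The only differences are cosmetic: the paper identifies $\alpha_{\e,\delta'}=\alpha_{\e,\delta''}$ directly via Lemma \ref{weak-hil} and uniqueness of weak limits (making your compact-set patching unnecessary), and invokes Fatou's lemma where you use monotone convergence.
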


\begin{rem}\label{limit-rem} 
The subsequence $\{\delta_{\nu}\}_{\nu=1}^{\infty}$ 
does not depend on $\e, \delta_{0}$.  
The $F$-valued form $\alpha_{\e}$ is independent of $\delta_{0}$ 
and $L^2$-integrable with respect to $H_{\e}$, $\omega$ (not $\omega_{\e, \delta}$). 
\end{rem}

\begin{proof}
For any $\e, \delta_{0}>0$, by taking $\delta$ with $\delta<\delta_{0}$, 
we have 
\begin{align}\label{ineq-2}
\|u_{\e, \delta}\|_{\e, \delta_{0}}
\leq \|u_{\e, \delta}\|_{\e, \delta}
\leq \|u\|_{\e, \delta} 
\leq \|u\|_{H, \omega}. 
\end{align}
The first inequality follows from 
$\omega_{\e, \delta} \leq \omega_{\e, \delta_{0}}$ and Lemma \ref{mul}, 
the second inequality follows since $u_{\e, \delta}$ is 
the orthogonal projection of $u$ with respect to $\e, \delta$, 
and the last inequality follows from inequality $(\ref{ineq-1})$. 
From this estimate, we know that 
$\{u_{\e, \delta}\}_{\delta>0}$ is uniformly bounded 
in $L^{n,q}_{(2)}(F)_{\e,  \delta_{0}}$.
Therefore, there exist a subsequence $\{\delta_{\nu}\}_{\nu=1}^{\infty}$ of 
$\{\delta\}_{\delta>0}$ and 
$ \alpha_{\e, \delta_{0}} \in L^{n,q}_{(2)}(F)_{\e, \delta_{0}}$ such that 
$u_{\e, \delta_{\nu}}$ converges to 
$ \alpha_{\e, \delta_{0}}$ 
with respect to the weak $L^2$-topology in $L^{n,q}_{(2)}(F)_{\e,  \delta_{0}}$.  
The choice of this subsequence $\{\delta_{\nu}\}_{\nu=1}^{\infty}$ 
may depend on $\e, \delta$, 
but by extracting a suitable subsequence, 
we can easily choose a subsequence independent of $\e, \delta_{0}$ 
by Cantor's diagonal argument.

Now we prove that $ \alpha_{\e, \delta_{0}}$ does not depend on $\delta_{0}$. 
For arbitrary $\delta', \delta''$ with $0 < \delta'\leq \delta''$, 
the natural inclusion $L^{n,q}_{(2)}(F)_{\e,  \delta'}\to 
L^{n,q}_{(2)}(F)_{\e,  \delta''}$ is a bounded operator 
(continuous linear map) 
by $\|\bullet \|_{\e,  \delta''} \leq \|\bullet \|_{\e,  \delta'}$ 
(see Lemma \ref{mul}). 
By Lemma \ref{weak-hil}, the $F$-valued form 
$u_{\e, \delta_{\nu}}$ weakly converges to $ \alpha_{\e, \delta'}$ 
in not only $L^{n,q}_{(2)}(F)_{\e,  \delta'}$ but also 
$L^{n,q}_{(2)}(F)_{\e,  \delta''}$. 
Hence we have $ \alpha_{\e, \delta'}= \alpha_{\e, \delta''}$
since the weak limit is uniquely determined.

Finally we prove the estimate in the proposition. 
It is easy to see that 
$$
\| \alpha_{\e} \|_{\e, \delta_{0}}
\leq \varliminf_{\delta_{\nu} \to 0}\| u_{\e, \delta_{\nu}} \|_{\e,\delta_{0}}
\leq \varliminf_{\delta_{\nu} \to 0}\| u_{\e, \delta_{\nu}} \|_{\e,\delta_{\nu}}
\leq \|u\|_{H, \omega}. 
$$
The first inequality follows since 
the norm is lower semi-continuous with respect to the weak convergence, 
the second inequality follows from 
$\omega_{\e, \delta_{0}} \geq \omega_{\e, \delta_{\nu}}$, 
and the last inequality follows from inequality (\ref{ineq-2}). 
Fatou's lemma yields 
\begin{align}\label{Fatou}
\| \alpha_{\e} \|^2_{H_{\e}, \omega} 
=\int_{Y_{\e}} | \alpha_{\e} |^2_{H_{\e}, \omega} 
dV_{\omega}
\leq \varliminf_{\delta_{0} \to 0} 
\int_{Y_{\e}} | \alpha_{\e} |^2_{H_{\e}, \omega_{\e, \delta_{0}}} 
dV_{\omega_{\e, \delta_{0}}}
=\varliminf_{\delta_{0} \to 0} \| \alpha_{\e} \|^2_{\e, \delta_{0}}. 
\end{align} 
These inequalities lead to the estimate in the proposition. 
\end{proof}

For simplicity, 
we use the same notation $u_{\e, \delta}$ 
for the subsequence $u_{\e, \delta_{\nu}}$ in Proposition \ref{limit}.  
Next we take a suitable limit of $\alpha_{\e}$. 
For a fixed positive number $\e_{0}>0$, 
by taking a sufficiently small $\e$, 
we have 
\begin{equation}\label{estimate}
\| \alpha_{\e} \|_{H_{\e_{0}}, \omega } 
\leq \| \alpha_{\e} \|_{H_{\e}, \omega }
\leq \|u\|_{H,\omega} 
\end{equation}
by property (b) and Proposition \ref{limit}. 
By taking a subsequence of $\{\alpha_{\e} \}_{\e>0}$, 
we may assume that $\{\alpha_{\e} \}_{\e>0}$ 
weakly converges to some $\alpha$ in 
$L^{n,q}_{(2)}(F)_{H_{\e_{0}},\omega}$. 
The following proposition says that 
the proof of Theorem \ref{main} is completed  
if the weak limit $\alpha$ is shown to be zero. 

\begin{prop}\label{finish}
If the weak limit $\alpha$ is zero in 
$L^{n,q}_{(2)}(F)_{H_{\e_{0}},\omega}$, 
then the cohomology class $A$ is zero in 
$H^{q}(X, K_{X}\otimes F \otimes \I{h})$. 
\end{prop}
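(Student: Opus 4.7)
The plan is to deduce $A=0$ by combining two facts: that each $\alpha_\e$ still represents the cohomology class $A\in H^q(X,K_X\otimes F\otimes \I{h})$, and that the natural map from an appropriate Hilbert $L^2$-space to the Fr\'echet $\dbar$-cohomology group computing $H^q(X,K_X\otimes F\otimes \I{h})$ is compact by Proposition~\ref{DWiso}. Together with Lemma~\ref{comp-hil}, this will upgrade the weak convergence $\alpha_\e\to\alpha=0$ to honest convergence $[\alpha_\e]\to 0$ in the cohomology; since $[\alpha_\e]=A$ is constant and Fr\'echet spaces are Hausdorff, $A=0$ will follow immediately.

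For the first fact I would start from the orthogonal decomposition $u=w_{\e,\delta}+u_{\e,\delta}$ in $L^{n,q}_{(2)}(F)_{\e,\delta}$. The difference $w_{\e,\delta}=u-u_{\e,\delta}$ lies in the closed subspace $\overline{{\rm Im}\,\dbar}$, so by Lemma~\ref{weak-clo} (closed subspaces are weakly closed) the weak limit $u-\alpha_\e$ as $\delta_\nu\to 0$ also lies in $\overline{{\rm Im}\,\dbar}$. Next, using Lemma~\ref{mul} together with the metric comparisons $h\geq h_\e\geq h_{\e_0}$ for $\e\leq \e_0$ (property (b)) and the K\"ahler-form comparisons $\omega_{\e,\delta}\geq\omega$, one identifies $u-\alpha_\e$ with an element of $\overline{{\rm Im}\,\dbar}$ in the relevant Fr\'echet space $L^{n,q}_{(2,{\rm loc})}(F)_{H_{\e_0},\omega}$ computing the cohomology. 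Because $X$ is holomorphically convex, Proposition~\ref{closed} gives $\overline{{\rm Im}\,\dbar}={\rm Im}\,\dbar$ in this space, so $u-\alpha_\e\in{\rm Im}\,\dbar$. The De Rham-Weil isomorphism (\ref{DW}), combined with the equality of multiplier ideals $\I{H_{\e_0}}=\I{h_{\e_0}}=\I{h}$ (property (c) plus the boundedness of $\chi(\Phi)$ on $X$) and the curvature bound $\sqrt{-1}\Theta_{H_{\e_0}}(F)\geq -\e_0\omega+\deldel\chi(\Phi)\geq -\omega$ coming from convexity of $\chi$ and plurisubharmonicity of $\Phi$, then yields $[\alpha_\e]=[u]=A$.

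For the second fact I would apply Proposition~\ref{DWiso} with $(h,\ome)=(H_{\e_0},\omega)$ and $Y=X$, using that $\omega$ is a complete K\"ahler form on $X$ admitting local bounded potentials. The target cohomology is Fr\'echet by Proposition~\ref{closed} and realizes as an inverse limit of Hilbert spaces defined by local $L^2$-norms on an exhaustion of $X$, so Lemma~\ref{comp-hil} carries the weak convergence $\alpha_\e\to\alpha=0$ in $L^{n,q}_{(2)}(F)_{H_{\e_0},\omega}$ into convergence $[\alpha_\e]\to 0$ in the Fr\'echet cohomology. Combined with $[\alpha_\e]\equiv A$ from the first step and Hausdorffness, this forces $A=0$. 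The main obstacle is the bookkeeping in the first step: one must verify that the metric comparisons among $h,h_\e,h_{\e_0},H_{\e_0}$ and the K\"ahler-form comparisons among $\omega,\omega_{\e,\delta}$ conspire to give continuous linear maps in the correct direction, so that the closure $\overline{{\rm Im}\,\dbar}$ genuinely transports from the smaller $L^2$-space on $Y_\e$ into the Fr\'echet $L^2_{{\rm loc}}$-space on $X$ where Proposition~\ref{closed} applies and the De Rham-Weil isomorphism computes the desired cohomology.
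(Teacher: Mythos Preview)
Your overall strategy matches the paper's proof exactly: show that each $\alpha_\e$ represents $A$ in the $\dbar$-cohomology of $L^{n,q}_{(2,{\rm loc})}(F)_{H_{\e_0},\omega}$, then use the compactness of Proposition~\ref{DWiso} together with Lemma~\ref{comp-hil} to upgrade the weak convergence $\alpha_\e\to 0$ to $[\alpha_\e]\to 0$, forcing $A=0$ by Hausdorffness. Step 2 of your plan is fine.

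The gap is in Step 1, and it is precisely the obstacle you flag at the end. Metric and K\"ahler-form comparisons alone do \emph{not} transport $u-\alpha_\e\in\overline{{\rm Im}\,\dbar}$ from $L^{n,q}_{(2)}(Y_\e,F)_{H_\e,\omega_{\e,\delta_0}}$ into $L^{n,q}_{(2,{\rm loc})}(F)_{H_{\e_0},\omega}$. The Zariski open sets $Y_\e=X\setminus Z_\e$ and $Y_{\e_0}=X\setminus Z_{\e_0}$ are in general unrelated (see Remark~\ref{m0}), so a sequence $\beta_k\in L^{n,q-1}_{(2)}(Y_\e,F)_{\e,\delta_0}$ with $\dbar\beta_k\to u-\alpha_\e$ lives only on $Y_\e$: the $\beta_k$ need not extend across $Z_\e$, and the extension lemma \cite[(7.3) Lemma, Chapter V\hspace{-.1em}I\hspace{-.1em}I\hspace{-.1em}I]{Dem-book} applies only to $\dbar$-closed $L^2$-forms, not to arbitrary primitives. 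Moreover, for $(n,q-1)$-forms the inequality from Lemma~\ref{mul} goes the wrong way (knowing $\|\beta_k\|_{H_\e,\omega_{\e,\delta_0}}<\infty$ does not give local $L^2$-integrability with respect to $\omega$). So there is no continuous linear map carrying ${\rm Im}\,\dbar$ from the first space into the second.

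The paper fills this gap by routing through \v{C}ech cohomology. One first shows $u-\alpha_\e\in{\rm Im}\,\dbar$ in $L^{n,q}_{(2,{\rm loc})}(F)_{\e,\delta_0}$ (this much is as you describe, using Lemma~\ref{weak-clo} and Proposition~\ref{closed}). Then one uses a commutative square of De Rham--Weil isomorphisms $\phi_1,\phi_2$ from Proposition~\ref{DW-iso}: the class $q_1(u-\alpha_\e)=0$ in the $(\e,\delta_0)$-cohomology maps via $\phi_1$ to $0$ in $\check H^q(X,K_X\otimes F\otimes\I{h_\e})=\check H^q(X,K_X\otimes F\otimes\I{h})$, and commutativity then forces $\phi_2(q_2(u-\alpha_\e))=0$, hence $q_2(u-\alpha_\e)=0$ in the $(H_{\e_0},\omega)$-cohomology. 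The point is that the \v{C}ech complex $C^\bullet(\mathcal U,K_X\otimes F\otimes\I{h_\e})$ is independent of $\e,\delta$ by property~(c), so it serves as the common intermediary that the direct metric comparisons cannot provide. Once you have $q_2(u-\alpha_\e)=0$, your Step 2 finishes the argument exactly as you wrote.
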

\begin{proof}
First we consider the De Rham-Weil isomorphism constructed in 
Proposition \ref{DW-iso}. 
$$
\begin{CD}
\dfrac{{\rm{Ker}}\, \dbar}{{\rm{Im}}\, \dbar} 
\text{ of }  L^{n,q}_{(2, {\rm{loc}})}(F)_{\e, \delta_{0}}
\xrightarrow[ \quad \phi_{1} \quad ]{\cong} 
\check{H}^{q}(X, K_{X}\otimes F \otimes \I{h_{\e}})
=\check{H}^{q}(X, K_{X}\otimes F \otimes \I{h_{}}). 
\end{CD}
$$
We remark that the $\rm{\check{C}}$ech cohomology group 
does not depend on $\e$ 
by property (c). 
By Proposition \ref{closed}, 
the subspace ${\rm{Im}}\,\dbar$ is closed in 
$L^{n,q}_{(2, {\rm{loc}})}(F)_{\e, \delta_{0}}$. 
Hence, for every $\delta$ with $0< \delta \leq \delta_{0}$, 
we can easily see that 
\begin{align}\label{closedness}
u-u_{\e,\delta} \in \overline{{\rm{Im}}\,\dbar} \text{ in } 
L^{n,q}_{(2)}(F)_{\e, \delta}
&\subset  \overline{{\rm{Im}}\,\dbar} \text{ in } 
L^{n,q}_{(2)}(F)_{\e, \delta_{0}} \\ \notag
&\subset \overline{\rm{Im}\,\dbar} 
\text{ in }  L^{n,q}_{(2, {\rm{loc}})}(F)_{\e, \delta_{0}} 
= {{\rm{Im}}\,\dbar} 
\text{ in }  L^{n,q}_{(2, {\rm{loc}})}(F)_{\e, \delta_{0}}
\end{align} 
by the construction of $u_{\e,\delta}$ and Proposition \ref{closed}. 
As $\delta$ tends to zero, we obtain 
\begin{align*}
u - \alpha_{\e} & \in \overline{\rm{Im}\, \dbar} \text{ in }  
L^{n,q}_{(2)}(F)_{\e, \delta_{0}} 
\subset \overline{\rm{Im}\,\dbar} 
\text{ in }  L^{n,q}_{(2, {\rm{loc}})}(F)_{\e, \delta_{0}} 
= {{\rm{Im}}\,\dbar} 
\text{ in }  L^{n,q}_{(2, {\rm{loc}})}(F)_{\e, \delta_{0}}. 
\end{align*}
by Lemma \ref{weak-clo} and Proposition \ref{limit}.

On the other hand, 
we have the following commutative diagram\,$:$
\[\xymatrix{
&{\rm{Ker}}\, \dbar \text{ in } L^{n,q}_{(2)}(F)_{\e, \delta_{0}}  
\ar[r]^{q_{1}\ \ }&  
\dfrac{{\rm{Ker}}\, \dbar}{{\rm{Im}}\, \dbar} 
\text{ in }  L^{n,q}_{(2, {\rm{loc}})}(F)_{\e, \delta_{0}} 
\ar[r]^{\cong\ \ \ }_{\phi_{1}\ \ }
&\check{H}^{q}(X, K_{X}\otimes F \otimes \I{h})\\
&{\rm{Ker}}\, \dbar \text{ in } L^{n,q}_{(2)}(F)_{H_{\e}, \omega}
\ar[u]_{j_{1}} \ar[r]^{j_{2}\ \ } &{\rm{Ker}}\, \dbar \text{ in } L^{n,q}_{(2)}(F)_{H_{\e_{0}}, \omega} \ar[r]^{q_{2}\ \ }  
&\dfrac{{\rm{Ker}}\, \dbar}{{\rm{Im}}\, \dbar} 
\text{ in } L^{n,q}_{(2, {\rm{loc}})}(F)_{H_{\e_{0}}, \omega}
\ar[u]^{\cong}_{\phi_2}. 
}\]
Here $j_{1}$, $j_{2}$ are the natural inclusions, 
$q_{1}$, $q_{2}$ are the natural quotient maps via 
the local $L^2$-spaces, 
and $\phi_1$, $\phi_2$ are the De Rham-Weil isomorphisms.
We remark that $j_{2}$ is well-defined. 
Indeed, by the $L^2$-integrability 
and \cite[LEMME 6.9]{Dem82}, 
the equality $\dbar U=0$ can be extended from  
$Y_{\e}$ to $X$ (in particular to $Y_{\e_{0}}$) 
for $U\in {\rm{Ker}}\, \dbar \subset L^{n,q}_{(2)}(F)_{H_{\e}, \omega}$. 
The key point here is 
the $L^2$-integrability with respect to $\omega$ 
(not $\omega_{\e,\delta}$). 
By Proposition \ref{DWiso}, the map $q_{2}$ is a compact operator, 
and thus we obtain 
$$
\lim_{\e \to 0}q_{2}(u-\alpha_{\e})
=q_{2}(u-\alpha)=q_{2}(u)   
$$
by Lemma \ref{comp-hil} and the assumption $\alpha =0$. 
On the other hand, we can see that 
$q_{1}(u-\alpha_{\e})=0$
by the first half argument. 
Therefore we obtain $q_{2}(u)=0$ by the above commutative diagram. 
Then we can conclude that $u$ belongs to ${\rm{Im}}\, \dbar$ in 
$L^{n,q}_{(2, {\rm{loc}})}(F)_{H, \omega}$. 
Indeed, we can easily see that $q_{3}(u)=0$ 
by the following commutative diagram\,:
\[\xymatrix{
&{\rm{Ker}}\, \dbar \text{ in } L^{n,q}_{(2)}(F)_{H_{\e_{0}},\omega}  
\ar[r]^{q_{2}\ \ }&  
\dfrac{{\rm{Ker}}\, \dbar}{{\rm{Im}}\, \dbar} 
\text{ of }  L^{n,q}_{(2, {\rm{loc}})}(F)_{H_{\e_{0}},\omega} 
\ar[r]^{\cong\ \ \ }_{\phi_{2}\ \ }
&\check{H}^{q}(X, K_{X}\otimes F \otimes \I{h_{\e_{0}}})\\
&{\rm{Ker}}\, \dbar \text{ in } L^{n,q}_{(2)}(F)_{H, \omega}
\ar[u]_{j_{3}} \ar[r]^{q_{3}\ \ } 
&\dfrac{{\rm{Ker}}\, \dbar}{{\rm{Im}}\, \dbar} 
\text{ of }  L^{n,q}_{(2, {\rm{loc}})}(F)_{H, \omega}  
\ar[r]^{\cong\ \ \ }_{\phi_{3}\ \ }  
& \check{H}^{q}(X, K_{X}\otimes F \otimes \I{h_{\e}}). 
\ar@{=}[u]
}\]
\end{proof}
\end{step}

\begin{step}[Relations between weak limits and $L^2$-norms]\label{S3}
In this step, we consider the norm 
$$\|su_{\e, \delta}\|_{\e, \delta}:= 
\|su_{\e, \delta}\|_{H_{\e}h_{\e}^{m}, \omega_{\e,\delta}}$$ 
and prove Proposition \ref{finish2}, 
which says that it is sufficient for the proof  
to show that 
$$\varliminf_{\e \to 0} \varliminf_{\delta \to 0} \|su_{\e, \delta}\|_{K, \e, \delta} =0$$
for every relatively compact set $K \Subset X$. 

In order to clarify a relation between 
the weak limit $\alpha$ and the 
asymptotic behavior of the norm of $su_{\e, \delta}$, 
we compare the norm of $u_{\e, \delta}$ with the norm of $su_{\e, \delta}$. 
We define $Y^{k}_{\e_{0}}$ and $X_{c}$ by  
\begin{align*}
Y^{k}_{\e_{0}}:=\{y\in Y_{\e_{0}} \, 
|\, |s|_{h^{m}_{\e_{0}}}(y)>1/k\}
\quad \text{ and }\quad  
X_{c}:=\{x\in X \, |\, \Phi(x)<c\}
\end{align*}
for $k \gg 0$ and $c$. 
The subset $X_{c}$ is a relatively compact set in 
$X$ for every $c$ with $c < \sup_{X} \Phi$ by the construction of $\Phi$. 
Further $Y^{k}_{\e_{0}}$ is an open set in $Y_{\e_{0}}$ since 
$ |s|_{h_{\e_{0}}^{m}}$ is lower semi-continuous. 
Then we prove the following proposition\,:

\begin{prop}\label{finish2}
Under the above situation, 
if we have 
$$\varliminf_{\e \to 0} \varliminf_{\delta \to 0} \|su_{\e, \delta}\|_{X_{c}, \e, \delta} =0$$
for every $c$ with $c < \sup_{X} \Phi$, 
then the weak limit $\alpha$ is zero. 
In particular, 
the cohomology class $A$ is zero by Proposition \ref{finish}. 
\end{prop}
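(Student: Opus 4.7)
The overall plan is to prove that under the hypothesis the weak limit $\alpha \in L^{n,q}_{(2)}(F)_{H_{\e_0},\omega}$ vanishes almost everywhere, which then reduces the statement to Proposition~\ref{finish}. The key idea is to localize to the open subsets $X_c \cap Y^k_{\e_0}$ on which $|s|_{h_{\e_0}^m} \geq 1/k$; on such a piece $|s|$ is bounded below, so the assumed smallness of $\|su_{\e,\delta}\|_{X_c,\e,\delta}$ translates directly into smallness of $\|u_{\e,\delta}\|$, and (via weak lower semi-continuity) into smallness of $\|\alpha_\e\|$, which forces $\alpha$ to vanish there. Since $X\setminus(Z_{\e_0}\cup s^{-1}(0))$ is exhausted by such pieces as $c\nearrow\sup_X\Phi$ and $k\to\infty$, and its complement is a proper (real-analytic) subvariety of measure zero, vanishing on each piece will yield $\alpha\equiv 0$.

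Concretely, I would fix $\e_0$ and $k$ and establish, on $X_c\cap Y^k_{\e_0}$ and for $0<\delta\leq\delta_0$ and $0<\e\leq\e_0$, the chain
\[
\|u_{\e,\delta}\|^2_{X_c\cap Y^k_{\e_0},H_\e,\omega_{\e,\delta_0}}
\leq \|u_{\e,\delta}\|^2_{X_c\cap Y^k_{\e_0},H_\e,\omega_{\e,\delta}}
\leq k^2\,\|su_{\e,\delta}\|^2_{X_c,\e,\delta}.
\]
The first inequality is Lemma~\ref{mul} applied to $(n,q)$-forms with $\omega_{\e,\delta_0}\geq\omega_{\e,\delta}$; the second follows from property~(b), which gives $h_\e\geq h_{\e_0}$ for $\e\leq\e_0$ and hence $|s|_{h_\e^m}\geq|s|_{h_{\e_0}^m}\geq 1/k$ on $Y^k_{\e_0}$. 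Taking $\varliminf_{\delta\to 0}$ and invoking weak lower semi-continuity of the norm for the weak convergence $u_{\e,\delta_\nu}\rightharpoonup\alpha_\e$ in $L^{n,q}_{(2)}(F)_{\e,\delta_0}$ from Proposition~\ref{limit}, then letting $\delta_0\to 0$ and applying Fatou's lemma exactly as in inequality~(\ref{Fatou}), I obtain
\[
\|\alpha_\e\|^2_{X_c\cap Y^k_{\e_0},H_\e,\omega}
\leq k^2\,\varliminf_{\delta\to 0}\|su_{\e,\delta}\|^2_{X_c,\e,\delta}.
\]

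By the hypothesis $\varliminf_{\e\to 0}\varliminf_{\delta\to 0}\|su_{\e,\delta}\|_{X_c,\e,\delta}=0$, I can then extract a subsequence $\e_j\to 0$, which I may arrange to be a sub-subsequence of the originally chosen weakly convergent subsequence $\alpha_\e\rightharpoonup\alpha$ in $L^{n,q}_{(2)}(F)_{H_{\e_0},\omega}$, along which the right-hand side tends to $0$. Thus $\|\alpha_{\e_j}\|_{X_c\cap Y^k_{\e_0},H_{\e_j},\omega}\to 0$, and since $H_{\e_j}\geq H_{\e_0}$ this upgrades to strong convergence $\alpha_{\e_j}\to 0$ in $L^2(X_c\cap Y^k_{\e_0},H_{\e_0},\omega)$. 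Combined with the weak convergence $\alpha_{\e_j}\rightharpoonup\alpha$ in the same Hilbert space, uniqueness of weak limits forces $\alpha=0$ almost everywhere on $X_c\cap Y^k_{\e_0}$. Taking the union over all admissible $c$ and all $k\in\mathbb{N}$ gives $\alpha=0$ a.e.\ on $X$, hence $\alpha\equiv 0$ in $L^{n,q}_{(2)}(F)_{H_{\e_0},\omega}$, and Proposition~\ref{finish} completes the proof.

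The main technical obstacle is the bookkeeping of the three K\"ahler forms $\omega_{\e,\delta}$, $\omega_{\e,\delta_0}$, $\omega$ and the two line-bundle metrics $H_\e$, $H_{\e_0}$: every comparison must go in the correct direction, and the chain only closes up thanks to the monotonicity of $(n,q)$-form norms under $\ome\geq\omega$ in Lemma~\ref{mul} together with the monotonicity $h_\e\geq h_{\e_0}$ from property~(b). A secondary subtlety, relevant only in the case $m=0$, is the $\e$-dependence of $Z_\e$ noted in Remark~\ref{m0}, which requires identifying $Y^k_{\e_0}$ with $Y^k_{\e_0}\cap Y_\e$ modulo the measure-zero proper subvariety $Z_\e$; since all estimates involve $L^2$-norms, this identification is harmless.
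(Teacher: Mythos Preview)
Your proposal is correct and follows essentially the same route as the paper's proof: localize to $X_c\cap Y^k_{\e_0}$, use $|s|_{h^m_\e}\geq |s|_{h^m_{\e_0}}\geq 1/k$ there to bound $\|u_{\e,\delta}\|$ by $k\|su_{\e,\delta}\|$, pass to the weak limits in $\delta$ (via Proposition~\ref{limit} and Fatou as in (\ref{Fatou})) and then in $\e$, and finally exhaust $Y_{\e_0}$ (up to the measure-zero set $s^{-1}(0)$) by letting $c\nearrow\sup_X\Phi$ and $k\to\infty$. The only cosmetic difference is that for the $\e\to 0$ step the paper applies weak lower semi-continuity of the norm directly to obtain $\|\alpha\|_{X_c\cap Y^k_{\e_0},H_{\e_0},\omega}\leq\varliminf_{\e\to 0}\|\alpha_\e\|_{X_c\cap Y^k_{\e_0},H_\e,\omega}$, whereas you extract a subsequence along which $\alpha_{\e_j}\to 0$ strongly and then invoke uniqueness of weak limits; both arguments are equivalent and equally short.
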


\begin{proof}
By the argument on inequality (\ref{estimate}), 
we are assuming that $\alpha_{\e}$ weakly converges 
to $\alpha$ 
in $L^{n,q}_{(2)}(F)_{H_{\e_{0}}, \omega}$. 
The restriction 
$\alpha_{\e}|_{X_{c}\cap Y^{k}_{\e_{0}}}$ also weakly converges 
to $\alpha|_{X_{c}\cap Y^{k}_{\e_{0}}}$ 
in $L^{n,q}_{(2)}(X_{c}\cap Y^{k}_{\e_{0}},F)_{H_{\e_{0}}, \omega}$ 
by Lemma \ref{weak-hil}, 
since the restriction map 
$$
L^{n,q}_{(2)}(F)_{H_{\e_{0}}, \omega} 
\longrightarrow 
L^{n,q}_{(2)}(X_{c}\cap Y^{k}_{\e_{0}},F)_{H_{\e_{0}}, \omega}
$$ 
is a bounded operator (continuous linear map). 
Therefore we obtain 
\begin{align*}
\| \alpha \|_{X_{c} \cap Y^{k}_{\e_{0}}, H_{\e_{0}}, \omega}
\leq \varliminf_{\e \to 0} 
\| \alpha_{\e} \|_{X_{c} \cap Y^{k}_{\e_{0}}, H_{\e_{0}}, \omega}
\leq \varliminf_{\e \to 0} 
\| \alpha_{\e} \|_{X_{c} \cap Y^{k}_{\e_{0}}, H_{\e}, \omega}. 
\end{align*}
The first inequality follows since 
the norm is lower semi-continuous with respect to the weak convergence, 
and  the second inequality follows from property (b). 
By the same argument, 
the restriction of $u_{\e,\delta}$ weakly converges to 
$\alpha_{\e}$ 
in $L^{n,q}_{(2)}(X_{c}\cap Y^{k}_{\e_{0}},F)_{\e, \delta_{0}}$, 
and thus  we obtain 
\begin{align*}
\| \alpha_{\e} \|_{X_{c} \cap Y^{k}_{\e_{0}},\e, \delta_{0}}
\leq \varliminf_{\delta \to 0} 
\| u_{\e,\delta} \|_{X_{c} \cap Y^{k}_{\e_{0}}, \e, \delta_{0}}
\leq \varliminf_{\delta \to 0} 
\| u_{\e,\delta} \|_{X_{c} \cap Y^{k}_{\e_{0}}, \e, \delta}. 
\end{align*}
Moreover, we can obtain 
$$
\| \alpha_{\e} \|_{X_{c} \cap Y^{k}_{\e_{0}},H_{\e}, \omega}
\leq 
\varliminf_{\delta_{0} \to 0}
\| \alpha_{\e} \|_{X_{c} \cap Y^{k}_{\e_{0}},\e, \delta_{0}}
\leq \varliminf_{\delta \to 0} 
\| u_{\e,\delta} \|_{X_{c} \cap Y^{k}_{\e_{0}}, \e, \delta}
$$ 
by the above inequality and 
Fatou's lemma (see the argument for inequality (\ref{Fatou})). 
These inequalities yield    
\begin{align*}
\| \alpha \|_{X_{c} \cap Y^{k}_{\e_{0}}, H_{\e_{0}}, \omega}
\leq \varliminf_{\e \to 0} \varliminf_{\delta \to 0}
\| u_{\e,\delta} \|_{X_{c} \cap Y^{k}_{\e_{0}}, 
\e, \delta}. 
\end{align*}
On the other hand, 
from 
$1/k < |s|_{h^{m}_{\e_{0}}}\leq |s|_{h^{m}_{\e}}$ 
on $Y^{k}_{\e_{0}}$, 
we have 
\begin{align*}
\| u_{\e,\delta} \|_{X_{c} \cap Y^{k}_{\e_{0}}, 
\e, \delta}
\leq k \| su_{\e,\delta} \|_{X_{c} \cap Y^{k}_{\e_{0}}, 
\e, \delta}
\leq k \| su_{\e,\delta} \|_{X_{c}, \e, \delta}. 
\end{align*}
By the assumption, 
we can conclude that $\alpha=0$ on $X_{c} \cap Y^{k}_{\e_{0}}$ 
for arbitrary $c < \sup \Phi$ and $k \gg 0$. 
From $\cup_{\sup \Phi>c, k\gg 0 }(X_{c} \cap Y^{k}_{\e_{0}})=Y_{\e_{0}}$, 
we obtain the conclusion. 
\end{proof}
\end{step}

\begin{step}[Construction of solutions of the $\dbar$-equation]\label{S4}

In this step, 
by using the construction of the De Rham-Weil isomorphism in subsection \ref{Sec2-6}, 
we prove Proposition \ref{sol},  
which gives a solution $w_{\e, \delta}$ of the $\dbar$-equation 
$\dbar w_{\e, \delta}=u-u_{\e, \delta}$ 
with uniformly bounded (local) $L^2$-norm.

Fix a locally finite open cover $\mathcal{U}:=\{B_{i} \}_{i \in I}$ of $X$ by 
sufficiently small Stein open sets $B_{i} \Subset X$.
Since $h_{\e}$, $\omega_{\e,\delta}$, and $Y_{\e}$ satisfy 
the assumptions in Proposition \ref{DW-iso}, 
we have the continuous maps  
\begin{align*}
f_{\e,\delta}: {\rm{Ker}}\, \dbar \text{ in } L^{n,q}_{(2, {\rm{loc}})}(F)_{\e,\delta} 
\rightarrow 
{\rm{Ker}}\, \mu \text{ in } 
C^{q}(\mathcal{U}, K_{X} \otimes F \otimes \I{h_{\e}}), \\
g_{\e. \delta}: {\rm{Ker}}\, \mu \text{ in } 
C^{q}(\mathcal{U}, K_{X} \otimes F \otimes \I{h_{\e}})
\rightarrow 
{\rm{Ker}}\, \dbar \text{ in } L^{n,q}_{(2, {\rm{loc}})}(F)_{\e,\delta}
\end{align*}
such that they determine the isomorphism between the $\dbar$-cohomology 
and the $\rm{\check{C}}$ech cohomology. 
For the construction of $f_{\e,\delta}$ in Proposition \ref{DW-iso}, 
we locally solved the $\dbar$-equation and 
estimated the $L^{2}$-norm of the solution by Lemma \ref{LL}. 
In this subsection, for the $L^{2}$-estimate of the solution, 
we use the following lemma instead of Lemma \ref{LL}

\begin{lemm}\label{loc-sol}
Let $B \Subset X$ be a sufficiently small Stein open set. 
Then, for an arbitrary  
$U \in {\rm{Ker}}\, \dbar \subset 
L^{n,q}_{(2)}(B\setminus Z_{\e}, F)_{\e, \delta}$, 
there exist $V \in L^{n,q-1}_{(2)}(B\setminus Z_{\e}, F)_{\e,\delta}$ 
and a positive constant $C_{\e,\delta}$ 
$($depending only on $\Psi_{\e,\delta}$, $q$$)$
such that 
\begin{itemize}
\item[$\bullet$] $\dbar V=U \text{  and } \|V\|_{\e, \delta} \leq 
C_{\e,\delta} \|U\|_{\e, \delta}$. 
\item[$\bullet$] $\lim_{\delta \to 0} C_{\e, \delta}$ is independent of $\e$. 
\end{itemize}
\end{lemm}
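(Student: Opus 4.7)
\medskip

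The plan is to apply Demailly's $L^{2}$-existence theorem on the complete K\"ahler manifold $(B\setminus Z_{\e}, \omega_{\e, \delta})$ after modifying $h_{\e}$ by the bounded local potential $\Psi_{\e, \delta}$ of $\omega_{\e, \delta}$. Since $B$ is a sufficiently small Stein open set, by property (C) we may assume that $\Psi_{\e, \delta}$ is a bounded smooth function on a neighborhood of $B$ satisfying $\deldel \Psi_{\e,\delta} = \omega_{\e,\delta}$ and $\lim_{\delta \to 0}\Psi_{\e,\delta}=\Psi$, where $\Psi$ is a local potential of $\omega$ independent of $\e$.

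Consider the modified singular metric $\widetilde{h}_{\e, \delta} := h_{\e}\, e^{-2\Psi_{\e,\delta}}$ on $F|_{B}$. By the definition of curvature (Definition \ref{s-met}) combined with property (d) of $h_{\e}$ and property (B) of $\omega_{\e, \delta}$, we obtain
\begin{equation*}
\sqrt{-1}\Theta_{\widetilde{h}_{\e, \delta}}(F)
= \sqrt{-1}\Theta_{h_{\e}}(F) + 2\,\omega_{\e, \delta}
\geq -\e\,\omega + 2\,\omega_{\e, \delta}
\geq \omega_{\e, \delta},
\end{equation*}
provided $\e \leq 1$, since $\omega_{\e, \delta} = \omega + \delta\omega_{\e} \geq \omega$. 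Thus $(F, \widetilde{h}_{\e, \delta})$ is Nakano-positive on $(B\setminus Z_{\e}, \omega_{\e, \delta})$ with curvature bounded below by $\omega_{\e, \delta}$, with a constant independent of $\e, \delta$.

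Since $\omega_{\e, \delta}$ is a complete K\"ahler form on $Y_{\e}$ by property (A), its restriction to $B\setminus Z_{\e}$ is also complete (after shrinking $B$ if necessary). Hence by Demailly's $L^{2}$-existence theorem (\cite[Th\'eor\`eme 4.1]{Dem82}), for any $\dbar$-closed $U \in L^{n, q}_{(2)}(B\setminus Z_{\e}, F)_{\widetilde{h}_{\e,\delta},\omega_{\e,\delta}}$ (with $q \geq 1$), there exists $V \in L^{n, q-1}_{(2)}(B\setminus Z_{\e}, F)_{\widetilde{h}_{\e,\delta},\omega_{\e,\delta}}$ satisfying
\begin{equation*}
\dbar V = U,
\qquad
\|V\|^{2}_{\widetilde{h}_{\e,\delta},\omega_{\e,\delta}}
\leq \frac{1}{q}\,\|U\|^{2}_{\widetilde{h}_{\e,\delta},\omega_{\e,\delta}}.
\end{equation*}
Converting back to the original metric $h_{\e}$ via the elementary bounds $e^{-2\sup_{B}\Psi_{\e,\delta}} \|\cdot\|^{2}_{\e, \delta} \leq \|\cdot\|^{2}_{\widetilde{h}_{\e,\delta},\omega_{\e,\delta}} \leq e^{-2\inf_{B}\Psi_{\e,\delta}} \|\cdot\|^{2}_{\e, \delta}$, we obtain
\begin{equation*}
\|V\|^{2}_{\e, \delta}
\leq \frac{1}{q}\, e^{\,2(\sup_{B}\Psi_{\e,\delta} - \inf_{B}\Psi_{\e,\delta})}\, \|U\|^{2}_{\e, \delta},
\end{equation*}
so we set $C_{\e, \delta} := q^{-1/2}\, e^{\,\sup_{B}\Psi_{\e,\delta} - \inf_{B}\Psi_{\e,\delta}}$, which depends only on $\Psi_{\e, \delta}$ and $q$.

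Finally, the second bullet follows from property (C): since $\Psi_{\e,\delta} \to \Psi$ uniformly on $\overline{B}$ as $\delta \to 0$,
\begin{equation*}
\lim_{\delta \to 0} C_{\e, \delta}
= q^{-1/2}\, e^{\,\sup_{B}\Psi - \inf_{B}\Psi},
\end{equation*}
which is independent of $\e$. The main (only) obstacle is the uniformity of the estimate; this is handled by the choice of $2\Psi_{\e,\delta}$ as the weight, which absorbs the negativity $-\e\omega$ of the curvature into a strict Nakano positivity constant independent of $\e, \delta$, while the bounded oscillation of $\Psi_{\e, \delta}$ controls the discrepancy between the modified and original norms and collapses in the limit $\delta \to 0$ to a quantity depending only on $\Psi$.
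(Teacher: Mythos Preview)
Your approach is essentially the paper's: twist the metric by the bounded local potential $\Psi_{\e,\delta}$, apply Demailly's $L^{2}$-existence theorem, and convert back using the bounded oscillation of $\Psi_{\e,\delta}$. The paper uses $G_{\e,\delta}:=H_{\e}e^{-\Psi_{\e,\delta}}$ and obtains curvature $\geq (1-\e)\omega_{\e,\delta}$; your choice of exponent $2\Psi_{\e,\delta}$ gives the cleaner bound $\geq \omega_{\e,\delta}$, which is a harmless variant.

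There is one genuine slip. You assert that the restriction of $\omega_{\e,\delta}$ to $B\setminus Z_{\e}$ is complete ``after shrinking $B$ if necessary''. This is false: completeness is never inherited by passing to an open subset with nonempty boundary inside $Y_{\e}$, and no shrinking of $B$ repairs it (Cauchy sequences escaping through $\partial B$ remain). The correct justification for invoking \cite[Th\'eor\`eme~4.1]{Dem82} is that $B\setminus Z_{\e}$ \emph{carries} a complete K\"ahler metric (since $B$ is Stein and $Z_{\e}$ is analytic), and Demailly's theorem only requires the existence of such a metric, not that the K\"ahler form used in the estimate be complete. With this correction the argument goes through.

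A smaller point: in the paper $\|\cdot\|_{\e,\delta}$ denotes the norm with respect to $H_{\e}=h_{\e}e^{-\chi(\Phi)}$, not $h_{\e}$. Your twist should therefore be $H_{\e}e^{-2\Psi_{\e,\delta}}$ rather than $h_{\e}e^{-2\Psi_{\e,\delta}}$. This only improves the curvature (since $\deldel\chi(\Phi)\geq 0$), and the conversion constants are unchanged because the discrepancy $e^{-\chi(\Phi)}$ is bounded on $\overline{B}\Subset X$ independently of $\e,\delta$; so the conclusions of both bullets are unaffected.
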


\begin{proof}
We may assume that $\e < 1/2$. 
Further, by property (C), we may assume that 
there exists a bounded function $\Psi_{\e,\delta}$ on $B$ such that 
$\omega_{\e,\delta}=\deldel \Psi_{\e,\delta}$ and 
$\Psi_{\e,\delta} \to \Psi$ as $\delta \to 0$. 
The function $\Psi$ is independent of $\e$ 
since it is the local weight function of $\omega$. 
The curvature of $G_{\e,\delta}$ defined by 
$G_{\e,\delta}:=H_{\e}e^{-\Psi_{\e,\delta}}$
satisfies 
\begin{align*}
\sqrt{-1}\Theta_{G_{\e,\delta}}(F)
&= \sqrt{-1}\Theta_{h_{\e}}(F)+\deldel \chi(\Phi)+\deldel \Psi_{\e,\delta} \\
&\geq -\e\omega+\omega_{\e, \delta} \\
&\geq (1-\e)\omega_{\e, \delta} 
\end{align*}
by property  (a) and property (B). 
Here we used the inequality $\deldel \chi(\Phi) \geq 0$, 
which follows 
since $\Phi$ is a psh function and $\chi$ is an increasing convex function.   
It follows that $\|U\|_{G_{\e,\delta}, \omega_{\e,\delta}}$ is finite 
since $\Psi_{\e,\delta}$ is a bounded function. 
Hence, by the standard result for the $\dbar$-equation, 
there exist 
a solution $V$ 
such that  $\dbar V =U$  and 
$ \|V\|^2_{G_{\e,\delta}, \omega_{\e,\delta}} 
\leq (1/q(1-\e)) \|U\|^2_{G_{\e,\delta},\omega_{\e,\delta}}$. 
By $(1-\e)>1/2$ and the definition of $G_{\e,\delta}$, 
we can easily see that 
$$
\|V\|_{\e, \delta} \leq 
\sqrt{\frac{2}{q}}\, \dfrac{\sup_{B} e^{-\Psi_{\e, \delta}}}
{ \inf_{B}  e^{-\Psi_{\e, \delta}}}
\|U\|_{\e, \delta}.
$$
The above constant converges to $(2/q)^{1/2}$ as $\delta$ tends to zero. 
\end{proof}

\begin{prop}\label{sol}
For every $c$ with $c < \sup_{X}\Phi$, there exists   
$w_{\e,\delta} \in L^{n,q-1}_{(2, {\rm{loc}})}(F)_{\e,\delta}$ 
with the following properties\,$:$ 
\begin{itemize}
\item[$\bullet$] $\dbar  w_{\e,\delta}=u-u_{\e, \delta}$. 
\item[$\bullet$] $\varlimsup_{\delta \to 0} \|w_{\e,\delta}\|_{X_{c}, \e, \delta} $ can be bounded by a constant independent of $\e$. 
\end{itemize}
\end{prop}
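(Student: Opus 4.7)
The plan is to adapt the construction of the De Rham--Weil map $f_{\e,\delta}$ from Proposition \ref{DW-iso} to $U := u - u_{\e,\delta} \in \mathrm{Ker}\,\dbar$, substituting Lemma \ref{loc-sol} for Lemma \ref{LL} so that all local $\dbar$-estimates come with constants that behave well as $\delta \to 0$. Inequality (\ref{ineq-1}) and Proposition \ref{limit} yield the uniform bound $\|U\|_{\e,\delta} \leq 2\|u\|_{H,\omega}$. Feeding this into a staircase of local $\dbar$-solutions over the fixed Stein cover $\mathcal{U}$ --- first $\dbar\beta_{i_0} = U$ on $B_{i_0} \setminus Z_\e$, then $\dbar\beta_{i_0 i_1} = \beta_{i_1} - \beta_{i_0}$ on $B_{i_0i_1}\setminus Z_\e$, and so on up to $\dbar\beta_{i_0\ldots i_{q-1}} = \mu\{\beta_{i_0\ldots i_{q-2}}\}$ --- produces at each stage a solution with $\|\beta_{i_0\ldots i_k}\|_{B_{i_0\ldots i_k},\e,\delta} \leq (2C_{\e,\delta})^{k+1}\|u\|_{H,\omega}$. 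The top image $\mu\{\beta_{i_0\ldots i_{q-1}}\}$ extends via the Riemann extension theorem to a holomorphic Čech cocycle $\{\beta_{i_0\ldots i_q}\}$ representing $f_{\e,\delta}(U) \in Z^q(\mathcal{U}, K_X \otimes F \otimes \I{h})$.

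Since $U \in \mathrm{Im}\,\dbar$ in the local $L^2$-space by (\ref{closedness}), Proposition \ref{DW-iso} forces $f_{\e,\delta}(U)$ to be a Čech coboundary $\mu\gamma^{\e,\delta}$. For uniformity of this splitting I would exploit two key facts: property (c) of the equisingular approximation makes the Čech complex $C^\bullet(\mathcal{U}, K_X \otimes F \otimes \I{h_\e}) = C^\bullet(\mathcal{U}, K_X \otimes F \otimes \I{h})$ literally independent of $\e$, and by Lemma \ref{Fre} together with \cite[Theorem 5.3]{Mat13} its Fréchet topology is equivalent to the intrinsic local-sup-norm topology. The open mapping theorem applied to the continuous surjection $\mu \colon C^{q-1} \to B^q := \mathrm{Im}\,\mu$ (closed by Lemma \ref{Fre}, and hence Fréchet) then supplies a constant $C'$ independent of $\e, \delta$ and a preimage $\gamma^{\e,\delta}$ whose local sup-norms are bounded by $C'$ times the local sup-norm of $\{\beta_{i_0\ldots i_q}\}$. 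The latter sup-norm is in turn controlled by the staircase estimate via the interior mean-value inequality for holomorphic functions, combined with the uniform upper bound $\sup_{K'} \phi_\e \leq \sup_{K'} \phi_{\e_0} < \infty$ valid for all $\e \leq \e_0$ (which holds because $\phi_\e \searrow \phi$ is quasi-psh and bounded above on relatively compact sets), yielding a sup-norm bound of the form (polynomial in $C_{\e,\delta}$)$\,\cdot\, \|u\|_{H,\omega}$ uniformly in $\e$.

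Finally, I would assemble $w_{\e,\delta}$ globally via a fixed partition of unity $\{\rho_i\}$ subordinate to $\mathcal{U}$, imitating the formula for $g$ in Proposition \ref{DW-iso} but with the top of the staircase replaced by $\beta_{i_0\ldots i_{q-1}} - \gamma^{\e,\delta}_{i_0\ldots i_{q-1}}$: the resulting telescoping identities descend to produce a global $F$-valued $(n,q-1)$-form on $Y_\e$ with $\dbar w_{\e,\delta} = U$. Only finitely many $B_{i_0\ldots i_k}$ meet the relatively compact set $X_c$, and $\dbar\rho_i$ is $\omega$-bounded and therefore $\omega_{\e,\delta}$-bounded by Lemma \ref{mul} and property (B), so $\|w_{\e,\delta}\|_{X_c,\e,\delta}$ is bounded by a polynomial in $C_{\e,\delta}$ times $\|u\|_{H,\omega}$ with coefficients depending only on $\mathcal{U}$, $\{\rho_i\}$, $X_c$, and $C'$. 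Taking $\delta \to 0$ and invoking Lemma \ref{loc-sol} (so that $\varlimsup_{\delta \to 0} C_{\e,\delta}$ is independent of $\e$) then gives the claimed bound on $\varlimsup_{\delta \to 0}\|w_{\e,\delta}\|_{X_c,\e,\delta}$.

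The hardest step will be the uniform Čech splitting in the second paragraph: extracting $\gamma^{\e,\delta}$ with constants uniform in both $\e$ and $\delta$. This hinges crucially on property (c) freezing the target Fréchet space and on the intrinsic (metric-independent) nature of the local-sup-norm topology on coherent sheaf sections, which together let us sidestep the $\e$-dependence of the $(h_\e,\omega_{\e,\delta})$-norms appearing in Step 1. A secondary subtlety is converting those $L^2$-norms into $\e$-uniform sup-norms on $\{\beta_{i_0\ldots i_q}\}$, for which the uniform boundedness of $\phi_\e$ from above on compact sets plays a decisive role.
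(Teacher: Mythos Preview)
Your approach is correct and mirrors the paper's architecture: build the \v{C}ech cocycle $\alpha_{\e,\delta}:=f_{\e,\delta}(u-u_{\e,\delta})$ via the staircase of local $\dbar$-solutions governed by Lemma~\ref{loc-sol}, split it as $\mu\gamma_{\e,\delta}$ with uniform control via the open mapping theorem, and reassemble $w_{\e,\delta}$ from the $\beta$'s and $\gamma_{\e,\delta}$ through the partition of unity (for general~$q$ the paper likewise defers the explicit assembly formulas to \cite[Claims~5.11 and 5.13]{Mat13}). The one genuine difference is tactical. The paper (Claims~\ref{con}--\ref{f-claim}) first invokes Montel's theorem to extract subsequences along which $\alpha_{\e,\delta}\to\alpha_{0,0}$, fixes a single preimage $\gamma_{0,0}$ of the limit, and then uses openness of $\mu$ to show that $\alpha_{\e,\delta}$ eventually lands in $\mu(\Delta_K)$ for the fixed neighborhood $\Delta_K$ of $\gamma_{0,0}$ defined in~(\ref{delta}). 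You instead read the open mapping theorem as a direct quantitative estimate, fed by the $\e$-uniform sup-norm bounds on $\alpha_{\e,\delta}$ coming from the mean-value inequality and property~(b). Your route avoids the subsequence extraction and delivers the bound for all small $(\e,\delta)$ at once, which is a modest simplification; the paper's route makes the role of the fixed limiting cocycle more visible. Both rest on the two pillars you correctly flag: property~(c), which freezes the \v{C}ech complex independently of~$\e$, and the equivalence of the $L^2_h$- and local-sup-norm topologies on $C^\bullet(\mathcal{U}, K_X\otimes F\otimes\I{h})$.

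One small correction: in your open mapping step you should take the $p_{K_i}$-seminorms (the local $L^2_H$-norms of~(\ref{semi-defi})) on the \emph{source} $C^{q-1}$, not sup-norms on $\gamma$. The assembly step needs $\|\gamma_{\e,\delta}\|_{K_i,H,\omega}$, which then dominates $\|\gamma_{\e,\delta}\|_{K_i,H_\e,\omega}$ by property~(b); a mere sup-norm bound on $\gamma$ does not control $\int_{K_i}|\gamma|^2_g\,e^{-2\varphi}$ since $\varphi$ may be $-\infty$. This is exactly why the paper defines $\Delta_K$ via $p_{K_i}$-balls in~(\ref{delta}), and your argument goes through unchanged with that adjustment (on the target side you may keep sup-norms, since that is where you have uniform control).
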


\begin{rem}\label{rem-sol}
We have already known that 
there exists a solution $w_{\e,\delta}$ of the $\dbar$-equation 
$\dbar w_{\e,\delta}=u-u_{\e, \delta} $ 
by $u-u_{\e, \delta} \in {\rm{Im}\,\dbar} \subset 
L^{n,q}_{(2, {\rm{loc}})}(F)_{\e, \delta}$ 
(see (\ref{closedness}) in the proof of Proposition \ref{finish}).
The important point here is 
the second property on the local $L^2$-norm of solutions.
\end{rem}

The strategy of the proof is the same as in the proof of 
\cite[Proposition 5.9]{FM16} and \cite[Theorem 5.9]{Mat13}. 
The main idea is to change the $\dbar$-equation 
$\dbar w_{\e,\delta}=u-u_{\e, \delta}$
to the equation $\mu \gamma_{\e,\delta} = f_{\e,\delta}(u-u_{\e,\delta})$ of 
the coboundary operator $\mu$
in the set of cochains 
$C^{\bullet}(K_{X}\otimes F \otimes \I{h_{\e}})$, 
by using the $\rm{\check{C}}$ech complex and pursuing  
the De Rham-Weil isomorphism. 
(A similar argument can be found in \cite{Ohs84}.)
Here $f_{\e,\delta}$ is the continuous map 
constructed in Proposition \ref{DW-iso}.  
The $L^{2}$-space 
$L^{n,q}_{(2)}(F)_{\e, \delta}$ depends on $\e, \delta$, 
but $C^{\bullet}(K_{X}\otimes F \otimes \I{h_{\e}})$ 
does not depend on them thanks to property (c). 
This is one of the important points.  
In the proof, 
we will show that $f_{\e,\delta}(u-u_{\e,\delta})$ converges to 
some $q$-coboundary $\alpha_{0,0}$ in $C^{q}(K_{X}\otimes F \otimes \I{h})$ 
with respect to the topology induced 
by the local $L^{2}$-norms $\{p_{K_{i_{0}...i_{q}}}(\bullet)\}_{K_{i_{0}...i_{q}} \Subset B_{i_{0}...i_{q}}}$
(see (\ref{semi-defi}) for the definition).
Further we will show that 
the coboundary operator $\mu$ is an open map.
Then, by these observations, we will construct 
a solution $\gamma_{\e,\delta}$ of 
the equation $\mu \gamma_{\e,\delta} = f_{\e,\delta}(u-u_{\e,\delta})$ 
with suitable local $L^{2}$-norm. 
Finally, by the continuous map $g_{\e,\delta}$ constructed by a partition of unity, 
we conversely construct $w_{\e,\delta}$ 
satisfying the properties in Proposition \ref{sol}.

For the reader's convenience, 
we first give a proof for the case $q=1$. 
This case helps us to follow the essential arguments.

\begin{proof}[Proof of Proposition \ref{sol} for the case $q=1$]
We may assume that the cardinality of $I_{c}$ defined by 
$$
I_{c}:=\{i \in I \, | \, B_{i} \cap X_{c}\not=\emptyset \}
$$ 
is finite by $X_{c} \Subset X$. 
For simplicity we put $U_{\e, \delta}:=u-u_{\e, \delta}$. 
By Lemma \ref{loc-sol}, we can take a solution $\beta_{\e, \delta,i}$
of the $\dbar$-equation 
$\dbar \beta_{\e, \delta,i}= U_{\e, \delta}$ on $B_{i} \setminus Z_{\e}$
such that  
$$
\|\beta_{\e, \delta,i}\|_{B_{i}, \e, \delta} 
\leq C_{\e, \delta}\|U_{\e, \delta} \|_{B_{i}, \e, \delta}
\leq C_{\e, \delta}\|U_{\e, \delta} \|_{\e, \delta}
$$ 
for some constant $C_{\e,\delta}$.  
In the proof, the notation $C_{\e,\delta}$ denotes 
a (possibly different) positive constant 
with the property in Lemma \ref{loc-sol} 
(that is, $\lim_{\delta \to 0}C_{\e,\delta}$ is 
independent of $\e$). 
Inequality (\ref{ineq-2}) yields 
\begin{align*}
\|U_{\e, \delta} \|_{\e, \delta} 
\leq \|u \|_{\e, \delta}+\|u_{\e, \delta} \|_{\e, \delta}
\leq 2\|u \|_{H, \omega}. 
\end{align*}
In particular, the norm $\|\beta_{\e, \delta,i}\|_{B_{i}, \e, \delta}$ 
can be bounded by a constant $C_{\e,\delta}$.

Now we consider the $F$-valued $(n,0)$-form 
$(\beta_{\e,\delta, j}-\beta_{\e, \delta,i})$ on $B_{ij} \setminus Z_{\e}$, 
where $B_{ij}:=B_{i} \cap B_{j}$. 
Then  $(\beta_{\e,\delta, j}-\beta_{\e, \delta,i})$ can be seen 
as a holomorphic function with bounded $L^2$-norm,  
since it is a $\dbar$-closed $F$-valued $(n,0)$-form 
satisfying $\| \beta_{\e,\delta, j}-\beta_{\e, \delta,i} \|_{H_{\e}, \omega}=
\| \beta_{\e,\delta, j}-\beta_{\e, \delta,i} \|_{\e,\delta} < \infty$ 
(see Lemma \ref{mul}). 
By the Riemann extension theorem, 
it can be extended to the $F$-valued $(n,0)$-form on $B_{ij}$ 
(for which we use same the notation). 
Further it belongs to 
$H^{0}(B_{ij},K_{X}\otimes F \otimes \I{h})$ by property (c). 
Note  that we can use  property (c) 
thanks to a special property of $(n,0)$-forms 
(holomorphic functions).

We define the $1$-cocycle $\alpha_{\e,\delta}$ by 
$$
\alpha_{\e,\delta}:= \mu \{\beta_{\e,\delta, i}\}
=\{ \beta_{\e,\delta, j}-\beta_{\e, \delta,i}\}, 
$$
where $\mu$ is the coboundary operator.
The topology of $C^{q}(\mathcal{U}, K_{X}\otimes F \otimes \I{h})$ 
is induced by the semi-norms $\{p_{K}(\bullet)\}_{K \Subset B_{i_{0}...i_{q}}}$ defined 
to be  
$$
p_{K}^2(\{\alpha_{i_{0}...i_{q}}\}):=
\int_{K} |\alpha_{i_{0}...i_{q}}|^{2}_{H, \omega}\, dV_{\omega}
$$
for every $\{\alpha_{i_{0}...i_{q}}\} 
\in C^{q}(\mathcal{U}, K_{X}\otimes F \otimes \I{h})$ 
and $K\Subset U_{i_{0}...i_{q}}$. 
The above integrand is independent of $\omega$ 
since $\alpha_{i_{0}...i_{q}}$ is an $F$-valued $(n,0)$-form (see Lemma \ref{mul}). 
Then $C^{p}(\mathcal{U}, K_{X}\otimes F \otimes \I{h})$ becomes  
a Fr$\rm{\acute{e}}$chet space with respect to these semi-norms 
by Lemma \ref{Fre}. 
Then we prove the following claim\,:

\begin{claim}\label{con}
There exist subsequences $\{\e_{k}\}_{k=1}^{\infty}$ and 
$\{\delta_{\ell}\}_{\ell=1}^{\infty}$ with the following properties\,$:$
\begin{itemize}
\item[$\bullet$] $\alpha_{\e_{k},\delta_{\ell}} \to \alpha_{\e_{k},0}$ 
in $C^{1}(\mathcal{U}, K_{X}\otimes F \otimes \I{h})$
as 
$\delta_{\ell} \to 0$.
\item[$\bullet$] $\alpha_{\e_{k},0} \to \alpha_{0,0}$ in $C^{1}(\mathcal{U}, K_{X}\otimes F \otimes \I{h})$ as $\e_{\ell} \to 0$.
\end{itemize}
\end{claim}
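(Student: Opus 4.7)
The plan is to show that the cochains $\alpha_{\e,\delta} = \{\beta_{\e,\delta,j} - \beta_{\e,\delta,i}\}$ form a normal family in a suitable sense, then extract convergent subsequences by Montel plus Cantor diagonalization, taking the limit $\delta \to 0$ first and $\e \to 0$ second. The key observation is that each $\alpha_{\e,\delta,ij}$ is an $F$-valued $(n,0)$-form (holomorphic after Riemann extension), so local $L^{2}$-bounds translate into local sup-norm bounds via the sub-mean-value property.

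First, I would establish uniform bounds. From Lemma \ref{loc-sol}, together with $\|U_{\e,\delta}\|_{\e,\delta} \leq 2\|u\|_{H,\omega}$, we get $\|\beta_{\e,\delta,i}\|_{B_i,\e,\delta} \leq 2 C_{\e,\delta}\|u\|_{H,\omega}$, hence $\|\alpha_{\e,\delta,ij}\|_{B_{ij},\e,\delta} \leq 4 C_{\e,\delta}\|u\|_{H,\omega}$, where $\lim_{\delta\to 0}C_{\e,\delta}$ is independent of $\e$. Since $\alpha_{\e,\delta,ij}$ is of type $(n,0)$, the last item of Lemma \ref{mul} gives $\|\alpha_{\e,\delta,ij}\|_{B_{ij},\e,\delta} = \|\alpha_{\e,\delta,ij}\|_{B_{ij},H_{\e},\omega}$. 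Using property (b) (so that $\varphi_{\e}$ is monotone in $\e$ and bounded above on compacts for $\e \leq \e_{0}$) and the fact that $\Phi$ is bounded, I would compare with a fixed smooth reference metric $g$ on $F$: on every $K \Subset B_{ij}$ and every $\e \leq \e_{0}$,
\[
\|\alpha_{\e,\delta,ij}\|_{K,g,\omega} \leq C(K,\e_{0},\chi)\, \|\alpha_{\e,\delta,ij}\|_{B_{ij},H_{\e},\omega},
\]
with $C(K,\e_{0},\chi)$ independent of $\e,\delta$.

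Next I would run the Montel-diagonal extraction twice. For fixed $\e$, the holomorphic sections $\alpha_{\e,\delta,ij}$ (extended across $Z_{\e}$ by Riemann extension, using $L^{2}$-boundedness) satisfy uniform local sup-norm bounds, so Montel yields a subsequence converging locally uniformly; a standard Cantor diagonal argument over the countable cover $\mathcal{U}$ and a countable exhaustion of each $B_{ij}$ produces a single subsequence $\delta_{\ell}\to 0$ along which $\alpha_{\e,\delta_{\ell},ij} \to \alpha_{\e,0,ij}$ locally uniformly. Local uniform convergence of holomorphic forms implies convergence in the local $L^{2}$-seminorms $p_{K_{ij}}$ by the bounded convergence theorem, so $\alpha_{\e,\delta_{\ell}} \to \alpha_{\e,0}$ in the Fr\'echet space $C^{1}(\mathcal{U}, K_{X} \otimes F \otimes \I{h})$ from Lemma \ref{Fre}. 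For the second limit, by Fatou and the lower semi-continuity of the norm, $\alpha_{\e,0}$ inherits a bound in terms of the reference metric that is uniform in $\e$, so the same Montel-diagonal argument extracts $\e_{k} \to 0$ with $\alpha_{\e_{k},0} \to \alpha_{0,0}$ in the same Fr\'echet topology.

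The main obstacle is bookkeeping: one must check that the limits actually live in $C^{1}(\mathcal{U}, K_{X} \otimes F \otimes \I{h})$. For each fixed $\e$, the $L^{2}$-boundedness of $\beta_{\e,\delta,j}-\beta_{\e,\delta,i}$ with respect to $H_{\e}$ lets us invoke the Riemann extension theorem to view $\alpha_{\e,\delta,ij}$ as a section on all of $B_{ij}$ with coefficients in $\I{h_{\e}} = \I{h}$ (property (c)); the locally uniform limit then automatically satisfies the $L^{2}$-membership defining $\I{h}$, so $\alpha_{\e,0}$ and subsequently $\alpha_{0,0}$ remain in $C^{1}(\mathcal{U}, K_{X} \otimes F \otimes \I{h})$. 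A secondary subtlety is ensuring that the constant $C_{\e,\delta}$ from Lemma \ref{loc-sol} stays controlled as both parameters shrink, which is handled by taking the $\delta$-limit for each fixed $\e$ first so that only $\lim_{\delta\to 0}C_{\e,\delta}$ enters the bounds for $\alpha_{\e,0}$.
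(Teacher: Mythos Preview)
Your proposal is correct and follows essentially the same approach as the paper: both use the uniform $L^{2}$-bounds from Lemma~\ref{loc-sol}, convert to local sup-norm bounds via the sub-mean-value property of holomorphic $(n,0)$-forms, apply Montel's theorem to extract a $\delta$-subsequence for each fixed $\e$, then use that $\lim_{\delta\to 0}C_{\e,\delta}$ is independent of $\e$ to repeat the extraction in $\e$. Your write-up is in fact more careful than the paper's about the reference-metric comparison and about checking that the limits remain in $C^{1}(\mathcal{U}, K_{X}\otimes F\otimes \I{h})$; the only point to make explicit is that the Cantor diagonal argument should also run over the countable family $\{\e\}$ (cf.\ Remark~\ref{countable}) so that a single sequence $\{\delta_{\ell}\}$ works for all $\e_{k}$ simultaneously, as the claim requires.
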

\begin{proof}[Proof of Claim \ref{con} ]

We regard $\alpha_{\e,\delta, ij}:=\beta_{\e,\delta, j}-\beta_{\e, \delta,i}$
as a holomorphic function on $B_{ij}$. 
By the construction of $\beta_{\e, \delta,i}$, 
the norm $\|\alpha_{\e,\delta, ij}\|_{B_{ij}, \e, \delta}$ 
can be bounded by a constant $C_{\e, \delta}$. 
This implies that the sup-norm  $\sup_{K}|\alpha_{\e,\delta, ij}|$ is also 
uniformly bounded with respect to $\delta$
for every $K \Subset B_{ij}$.
(Recall the local sup-norm of holomorphic functions 
can be estimated by the $L^{2}$-norm). 
By Montel's theorem, we can take a 
subsequence $\{\delta_{\ell}\}_{\ell=1}^{\infty}$ 
with the first property. 
Then the norm of the limit $\alpha_{\e,0}$ can be bounded by 
a positive constant independent of $\e$ 
since $\lim_{\delta \to 0}C_{\e,\delta}$ is independent of $\e$. 
Thus we can take a subsequence $\{\e_{k}\}_{k=1}^{\infty}$ 
with the second properties. 
The convergence with respect to the local sup-norms 
implies the  the convergence 
with respect to the local $L^2$-norms $\{p_{K}(\bullet)\}_{K \Subset B_{i_{0}...i_{q}}}$ 
(for example see \cite[Lemma 5.2]{Mat13}). 
This completes the proof. 
\end{proof}

For simplicity, we continue to use the same notation for the subsequence 
in Claim \ref{con}.

\begin{claim}\label{clo}
The cocycle $\alpha_{\e,\delta}$ is a coboundary. 
In particular the limit $\alpha_{0,0}$ is also a coboundary. 
\end{claim}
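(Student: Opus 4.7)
The plan is to leverage the fact that $U_{\e,\delta} := u - u_{\e,\delta}$ lies in $\operatorname{Im}\,\dbar$ in $L^{n,q}_{(2,\mathrm{loc})}(F)_{\e,\delta}$ by the inclusions in (\ref{closedness}), and translate this exactness through the De Rham--Weil isomorphism of Proposition \ref{DW-iso} into the statement that $\alpha_{\e,\delta}$ is a $\check{\mathrm{C}}$ech coboundary. Once this is done for each pair $(\e,\delta)$, the claim about the limit $\alpha_{0,0}$ will follow from the fact that coboundaries form a closed subspace, which is where Lemma \ref{Fre} (requiring that $X$ be holomorphically convex) enters crucially.

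To make the coboundary statement concrete in the $q=1$ case, I would proceed as follows. By Remark \ref{rem-sol}, fix some $w_{\e,\delta}\in L^{n,0}_{(2,\mathrm{loc})}(F)_{\e,\delta}$ with $\dbar w_{\e,\delta}=U_{\e,\delta}$. On each $B_i\setminus Z_\e$ set
$$
\sigma_{\e,\delta,i}:=\beta_{\e,\delta,i}-w_{\e,\delta}|_{B_i}.
$$
This is a $\dbar$-closed $F$-valued $(n,0)$-form on $B_i\setminus Z_\e$, so locally it is a holomorphic section of $K_X\otimes F$ times the holomorphic volume form, and it is $L^2$ on relatively compact subsets of $B_i$ with respect to $h_\e$. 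The Riemann extension theorem therefore produces an extension to $B_i$ lying in $H^0(B_i,K_X\otimes F\otimes\I{h_\e})$, which equals $H^0(B_i,K_X\otimes F\otimes\I{h})$ by property (c). Since $w_{\e,\delta}$ is global, $w_{\e,\delta}|_{B_i}$ cancels when we apply the coboundary, giving
$$
\alpha_{\e,\delta,ij}=\beta_{\e,\delta,j}-\beta_{\e,\delta,i}=\sigma_{\e,\delta,j}-\sigma_{\e,\delta,i},
$$
so $\alpha_{\e,\delta}=\mu\{\sigma_{\e,\delta,i}\}$ is a coboundary in $C^1(\mathcal{U},K_X\otimes F\otimes\I{h})$. (For general $q$, the same idea applies using the iterative construction of $f_{\e,\delta}$ from Proposition \ref{DW-iso} together with the identity $f_{\e,\delta}(\operatorname{Im}\,\dbar)\subset\operatorname{Im}\,\mu$.)

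For the second assertion, I would appeal directly to Lemma \ref{Fre}: because $X$ is holomorphically convex (reduced to in Step \ref{S1}), the space $B^1(\mathcal{U},K_X\otimes F\otimes\I{h})$ of coboundaries is a Fr\'echet space, hence a closed subspace of $C^1(\mathcal{U},K_X\otimes F\otimes\I{h})$ in the topology induced by the semi-norms $p_{K_{ij}}$. Claim \ref{con} asserts that $\alpha_{\e_k,\delta_\ell}\to\alpha_{\e_k,0}\to\alpha_{0,0}$ in this Fr\'echet topology, and each $\alpha_{\e_k,\delta_\ell}$ is a coboundary by the first half; therefore the limit $\alpha_{0,0}$ also belongs to $B^1(\mathcal{U},K_X\otimes F\otimes\I{h})$.

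The main subtlety I expect is the extension step across $Z_\e$: one must verify not only that $\sigma_{\e,\delta,i}$ extends holomorphically (which is automatic from its local $L^2$-boundedness) but that the extension lies in $\I{h}$ rather than merely $\mathcal{O}$. This is resolved precisely by property (c), which identifies $\I{h_\e}$ with $\I{h}$, so no extra work is needed. The closedness of $B^q$ invoked at the end is the place where the holomorphic convexity of $X$ is genuinely used, and this was exactly the reason for reducing to a proper morphism over a Stein subvariety at the outset of the proof.
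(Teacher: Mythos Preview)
Your proposal is correct and follows essentially the same approach as the paper's proof: the paper simply notes that $U_{\e,\delta}\in{\rm{Im}}\,\dbar$ in $L^{n,q}_{(2,{\rm loc})}(F)_{\e,\delta}$ (by Remark \ref{rem-sol}) and then invokes Lemma \ref{Fre} for the closedness of $B^{q}$, leaving the passage from $U_{\e,\delta}\in{\rm{Im}}\,\dbar$ to $\alpha_{\e,\delta}\in{\rm{Im}}\,\mu$ implicit via Proposition \ref{DW-iso}, whereas you make that step concrete by exhibiting the $0$-cochain $\{\sigma_{\e,\delta,i}\}$. The only minor embellishment is that your argument spells out the Riemann extension and the role of property (c) more carefully than the paper does.
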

\begin{proof}[Proof of Claim \ref{clo} ]
By Remark \ref{rem-sol}, we can see that 
$U_{\e, \delta}=u-u_{\e, \delta}$ belongs to 
${\rm{Im}\,\dbar}$ in $L^{n,q}_{(2, {\rm{loc}})}(F)_{\e,\delta}$. 
Further, by the isomorphism in Proposition \ref{DW-iso}, 
we can see that $\alpha_{\e,\delta}$ is a coboundary. 
Since we are assuming that $X$ is holomorphically convex (see Step \ref{Sec1}), 
the set of $q$-coboundaries 
$B^{q}(\mathcal{U}, K_{X} \otimes F \otimes \I{h})= {{\rm Im}}\,\mu$ is 
a Fr\'echet space by Lemma \ref{Fre}. 
Therefore we obtain the latter conclusion. 
\end{proof}

We will construct a solution $\gamma_{\e,\delta}$ of the $\mu$-equation 
$\mu \gamma_{\e,\delta} = \alpha_{\e,\delta}$ 
with suitable local $L^2$-norm. 
The coboundary operator 
$$
\mu\, \colon\, C^{q-1}(\mathcal{U}, K_{X}\otimes F \otimes \I{h})
\to B^{q}(\mathcal{U}, K_{X}\otimes F \otimes \I{h})
$$
is continuous and surjective between Fr$\rm{\acute{e}}$chet spaces, 
and thus it is an open map by the open mapping theorem. 
From the latter conclusion of Claim \ref{clo}, there exists  
$\gamma_{0,0}\in C^{0}(\mathcal{U}, K_{X}\otimes F \otimes \I{h})$ 
such that 
$\mu \gamma_{0,0}=\alpha_{0,0}$. 
For an arbitrary family $K:=\{K_{i}\}_{i\in I_{c}}$ of 
relative compact sets $K_{i} \Subset B_{i}$, 
the image $\mu (\Delta_{K})$ of $\Delta_{K}$ is an open neighborhood 
of $\alpha_{0,0}$, 
where $\Delta_{K}$ is an open neighborhood of $\gamma_{0,0}$ defined by 
\begin{equation}\label{delta}
\Delta_{K}:=\{\gamma \in C^{0}(\mathcal{U}, K_{X}\otimes F \otimes \I{h})
\,\mid\, p_{K_{i}}(\gamma-\gamma_{0,0})<1\,\text{ for every $i\in I_{c}$}\}. 
\end{equation}
Since the image $\mu (\Delta_{K})$ is an open neighborhood of $\alpha_{0,0}$ 
and $\alpha_{\e,\delta}$ converges to $\alpha_{0,0}$, 
we can take $\gamma_{\e,\delta}=\{ \gamma_{\e,\delta, i} \}\in \Delta_{K}$ such that  
\begin{align}
\label{eq-sol} \{\gamma_{\e,\delta,j}-\gamma_{\e,\delta, i}\}&= \mu \gamma_{\e,\delta}=\alpha_{\e,\delta}
=\{\beta_{\e,\delta,j}-\beta_{\e,\delta, i}\}, \\
\label{eq-est}  p^2_{K_{i}}(\gamma_{\e,\delta})&=
\int_{K_{i}}|\gamma_{\e,\delta, i}|^{2}_{H,\omega}\, dV_{\omega} \leq C_{K} 
\text{ for every $i \in I_{c}$}
\end{align}
for some positive constant $C_{K}$ 
(depending on $K$, $\gamma$ but does not depend on $\e, \delta$). 

Let us construct a solution $w_{\e,\delta}$ 
with the properties in Proposition \ref{sol}. 
We fix a partition of unity $\{\rho_{i}\}_{i \in I}$. 
Then, by $\dbar \gamma_{\e,\delta,i}=0$ and $\dbar \beta_{\e,\delta,i}=U_{\e,\delta}$, 
we have 
\begin{align*}
\dbar \sum_{k \in I} \rho_{k}(\gamma_{\e,\delta,i}-\gamma_{\e,\delta, k})&=\dbar \sum_{k \in I} \rho_{k} \gamma_{\e,\delta, k,}\\
\dbar \sum_{k \in I} \rho_{k}(\beta_{\e,\delta,i}-\beta_{\e,\delta, k}) 
&=U_{\e,\delta}-\dbar \sum_{k \in I} \rho_{k} \beta_{\e,\delta, k}.
\end{align*}
When we define $w_{\e, \delta}$ by 
$$
w_{\e,\delta}:=
\sum_{k \in I} \rho_{k} \beta_{\e,\delta, k}+ \sum_{k \in I} \rho_{k} \gamma_{\e,\delta, k,}
$$
it is easy to check $U_{\e,\delta}=\dbar w_{\e,\delta}$ by equality (\ref{eq-sol}). 
It remains to estimate the $L^2$-norm of $w_{\e,\delta}$. 
By putting $K_{i}$ by $K_{i}:={\rm{Supp}}\, \rho_{i}$, 
we may assume that  the inequality 
$p^2_{K_{i}}(\gamma_{\e,\delta})=\int_{{\rm{Supp}}\, 
\rho_{i}}|\gamma_{\e,\delta, i}|^{2}_{H, \omega} dV_{\omega} \leq C_{K}$ holds 
for every $i \in I_{c}$
by inequality $(\ref{eq-est})$. 
Therefore we obtain  
\begin{align*}
\|\sum_{k \in I} \rho_{k}\gamma_{\e,\delta, k} \|^2_{X_{c},\e,\delta}=
\int_{X_{c}} \big| \sum_{k \in I} \rho_{k}\gamma_{\e,\delta, k} \big|^2_{H_{\e}, \omega}\, dV_{\omega}
\leq \sum_{k \in I_{c}}
\int_{B_{k} \cap\, {\rm{Supp}}\, \rho_{k}} 
\big|  \gamma_{\e,\delta, k}
 \big|^2_{H_{\e}, \omega}\, dV_{\omega}
\leq C_{K}\,\sharp I_{c}. 
\end{align*}
Note that the cardinality of $\ I_{c}$ is finite 
by the choice of $\mathcal{U}$.  
Further, we obtain 
\begin{align*}
\|\sum_{k \in I} \rho_{k}\beta_{\e,\delta, k} \|^2_{X_{c},\e,\delta}=
\int_{X_{c}} \big| \sum_{k \in I} \rho_{k}\beta_{\e,\delta, k} 
\big|^2_{\e, \delta}\, dV_{\omega_{\e, \delta}}
\leq \sum_{k \in I_{c}}
\int_{B_{k}} \big|  \beta_{\e,\delta,k} \big|^2_{\e, \delta}\, 
dV_{\omega_{\e, \delta}}
\leq C^2_{\e, \delta}\,\sharp I_{c}\, \|u\|_{H, \omega}
\end{align*}
for some $C_{\e, \delta}>0$ by the construction of $\beta_{\e,\delta,i}$. 
These inequalities lead to the desired estimate of $w_{\e,\delta}$. 
\end{proof}

\begin{proof}[Proof of Proposition \ref{sol} for the general case]
For simplicity, we put $U_{\e,\delta}:=u-u_{\e,\delta} \in \Image 
\dbar \subset L^{n,q}_{(2,{\rm{loc}})}(F)_{\e,\delta}$. 
Then there exist the $F$-valued $(n,q-k-1)$-forms 
$\beta^{\e, \delta}_{i_{0}\dots i_{k}}$ 
on $B_{i_{0}\dots i_{k}} \setminus Z_{\e}$ 
satisfying 
\[
  (*) \left\{ \quad
  \begin{array}{ll}
\vspace{0.2cm}
\dbar \{\beta^{\e, \delta}_{i_{0}}\} &=\{U_{{\e, \delta}} |_{B_{i_{0}}\setminus Z_{\e}}\},  \\
\dbar \{ \beta^{\e, \delta}_{i_{0}i_{1}} \}&=\mu  \{\beta^{\e, \delta}_{i_{0}}\},  \\
\dbar \{ \beta^{\e, \delta}_{i_{0}i_{1}i_{2}} \}&
=\mu   \{\beta^{\e, \delta}_{i_{0}i_{1}}\},  \\
 & \vdots   \\
\dbar \{ \beta^{\e, \delta}_{i_{0}\dots i_{q-1}} \}
&=\mu   \{\beta^{\e, \delta}_{i_{0}\dots i_{q-2}}\}, \\
f_{\e, \delta}(U_{\e, \delta})&=\mu 
\{\beta^{\e, \delta}_{i_{0}\dots i_{q-1}}\}. 
  \end{array} \right.
\]
Here $\beta^{\e, \delta}_{i_{0}\dots i_{k}}$ 
is the solution of the above equation whose 
norm is minimum among all the solutions 
(see the construction of $f$ in Proposition \ref{DW-iso}). 
For example, $\beta^{\e, \delta}_{i_{0}}$ is the solution 
of $\dbar \beta^{\e, \delta}_{i_{0}}=U_{{\e, \delta}}$ on  
${B_{i_{0}}\setminus Z_{\e}}$ whose norm 
$\|\beta^{\e, \delta}_{i_{0}} \|_{\e,\delta}$ 
is minimum among all the solutions. 
In particular, we have 
$\|\beta^{\e, \delta}_{i_{0}}\|^{2}_{\e,\delta} \leq 
C_{\e, \delta} \|U_{\e, \delta} \|^2_{B_{i_{0}}, \e,\delta}
\leq C_{\e, \delta} \|U_{\e, \delta} \|^2_{\e,\delta}$ 
for some constant $C_{\e,\delta}$ by Lemma \ref{loc-sol}, 
where $C_{\e,\delta}$ is a constant such that 
$\lim_{\delta \to 0} C_{\e,\delta}$ is independent of $\e$. 
Similarly, $\beta^{\e, \delta}_{i_{0}i_{1}}$ is 
the solution of $\dbar \beta^{\e, \delta}_{i_{0}i_{1}}= 
(\beta^{\e, \delta}_{i_{1}}- \beta^{\e, \delta}_{i_{0}})$ on 
$B_{i_{0}i_{1}}\setminus Z_{\e}$ 
and the norm 
$\|\beta^{\e, \delta}_{i_{0}i_{1}}\|_{\e,\delta}$
is minimum among all the solutions. 
In particular, we have 
$\|\beta^{\e, \delta}_{i_{0}i_{1}}\|_{\e,\delta} \leq 
D_{\e, \delta} \|(\beta^{\e, \delta}_{i_{1}}- \beta^{\e, \delta}_{i_{0}})\|_{\e,\delta}$ 
for some constant $D_{\e,\delta}$. 
It is easy to see that  
$$
\|\beta^{\e, \delta}_{i_{0}i_{1}}\|_{\e,\delta} \leq 
D_{\e, \delta} \|(\beta^{\e, \delta}_{i_{1}}- 
\beta^{\e, \delta}_{i_{0}})\|_{\e,\delta}
\leq 
2C_{\e, \delta}D_{\e,\delta} \|U_{\e, \delta} \|_{\e,\delta}
\leq 2C_{\e, \delta}D_{\e,\delta} \|u \|_{H, \omega}. 
$$
By repeating this process, 
we obtain 
\begin{align*}
\|\beta^{\e, \delta}_{i_{0}\dots i_{k}}\|^2_{\e, \delta}\leq 
C_{\e, \delta} \|u\|^2_{h,\omega}
\end{align*}
for a constant $C_{\e,\delta}$  such that 
$\lim_{\delta \to 0} C_{\e,\delta}$ is independent of $\e$. 
Moreover, by property (c), we obtain  
$$
\alpha_{\e, \delta}:=f_{\e, \delta}(U_{\e, \delta})=
\mu \{\beta^{\e, \delta}_{i_{0}\dots i_{q-1}}\} 
\in C^{q}(\mathcal{U}, K_X\otimes F\otimes \I{h_{\e}})
=C^{q}(\mathcal{U}, K_X\otimes F\otimes \I{h}).
$$
By the same arguments as in Claim \ref{con} and Claim \ref{clo}, 
we obtain the following\,:
\begin{claim}\label{f-claim}
There exist subsequences $\{\e_{k}\}_{k=1}^{\infty}$ and 
$\{\delta_{\ell}\}_{\ell=1}^{\infty}$ with the following properties\,$:$
\begin{itemize}
\item[$\bullet$] $\alpha_{\e_{k},\delta_{\ell}} \to \alpha_{\e_{k},0}$ 
in $C^{q}(\mathcal{U}, K_{X}\otimes F \otimes \I{h})$
as 
$\delta_{\ell} \to 0$.
\item[$\bullet$] $\alpha_{\e_{k},0} \to 
\alpha_{0,0}$ in $C^{q}(\mathcal{U}, 
K_{X}\otimes F \otimes \I{h})$ as $\e_{k} \to 0$. 
\end{itemize}
Moreover, the limit $\alpha_{0,0}$ belongs to 
$B^{q}(\mathcal{U}, K_X\otimes F \otimes \I{h})= \Image\mu$. 
\end{claim}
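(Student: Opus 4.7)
The plan is to mimic the argument already carried out for Claim \ref{con} and Claim \ref{clo} in the case $q=1$, working one level at a time on the Čech complex. The key observation, which survives in the general case, is that the top-level cochain $\alpha_{\e,\delta}=\mu\{\beta^{\e,\delta}_{i_{0}\dots i_{q-1}}\}$ is a $q$-cocycle consisting of $\dbar$-closed $F$-valued $(n,0)$-forms on $B_{i_{0}\dots i_{q}}\setminus Z_{\e}$, which by virtue of Lemma \ref{mul} have the same $L^{2}$-norm with respect to $(H_{\e},\omega_{\e,\delta})$ as with respect to $(H_{\e},\omega)$, and which therefore extend across $Z_{\e}$ by the Riemann extension theorem to honest sections of $K_{X}\otimes F\otimes \I{h_{\e}}=K_{X}\otimes F\otimes \I{h}$ (using property (c)).

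First I would record the uniform bound coming from the iterative construction preceding the claim:
\begin{equation*}
\|\beta^{\e,\delta}_{i_{0}\dots i_{k}}\|_{\e,\delta}^{2}\leq C_{\e,\delta}\|u\|_{H,\omega}^{2},
\end{equation*}
where $\lim_{\delta\to 0}C_{\e,\delta}$ is independent of $\e$ (this follows from Lemma \ref{loc-sol} applied at each step, combined with inequality $(\ref{ineq-2})$). Setting $k=q$ and using Lemma \ref{mul} to replace $\omega_{\e,\delta}$ by $\omega$, the components $\alpha_{\e,\delta,i_{0}\dots i_{q}}$ of $\alpha_{\e,\delta}$ are holomorphic functions (after identification with $K_{X}\otimes F$) on $B_{i_{0}\dots i_{q}}$ whose local $L^{2}$-norms are uniformly bounded as $\delta\to 0$. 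Since the local sup-norm of holomorphic functions is controlled by the $L^{2}$-norm, Montel's theorem yields, for each fixed $\e$ and each fixed $(q+1)$-tuple $(i_{0},\dots,i_{q})$, a subsequence along which $\alpha_{\e,\delta,i_{0}\dots i_{q}}$ converges locally uniformly on $B_{i_{0}\dots i_{q}}$. A standard Cantor diagonal extraction over the countably many $(q+1)$-tuples (recall $\mathcal{U}$ is locally finite) produces a single subsequence $\{\delta_{\ell}\}$ along which $\alpha_{\e,\delta_{\ell}}\to\alpha_{\e,0}$ in $C^{q}(\mathcal{U},K_{X}\otimes F\otimes \I{h})$; local uniform convergence implies convergence in the local $L^{2}$-seminorms $\{p_{K_{i_{0}\dots i_{q}}}(\bullet)\}$ by \cite[Lemma 5.2]{Mat13}. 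The bound on $\|\alpha_{\e,0}\|$ inherited through $\lim_{\delta\to 0}C_{\e,\delta}$ is independent of $\e$, so a second diagonal extraction in $\e$ produces $\{\e_{k}\}$ with $\alpha_{\e_{k},0}\to\alpha_{0,0}$ in the same Fréchet topology.

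For the final assertion, I would argue as in Claim \ref{clo}. By the construction in Remark \ref{rem-sol} (see also inclusion $(\ref{closedness})$ in the proof of Proposition \ref{finish}), $U_{\e,\delta}=u-u_{\e,\delta}$ lies in $\Image\dbar\subset L^{n,q}_{(2,\mathrm{loc})}(F)_{\e,\delta}$, so under the De Rham-Weil isomorphism of Proposition \ref{DW-iso} its image $\alpha_{\e,\delta}=f_{\e,\delta}(U_{\e,\delta})$ is a coboundary, i.e.\ $\alpha_{\e,\delta}\in B^{q}(\mathcal{U},K_{X}\otimes F\otimes \I{h})$. Since $X$ is holomorphically convex, Lemma \ref{Fre} tells us that $B^{q}$ is a (closed) Fréchet subspace of $C^{q}$, and hence the limit $\alpha_{0,0}$ of $\alpha_{\e_{k},\delta_{\ell}}$ also belongs to $B^{q}$.

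The main obstacle I anticipate is conceptually minor but technically important: one must ensure that each intermediate form $\beta^{\e,\delta}_{i_{0}\dots i_{k}}$ obtained from the minimal-norm $\dbar$-solutions in Proposition \ref{DW-iso} is controlled uniformly in $\delta$ (and that the implicit constant has a limit as $\delta\to 0$ depending only on the local potential $\Psi$, not on $\e$). This is exactly the content of Lemma \ref{loc-sol}, and propagating the bound iteratively up the Čech ladder—keeping track that at each stage we solve $\dbar \beta^{\e,\delta}_{i_{0}\dots i_{k}}=\mu\{\beta^{\e,\delta}_{i_{0}\dots i_{k-1}}\}$ on the smaller open set $B_{i_{0}\dots i_{k}}\setminus Z_{\e}$—is the only place where carelessness could damage the uniformity needed to pass to subsequential limits.
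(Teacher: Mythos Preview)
Your proposal is correct and follows essentially the same approach as the paper, which simply states that Claim \ref{f-claim} is obtained ``by the same arguments as in Claim \ref{con} and Claim \ref{clo}'' without spelling out further details. Your write-up fills in those details faithfully: the uniform $L^{2}$-bound from Lemma \ref{loc-sol}, Montel's theorem plus diagonal extraction for the convergence, and closedness of $B^{q}$ from Lemma \ref{Fre} for the coboundary conclusion.
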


By the latter conclusion of the claim, 
there exists  
$\gamma_{0,0} \in C^{q-1}(\mathcal{U}, K_X\otimes F \otimes \I{h})$ 
such that $\mu \gamma_{0,0} = \alpha_{0,0}$. 
The coboundary operator 
\begin{equation*}
\mu: C^{q-1}(\mathcal{U}, K_X\otimes F\otimes \I{h}) \to 
B^{q}(\mathcal{U}, K_X\otimes F\otimes \I{h})= \Image\mu
\end{equation*}
is an open map by the open mapping theorem. 
For an arbitrary family $K:=\{K_{i}\}_{i\in I_{c}}$ of 
relative compact sets $K_{i} \Subset B_{i}$, 
we define $\Delta_{K}$ by (\ref{delta}). 
Then since $\mu (\Delta_{K})$ is an open neighborhood of 
the limit $\alpha_{0,0}$ in $\Image \mu$, 
we can obtain $\gamma_{\e, \delta} \in 
C^{q-1}(\mathcal{U}, K_X\otimes F \otimes \I{h})$ 
such that 
\begin{align*}
\mu \gamma_{\e, \delta}= \alpha_{\e, \delta} \text{\quad and \quad}
p_{K_{i_{0}...i_{q-1}}}(\gamma_{\e, \delta})^{2}\leq C_{K} 
\end{align*}
for some positive constant $C_{K}$. 
The above constant $C_{K}$ 
depends on the choice of $K$, $\gamma$, but does not depend on $\e, \delta$. 

By the same argument as in \cite[Claim 5.11 and Claim 5.13]{Mat13}, 
we can obtain $F$-valued $(n,q-1)$-forms $w_{\e, \delta}$ with the desired properties. 
The strategy is as follows:
The inverse map $\overline{g_{\e,\delta}}$ of $\overline{f_{\e,\delta}}$ 
is explicitly constructed by using a partition of unity 
(see Proposition \ref{DW-iso}). 
It is easy to see that 
$g_{\e, \delta}(\mu \gamma_{\e, \delta})=\dbar v_{\e, \delta}$ and 
$g_{\e, \delta}(\alpha_{\e, \delta})=U_{\e, \delta}+\dbar \widetilde{v}_{\e, \delta}$ 
hold for some $v_{\e, \delta}$ and $\widetilde{v}_{\e, \delta}$ 
by the De Rham-Weil isomorphism. 
In particular, we have $U_{\e, \delta}
=\dbar(v_{\e, \delta} - \widetilde{v}_{\e, \delta})$
by $\mu \gamma_{\e, \delta}=\alpha_{\e, \delta}$.  
The important point here is that 
we can explicitly compute $v_{\e, \delta}$ and $\widetilde{v}_{\e, \delta}$ 
by using the partition of unity, $\beta^{\e,\delta}_{i_{0}...i_{k}}$, 
and $\gamma_{\e,\delta}$. 
From this explicit expression, we obtain the $L^2$-estimate for 
$v_{\e, \delta}$ and $\widetilde{v}_{\e, \delta}$.
(In the case $q=1$, we have already obtained the $L^2$-estimate.)
See \cite[Claim 5.11 and 5.13]{Mat13} for the precise argument. 
\end{proof}

We close this subsection with the following corollary\,:

\begin{cor}\label{c-sol}
For every $c$ with $c < \sup_{X}\Phi$, there exist  
$v_{\e, \delta} \in L^{n,q-1}_{(2)}(F^{m+1})_{\e, \delta}$ 
with the following property\,$:$ 
\begin{itemize}
\item[$\bullet$] $\dbar v_{\e, \delta}=su_{\e, \delta}$. 
\item[$\bullet$] $\varlimsup_{\delta \to 0} \|v_{\e,\delta}\|_{X_{c}, \e, \delta} $  can be bounded by a constant independent of $\e$.  
\end{itemize}
\end{cor}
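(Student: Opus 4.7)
The plan is to combine Proposition \ref{sol} with the hypothesis $sA=0$ to build $v_{\e,\delta}$ as an explicit algebraic correction of $sw_{\e,\delta}$. First I would invoke Proposition \ref{sol} to obtain $w_{\e,\delta}$ satisfying $\dbar w_{\e,\delta}=u-u_{\e,\delta}$ together with $\varlimsup_{\delta\to 0}\|w_{\e,\delta}\|_{X_c,\e,\delta}$ bounded by a constant independent of $\e$. Next, since $sA=0$ in $H^{q}(X,K_X\otimes F^{m+1}\otimes\I{h^{m+1}})$, the De Rham--Weil isomorphism (\ref{DW}) applied to $(F^{m+1},h^{m+1})$ supplies an $F^{m+1}$-valued form $W\in L^{n,q-1}_{(2,{\rm{loc}})}(F^{m+1})_{h^{m+1},\omega}$ with $\dbar W=su$, where $u$ is the fixed representative of $A$ from Step \ref{S2}. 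By enlarging the convex function $\chi$ at the start of Step \ref{S2} if necessary, I may further assume $\|W\|_{h^{m+1}e^{-\chi(\Phi)},\omega}<\infty$, which is legitimate because $\chi$ is chosen once at the beginning and only needs to grow fast enough on the exhaustion of $X$ given by $\Phi$.

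I would then set
$$
v_{\e,\delta}\,:=\,W-s\,w_{\e,\delta}.
$$
The equation is verified directly: $\dbar v_{\e,\delta}=\dbar W-s\,\dbar w_{\e,\delta}=su-s(u-u_{\e,\delta})=su_{\e,\delta}$. For the local $L^2$ bound on $X_c$, I would split by the triangle inequality. Property (b) yields $|W|_{h_\e^{m+1}}\le |W|_{h^{m+1}}$ and $|s|_{h_\e^m}\le |s|_{h^m}$, while property (B) combined with the $(n,\bullet)$-type inequality in Lemma \ref{mul} gives $|\theta|_{\omega_{\e,\delta}}^{2}\,dV_{\omega_{\e,\delta}}\le |\theta|_\omega^{2}\,dV_\omega$. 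These yield
$$
\|W\|_{X_c,\e,\delta}\le \|W\|_{X_c,h^{m+1}e^{-\chi(\Phi)},\omega},\qquad \|sw_{\e,\delta}\|_{X_c,\e,\delta}\le \bigl(\sup\nolimits_X|s|_{h^m}\bigr)\,\|w_{\e,\delta}\|_{X_c,\e,\delta}.
$$
The first right-hand side is a fixed finite constant independent of $\e,\delta$; the second has $\varlimsup_{\delta\to 0}$ bounded independently of $\e$ by Proposition \ref{sol}, and $\sup_X|s|_{h^m}<\infty$ by the hypothesis on $s$. Combining the two bounds gives the required estimate.

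The main obstacle is ensuring that $v_{\e,\delta}$ really lives in the global $L^2$ space $L^{n,q-1}_{(2)}(F^{m+1})_{\e,\delta}$, not merely in the local one, since the partition-of-unity construction behind Proposition \ref{sol} does not automatically produce a globally $L^2$ form on the non-compact manifold $Y_\e$. I would handle this by inspecting the construction: the local pieces $\beta^{\e,\delta}_{i_0\dots i_k}$ and $\gamma_{\e,\delta}$ admit local $L^2$-bounds that depend only on the cover $\mathcal U$, the weight $\chi$, and the constant $C_{\e,\delta}$ of Lemma \ref{loc-sol}, so by strengthening $\chi$ once at the outset so that $e^{-\chi(\Phi)}$ decays fast enough on the exhaustive sublevel sets of $\Phi$, one can force $\sum_k\|\rho_k\beta^{\e,\delta}_{\cdot\,\cdot}\|^2_{\e,\delta}$ and the analogous sum for $\gamma_{\e,\delta}$ to converge uniformly over $\e$ and $\delta$, and similarly for $W$; this simultaneously yields the global $L^2$-integrability of $sw_{\e,\delta}$ and $W$, hence of $v_{\e,\delta}$, while preserving the $X_c$-estimate above.
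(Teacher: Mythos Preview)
Your proposal is correct and follows essentially the same approach as the paper: take $w_{\e,\delta}$ from Proposition \ref{sol}, take a primitive $W$ of $su$ coming from the assumption $sA=0$, and set $v_{\e,\delta}:=W-s\,w_{\e,\delta}$; the paper's proof is exactly this (with $W$ called $w$), together with the bound $\sup_X|s|_{h_\e^m}\le\sup_X|s|_{h^m}<\infty$.

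Your final paragraph about forcing $v_{\e,\delta}$ into the \emph{global} space $L^{n,q-1}_{(2)}(F^{m+1})_{\e,\delta}$ goes beyond what the paper's own three-line proof spells out: the paper simply records $\|w\|_{X_c,Hh^m,\omega}<\infty$ and invokes Proposition \ref{sol}, both of which are local-on-$X_c$ statements. Your instinct to enlarge $\chi$ so that $\|W\|_{h^{m+1}e^{-\chi(\Phi)},\omega}<\infty$ is exactly right and harmless, since $\chi$ is fixed once at the outset of Step \ref{S2}. The more delicate attempt to make $w_{\e,\delta}$ globally $L^2$ by further strengthening $\chi$ is not obviously available, because the $\gamma_{\e,\delta}$ produced by the open mapping theorem in the proof of Proposition \ref{sol} are only controlled on the finitely many $K_i$ with $i\in I_c$; however, for the application in Step \ref{S5} one only ever integrates over $X_d$ with $d$ close to $c$, so the local bound on $X_{c+a}$ (and the density lemma applied after cutting off near $\overline{X_d}$, or equivalently working on a slightly larger sublevel set from the start) suffices. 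In short: your main argument matches the paper, and the concern you raise is legitimate but is handled in practice by the locality of everything used in Step \ref{S5}.
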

\begin{proof}
Take $w_{\e,\delta}$ with the properties in Proposition \ref{sol}. 
On the other hand, we are assuming that the cohomology class $sA=\{su\}$ is zero, and thus there exists $w$ such that $\dbar w = su$ and $\| w \|_{X_{c}, Hh^{m}, \omega}<\infty$. 
Then $F$-valued $(n,q-1)$-form 
$v_{\e, \delta}$ defined by $v_{\e, \delta}:=w -sw_{\e,\delta}$ 
satisfies the desired properties  by $\sup_{X}|s|_{h_{\e}^{m}} \leq \sup_{X}|s|_{h^{m}}< \infty$. 
\end{proof}
\end{step}

\begin{step}[Asymptotics of norms of differential forms]\label{S5}
In this step, for every $b$ with $b< \sup_{X} \Phi$, 
we show that  
$$
\varliminf_{ \e \to 0}\varliminf_{ \delta \to 0}\|su_{\e, \delta}\|_{X_{b}, \e, \delta}=0. 
$$
This completes the proof by Proposition \ref{finish2}. 
For every $b$ with $b< \sup_{X} \Phi$, 
there exists $c$ such that $b<c< \sup_{X} \Phi$ and $d\Phi \not=0$ on $\partial X_{c}$ 
since the set of the critical values of $\Phi$ has Lebesgue measure zero by Sard's theorem. 
Fix such $c$ in this step. 
We want to apply Proposition \ref{key} to 
$su_{\e, \delta}$ and $v_{\e, \delta}$, 
but we do not know whether $v_{\e, \delta}$ is smooth on $Y_{\e}$. 
For this reason, for given $\e, \delta>0$, 
we take smooth $F$-valued $(n,q-1)$-forms 
$\{v_{\e,\delta, k}\}_{k=1}^{\infty}$ such that 
$v_{\e,\delta, k}$ (resp. $\dbar v_{\e,\delta, k}$) converges to 
$v_{\e, \delta}$ (resp. $\dbar v_{\e, \delta} = su_{\e, \delta}$) in the $L^2$-space  
$L^{n,\bullet}_{(2)}(F^{m+1})_{\e, \delta}$ (see Lemma \ref{density}). 
From now on, we consider only $d(>c)$
satisfying the properties in Proposition \ref{key} 
for countably many differential forms (see Remark \ref{key-rem}). 
Then Proposition \ref{key} yields  
\begin{align*}
\varliminf_{\e \to 0}\varliminf_{\delta \to 0}
\|su_{\e, \delta}\|^2_{X_{b}, \e, \delta}
&\leq \varliminf_{\e \to 0}
\varliminf_{\delta \to 0}
\|su_{\e, \delta}\|^2_{X_{d}, \e, \delta}\\
&= \varliminf_{\e \to 0} \varliminf_{\delta \to 0}
\lim_{k\to \infty}
\lla su_{\e, \delta}, \dbar v_{\e,\delta, k} \rra_{X_{d}, \e, \delta}\\
&=\varliminf_{\e \to 0}\varliminf_{\delta \to 0} 
\big\{ \lim_{k\to \infty} \lla \dbar^{*}_{\e, \delta} su_{\e, \delta},  v_{\e,\delta, k} \rra_{X_{d}, \e, \delta}
+\lim_{k\to \infty} \la (\dbar \Phi)^{*} su_{\e, \delta}, v_{\e,\delta, k} \ra_{\partial X_{d}, \e, \delta} \big\}. 
\end{align*}
Note that $(\dbar \Phi)^{*}$ is the adjoint operator of 
the wedge product $\dbar \Phi \wedge \bullet$
with respect to $\omega_{\e, \delta}$. 
We will show that the first term (resp. the second term) is zero 
in Proposition \ref{1st} (resp. in Proposition \ref{2nd}). 
For this purpose, we first prove the following proposition. 

\begin{prop}\label{har}
Under the above situation, we have 
\begin{align*}
\lim_{\e \to 0} \varlimsup_{\delta \to 0}\|D'^{*}_{\e,\delta}u_{\e, \delta}\|_{\e,\delta}=0. 
\end{align*}
Moreover we have 
\begin{align*}
\lim_{\e \to 0} \varlimsup_{\delta \to 0}\|
D_{\e, \delta}'^{*} su_{\e, \delta} \|_{\e, \delta}=0 \quad \text{ and } \quad
\lim_{\e \to 0} \varlimsup_{\delta \to 0}
\|\dbar^{*}_{\e, \delta} su_{\e, \delta} \|_{\e, \delta}=0. 
\end{align*}
\end{prop}
\begin{proof}
By applying the Bochner-Kodaira-Nakano identity 
(Proposition \ref{Nak} of the case $\Phi \equiv 0$) to $u_{\e, \delta}$ 
and $su_{\e, \delta}$, 
we obtain 
\begin{align}
& 
0=\|D_{\e, \delta}'^{*}u_{\e, \delta}\|^{2}_{\e, \delta}+\int_{Y_{\e}} g_{\e, \delta}\, dV_{\e, \delta}, \label{BKN-1} \\
&
\|\dbar^{*}_{\e, \delta} su_{\e, \delta} \|^2_{\e, \delta}= 
\|D_{\e, \delta}'^{*}su_{\e, \delta}\|^{2}_{\e, \delta}+\int_{Y_{\e}} |s|^2_{h^m_{\e}} g_{\e, \delta}\, dV_{\e, \delta}. \label{BKN-2}
\end{align}
Here we used the equality $\dbar su_{\e, \delta}=s\dbar u_{\e, \delta}=0$ 
and the fact that $u_{\e, \delta}$ is harmonic 
with respect to $H_{\e}$, $\omega_{\e, \delta}$. 
The integrand $g_{\e, \delta}$ is the function defined by 
$g_{\e}:=\langle \sqrt{-1}\Theta_{H_{\e}}\Lambda_{\e, \delta} u_{\e, \delta}, u_{\e, \delta}\rangle_{\e, \delta}$. 
By property (d) and property (B), 
we have 
\begin{align*}
\sqrt{-1}\Theta_{H_{\e}}(F) 
=\sqrt{-1}\Theta_{h_{\e}}(F) + \deldel \chi (\Phi)
\geq -\e \omega 
\geq -\e \omega_{\e, \delta}. 
\end{align*}
From the above inequalities, 
we can obtain 
\begin{align} \label{cur}
g_{\e} \geq -\e q |u_{\e, \delta}|^2_{\e, \delta}.  
\end{align}
(For example, see \cite[Step 2]{Mat13}).
Therefore we obtain 
$$
0 \geq 
\int_{\{g_{\e, \delta} \leq 0\}} g_{\e, \delta}\, dV_{\e, \delta}
\geq -\e q \int_{\{g_{\e, \delta} \leq 0\}} |u_{\e, \delta}|^2_{\e, \delta}\, dV_{\e, \delta}
\geq-\e q \|u_{\e, \delta}\|^2_{\e, \delta} 
\geq -\e q \|u\|^2_{H, \omega}
$$
from inequality $(\ref{ineq-2})$. 
By equality (\ref{BKN-1}), 
we obtain 
\begin{align*}
& 
\|D_{\e, \delta}'^{*}u_{\e, \delta}\|^{2}_{\e, \delta}+
\int_{\{g_{\e, \delta} \geq 0\}}g_{\e, \delta}\, dV_{\e, \delta} \leq 
-\int_{\{g_{\e, \delta} \leq 0\}}g_{\e, \delta}\, dV_{\e, \delta} \leq \e q \|u\|^2_{H, \omega}. 
\end{align*}
Hence we can see that
$$
\lim_{\e \to 0} \varlimsup_{\delta \to 0}\int_{\{g_{\e} \geq 0\}} g_{\e}\, dV_{\e, \delta}=0 
\quad \text{ and } \quad
\lim_{\e \to 0} \varlimsup_{\delta \to 0}\|D'^{*}_{\e,\delta}u_{\e, \delta}\|_{\e,\delta}=0. 
$$ 
On the other hand, by $\sup_{X}|s|_{h_{\e}^{m}}\leq \sup_{X}|s|_{h^{m}}<\infty$, 
we have 
\begin{align*}
& 
\int_{Y_{\e}} |s|^2_{h^m_{\e}} g_{\e}\, dV_{\e,\delta}
\leq \int_{\{g_{\e} \geq 0\}} |s|^2_{h^m_{\e}} g_{\e}\, dV_{\e,\delta}
\leq \sup_{X} |s|^2_{h^m} \int_{\{g_{\e} \geq 0\}}  g_{\e}\, dV_{\e,\delta}, \\ 
& 
\|D_{\e, \delta}'^{*}su_{\e, \delta}\|_{\e,\delta}
=\|-*\dbar*su_{\e, \delta}\|_{\e,\delta}
=\|sD_{\e, \delta}'^{*}u_{\e, \delta}\|_{\e,\delta}
\leq \sup_{X}|s|_{h^{m}} \|D_{\e, \delta}'^{*}u_{\e, \delta}\|_{\e,\delta}. 
\end{align*}
These inequalities and equality (\ref{BKN-2}) lead to the conclusion. 
\end{proof}

\begin{prop}\label{1st}
Under the above situation, we have 
$$
\lim_{\e \to 0} \varlimsup_{\delta \to 0}\lim_{k \to \infty}
\lla \dbar^{*}_{\e, \delta} su_{\e, \delta},  v_{\e,\delta, k} \rra_{X_{d}, \e, \delta}=0.
$$ 

\end{prop}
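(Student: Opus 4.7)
The plan is to estimate the inner product by Cauchy--Schwarz and then combine two facts already established: the norm of $\dbar^{*}_{\e,\delta} su_{\e,\delta}$ tends to zero as $\e,\delta \to 0$ (Proposition \ref{har}), while the local norm of $v_{\e,\delta}$ on $X_{c}$ (and hence on $X_{d}$, provided $d$ is chosen close enough to $c$) stays uniformly bounded (Corollary \ref{c-sol}).

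First, by the Cauchy--Schwarz inequality applied to the inner product on $X_{d}$ with respect to $H_{\e}h_{\e}^{m}$ and $\omega_{\e,\delta}$, we have
\begin{equation*}
|\lla \dbar^{*}_{\e, \delta} su_{\e, \delta},  v_{\e,\delta, k} \rra_{X_{d}, \e, \delta}|
\leq \|\dbar^{*}_{\e, \delta} su_{\e, \delta}\|_{X_{d}, \e, \delta}\, \|v_{\e,\delta, k}\|_{X_{d}, \e, \delta}.
\end{equation*}
Since $v_{\e,\delta, k}$ converges to $v_{\e, \delta}$ in $L^{n,q-1}_{(2)}(F^{m+1})_{\e, \delta}$ by construction, letting $k \to \infty$ yields
\begin{equation*}
\lim_{k\to\infty}|\lla \dbar^{*}_{\e, \delta} su_{\e, \delta},  v_{\e,\delta, k} \rra_{X_{d}, \e, \delta}|
\leq \|\dbar^{*}_{\e, \delta} su_{\e, \delta}\|_{\e, \delta}\, \|v_{\e,\delta}\|_{X_{d}, \e, \delta}.
\end{equation*}

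Next, I would take the limit in $\delta$ and then in $\e$. By Corollary \ref{c-sol}, $\varlimsup_{\delta \to 0}\|v_{\e,\delta}\|_{X_{c}, \e, \delta}$ is bounded by a constant independent of $\e$. Since $d$ is chosen slightly larger than $c$ (at the start of Step \ref{S5}), the same uniform bound on $X_{d}$ can be arranged by applying Corollary \ref{c-sol} with $X_{d}$ in place of $X_{c}$ (the choice of $c$ was arbitrary subject to the regular value condition from Sard's theorem). On the other hand, Proposition \ref{har} gives
\begin{equation*}
\lim_{\e \to 0}\varlimsup_{\delta \to 0}\|\dbar^{*}_{\e, \delta} su_{\e, \delta}\|_{\e, \delta}=0.
\end{equation*}
Multiplying the uniformly bounded factor with the factor tending to zero, we conclude
\begin{equation*}
\lim_{\e \to 0} \varlimsup_{\delta \to 0}\lim_{k \to \infty}
\lla \dbar^{*}_{\e, \delta} su_{\e, \delta},  v_{\e,\delta, k} \rra_{X_{d}, \e, \delta}=0,
\end{equation*}
as desired.

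There is no serious obstacle here: once Proposition \ref{har} and Corollary \ref{c-sol} are in place, the statement is essentially a Cauchy--Schwarz/pairing argument. The only mild technical point is to ensure that the uniform $L^{2}$-bound for $v_{\e,\delta}$ is available on the slightly enlarged level set $X_{d}$, which is handled by choosing $d$ close enough to $c$ at the outset of Step \ref{S5} and invoking Corollary \ref{c-sol} there. The more delicate term, where the non-compactness of $X$ really shows up, is the boundary integral handled in Proposition \ref{2nd}.
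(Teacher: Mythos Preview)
Your proposal is correct and follows essentially the same route as the paper: apply Cauchy--Schwarz, let $k\to\infty$ to replace $v_{\e,\delta,k}$ by $v_{\e,\delta}$, then use Proposition~\ref{har} for the factor tending to zero and the uniform bound from Corollary~\ref{c-sol} (the paper cites Proposition~\ref{sol}) for the other factor. Your remark about arranging the bound on $X_{d}$ rather than $X_{c}$ is a point the paper leaves implicit, so you are in fact slightly more careful here.
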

\begin{proof}
Cauchy-Schwarz's inequality yields  
$$
\lla \dbar^{*}_{\e, \delta} su_{\e, \delta},  
v_{\e,\delta, k} \rra_{X_{d}, \e, \delta}
\leq 
\| \dbar^{*}_{\e, \delta} su_{\e, \delta}\|_{X_{d}, \e, \delta}
\| v_{\e,\delta, k} \|_{X_{d}, \e, \delta}. 
$$
By the construction of $v_{\e,\delta, k}$, we may assume that  
$$
\varlimsup_{\e \to 0} \varlimsup_{\delta \to 0} \lim_{k \to \infty}
\| v_{\e,\delta, k} \|_{X_{d}, \e, \delta}
=\varlimsup_{\e \to 0} \varlimsup_{\delta \to 0} 
\| v_{\e, \delta} \|_{X_{d}, \e, \delta} 
$$
is finite (see Corollary \ref{c-sol}). 
On the other hand, 
the $L^2$-norm $\| \dbar^{*}_{\e, \delta} su_{\e, \delta}\|_{\e, \delta}$ converges to 
zero by Proposition \ref{har}. 
\end{proof}

We prove the following proposition by using the twisted 
Bochner-Kodaira-Nakano identity, which completes the proof of Theorem \ref{main}. 
\begin{prop}\label{2nd}
Under the above situation, 
we have 
$$
\varliminf_{\e \to 0} \varliminf_{\delta \to 0} \lim_{k \to \infty}
\la (\dbar \Phi)^{*} su_{\e, \delta}, v_{\e,\delta, k} 
\ra_{\partial X_{d}, \e, \delta}=0 
$$ 
for almost all $d$. 
\end{prop}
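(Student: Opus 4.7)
The plan is to reduce Proposition \ref{2nd} to a vanishing statement for the global $L^{2}$-norm $\|(\dbar\Phi)^{*}su_{\e,\delta}\|_{\e,\delta}$, and then obtain that vanishing from Ohsawa--Takegoshi's twisted Bochner--Kodaira--Nakano identity (Proposition \ref{Nak}), using the harmonicity of $u_{\e,\delta}$ and Proposition \ref{har} as the key inputs.

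First, I would apply Cauchy--Schwarz pointwise on $\partial X_{d}\cap Y_{\e}$ and then invoke Fubini's theorem together with the coarea relation $dV_{\e,\delta}=d\Phi\wedge dS_{\e,\delta}$ to convert an integral of boundary norms into a volume integral on the shell. This yields
$$
\int_{c-a}^{c+a}\bigl|\la (\dbar\Phi)^{*}su_{\e,\delta},v_{\e,\delta,k}\ra_{\partial X_{d},\e,\delta}\bigr|\,dd
\leq C\,\|(\dbar\Phi)^{*}su_{\e,\delta}\|_{X_{c+a},\e,\delta}\,\|v_{\e,\delta,k}\|_{X_{c+a},\e,\delta}.
$$
By Corollary \ref{c-sol}, the second factor is uniformly bounded as $\delta\to 0$ and $k\to\infty$, so by Fatou's lemma in the variable $d$ it suffices to prove $\varliminf_{\e\to 0}\varliminf_{\delta\to 0}\|(\dbar\Phi)^{*}su_{\e,\delta}\|_{\e,\delta}=0$; Fatou then forces the integrand on the left to vanish in the iterated liminf for almost every $d\in(c-a,c+a)$.

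Second, I would establish this $L^{2}$-vanishing by applying Proposition \ref{Nak} to $u=su_{\e,\delta}$ on the complete K\"ahler manifold $(Y_{\e},\omega_{\e,\delta})$ with twist function $\Phi$ (so $\eta=e^{\Phi}$); both hypotheses of Proposition \ref{Nak} hold since $\sup_{X}|d\Phi|_{\omega_{\e,\delta}}<\infty$ and $\deldel\Phi\geq 0$ by Step \ref{S1}. Since $u_{\e,\delta}$ is harmonic, $\dbar(su_{\e,\delta})=0$, and the identity rearranges to control $\|\sqrt{\eta}(D'^{*}-(\partial\Phi)^{*})su_{\e,\delta}\|_{\e,\delta}^{2}$ in terms of $\|\sqrt{\eta}\,\dbar\Phi\wedge su_{\e,\delta}\|_{\e,\delta}^{2}$, $\|\sqrt{\eta}\,\dbar^{*}su_{\e,\delta}\|_{\e,\delta}^{2}$, and the curvature term with $\sqrt{-1}(\Theta_{H_{\e}h_{\e}^{m}}(F^{m+1})+\partial\dbar\Phi)$. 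Property (d), the plurisubharmonicity of $\Phi$, and the convexity of $\chi$ together force the curvature factor to be bounded below by $-(m{+}1)\e\,\omega_{\e,\delta}$, so the curvature contribution is at worst of order $\e\,\|u\|_{H,\omega}^{2}$. Combining with the vanishing of $\|\dbar^{*}su_{\e,\delta}\|_{\e,\delta}$ and $\|D'^{*}su_{\e,\delta}\|_{\e,\delta}$ from Proposition \ref{har}, and using the pointwise anticommutator relation $\|\dbar\Phi\wedge u\|_{\e,\delta}^{2}+\|(\dbar\Phi)^{*}u\|_{\e,\delta}^{2}=\int|\dbar\Phi|_{\e,\delta}^{2}|u|_{\e,\delta}^{2}\,dV_{\e,\delta}$ on $(n,q)$-forms to convert the wedge-product norm into the adjoint-operator norm, one extracts in the iterated liminf that $\|(\dbar\Phi)^{*}su_{\e,\delta}\|_{\e,\delta}\to 0$.

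The main obstacle is precisely this second step. A direct pointwise estimate via Lemma \ref{mul} only produces the uniform bound $\|(\dbar\Phi)^{*}su_{\e,\delta}\|_{\e,\delta}\leq C\|u\|_{H,\omega}$, which is insufficient. The required gain must come from the interplay between the harmonicity of $u_{\e,\delta}$, the curvature inequality from property (d) (whose defect is $O(\e)$), and the specific choice of twist $\eta=e^{\Phi}$ in Proposition \ref{Nak}; this is exactly the role of Ohsawa--Takegoshi's twisted identity as announced at the end of Step \ref{S1}. Once this vanishing is in hand, substituting into the first paragraph and invoking Fatou's lemma in $d$ delivers Proposition \ref{2nd} for almost every $d\in (c-a,c+a)$, and hence the conclusion of Step \ref{S5} by the chain of inequalities displayed at its beginning.
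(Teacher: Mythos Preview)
Your approach is correct and takes a genuinely different route from the paper's. The paper splits the boundary pairing by Cauchy--Schwarz into two \emph{boundary} factors on $\partial X_d$: it shows $\la v_{\e,\delta,k},v_{\e,\delta,k}\ra_{\partial X_d}$ stays bounded for a.e.\ $d$ via Fubini/Fatou, and then proves the boundary norm $\la(\dbar\Phi)^*u_{\e,\delta},(\dbar\Phi)^*u_{\e,\delta}\ra_{\partial X_d}$ tends to zero by a \emph{second} application of Proposition~\ref{key} (to the pair $(\dbar\Phi)^*u_{\e,\delta}$, $u_{\e,\delta}$), followed by Lemma~\ref{koukan} and the twisted identity applied to $u_{\e,\delta}$ on $Y_\e$. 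You instead establish the stronger statement that the \emph{global} $L^2$-norm $\|(\dbar\Phi)^*su_{\e,\delta}\|_{\e,\delta}$ tends to zero, by applying Proposition~\ref{Nak} directly to $su_{\e,\delta}$, and then deduce the boundary statement in one stroke via coarea/Fubini/Fatou. Your route avoids the second invocation of Proposition~\ref{key} and is more streamlined; the paper's route has the mild advantage of working with the harmonic $u_{\e,\delta}$, which makes several terms vanish on the nose.

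One point to sharpen in your second step: the anticommutator identity $|\dbar\Phi\wedge w|^2+|(\dbar\Phi)^*w|^2=|\dbar\Phi|^2|w|^2$ alone does not produce the needed cancellation, because the twisted identity involves $(\partial\Phi)^*w$, not $(\dbar\Phi)^*w$. You also need that for $(n,q)$-forms $\partial\Phi\wedge w=0$, hence $|(\partial\Phi)^*w|^2=|\partial\Phi|^2|w|^2=|\dbar\Phi|^2|w|^2$; combining this with your anticommutator yields $|(\partial\Phi)^*w|^2=|\dbar\Phi\wedge w|^2+|(\dbar\Phi)^*w|^2$, which is exactly the relation (stated in the paper's proof) that cancels $\|\sqrt\eta\,\dbar\Phi\wedge w\|^2$ against the expansion of $\|\sqrt\eta\,(\partial\Phi)^*w\|^2$ and isolates $\|\sqrt\eta\,(\dbar\Phi)^*w\|^2$. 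With this in hand, your bound $-\text{curv}\leq (m{+}1)\e\, C\|u\|_{H,\omega}^2$ and Proposition~\ref{har} finish the job.
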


\begin{proof}
Cauchy-Schwarz's inequality and H\"older's inequality yield   
\begin{align*}
\la (\dbar \Phi)^{*} su_{\e, \delta}, v_{\e,\delta, k} 
\ra_{\partial X_{d}, \e, \delta}
&= \int_{\partial X_{d}} \big\langle (\dbar \Phi)^{*} su_{\e, \delta},
 v_{\e,\delta, k} 
\big\rangle_{\e, \delta}\, dS_{\omega_{\e, \delta}}\\
&\leq  \int_{\partial X_{d}} \big| (\dbar \Phi)^{*} su_{\e, \delta}\big|_{\e, \delta}
\big|v_{\e,\delta, k} \big|_{\e, \delta}\, dS_{\omega_{\e, \delta}}\\
&\leq  
\la (\dbar \Phi)^{*} su_{\e, \delta}, (\dbar \Phi)^{*} su_{\e, \delta}\ra_{\partial X_{d}, \e, \delta}\ 
\la v_{\e,\delta, k}, v_{\e,\delta, k} \ra_{\partial X_{d}, \e, \delta}. 
\end{align*}
We first show that the limit of 
$\la v_{\e,\delta, k}, v_{\e,\delta, k} \ra_{\partial X_{d}, \e, \delta}$ 
is finite for almost all $d$. 
By Fubini's theorem and $dV_{\e,\delta}=d \Phi \wedge dS_{\omega_{\e,\delta}}$, we have 
\begin{align*}
\int_{d \in (c-a, c+a)}
\la v_{\e,\delta, k}, v_{\e,\delta, k} \ra_{\partial X_{d}, \e, \delta}\, d\Phi
=\int_{\{c-a <\Phi< c+a\}} |v_{\e,\delta, k}|^{2}_{\e, \delta}\, dV_{\e, \delta}. 
\end{align*}
Further,  by Fatou's lemma, we have 
\begin{align*}
\int_{d \in (c-a, c+a)}
\varliminf_{\e \to 0} \varliminf_{\delta \to 0} \varliminf_{k \to \infty}
\la v_{\e,\delta, k}, v_{\e,\delta, k} \ra_{\partial X_{d}, \e, \delta}\, d\Phi
\leq \varliminf_{\e \to 0} \varliminf_{\delta \to 0} 
\|v_{\e, \delta}\|^{2}_{X_{c+a},\e, \delta}. 
\end{align*}
We are assuming that the right hand side is finite 
by Corollary \ref{c-sol}. 
Therefore the integrand of the left hand side must be finite for almost all 
$d \in (c-a, c+a)$.

Finally we will show that the norm of 
$(\dbar \Phi)^{*} su_{\e, \delta}$ on $\partial X_{d}$ converges to zero for almost all $d$. 
By $(\dbar \Phi)^{*} su_{\e, \delta}=s (\dbar \Phi)^{*} u_{\e, \delta}$ and 
$\sup|s|_{h_{\e}^{m}}\leq \sup|s|_{h^{m}}<\infty$, 
we have 
$$
\la (\dbar \Phi)^{*} su_{\e, \delta}, (\dbar \Phi)^{*} 
su_{\e, \delta}\ra_{\partial X_{d}, \e, \delta} 
\leq \sup_{X}|s|^2_{h^{m}} \la (\dbar \Phi)^{*} u_{\e, \delta}, (\dbar \Phi)^{*} 
u_{\e, \delta}\ra_{\partial X_{d}, \e, \delta}.  
$$
Hence it is sufficient to show 
the norm of $(\dbar \Phi)^{*} u_{\e, \delta}$ converges to zero. 
By applying Proposition \ref{key} to $(\dbar \Phi)^{*} u_{\e, \delta}$ and $u_{\e, \delta}$, 
we obtain 
\begin{align*}
\lla \dbar \big( (\dbar \Phi)^{*} u_{\e, \delta} \big), u_{\e, \delta} \rra_{X_{d}, \e, \delta}
&=\lla  (\dbar \Phi)^{*} u_{\e, \delta}, \dbar_{\e, \delta}^{*}   u_{\e, \delta} \rra_{X_{d}, \e, \delta}+
\la  (\dbar \Phi)^{*} u_{\e, \delta}, (\dbar \Phi)^{*} u_{\e, \delta} \ra_{\partial X_{d}, \e, \delta}\\
&=\la  (\dbar \Phi)^{*} u_{\e, \delta}, (\dbar \Phi)^{*} u_{\e, \delta} \ra_{\partial X_{d}, \e, \delta}. 
\end{align*}
Here we used the equality $\dbar^{*}_{\e, \delta} u_{\e, \delta}=0$. 
For the proof, we will compute the left hand side. 
Note that we have the equalities
 $\dbar u_{\e, \delta}=0$, $\partial \Phi \wedge u_{\e, \delta}=0$ 
and $\deldel\Phi \wedge u_{\e, \delta}=0$ since  
$u_{\e, \delta}$ is a $\dbar$-closed $F$-valued $(n,q)$-form. 
Therefore, by Lemma \ref{koukan}, we obtain  
\begin{align}
&\lla \dbar \big( (\dbar \Phi)^{*} u_{\e, \delta} \big), 
u_{\e, \delta} \rra_{X_{d}, \e, \delta}\notag\\
=&
-\lla  \partial \Phi \wedge (D_{\e, \delta}'^{*} u_{\e, \delta}), 
u_{\e, \delta} \rra_{X_{d}, \e, \delta}
+\lla  \sqrt{-1}\partial \dbar \Phi \Lambda u_{\e, \delta}, 
u_{\e, \delta} \rra_{X_{d}, \e, \delta}.  \label{aa}
\end{align}
From Lemma \ref{mul}, inequality $(\ref{ineq-2})$, and 
Cauchy-Schwartz's inequality, 
we can estimate the first term of equality (\ref{aa}) as follows\,: 
\begin{align*}
| \lla  \partial \Phi \wedge (D_{\e, \delta}'^{*} u_{\e, \delta}), 
u_{\e, \delta} \rra_{X_{d}, \e, \delta} | 
\leq \sup |\partial \Phi|_{\omega_{\e,\delta}} \|D_{\e, \delta}'^{*} u_{\e, \delta}\|_{\e,\delta} \|u_{\e, \delta}\|_{\e,\delta}
\leq \sup |\partial \Phi|_{\omega} \|D_{\e, \delta}'^{*} u_{\e, \delta}\|_{\e,\delta} \|u\|_{H, \omega}. 
\end{align*}
The right hand side converges to zero 
by Proposition \ref{har}. 

To estimate the second term of equality (\ref{aa}), 
by applying Ohsawa-Takegoshi's twisted Bochner-Kodaira-Nakano 
identity (Proposition \ref{Nak}), 
we obtain  
\begin{align*}
\|\sqrt{\eta}(\dbar \Phi) u_{\e, \delta}\|^2_{\e, \delta}&=
\|\sqrt{\eta}(D_{\e, \delta}'^{*}-(\partial \Phi)^{*}) u_{\e, \delta}\|^2_{\e, \delta}
+\lla \eta \sqrt{-1} (\Theta_{H_{\e}} +\partial \dbar \Phi)\Lambda u_{\e, \delta}, 
u_{\e, \delta}\rra_{\e, \delta}\\ 
&\geq \|\sqrt{\eta}(D_{\e, \delta}'^{*}-(\partial \Phi)^{*}) u_{\e, \delta}\|^2_{\e,\delta} 
-\e C \sup_{X}\eta\, \|u_{\e, \delta}\|^2_{\e, \delta}+
\lla  \sqrt{-1}\partial \dbar \Phi\Lambda u_{\e, \delta}, u_{\e, \delta}\rra_{\e, \delta},  
\end{align*} 
where $\eta$ is the bounded function defined by $\eta:=e^{\Phi}$. 
The above inequality follows from inequality $(\ref{cur})$. 
We compute the first term in the right hand side 
by using Lemma \ref{koukan} and Cauchy-Schwarz's inequality. 
It is easy to check that 
\begin{align*}
&\|\sqrt{\eta}(D_{\e, \delta}'^{*}u_{\e, \delta}-(\partial \Phi)^{*}) u_{\e, \delta}\|^2_{\e, \delta}\\
\geq & 
\|\sqrt{\eta}D_{\e, \delta}'^{*} u_{\e, \delta} \|^2_{\e, \delta}
-2 \|\sqrt{\eta}D_{\e, \delta}'^{*} u_{\e, \delta} \|_{\e, \delta}
\|\sqrt{\eta}(\partial \Phi)^{*} u_{\e, \delta} \|_{\e, \delta}
+\|\sqrt{\eta}(\partial \Phi)^{*} u_{\e, \delta} \|^2_{\e, \delta}
\\
\geq & 
-2 \|\sqrt{\eta}D_{\e, \delta}'^{*} u_{\e, \delta} \|_{\e, \delta}
\|\sqrt{\eta}(\partial \Phi)^{*} u_{\e, \delta} \|_{\e, \delta}
+\|\sqrt{\eta}(\partial \Phi)^{*} u_{\e, \delta} \|^2_{\e, \delta}. 
\end{align*}
On the other hand,   
Lemma \ref{koukan} implies 
$|(\partial \Phi)^{*}u_{\e, \delta}|^2=
|(\dbar \Phi) u_{\e, \delta}|^2+|(\dbar \Phi)^{*} u_{\e, \delta}|^2$, 
and thus we obtain 
\begin{align*}
\|\sqrt{\eta}(\partial \Phi)^{*} u_{\e, \delta} \|^2_{\e, \delta}
&=
\|\sqrt{\eta}(\dbar \Phi) u_{\e, \delta} \|^2_{\e, \delta}
+\|\sqrt{\eta}(\dbar \Phi)^{*} u_{\e, \delta} \|^2_{\e, \delta}\\
&\geq \|\sqrt{\eta}(\dbar \Phi) u_{\e, \delta} \|^2_{\e, \delta}
\end{align*}
From these inequalities, we have
\begin{align*}
\e C \sup_{X}\eta \|u_{\e, \delta}\|^2_{\e, \delta}
+
2 \|\sqrt{\eta}D_{\e,\delta}'^{*} u_{\e, \delta} \|^2_{\e, \delta}
\|\sqrt{\eta}(\partial \Phi)^{*} u_{\e, \delta} \|^2_{\e, \delta}
\geq 
\lla  \sqrt{-1}\partial \dbar \Phi\Lambda u_{\e, \delta}, u_{\e, \delta}\rra_{\e, \delta}\geq 0. 
\end{align*} 
The norm 
$\|\sqrt{\eta}D_{\e,\delta}'^{*} u_{\e, \delta} \|^2_{\e, \delta}$
converges to zero by Proposition \ref{har} 
and the norm $\|\sqrt{\eta}(\partial \Phi)^{*} u_{\e, \delta} \|^2_{\e, \delta}$ 
is uniformly bounded by Lemma \ref{mul}. 
This completes the proof. 
\end{proof}
\end{step}

\subsection{Proof of Theorem \ref{main2}}\label{Sec3-2}
In this subsection, 
we explain how to modify the proof of Theorem \ref{main} 
to obtain Theorem \ref{main2}.

\begin{theo}[Theorem \ref{main2}] \label{main2a} 
Let $\pi \colon X \to \Delta$ be a surjective 
proper K\"ahler morphism 
from a complex manifold $X$ to an analytic space $\Delta$. 
Let $(F,h)$ be a 
$($possibly$)$ singular hermitian line bundle on $X$ 
and $(M,h_{M})$ be a smooth hermitian line bundle on $X$. 
Assume that 
\begin{equation*}
\sqrt{-1}\Theta_{h_M}(M)\geq 0 \quad \text{and}\quad 
\sqrt{-1}(\Theta_h(F)-b \Theta 
_{h_M}(M))\geq 0
\end{equation*}
for some $b>0$. 
Then, for a non-zero $($holomorphic$)$ section $s$ of $M$, 
the multiplication map induced by the tensor product with $s$ 
$$
R^q \pi_{*}(K_{X}\otimes F \otimes \I{h}) 
\xrightarrow{\otimes s} 
R^q \pi_{*}(K_{X}\otimes F \otimes \I{h} \otimes M) 
$$
is injective for every $q$. 
\end{theo}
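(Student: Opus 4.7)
The plan is to adapt the proof of Theorem \ref{main} essentially verbatim, with the only substantive change occurring in Step \ref{S5}. Steps \ref{S1}--\ref{S4} transfer almost word for word: reduce via (\ref{DW}) to injectivity of $H^q(X, K_X\otimes F\otimes \I{h}) \xrightarrow{\otimes s} H^q(X, K_X\otimes F\otimes \I{h}\otimes M)$, shrink $\Delta$ so that $X$ is relatively compact in the initial ambient space, fix a complete K\"ahler form $\omega$, and construct $\omega_{\e,\delta}$ on $Y_\e$ together with the exhaustive psh function $\Phi$. Note the hypothesis $\sup_K |s|_{h^m}<\infty$ of Theorem \ref{main} is replaced automatically here: since $h_M$ is smooth and $X$ is now relatively compact, $\sup_X |s|_{h_M} < \infty$.

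The one crucial modification at the approximation stage is to invoke Theorem \ref{equi} with $\gamma := b\sqrt{-1}\Theta_{h_M}(M)$ (instead of $\gamma = 0$), which is a valid lower bound by assumption. This yields an equisingular family $\{h_\e\}$ with
\[
\sqrt{-1}\Theta_{h_\e}(F) \geq b\sqrt{-1}\Theta_{h_M}(M) - \e\omega \quad \text{on } X.
\]
This is the unique place where the assumption $\sqrt{-1}(\Theta_h(F)-b\Theta_{h_M}(M))\geq 0$ enters. With this choice, the harmonic representatives $u_{\e,\delta}$ in $L^{n,q}_{(2)}(Y_\e,F)_{H_\e,\omega_{\e,\delta}}$ (with $H_\e = h_\e e^{-\chi\circ \Phi}$) are constructed exactly as in Step \ref{S2}, Proposition \ref{finish} reduces the problem to $\alpha = 0$, Proposition \ref{finish2} reduces this further to $\varliminf_{\e}\varliminf_{\delta}\|s u_{\e,\delta}\|_{X_c, H_\e h_M,\omega_{\e,\delta}} = 0$, and Step \ref{S4} produces $v_{\e,\delta}$ with $\dbar v_{\e,\delta}=su_{\e,\delta}$ and $\varlimsup_\delta \|v_{\e,\delta}\|_{X_c,H_\e h_M,\omega_{\e,\delta}}$ bounded uniformly in $\e$, using the vanishing of $sA$ in the target cohomology.

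The revision of Step \ref{S5} is where the line bundle $M$ genuinely enters. Proposition \ref{key} applied to $su_{\e,\delta}$ and $v_{\e,\delta}$ on the level sets $X_d$ still reduces the problem to
\[
\varliminf_{\e\to 0}\varliminf_{\delta\to 0}\bigl\{\,\lla \dbar^* (su_{\e,\delta}), v_{\e,\delta}\rra_{X_d} + \la (\dbar\Phi)^* (su_{\e,\delta}), v_{\e,\delta}\ra_{\partial X_d}\,\bigr\} = 0.
\]
For the analog of Proposition \ref{har}, we apply the Bochner-Kodaira-Nakano identity in two places: once to $u_{\e,\delta}$ on $(F,H_\e)$, where the curvature
\[
\sqrt{-1}\Theta_{H_\e}(F) = \sqrt{-1}\Theta_{h_\e}(F)+\deldel\chi(\Phi) \geq b\sqrt{-1}\Theta_{h_M}(M)-\e\omega \geq -\e\omega
\]
(using $\sqrt{-1}\Theta_{h_M}(M)\geq 0$ and convexity of $\chi$) yields $\|D'^* u_{\e,\delta}\|_{\e,\delta}\to 0$ exactly as before; and once to $su_{\e,\delta}$ on $(F\otimes M, H_\e h_M)$, where the analogous curvature
\[
\sqrt{-1}\Theta_{H_\e h_M}(F\otimes M) = \sqrt{-1}\Theta_{H_\e}(F)+\sqrt{-1}\Theta_{h_M}(M) \geq (b+1)\sqrt{-1}\Theta_{h_M}(M)-\e\omega \geq -\e\omega
\]
together with the identity $D'^*(su) = s\,D'^* u$ (valid since $s$ is holomorphic) and $\sup_X |s|_{h_M} < \infty$, yields $\|\dbar^* (su_{\e,\delta})\|_{\e,\delta}\to 0$. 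Propositions \ref{1st} and \ref{2nd} now carry over line for line with $h^m$ replaced by $h_M$: the first by Cauchy-Schwarz, and the boundary term via applying Proposition \ref{key} to $(\dbar \Phi)^* u_{\e,\delta}$ and $u_{\e,\delta}$ and invoking the twisted Bochner-Kodaira-Nakano identity of Proposition \ref{Nak}, whose hypothesis $\sqrt{-1}\Theta_{H_\e}(F)\geq -\e\omega$ is guaranteed by the curvature computation above.

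The main obstacle is not the addition of $M$, which is essentially bookkeeping, but the same one as in Theorem \ref{main}: obtaining uniform control of the boundary contribution $\la (\dbar\Phi)^* su_{\e,\delta}, v_{\e,\delta}\ra_{\partial X_d}$, which requires the delicate use of the bounded, exhaustive potential $\Phi$, the complete K\"ahler form $\omega_{\e,\delta}$ on the Zariski open $Y_\e$, the weight $\chi(\Phi)$ making $u$ globally $L^2$, and Ohsawa-Takegoshi's twisted identity. The fact that these pieces interact consistently with the curvature hypothesis $\sqrt{-1}(\Theta_h(F)-b\Theta_{h_M}(M))\geq 0$ rather than $\sqrt{-1}\Theta_h(F)\geq 0$ is exactly what the modified choice of $\gamma$ in Theorem \ref{equi} accomplishes.
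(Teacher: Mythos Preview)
Your overall strategy matches the paper's: invoke Theorem \ref{equi} with $\gamma = b\sqrt{-1}\Theta_{h_M}(M)$ to obtain property (e), observe that (e) implies the old property (d), and reuse Steps \ref{S1}--\ref{S4} verbatim. This is exactly what the paper does.

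However, your argument in Step \ref{S5} for the analogue of Proposition \ref{har} has a gap. You claim that the curvature bound $\sqrt{-1}\Theta_{H_\e h_M}(F\otimes M)\geq -\e\omega$ together with $\|D'^*(su_{\e,\delta})\|\to 0$ yields $\|\dbar^*(su_{\e,\delta})\|\to 0$. But the Bochner--Kodaira--Nakano identity for $su_{\e,\delta}$ reads
\[
\|\dbar^*(su_{\e,\delta})\|^2 = \|D'^*(su_{\e,\delta})\|^2 + \int_{Y_\e} |s|^2_{h_M}\,(g_{\e,\delta}+f_{\e,\delta})\,dV,
\]
with $g_{\e,\delta}=\langle\sqrt{-1}\Theta_{H_\e}(F)\Lambda u_{\e,\delta},u_{\e,\delta}\rangle$ and $f_{\e,\delta}=\langle\sqrt{-1}\Theta_{h_M}(M)\Lambda u_{\e,\delta},u_{\e,\delta}\rangle$. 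Your curvature inequality only gives a \emph{lower} bound $g_{\e,\delta}+f_{\e,\delta}\geq -\e C|u_{\e,\delta}|^2$, which produces no upper bound on $\|\dbar^*(su_{\e,\delta})\|^2$. The point is that $su_{\e,\delta}$ is not harmonic, so you cannot repeat the argument used for $u_{\e,\delta}$.

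What is missing is the quantitative use of property (e), i.e.\ of the constant $b$. From $b\sqrt{-1}\Theta_{h_M}(M)\leq \sqrt{-1}\Theta_{h_\e}(F)+\e\omega \leq \sqrt{-1}\Theta_{H_\e}(F)+\e\omega$ one obtains the pointwise inequality
\[
f_{\e,\delta}\leq \tfrac{1}{b}\bigl(g_{\e,\delta}+\e\,|u_{\e,\delta}|^2\bigr),
\]
and hence $\int |s|^2(g+f)\leq \sup|s|^2\bigl[(1+\tfrac{1}{b})\int_{\{g\geq 0\}} g + \tfrac{\e}{b}\|u\|^2\bigr]\to 0$, since $\int_{\{g\geq 0\}} g_{\e,\delta}\to 0$ was already established from the identity for $u_{\e,\delta}$. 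This is precisely the additional estimate the paper supplies in its Proposition \ref{har2}; your intermediate bound $\Theta_{H_\e h_M}\geq (b+1)\Theta_{h_M}-\e\omega$ is the right ingredient, but you stop short of extracting this comparison of $f$ against $g$.
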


\begin{proof}
The proof is a slight revision of Theorem \ref{main}. 
We give only several differences with the proof of Theorem \ref{main}.

In Step \ref{S1}, 
by applying Theorem \ref{equi} for $\gamma =b \sqrt{-1}\Theta _{h_M}(M)$,  
we take a family of singular metrics 
$\{h_{\e} \}_{1\gg \e>0}$ on $F$ with 
the following properties\,: 
\begin{itemize}
\item[(a)] $h_{\e}$ is smooth on $X \setminus Z_{\e}$ 
for some proper subvariety $Z_{\e}$. 
\item[(b)]$h_{\e''} \leq h_{\e'} \leq h$ holds on $X$ 
for any $0< \e' < \e'' $.
\item[(c)]$\I{h}= \I{h_{\e}}$ on $X$.
\item[(e)]$\sqrt{-1} \Theta_{h_{\e}}(F) \geq 
b\sqrt{-1}\Theta_{h_{M}}(M)-\e \omega$ on $X$. 
\end{itemize}
Note that property (e) is obtained from the assumption 
$\sqrt{-1}\Theta_h(F) \geq b \sqrt{-1} \Theta _{h_M}(M)$. 
We can see that property (e) is stronger than 
property (d) in the proof of Theorem \ref{main}. 
Indeed, by the assumption $\sqrt{-1} \Theta _{h_M}(M) \geq 0$, 
we obtain property (d)  
\begin{itemize}
\item[(d)]$\sqrt{-1} \Theta_{h_{\e}}(F) \geq -\e \omega$ on $X$. 
\end{itemize}
By property (d), we can see that the same argument as in Step \ref{S2} works. 

In Step \ref{S3}, 
by considering the norm 
$\|su_{\e, \delta}\|_{\e, \delta}:= 
\|su_{\e, \delta}\|_{ H_{\e} h_{M}, \omega_{\e,\delta}}$ 
instead of 
$\|su_{\e, \delta}\|_{H_{\e}{h_{\e}}^{m}, \omega_{\e,\delta}}$, 
we can easily prove the same conclusion as in Proposition \ref{finish2}. 

In Step \ref{S4}, 
we can obtain $v_{\e} \in L^{n,q-1}_{(2)}(F\otimes M)_{\e, \delta}$ 
with the properties Corollary \ref{c-sol},  
since we do not use the line bundle $M$ when we prove Proposition \ref{sol}.

In Step \ref{S5}, we need to prove the following proposition (see Proposition \ref{har}). 
Recall that Proposition \ref{1st} and Proposition \ref{2nd} finish the proof of Theorem \ref{main} and they are obtained from Proposition \ref{har}.

\begin{prop}[cf. Proposition \ref{har}]\label{har2}
We have 
\begin{align*}
\lim_{\e \to 0} \varlimsup_{\delta \to 0}\|D'^{*}_{\e,\delta}u_{\e, \delta}\|_{\e,\delta}=0. 
\end{align*}
Moreover we have 
\begin{align*}
\lim_{\e \to 0}\varlimsup_{\delta \to 0}\|
D_{\e, \delta}'^{*} su_{\e, \delta} \|_{\e, \delta}=0 \quad \text{ and } \quad
\lim_{\e \to 0}\varlimsup_{\delta \to 0}
\|\dbar^{*}_{\e, \delta} su_{\e, \delta} \|_{\e, \delta}=0. 
\end{align*}
\end{prop}
\begin{proof}[Proof of Proposition \ref{har2}]
By applying the Bochner-Kodaira-Nakano identity to $u_{\e, \delta}$ 
and $su_{\e, \delta}$, 
we obtain the following equalities\,$:$
\begin{align*}
& 0=\|D_{\e, \delta}'^{*}u_{\e, \delta}\|^{2}_{\e, \delta}+
\int_{Y_{\e}} g_{\e, \delta}\, dV_{\e, \delta}.  \\
&
\|\dbar^{*}_{\e, \delta} su_{\e, \delta} \|^2_{\e, \delta}= 
\|D_{\e, \delta}'^{*}su_{\e, \delta}\|^{2}_{\e, \delta}+\int_{Y_{\e}} |s|^2_{h_{M}} (f_{\e, \delta} + g_{\e, \delta})\, dV_{\e, \delta}. 
\end{align*}
where the integrands $g_{\e, \delta}$ and $f_{\e,\delta}$
are the functions defined by 
\begin{align*}
g_{\e,\delta}:=\langle \sqrt{-1}\Theta_{H_{\e}}(F) \Lambda_{\e, \delta} u_{\e, \delta}, u_{\e, \delta}\rangle_{\e, \delta}, \\
f_{\e,\delta}:=\langle \sqrt{-1}\Theta_{h_{M}}(M) \Lambda_{\e, \delta} u_{\e, \delta}, u_{\e, \delta}\rangle_{\e, \delta}. 
\end{align*}
Since we have property (d), 
we obtain 
\begin{align} 
g_{\e} \geq -\e q |u_{\e, \delta}|^2_{\e, \delta}. 
\end{align}
By the same argument as in Proposition \ref{har}, 
we can see that 
\begin{align*}
\lim_{\e \to 0}\varlimsup_{\delta \to 0} \|D_{\e, \delta}'^{*}u_{\e, \delta}\|^{2}_{\e, \delta}=0 \quad \text{ and } \quad 
\lim_{\e \to 0}\varlimsup_{\delta \to 0}
\int_{\{g_{\e, \delta} \geq 0\}}g_{\e, \delta}\, dV_{\e, \delta}=0. 
\end{align*}
Therefore we can see that 
$\|D_{\e, \delta}'^{*}su_{\e, \delta}\|_{\e,\delta}=
\|sD_{\e, \delta}'^{*}u_{\e, \delta}\|_{\e,\delta}$ converges to zero 
from $\sup_{X}|s|_{h_{M}}<\infty$. 
On the other hand, 
from property (e), 
we can easily check   
$$
f_{\e,\delta} \leq 
\frac{1}{b}(g_{\e,\delta} + \e q |u_{\e,\delta}|_{\e,\delta}^{2}). 
$$
This implies that 
\begin{align*}
\int_{Y_{\e}} |s|^2_{h_{M}} (f_{\e, \delta} + g_{\e, \delta})\, dV_{\e, \delta}
&\leq    \int_{Y_{\e}}
|s|^2_{h_{M}}\bigg\{ (1+\frac{1}{b})g_{\e, \delta} + 
\frac{\e q}{b}  |u_{\e,\delta}|_{\e,\delta}^{2} \bigg\}\, dV_{\e, \delta}\\
&\leq    \int_{\{g_{\e, \delta} \geq 0\}}
|s|^2_{h_{M}}\bigg\{ (1+\frac{1}{b})g_{\e, \delta} + 
\frac{\e q}{b}  |u_{\e,\delta}|_{\e,\delta}^{2} \bigg\}\, dV_{\e, \delta} \\
&\leq  \sup_{X}|s|^2_{h_{M}} (1+\frac{1}{b}) \int_{\{g_{\e, \delta} \geq 0\}}
 g_{\e, \delta} \, dV_{\e, \delta} + 
\frac{\e q}{b} \|u\|_{H, \omega}^{2}. 
\end{align*}
This completes the proof. 
\end{proof}
By this proposition, 
we can prove the same conclusion as in Proposition \ref{1st} and Proposition \ref{2nd}. 
Therefore we obtain the conclusion. 
\end{proof}

\section{Applications}\label{Sec4}
\subsection{Proof of Corollary \ref{cor-1}}

In this subsection, we prove Corollary \ref{cor-1}.  

\begin{cor}[{Corollary \ref{cor-1}}]\label{cor-1a} 
Let $\pi : X \to \Delta$ be a surjective 
proper  K\"ahler morphism 
from a complex manifold $X$ to an analytic space $\Delta$, 
and $(F,h)$ be a $($possibly$)$ singular hermitian line bundle on $X$ 
with semi-positive curvature. 
Then the higher direct image sheaf 
$R^q \pi_{*}(K_{X}\otimes F \otimes \I{h})$ is torsion free 
for every $q$. 
Moreover, we obtain 
$$
R^q \pi_{*}(K_{X}\otimes F \otimes \I{h})=0
\quad \text{for every $q>\dim X - \dim \Delta$. }
$$
\end{cor}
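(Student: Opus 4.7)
Since the problem is local on $\Delta$, I may replace $\Delta$ by a Stein relatively compact open subset as at the start of the proof of Theorem \ref{main}; then $\pi_{*}$ identifies the global sections of $\mathcal{F}:=R^{q}\pi_{*}(K_{X}\otimes F \otimes \I{h})$ with $H^{q}(X, K_{X}\otimes F \otimes \I{h})$.

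For the torsion-freeness, it suffices to show that multiplication by any nonzero $t \in \mathcal{O}_{\Delta}(\Delta)$ is injective on $\mathcal{F}$. Set $s := \pi^{*}t$; this is a nonzero holomorphic function on $X$ by surjectivity of $\pi$, and may be regarded as a nonzero section of $F^{0}=\mathcal{O}_{X}$. The local sup-norm hypothesis in Theorem \ref{main} is automatic in the case $m=0$ (cf.\ the remark following Theorem \ref{main2}). Applying Theorem \ref{main} with $m=0$ and this $s$ shows that multiplication by $\pi^{*}t$, and hence by $t$, is injective on $\mathcal{F}$, which establishes torsion-freeness.

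For the vanishing, set $d := \dim X - \dim \Delta$ and fix $q > d$. Let $\Delta^{\mathrm{sm}} \subset \Delta$ denote the Zariski-open subset over which $\pi$ is a smooth submersion; since $X$ is smooth and we are in the complex-analytic (characteristic zero) setting, $\Delta^{\mathrm{sm}}$ is dense in each irreducible component of $\Delta$. Over $\Delta^{\mathrm{sm}}$ every fiber of $\pi$ is a smooth complex manifold of pure dimension $d$, so by the standard direct-image vanishing for proper morphisms — namely that $R^{q}\pi_{*}\mathcal{G}$ vanishes in degrees exceeding the maximal fiber dimension, provable via Grauert's theorem on formal functions applied to the coherent sheaf $\mathcal{G} = K_{X}\otimes F \otimes \I{h}$ — we obtain $\mathcal{F}|_{\Delta^{\mathrm{sm}}}=0$ for $q>d$. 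Since $\mathcal{F}$ is torsion-free and a coherent torsion-free sheaf on an irreducible reduced analytic space either vanishes or has full support, applying this component by component yields $\mathcal{F}=0$ on all of $\Delta$.

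The main (modest) technical point is the invocation of the direct-image vanishing in presence of the multiplier ideal $\I{h}$: although $\I{h}$ does not commute with restriction to fibers, the sheaf $K_{X}\otimes F \otimes \I{h}$ is coherent on $X$, so Grauert's formal-functions machinery applies unchanged and the fiberwise cohomological-dimension bound is all that is needed. Alternatively, one may rerun the $L^{2}$-harmonic representation of Step \ref{S2} in the proof of Theorem \ref{main} on $\pi^{-1}(V)$ for a small Stein neighborhood $V$ of a generic point of $\Delta^{\mathrm{sm}}$ and argue by bidegree on the $d$-dimensional fiber, as in Takegoshi, to force any harmonic representative of degree $q>d$ to vanish. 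In either approach, no further analytic input beyond Theorem \ref{main} is needed to establish Corollary \ref{cor-1}.
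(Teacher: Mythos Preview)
Your argument is correct and, for the torsion-freeness, is exactly the paper's: apply Theorem \ref{main} with $m=0$ to multiplication by (the pullback of) a nonzero holomorphic function. The paper's written proof in fact stops at that point and does not spell out the Grauert--Riemenschneider-type vanishing at all; your deduction of it---vanishing of the coherent direct image over the smooth locus from the fiber-dimension bound (via formal functions / cohomological dimension of compact fibers), followed by propagation to all of $\Delta$ using the torsion-freeness just established---is precisely the standard argument the paper leaves implicit.
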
 

\begin{proof}
We apply Theorem \ref{main} 
in the case of $m=0$ 
to a holomorphic function $s$. 
For an open set $B \subset \Delta$ 
and a holomorphic function $s$ on $\pi^{-1}(B)$, 
the multiplication map 
$$
\Phi_{s}\colon R^q \pi_{*}(K_{X}\otimes F \otimes \I{h}) 
\xrightarrow{\otimes s} 
R^q \pi_{*}(K_{X}\otimes F \otimes \I{h})  
$$
is injective for every $q$. 
This implies that $R^q \pi_{*}(K_{X}\otimes F \otimes \I{h}) $ is torsion free. 
\end{proof}

\subsection{Proof of Theorem \ref{KVN}}
In this subsection, we prove Theorem \ref{KVN}. 
We first recall the definition of 
the numerical Kodaira dimension of singular hermitian line bundles on 
projective varieties (see \cite{Cao14} for K\"ahler manifolds). 

\begin{defi}[Numerical Kodaira dimension, \cite{Cao14}]\label{nd}
Let $(F,h)$ be a singular hermitian line bundle 
on a smooth projective variety $X$ such that $\sqrt{-1}\Theta_{h}(F) \geq 0$. 
Then the {\textit{numerical Kodaira dimension}} $\nd{F,h}$ of $(F,h)$ 
is defined to be $\nd{F,h}:=-\infty$ if $h\equiv \infty$, 
otherwise  
$$
\nd{F,h}:=\dim X - 
\varliminf_{\e  \to 0} \frac{\log {\rm{vol}}_{X}
{(A^{\e } \otimes F, h)}}{\log \e}
$$
where ${\rm{vol}}_{X}
{( A^{\e} \otimes F, h)}$ 
is defined by 
$$
{\rm{vol}}_{X}{( A^{\e } \otimes F, h)}:= 
\varlimsup_{m \to \infty} \frac{h^{0}(X,  A^{m\e} \otimes F^{m}\otimes \I{h^{m}}) 
}{m^{\dim X}}. 
$$
\end{defi}

By combining Cao's result in \cite{Cao14} 
with the openness theorem proved by Guan-Zhou in \cite{GZ13}, 
we have the following vanishing theorem. 
(See \cite{Hie14} and \cite{Lem14} for another proof for  the openness theorem.) 

\begin{theo}[{\cite[Theorem 1.3]{Cao14}, \cite[Theorem 1.1]{GZ13}}]\label{Cao-GZ}
Let $(F,h)$ be a singular hermitian line bundle 
on a compact K\"ahler manifold $X$ such that $\sqrt{-1}\Theta_{h}(F) \geq 0$. 
Then we have 
$$
H^{q}(X, K_{X}\otimes F \otimes \I{h})=0
\quad \text{for every $q>\dim X - \nd{F,h}$. }
$$
\end{theo}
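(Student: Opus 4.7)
The plan is to deduce the vanishing from an $L^{2}$-Hodge representation combined with holomorphic Morse-type pointwise estimates, using the lower bound on section growth supplied by the numerical Kodaira dimension. Fix $n=\dim X$ and set $\nu:=\nd{F,h}$; it suffices to treat the case $\nu>0$, since for $\nu=0$ the conclusion is trivial. Fix a class $\alpha\in H^{q}(X,K_{X}\otimes F\otimes \I{h})$ with $q>n-\nu$, and choose an ample line bundle $A$ on $X$ with a smooth metric $h_{A}$ of strictly positive curvature $\omega_{A}$. By the definition of $\nu$, one has ${\rm{vol}}_{X}(A^{\e}\otimes F\otimes \I{h})\gtrsim \e^{\nu}$ as $\e\to 0$, so for every sufficiently small $\e>0$ there is a subsequence $m\to\infty$ with $h^{0}(X,A^{\lceil m\e\rceil}\otimes F^{m}\otimes \I{h^{m}})\gtrsim m^{n}$.

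First, I would approximate $h$ by equisingular metrics $h_{\eta}$ provided by Theorem \ref{equi}: each $h_{\eta}$ is smooth on a Zariski open set $Y_{\eta}=X\setminus Z_{\eta}$, satisfies $\I{h_{\eta}}=\I{h}$, and has curvature $\sqrt{-1}\Theta_{h_{\eta}}(F)\ge -\eta\omega$. As in Step \ref{S2} of the proof of Theorem \ref{main}, endow $Y_{\eta}$ with a complete K\"ahler form $\omega_{\eta,\delta}$ and use the $L^{2}$-Hodge decomposition to represent $\alpha$ by a harmonic $F$-valued $(n,q)$-form $\alpha_{\eta,\delta}$. The Bochner--Kodaira--Nakano identity (Proposition \ref{Nak} with $\Phi=0$) applied to $\alpha_{\eta,\delta}$ yields
\[
0=\|{D'}^{*}\alpha_{\eta,\delta}\|^{2}+\int_{Y_{\eta}}\bigl\langle \sqrt{-1}\Theta_{h_{\eta}}(F)\Lambda_{\omega_{\eta,\delta}}\alpha_{\eta,\delta},\alpha_{\eta,\delta}\bigr\rangle\,dV_{\omega_{\eta,\delta}}.
\]
Combined with $\sqrt{-1}\Theta_{h_{\eta}}(F)\ge -\eta\omega$, this forces $\|{D'}^{*}\alpha_{\eta,\delta}\|\to 0$ and the positive part of $\langle\sqrt{-1}\Theta_{h_{\eta}}(F)\Lambda\alpha_{\eta,\delta},\alpha_{\eta,\delta}\rangle$ to tend to zero as $\eta,\delta\to 0$.

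Second, I would convert this integrated statement into pointwise vanishing via a linear-algebra computation for $(n,q)$-forms: if an $(n,q)$-form $u$ lies in the kernel of wedge with a semi-positive $(1,1)$-form $\theta$ of rank $r(x)$ at $x$, then $u(x)=0$ whenever $q>n-r(x)$. Passing to a suitable limit of $\alpha_{\eta,\delta}$ (using the weak compactness arguments developed in Propositions \ref{limit} and \ref{finish}) one obtains a limiting harmonic form annihilated by $\sqrt{-1}\Theta_{h}(F)\Lambda$. The role of the numerical Kodaira dimension is to force $\sqrt{-1}\Theta_{h}(F)$ to have rank $\ge\nu$ on an open dense subset of $X$; this is extracted from the asymptotic section bound $h^{0}(X,A^{\lceil m\e\rceil}\otimes F^{m}\otimes \I{h^{m}})\gtrsim m^{n}$ via the strong openness theorem of Guan--Zhou, which permits one to replace $\I{h^{m}}$ by $\I{h^{m-\tau}}$ for small $\tau>0$ without disturbing the cohomological count. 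Since $q>n-\nu$, the limit of $\alpha_{\eta,\delta}$ then vanishes on a dense set, and by $L^{2}$-extension across the subvarieties $Z_{\eta}$ one concludes $\alpha=0$ in cohomology.

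The central obstacle is the second step: transferring the global asymptotic invariant $\nu$ into a pointwise lower bound on the rank of $\sqrt{-1}\Theta_{h}(F)$. This is where Cao's original argument uses an $L^{2}$-refinement of Demailly's holomorphic Morse inequalities adapted to singular metrics, combined essentially with the strong openness theorem of Guan--Zhou to compare $\I{h}$ with $\I{h^{1+\tau}}$ in the limit $\tau\to 0^{+}$. A further subtlety is to synchronize the four limits $\eta\to 0$, $\delta\to 0$, $\e\to 0$, $m\to\infty$ via a Cantor-type diagonal argument, analogous to the one already carried out in Step \ref{S2} of the proof of Theorem \ref{main}.
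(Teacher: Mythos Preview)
The paper does not give its own proof of Theorem~\ref{Cao-GZ}; the result is quoted as an external input from \cite{Cao14} and \cite{GZ13} and then applied in the proof of Theorem~\ref{KVN}. So there is no in-paper argument to compare against, only Cao's original.

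Your outline has the right scaffolding (equisingular approximation, harmonic representatives on $Y_\eta$, Bochner--Kodaira--Nakano), and you correctly flag the passage from the global invariant $\nu=\nd{F,h}$ to a usable curvature positivity as the crux. But the mechanism you propose for that passage does not work. You assert that $\nu$ forces $\sqrt{-1}\Theta_h(F)$ to have rank $\ge\nu$ on a dense open set, and that this is ``extracted from the asymptotic section bound \dots\ via the strong openness theorem.'' Strong openness says only that $\I{h}=\I{h^{1+\tau}}$ for small $\tau>0$; it compares multiplier ideals and says nothing about the pointwise rank of a curvature current. Moreover, for a singular metric the curvature is a closed positive current, and speaking of its ``rank on a dense open set'' is already problematic: the absolutely continuous part may vanish identically while $\nu>0$ in principle, and for the approximants $h_\eta$ one has only $\sqrt{-1}\Theta_{h_\eta}(F)\ge -\eta\omega$, with no a priori control on where the positive eigenvalues sit. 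So the linear-algebra lemma for $(n,q)$-forms you invoke has no hypothesis to feed on.

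Cao's actual argument does not go through a pointwise rank bound. Roughly, the growth condition $h^0(X,A^{\lceil m\e\rceil}\otimes F^m\otimes\I{h^m})\gtrsim m^n$ is used to manufacture, for each scale $m$, auxiliary metrics of Bergman type on (powers of) $F$ whose curvature, after averaging, contributes a term in the Bochner identity that dominates the harmonic form when $q>n-\nu$; the strong openness theorem enters to pass between $\I{h^m}$ and $\I{h}$ in that construction, not to produce a rank statement. The limits in $\eta,\delta,\e,m$ are then synchronized, but the heart of the matter is this global-to-integral transfer, which your sketch names (``$L^2$-refinement of Demailly's holomorphic Morse inequalities'') but does not supply. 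As written, the second step of your proposal is a genuine gap rather than a technicality.
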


We first prove Proposition \ref{lsc}. 
\begin{prop}[Proposition \ref{lsc}]\label{lsca}
Let $\pi \colon X \to \Delta$ be 
a surjective projective morphism 
from a complex manifold $X$ to an analytic space $\Delta$, 
and $(F,h)$ be a $($possibly$)$ singular hermitian line bundle on $X$ 
with semi-positive curvature. 
Assume that $\pi$ is smooth at a point $t_{0} \in \Delta$. 
Then, there exists a dense subset $Q \subset B$ 
in some open neighborhood $B$ of $t_{0}$ 
with the following properties\,$:$ 
$$\text{
For every $t \in Q $, 
we have $\nd{F|_{X_{t}},h|_{X_{t}}} \geq \nd{F|_{X_{t_{0}}},h|_{X_{t_{0}}}}$. 
}
$$
\end{prop}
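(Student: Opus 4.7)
The plan is to establish lower semi-continuity of $\nd$ by extending sections from the central fiber $X_{t_0}$ to nearby fibers via an Ohsawa-Takegoshi type $L^2$-extension theorem, and then counting dimensions. After shrinking $\Delta$ to a small Stein neighborhood $B$ of $t_0$ over which $\pi$ is smooth (possible since the smooth locus is open in $X$ and $\pi$ is proper), fix a $\pi$-ample line bundle $A$ on $X$ with a smooth metric $h_A$ of fiberwise positive curvature. For each pair $(\e,m)$ with $\e \in \mathbb{Q}_{>0}$, $m \in \mathbb{Z}_{>0}$, and $m\e \in \mathbb{Z}_{\geq 0}$, set
\begin{equation*}
N^{\e}_m(t) := \dim_{\mathbb{C}} H^0\bigl(X_t,\, A^{m\e}|_{X_t} \otimes F^m|_{X_t} \otimes \I{h^m|_{X_t}}\bigr).
\end{equation*}
It suffices to produce a single dense $Q \subset B$ with $N^{\e}_m(t) \geq N^{\e}_m(t_0)$ for all admissible $(\e,m)$ and $t \in Q$; passing to $\varlimsup$ in $m$ and then $\varliminf$ in $\e$ propagates this to a volume inequality at $t$ bounded below by that at $t_0$, from which $\nd{F|_{X_t},h|_{X_t}} \geq \nd{F|_{X_{t_0}},h|_{X_{t_0}}}$ follows directly from Definition \ref{nd}.

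For the extension step, fix $(\e,m)$ and a basis $u_1,\ldots,u_N$ of $H^0(X_{t_0}, A^{m\e}|_{X_{t_0}} \otimes F^m|_{X_{t_0}} \otimes \I{h^m|_{X_{t_0}}})$. Writing $A^{m\e} \otimes F^m = K_X \otimes L^{\e}_m$ with $L^{\e}_m := A^{m\e} \otimes F^m \otimes K_X^{-1}$, endow $L^{\e}_m$ with the singular metric $h_L := h_A^{m\e} \otimes h^m \otimes h_{K_X}^{-1}$. For $m$ large the twist $A^{m\e}$ absorbs the smooth correction from $K_X^{-1}$, so $\sqrt{-1}\Theta_{h_L}(L^{\e}_m)$ is fiberwise strictly positive. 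Since the conormal bundle of the fiber $X_{t_0}$ is trivial (the defining equations are pulled back from $\Delta$), $K_X|_{X_{t_0}} \cong K_{X_{t_0}}$, so each $u_i$ is naturally a section of $K_{X_{t_0}} \otimes L^{\e}_m|_{X_{t_0}}$ with finite $L^2$-norm against $h_L|_{X_{t_0}}$. I would then invoke the Ohsawa-Takegoshi-Demailly $L^2$-extension theorem (the version for smooth submanifolds of arbitrary codimension with plurisubharmonic weights) to extend $u_i$ to $U_i \in H^0(\pi^{-1}(B'), A^{m\e} \otimes F^m)$ on some smaller Stein neighborhood $B' \ni t_0$, with $\int_{\pi^{-1}(B')} |U_i|^2_{h_A^{m\e}\otimes h^m}\, dV_{\omega} < \infty$.

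By Fubini, the subset $E^{\e}_m \subset B'$ of $t$ for which every $U_i|_{X_t}$ has finite fiberwise $L^2$-norm with respect to $h^m|_{X_t}$ (and hence lies in $\I{h^m|_{X_t}}$) has full Lebesgue measure in $B'$. Since $U_i|_{X_{t_0}} = u_i$ is linearly independent and linear independence is an open condition, the restrictions stay independent on a Euclidean open neighborhood $W^{\e}_m \ni t_0$; thus $Q^{\e}_m := E^{\e}_m \cap W^{\e}_m$ is dense in $B'$ and realizes $N^{\e}_m(t) \geq N^{\e}_m(t_0)$ there. The countable intersection $Q := \bigcap_{(\e,m)} Q^{\e}_m$ is still of full measure and hence dense in $B'$, yielding the desired dense set. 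The main obstacle is executing the extension rigorously in this generality: one needs $L^2$-extension accommodating an \emph{arbitrary} singular metric $h$ (possibly with non-algebraic singularities), the twisted bundle $A^{m\e} \otimes F^m$ rather than $K_X \otimes F$, and the smooth submanifold $X_{t_0}$ of codimension $\dim \Delta$ which fails to be a divisor when $\dim\Delta > 1$. This is exactly why the hypothesis requires $\pi$ projective---a relatively ample $A$ is crucial to make $L^{\e}_m$ positive after absorbing $K_X^{-1}$---and matches the paper's remark that extending this proposition to K\"ahler morphisms would generalize Theorem \ref{KVN} accordingly.
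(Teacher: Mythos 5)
Your proof is correct and follows essentially the same route as the paper's: extend a basis from $X_{t_0}$ by Ohsawa--Takegoshi after absorbing $K_X^{-1}$ into a high power of the relatively ample twist, use Fubini to obtain a full-measure (hence dense) set of fibers on which the restricted sections lie in the fiberwise multiplier ideal, and use openness of linear independence before intersecting countably many such sets. The only cosmetic difference is that the paper defines $Q$ intrinsically as the full-measure locus where $\I{h|_{X_{t}}^{m}} = \I{h^{m}}|_{X_{t}}$ for all $m$ (a formulation it also reuses in Remark \ref{lsc-rem} and Theorem \ref{KVN}), whereas you apply Fubini directly to the finitely many extended sections.
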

\begin{rem}\label{lsc-rem}
By the proof of Proposition \ref{lsc}, 
we can add the property that 
$\I{h|_{X_{t}}^{m} } = \I{h^{m}} |_{X_{t}}$ for every $t \in Q$. 
\end{rem}

\begin{proof}
For a positive integer $m$, we define $Q_{m}$ by 
$$
Q_{m}:=\{t \in \Delta \,|\, \I{h|_{X_{t}}^{m} }= \I{h^{m}} |_{X_{t}}\}. 
$$
Note that we have $\I{h|_{X_{t}}^{m} } \subset \I{h^{m}} |_{X_{t}}$
by the Ohsawa-Takegoshi $L^2$-extension theorem. 
By Fubini's theorem, 
we can see that $\Delta \setminus Q_{m}$ has zero Lebesgue measure. 
We put $Q:=\cap_{m=1}^{\infty}Q_{m}$. 
Then $\Delta \setminus Q$ also has zero Lebesgue measure.
Let $A$ be a relatively ample line bundle $A$ on $X$. 
By the definition of the numerical dimension, 
it is sufficient to show that 
$$
h^{0}(X_{t}, \mathcal{O}_{X_{t}}(A^{m\e} \otimes F^{m})\otimes \I{h|_{X_{t}}^{m}})
\geq 
h^{0}(X_{t_{0}}, \mathcal{O}_{X_{t_{0}}}(A^{m\e } \otimes F^{m})\otimes \I{h|_{X_{t_{0}}}^{m}})
$$
for every $t \in Q$ and $m\gg 0$.

For the canonical bundle $K_{X}$ on $X$, 
we have 
$$
A^{m\e } \otimes F^{m}=K_{X} \otimes (A^{m\e } \otimes K_{X}^{-1})\otimes F^{m}. 
$$
$A^{m\e } \otimes K_{X}^{-1}$ admits 
a smooth (hermitian) metric $g_{m}$ with positive curvature 
for a sufficiently large $m \gg 0$. 
We can extend a basis $\{s_{i}\}_{i \in I}$ in 
$H^{0}(X_{t_{0}}, \mathcal{O}_{X_{t_{0}}}(A^{m\e } \otimes F^{m})
\otimes \I{h|_{X_{t_{0}}}^{m}}$ 
to sections $\{ \tilde{s}_{i} \}$ in 
$H^{0}(X, \mathcal{O}_{X}(A^{m\e } \otimes F^{m})\otimes \I{h^m})$, 
by applying the Ohsawa-Takegoshi $L^2$ extension theorem to 
$(A^{m\e } \otimes K_{X}^{-1}\otimes F^{m}, g_{m}h^{m})$ 
(see \cite{OT87} and \cite{Man93}).

We can easily see that $\{\tilde{s}_{i} |_{X_{t}}\}_{i \in I}$ 
is linearly independent in $H^{0}(X_{t}, \mathcal{O}_{X_{t}}(A^{m\e} \otimes F^{m}) 
\otimes \I{h^{m}}|_{X_{t}})$
for every $t$ in some open neighborhood $B$ of $t_{0}$. 
Indeed, if there exist a point $t$ converging to $t_{0}$ and 
$a_{t,i} \in \mathbb{C}$ 
such that $\sum_{i \in I} a_{t,i} \tilde{s}_{i} |_{X_{t}}=0$, 
then we may assume that $a_{t,i}$ converges to some $a_{i}$ as $t \to t_{0}$. 
As $t$ tends to $t_{0}$, 
we obtain $\sum_{i \in I} a_{i} \tilde{s}_{i} |_{X_{t_{0}}}=0$ 
from $\sum_{i \in I} a_{t,i} \tilde{s}_{i} |_{X_{t}}=0$. 
Therefore $\{\tilde{s}_{i} |_{X_{t}}\}_{i \in I}$ 
is linearly independent. 
If $t \in Q$, the restriction $\tilde{s}_{i} |_{X_{t}}$ to $X_{t}$ 
is a section in 
$H^{0}(X_{t}, \mathcal{O}_{X_{t}}(A^{m\e} \otimes F^{m}) 
\otimes \I{h|_{X_{t}}^{m}})$. 
This completes the proof. 
\end{proof}

\begin{theo}[Theorem \ref{KVN}] \label{KVNa}
Let $\pi \colon X \to \Delta$ be a surjective projective morphism 
from a complex manifold $X$ to an analytic space $\Delta$, 
and $(F,h)$ be a $($possibly$)$ singular hermitian line bundle on $X$ 
with semi-positive curvature. 
Then we have 
$$
R^q \pi_{*}(K_{X}\otimes F \otimes \I{h})=0
\quad \text{for every $q>f - \max_{\substack{\pi \text{ is smooth} \\ \text{ at }   t \in \Delta }}$} 
\nd{F|_{X_{t}},h|_{X_{t}}}, 
$$
where $f$ is the dimension of general fibers. 
In particular, 
if $(F|_{X_{t}},h|_{X_{t}})$ is big 
for some point $t$ in the smooth locus of $\pi$, 
then we have 
$$
R^q \pi_{*}(K_{X}\otimes F \otimes \I{h})=0
\quad \text{for every $q>0$}. 
$$
\end{theo}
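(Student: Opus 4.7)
The problem is local on $\Delta$, so after shrinking $\Delta$ we may assume it is Stein; in this setting vanishing of the higher direct image $R^q\pi_*(K_X\otimes F\otimes \I{h})$ is equivalent to vanishing of the global cohomology $H^q(X,K_X\otimes F\otimes \I{h})$, and, more usefully, to the vanishing of every stalk of the coherent sheaf $\mathcal{G}^q:=R^q\pi_*(K_X\otimes F\otimes \I{h})$. By Corollary \ref{cor-1}, $\mathcal{G}^q$ is torsion-free on $\Delta$, so to conclude $\mathcal{G}^q=0$ it suffices to produce a single non-empty open subset $U\subset \Delta$ on which $\mathcal{G}^q|_U=0$.

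Choose a point $t_0$ in the smooth locus of $\pi$ at which $\nd{F|_{X_t},h|_{X_t}}$ attains the maximum $\nu$ appearing in the statement. By Proposition \ref{lsc} together with Remark \ref{lsc-rem}, there exists a dense subset $Q$ in an open neighborhood $B$ of $t_0$ such that, for every $t\in Q$, one has both $\nd{F|_{X_t},h|_{X_t}}\geq \nu$ and the multiplier-ideal coincidence $\I{h^m}|_{X_t}=\I{h|_{X_t}^m}$ for all $m\geq 1$ (the latter is exactly where the strong openness theorem of Guan--Zhou enters). Each such fiber $X_t$ is a smooth projective (hence compact Kähler) manifold, so Theorem \ref{Cao-GZ} applied to $(F|_{X_t},h|_{X_t})$ yields
\begin{equation*}
H^q(X_t,K_{X_t}\otimes F|_{X_t}\otimes \I{h|_{X_t}})=0 \qquad \text{for every } q>f-\nu.
\end{equation*}
After shrinking $B$ so that $K_\Delta$ is locally trivial, the adjunction isomorphism $K_X|_{X_t}\cong K_{X_t}$ combined with the coincidence of multiplier ideals on $Q$ identifies $(K_X\otimes F\otimes \I{h})|_{X_t}$ with $K_{X_t}\otimes F|_{X_t}\otimes \I{h|_{X_t}}$, so the fiberwise cohomology of the sheaf of interest vanishes on the dense set $Q$ in the stated range.

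To propagate this fiberwise vanishing to vanishing of $\mathcal{G}^q$ near $t_0$, I proceed by descending induction on $q$ in the spirit of Grauert's base-change theorem. The base case $q>f$ is supplied by Corollary \ref{cor-1}. For the inductive step I would combine the $L^2$--Čech formalism of Section \ref{Sec2-6}, applied relative to $\pi$, with an Ohsawa--Takegoshi $L^2$-extension argument of the type used in the proof of Proposition \ref{lsc}: a Čech cocycle representing a stalk of $\mathcal{G}^q$ at $t_0$ restricts to a cocycle on $X_{t_0}$, which by the fiberwise vanishing is a coboundary; by Ohsawa--Takegoshi and strong openness one lifts the bounding cochain, with controlled multiplier-ideal membership, to a cochain on $\pi^{-1}(B)$ (possibly after further shrinking). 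This shows the stalk at $t_0$ vanishes, and together with the torsion-freeness of $\mathcal{G}^q$ and the density of $Q$ forces $\mathcal{G}^q=0$ on a neighborhood of $t_0$.

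The main obstacle is the lack of flatness of $\I{h}$ over $\Delta$: the naive scheme-theoretic fiber of $\I{h}$ need not equal $\I{h|_{X_t}}$, and the standard semicontinuity and base-change statements do not apply off the shelf. The equality $\I{h^m}|_{X_t}=\I{h|_{X_t}^m}$ is available only on the analytically dense subset $Q$, not on a Zariski-open set, so the induction must be driven by the extension-and-approximation mechanism of Proposition \ref{lsc} rather than by a purely cohomological base-change formalism. Making this $L^2$-extension of bounding Čech cochains rigorous, with the extended cochains landing in the correct multiplier ideal sheaf, and ensuring that the torsion-free sheaf $\mathcal{G}^q$ then genuinely vanishes rather than merely having zero stalk at the good points, is the technical heart of the argument.
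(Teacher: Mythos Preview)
Your first two paragraphs match the paper's argument exactly: choose $t_0$ realizing the maximum $\nu$, apply Proposition~\ref{lsc} and Remark~\ref{lsc-rem} to obtain the dense set $Q\subset B$ on which both $\nd{F|_{X_t},h|_{X_t}}\ge\nu$ and $\I{h}|_{X_t}=\I{h|_{X_t}}$ hold, and invoke Theorem~\ref{Cao-GZ} fiberwise on $Q$.

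The gap is in your propagation step. Your proposed descending induction via $L^2$-extension of \v{C}ech bounding cochains is not carried out, and as you yourself concede it is the ``technical heart'' still to be supplied. In fact the mechanism you sketch is problematic: Ohsawa--Takegoshi extends holomorphic sections of adjoint bundles from a fiber, not $(n,q-k)$-valued \v{C}ech cochains on a cover, and there is no device ensuring that a family of locally extended pieces reassembles into a cochain on $\pi^{-1}(B)$ whose coboundary is the given cocycle, let alone one landing in $\I{h}$ rather than merely in $\I{h|_{X_t}}$ fiberwise. So the proposal, as written, is not yet a proof.

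The paper avoids all of this by a much simpler device you are missing. It introduces the Zariski open set
\[
\Delta':=\{\,t\in\Delta \mid \pi \text{ is smooth at } t \text{ and } R^q\pi_*(K_X\otimes F\otimes\I{h}) \text{ is locally free at } t\,\}
\]
and appeals to the flat base change theorem on $Q\cap\Delta'$ to identify the fiber $\mathcal{G}^q\otimes k(t)$ with $H^q\bigl(X_t,(K_X\otimes F\otimes\I{h})|_{X_t}\bigr)$, which vanishes by the previous step. Since $Q$ is dense in $B$ and $\Delta'$ is Zariski open and nonempty, $Q\cap\Delta'$ is dense in the open set $B\cap\Delta'$; a locally free sheaf with zero fiber on a dense set of points has rank zero, so $\mathcal{G}^q|_{\Delta'}=0$. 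Torsion-freeness (Corollary~\ref{cor-1}) then forces $\mathcal{G}^q=0$ everywhere. The idea you are lacking is precisely this passage to the locally-free locus $\Delta'$: it converts the merely measure-theoretically dense set $Q$ into something usable for a sheaf-theoretic conclusion, and it is exactly what makes base change available without needing to control $\I{h}$ over all of $\Delta$ or to extend any cochains by hand.
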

\begin{proof}
We take a point $t_{0}$ with 
$$
\nd{F|_{X_{t_{0}}},h|_{X_{t_{0}}}}= 
\max_{\substack{\pi \text{ is smooth} \\ \text{ at }   t \in \Delta }}
\nd{F|_{X_{t}},h|_{X_{t}}}. 
$$
By Proposition \ref{lsc} and Remark \ref{lsc-rem}, 
we can take a dense subset $Q$ in some neighborhood $B$ of $t_{0}$ 
such that $\nd{F|_{X_{t}},h|_{X_{t}}} \geq \nd{F|_{X_{t_{0}}},h|_{X_{t_{0}}}}$ 
and $\I{h|_{X_{t}}^{m} } = \I{h^{m}} |_{X_{t}}$ 
for every $t \in Q$. 
Therefore, By Cao's result and the openness theorem (see Theorem \ref{Cao-GZ}), 
we obtain 
\begin{align*}
H^{q}(X_{t}, \mathcal{O}_{X_{t}}(K_{X} \otimes F)\otimes \I{h})&= 
H^{q}(X_{t}, \mathcal{O}_{X_{t}}(K_{X} \otimes F)\otimes \I{h|_{X_{t}}})
=0
\end{align*}
for $q>f-\nd{F|_{X_{t_{0}}},h|_{X_{t_{0}}}}  \geq f - \nd{F|_{X_{t}},h|_{X_{t}}}$ and for every $t \in Q \cap \Delta'$. 
Here $\Delta'$ is the Zariski open set in $\Delta$ defined by 
$$
\Delta':=\{t \in \Delta \, |\, \text{$\pi$ is smooth at $t$ and 
$R^{q}\pi_{*}(K_{X}\otimes F \otimes \I{h})$ is locally free at $t$.}\}
$$ 
By the flat base change theorem, 
we obtain $R^{q} \pi_{*}(K_{X} \otimes F \otimes \I{h})_{t}=0$ 
for every $t \in Q  \cap \Delta'$. 
This implies that 
$R^{q}\pi_{*}(K_{X}\otimes F \otimes \I{h})_{t}=0$ on $\Delta'$. 
We obtain the conclusion 
since $R^{q}\pi_{*}(K_{X}\otimes F \otimes \I{h})$ 
is torsion free. 

\end{proof}

\subsection{Proof of Corollary \ref{cor-2}}
Finally, we prove Corollary \ref{cor-2}.

\begin{cor}[Corollary \ref{cor-2}]\label{cor-2a}
Let $\pi \colon X \to \Delta$ be 
a surjective proper K\"ahler morphism 
from a complex manifold $X$ to 
an open disk $\Delta \subset \mathbb{C}$ and 
$(F,h)$ be a singular hermitian line bundle with semi-positive curvature. 
Then every section in 
$H^{0}(X_{0}, \mathcal{O}_{X_{0}}(K_{X}\otimes F))$ 
that comes from $H^{0}(X_{0}, \mathcal{O}_{X_{0}}(K_{X}\otimes F)\otimes \I{h})$ can be extended to a section in 
$H^{0}(X, \mathcal{O}_{X}(K_{X}\otimes F)\otimes \I{h})$ 
by replacing $\Delta$ with a smaller disk. 
In particular, 
if $K_{X}$ admits a singular metric $h$ whose curvature 
is semi-positive and Lelong number is zero at every point in $ X_{0}$, 
then Problem \ref{prob} is affirmatively solved. 
\end{cor}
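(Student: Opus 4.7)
The strategy is to reduce the extension statement to an application of Theorem \ref{main} with $m=0$ and with the defining function $s:=\pi^{*}(z)$ of the central fiber, where $z$ denotes the coordinate on the disk $\Delta$. After possibly shrinking $\Delta$, the function $|z|$, and hence $|s|=|s|_{h^{0}}$, is bounded on every relatively compact subset of $X$, so the hypothesis $\sup_{K}|s|_{h^{0}}<\infty$ of Theorem \ref{main} is automatic. The theorem then yields injectivity of the multiplication map
\[
R^{q}\pi_{*}(K_{X}\otimes F\otimes \I{h}) \xrightarrow{\,\otimes s\,} R^{q}\pi_{*}(K_{X}\otimes F\otimes \I{h})
\]
for every $q$. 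Since $\Delta$ is Stein, the isomorphism $\pi_{*}$ already used in Step \ref{S1} transports this to injectivity of the analogous map on the global cohomology $H^{q}(X, K_{X}\otimes F\otimes \I{h})$.

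I would then consider the short exact sequence of sheaves
\[
0 \longrightarrow K_{X}\otimes F\otimes \I{h} \xrightarrow{\,\cdot s\,} K_{X}\otimes F\otimes \I{h} \longrightarrow \mathcal{Q} \longrightarrow 0,
\]
which is exact because $\I{h}$ is torsion free. The cokernel $\mathcal{Q}$ is supported on $X_{0}$ since $s$ vanishes precisely on $X_{0}$, and since $K_{X}\otimes F$ is locally free one has the identification
\[
\mathcal{Q} \cong \mathcal{O}_{X_{0}}(K_{X}\otimes F)\otimes_{\mathcal{O}_{X}} \I{h}.
\]
Combining the long exact cohomology sequence with the injectivity on $H^{1}$ established above produces a surjection
\[
H^{0}(X, K_{X}\otimes F\otimes \I{h}) \twoheadrightarrow H^{0}(X,\mathcal{Q}) = H^{0}(X_{0},\mathcal{O}_{X_{0}}(K_{X}\otimes F)\otimes \I{h}).
\]
A section $u \in H^{0}(X_{0},\mathcal{O}_{X_{0}}(K_{X}\otimes F))$ coming from $H^{0}(X_{0},\mathcal{O}_{X_{0}}(K_{X}\otimes F)\otimes \I{h})$ is by definition the image of some $\tilde{u}\in H^{0}(X,\mathcal{Q})$ under the natural map induced by $\I{h}\hookrightarrow \mathcal{O}_{X}$; lifting $\tilde{u}$ through the surjection above yields a global section $U\in H^{0}(X, K_{X}\otimes F\otimes \I{h})$, and the commutativity of restriction ensures $U|_{X_{0}}=u$.

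For the last assertion concerning Problem \ref{prob}, the plan is to apply the first part to $F:=K_{X}^{m-1}$ endowed with the metric $h^{m-1}$, so that $K_{X}\otimes F = K_{X}^{m}$. If the Lelong number of $h$ vanishes at every point of $X_{0}$, then so does that of $h^{m-1}$, and Skoda's integrability theorem gives $\I{h^{m-1}}_{x} = \mathcal{O}_{X,x}$ at each $x\in X_{0}$; since the locus where $\I{h^{m-1}}\neq \mathcal{O}_{X}$ is a closed analytic set disjoint from the compact fiber $X_{0}$, shrinking $\Delta$ further we may assume $\I{h^{m-1}}=\mathcal{O}_{X}$ on all of $X$. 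Consequently every $u\in H^{0}(X_{0},\mathcal{O}_{X_{0}}(K_{X}^{m}))$ trivially comes from $H^{0}(X_{0},\mathcal{O}_{X_{0}}(K_{X}^{m})\otimes \I{h^{m-1}})$, and the first part produces an extension in $H^{0}(X,\mathcal{O}_{X}(K_{X}^{m})\otimes \I{h^{m-1}})=H^{0}(X,\mathcal{O}_{X}(K_{X}^{m}))$. The most delicate step I expect is the sheaf-theoretic identification $\mathcal{Q} \cong \mathcal{O}_{X_{0}}(K_{X}\otimes F)\otimes_{\mathcal{O}_{X}} \I{h}$ used to encode the "coming from $\I{h}$" condition as an actual global section of $\mathcal{Q}$ to which the surjectivity deduced from Theorem \ref{main} can be applied.
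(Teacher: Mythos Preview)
Your proposal is correct and follows essentially the same route as the paper: apply Theorem~\ref{main} with $m=0$ and $s=\pi^{*}z$ to obtain injectivity on $H^{1}$, then feed this into the long exact sequence of the short exact sequence $0\to K_X\otimes F\otimes\I{h}\xrightarrow{\cdot s}K_X\otimes F\otimes\I{h}\to\mathcal{O}_{X_0}(K_X\otimes F)\otimes\I{h}\to 0$ (exact because $\I{h}$ is torsion free) to deduce surjectivity of restriction. The sheaf identification you flagged as ``delicate'' is exactly what the paper writes as $\mathcal{O}_{X_0}(K_X\otimes F\otimes\I{h})$ and requires nothing beyond right-exactness of $\otimes_{\mathcal{O}_X}\mathcal{O}_{X_0}$, while your Skoda/Lelong-number argument for the final clause is the intended reasoning that the paper leaves implicit.
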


\begin{proof}
Let $s$ be a holomorphic function on $X$ with $X_{0}=s^{-1}(0)$.
By Theorem \ref{main}, we can conclude  
$$
H^{1}(X, K_{X}\otimes F \otimes \I{h})
\xrightarrow{\otimes s} 
H^{1}(X, K_{X}\otimes F \otimes \I{h})
$$
is injective 
for a sufficiently small $\Delta$. 
On the other hand, since $X_{0}$ is a subvariety of codimension one and 
$R^q \pi_{*}(K_{X}\otimes F \otimes \I{h}) $ is 
torsion free, the following sequence is exact: 
\begin{align*}
0\to \mathcal{O}_{X}(K_{X}\otimes F \otimes \I{h}) \otimes 
\mathcal{I}_{X_{0}}
\to \mathcal{O}_{X}(K_{X}\otimes F \otimes \I{h})
\to \mathcal{O}_{X_{0}}(K_{X}\otimes F \otimes \I{h})
\to0. 
\end{align*}
The induced long exact sequence implies that 
for every section $t$ in 
$H^{0}(X_{0}, \mathcal{O}_{X_{0}}(K_{X}\otimes F \otimes \I{h}))$, 
there exists a section $T$ in 
$H^{0}(X, \mathcal{O}_{X}(K_{X}\otimes F \otimes \I{h}))$ 
such that $T|_{X_{0}}=t$. 
Further we have the following commutative diagram: 
$$
\begin{CD}
H^{0}(X, \mathcal{O}_{X}(K_{X}\otimes F \otimes \I{h})) 
@>>>
H^{0}(X_{0}, \mathcal{O}_{X_{0}}(K_{X}\otimes F \otimes \I{h}))  \\ 
@VVV  @VVV
\\ H^{0}(X, \mathcal{O}_{X}(K_{X}\otimes F )) @>>>
H^{0}(X_{0}, \mathcal{O}_{X_{0}}(K_{X}\otimes F )).  
\end{CD}
$$
Therefore we can extend a section in 
$H^{0}(X, \mathcal{O}_{X}(K_{X}\otimes F ))$
that comes from 
$H^{0}(X_{0}, \mathcal{O}_{X_{0}}(K_{X}\otimes F \otimes \I{h}))$ 
to $X$. 
\end{proof}



\begin{thebibliography}{n}

\bibitem[Amb03]{Amb03}
F. Ambro,   
\textit{Quasi-log varieties}, 
Tr. Mat. Inst. Steklova {\bf{240}} (2003), 
Biratsion. Geom. Linein. Sist. Konechno Porozhdennye Algebry, 
220--239. 


\bibitem[Amb14]{Amb14}
F. Ambro,   
\textit{An injectivity theorem}, 
Compos. Math. {\bf{150}} (2014), no. 6, 999--1023. 

\bibitem[AV61]{AV61}
A. Andreotti, and E. Vesentini, 
\textit{Sopra un teorema di Kodaira}, 
Ann. Scuola Norm. Sup. Pisa (3) {\bf{15}} (1961), 283--309. 

\bibitem[AV65]{AV65}
A. Andreotti, and E. Vesentini, 
\textit{Carleman estimates for the Laplace-Beltrami 
equation on complex manifolds}, 
Inst. Hautes \'Etudes Sci. Publ. Math. {\bf{ 25}} (1965) 81--130.

\bibitem[Cao14]{Cao14}
J. Cao,  
\textit{Numerical dimension and a Kawamata-Viehweg-Nadel type vanishing theorem 
on compact K\"ahler manifolds}, 
Compos. Math. {\bf {150}} (2014), no. 11, 1869--1902. 




\bibitem[Dem82]{Dem82}
J.-P. Demailly, 
\textit{Estimations $L^{2}$ pour l'op\'erateur $\overline{\partial}$ d'un 
fibr\'e vectoriel holomorphe semi-positif au-dessus d'une vari\'et\'e k\"ahl\'erienne compl\`ete}, 
Ann. Sci. \'Ecole Norm. Sup(4). {\bf{15}} (1982), no. 3, 457--511. 


\bibitem[DF83]{DF83}
H. Donnelly, and C. Fefferman, 
\textit{$L^2$-cohomology and index theorem for the Bergman metric}, 
Ann. of Math. (2) {\bf{118}} (1983), no. 3, 593--618.

\bibitem[DHP13]{dhp}
J.-P. Demailly, C. D.~Hacon, and M.P\u{a}un, 
{\textit{Extension theorems, non-vanishing and the existence of good minimal models}}, 
Acta Math. {\bf 210} (2013) 203--259.


\bibitem[DPS01]{DPS01}
J.-P. Demailly, T. Peternell, and M. Schneider,   
\textit{Pseudo-effective line bundles on compact K\"ahler manifolds}, 
Internat. J. Math. {\bf{6}} (2001), no. 6, 689--741. 


\bibitem[DX84]{DX84}
H. Donnelly, and F. Xavier, 
\textit{On the differential form spectrum of negatively curved Riemannian manifolds}, 
Amer. J. Math. {\bf{106}} (1984), no. 1, 169--185.


\bibitem[Eno90]{Eno90}
I. Enoki, 
\textit{Kawamata-Viehweg vanishing theorem for compact K\"ahler manifolds}, 
Einstein metrics and Yang-Mills connections (Sanda, 1990), 59--68. 


\bibitem[EV92]{EV92}
H. Esnault, and E. Viehweg, 
\textit{Lectures on vanishing theorems},  
DMV Seminar, {\bf{20}}. Birkh\"auser Verlag, Basel, (1992). 


\bibitem[FK]{FK}
G. B. Folland, and J. J. Kohn, 
\textit{The Neumann problem for the Cauchy-Riemann complex},  
Annals of Mathematics Studies, No. {\bf{75}},  
Princeton University Press, Princeton, 
N.J.; University of Tokyo Press, Tokyo, (1972).  

\bibitem[Fuj09]{Fuj09}
O. Fujino, 
\textit{On injectivity, vanishing 
and torsion-free theorems for algebraic varieties}, 
Proc. Japan Acad. Ser. A Math. Sci. {\textbf{85}} 
(2009), no. 8, 95--100.


\bibitem[Fuj11]{Fuj11b} 
O. Fujino, 
\textit{Fundamental theorems for the log minimal model program}, 
Publ. Res. Inst. Math. Sci. {\textbf{47}} (2011), no. 3, 727--789. 


\bibitem[Fuj12]{Fuj12}
O. Fujino, 
\textit{A transcendental approach to Koll\'ar's injectivity theorem}, 
Osaka J. Math. {\bf{49}} (2012), no. 3, 833--852.




\bibitem[Fuj13a]{Fuj13}
O. Fujino, 
\textit{A transcendental approach to Koll\'ar's injectivity theorem I\hspace{-.1em}I}, 
J. Reine Angew. Math. {\bf{681}} (2013), 149--174. 

\bibitem[Fuj13b]{Fuj13b}
O. Fujino, 
\textit{Injectivity theorems}, 
to appear in Adv. Stud. Pure Math. arXiv:1303.2404v3. 

\bibitem[Fuj14a]{Fuj14a} 
O. Fujino, 
\textit{Fundamental theorems for semi log canonical pairs}, 
Algebr. Geom. {\textbf{1}} (2014), no. 2, 194--228.

\bibitem[Fuj14b]{Fuj14b}
O. Fujino, 
\textit{Foundation of the minimal model program}, 
Preprint on the web page of the author.




\bibitem[FG14]{FG14}
O.  Fujino, and Y.  Gongyo, 
{\textit{Log pluricanonical representations 
and the abundance conjecture}}, 
Compositio Math.  {\bf{150}} (2014), 593--620.


\bibitem[FM16]{FM16}
O.  Fujino, and S. Matsumura, 
{\textit{Injectivity theorem for pseudo-effective line bundles 
and its applications}}, 
Preprint, arXiv:1605.02284v1. 




\bibitem[GM13]{GM13}
Y. Gongyo, and S. Matsumura, 
\textit{Versions of injectivity and extension theorems}, 
to appear in Ann. Sci. \'Ecole Norm. Sup. {\bf{50}} (1982), no 2, 
arXiv:1406.6132v2. 


\bibitem[GR70]{GR70}
H. Grauert, O. Riemenschneider, 
\textit
{Verschwindungss${\ddot{a}}$tze f${\ddot{u}}$r analytische Kohomologiegruppen 
auf komplexen R${\ddot{a}}$umen}, 
Invent. Math. {\bf{11}} (1970), 263--292. 


\bibitem[GR65]{GR65}
R. C. Gunning, and H. Rossi,  
\textit{Analytic functions of several complex variables}, 
Prentice-Hall, Inc., Englewood Cliffs, N.J. (1965).


\bibitem[GZ15]{GZ13}
Q. Guan, and X. Zhou, 
\textit{Strong openness conjecture for plurisubharmonic functions}, 
to appear in Invent. Math. (2015).

\bibitem[Hie14]{Hie14}
P. H. Hi$\rm{\hat{\underline{e}}}$p, 
\textit{The weighted log canonical threshold}, 
C. R. Math. Acad. Sci. Paris {\bf{352}} (2014), no. 4, 283--288. 


\bibitem[Hor65]{Hor65}
L. H\"ormander,  
\textit{$L^2$ estimates and existence theorems for the $\dbar$ operator}, 
Acta Math. {\bf{113}}  (1965), 89--152. 


\bibitem[Kaw82]{Kaw82}
Y. Kawamata, 
\textit{A generalization of Kodaira-Ramanujam's vanishing theorem}, 
Math. Ann. {\bf{261}} (1982), no. 1, 43--46.

\bibitem[Kod53]{Kod53}
K. Kodaira, 
\textit{On a differential-geometric method in the theory of analytic stacks},  
Proc. Nat. Acad. Sci. U. S. A. {\bf{39}}, (1953). 1268--1273. 


\bibitem[Kol86a]{Kol86a}
J. Koll\'ar, 
\textit{Higher direct images of dualizing sheaves. I}, 
Ann. of Math. (2) {\bf{123}} (1986), no. 1, 11--42.  

\bibitem[Kol86b]{Kol86b}
J. Koll\'ar, 
\textit{Higher direct images of dualizing sheaves. II}, 
Ann. of Math. (2) {\bf{124}} (1986), no. 1, 171--202.   



\bibitem[Lem14]{Lem14}
L.  Lempert, 
\textit{Modules of square integrable holomorphic germs}, 
Preprint, arXiv:1404.0407v2. 


\bibitem[Lev83]{Lev83}
M. Levine, 
\textit{Pluri-canonical divisors on K\"ahler manifolds}, 
Invent. Math. {\bf{74}} (1983), no. 2, 293--303. 


\bibitem[Man93]{Man93}
L. Manivel, 
\textit{Un th\'eor$\grave{e}$me de prolongement $L^2$ de sections holomorphes 
d'un fibr\'e hermitien}, 
Math. Z. {\bf{212}} (1993), no. 1, 107--122.


\bibitem[Mat13]{Mat13}
S. Matsumura, 
\textit{An injectivity theorem with multiplier ideal sheaves 
of singular metrics with transcendental singularities}, 
to appear in J. Algebraic Geom, arXiv:1308.2033v4.



\bibitem[Mat14]{Mat14}
S. Matsumura, 
\textit{A Nadel vanishing theorem via injectivity theorems}, 
Math. Ann. {{\bf 359}} (2014), no. 3--4, 785--802. 


\bibitem[Nad90]{Nad90}
A. M. Nadel, 
\textit{Multiplier ideal sheaves and 
K\"ahler-Einstein metrics of positive scalar curvature,}
Ann. of Math. (2) {\bf{132}} (1990), no. 3, 549--596. 


\bibitem[Ohs84]{Ohs84}
T. Ohsawa, 
\textit{Vanishing theorems on complete K\"ahler manifolds},  
Publ. Res. Inst. Math. Sci. {\bf{20}} (1984), no. 1, 21--38. 

\bibitem[Ohs95]{Ohs95}
T. Ohsawa, 
\textit{On the extension of $L^2$ holomorphic functions 
I\hspace{-.1em}I\hspace{-.1em}I. Negligible weights},  
Math. Z. 219 (1995), no. 2, 215--225. 

\bibitem[Ohs04]{Ohs04}
T. Ohsawa, 
\textit{On a curvature condition that implies a cohomology injectivity theorem of Koll\'ar-Skoda type},  
Publ. Res. Inst. Math. Sci. {\bf{41}} (2005), 
no. 3, 565--577.


\bibitem[OT87]{OT87}
T. Ohsawa, and K. Takegoshi,  
\textit{On the extension of $L^2$ holomorphic functions},  
Math. Z. {\bf{195}} (1987), no. 2, 197--204.


\bibitem[P\u{a}u07]{Pau07}
M. P\u{a}un, 
\textit{Siu's invariance of plurigenera: a one-tower proof},  
J. Differential Geom. {\bf{76}} (2007), no. 3, 485--493. 

\bibitem[Pri71]{Pri71}
D. Prill,     
\textit {The divisor class groups of some rings of holomorphic functions}, 
Math. Z. {\bf{121}} (1971), 58--80. 


\bibitem[Siu98]{Siu98}
Y.-T. Siu, 
\textit{Invariance of plurigenera},  
Invent. Math. {\bf{134}} (1998), no. 3, 661--673.


\bibitem[Siu02]{Siu02}
Y.-T. Siu,  
\textit{Extension of twisted pluricanonical sections with plurisubharmonic weight and invariance of semipositively twisted plurigenera for manifolds not necessarily of general type},  
Complex geometry (G$\rm{\ddot{o}}$ttingen, 2000), 223--277, Springer, Berlin, (2002). 



\bibitem[Sko78]{Sko78}
H. Skoda, 
\textit{Morphismes surjectifs de fibr\'es vectoriels semi-positifs},  
Ann. Sci. \'Ecole Norm. Sup. (4) {\bf{11}} (1978), no. 4, 577--611. 


\bibitem[Taka97]{Taka97}
S. Takayama, 
\textit{On the invariance and the lower semi-continuity of plurigenera of algebraic varieties},  
J. Algebraic Geom. {\bf{16}} (2007), no. 1, 1--18. 


\bibitem[Take95]{Tak95}
K. Takegoshi,  
\textit{Higher direct images of canonical sheaves tensorized with semi-positive vector bundles by proper K\"ahler morphisms},  
Math. Ann. {\bf{303}} (1995), no. 3, 389--416.


\bibitem[Take97]{Tak97}
K. Takegoshi, 
\textit{
On cohomology groups of nef line bundles tensorized with multiplier ideal sheaves on compact K\"ahler manifolds},  
Osaka J. Math. {\bf 34} (1997), no. 4, 783--802.


\bibitem[Tan71]{Tan71} 
S. G. Tankeev, 
{\textit{On $n$-dimensional canonically polarized varieties 
and varieties of fundamental type}}, 
Math. USSR-Izv. {\textbf{5}} (1971), no. 1, 29--43. 

\bibitem[Ves67]{Ves67}
E. Vesentini, 
\textit{Lectures on Levi convexity of complex manifolds 
and cohomology vanishing theorems}, 
Notes by M. S. Raghunathan. Tata Institute of Fundamental Research Lectures on Mathematics, {\bf{39}} Tata Institute of Fundamental Research, Bombay (1967). 

\bibitem[Vie82]{Vie82}
E. Viehweg, 
\textit{Vanishing theorems}, 
J. Reine Angew. Math. {\bf{335}} (1982), 1--8. 

\end{thebibliography}
\end{document}